\documentclass[preprint,11pt]{article}
\usepackage{amsthm}
\usepackage{jmlr2e}
\usepackage{amsmath,amssymb}
\usepackage{array}
\usepackage{cellspace}
\usepackage{multirow}
\usepackage{tabularx}
\usepackage{hyperref}
\usepackage{booktabs}

\usepackage{array}
\usepackage{xcolor}
\usepackage{blindtext}

\allowdisplaybreaks[2]

\newtheorem{assumption}{Assumption}
\usepackage{amsmath}
\usepackage{graphicx}
\usepackage{enumerate}
\usepackage{tikz}
\usepackage{url} 
\usepackage{amsthm,amsmath,amsfonts,amssymb,mathrsfs}
\usepackage{algorithm}
\usepackage{algpseudocode}
\usepackage{multirow}
\usepackage{epstopdf}
\usepackage{epsfig}
\usepackage{subcaption,float}
\usepackage{pgfplots}
\usepackage{dsfont,stmaryrd,bm}
\usepackage{booktabs}

\newcommand{\mbb}{\mathbb}
\newcommand{\mbf}{\mathbf}
\newcommand{\Var}{\text{Var}}
\newcommand{\mcal}{\mathcal}
\newcommand{\zy}[1]{{\color{red}\bf[ZY: #1]}}

\usepackage{threeparttable}
\usepackage{threeparttable}
\newcommand{\bg}{{\boldsymbol g}}
\newcommand{\bh}{{\boldsymbol h}}
\newcommand{\bdSigma}{{\boldsymbol \Sigma}}

\numberwithin{equation}{section}
\usepackage{hyperref}
\hypersetup{
    colorlinks=true,
    linkcolor=blue,
    filecolor=magenta,
    urlcolor=cyan,
}
\allowdisplaybreaks
\usepackage{xr}
\newcommand{\bed}{\begin{definition}}
\newcommand{\eed}{\end{definition}}

\newcommand{\bitem}{\begin{itemize}}
\newcommand{\eitem}{\end{itemize}}

\newcommand{\beqn}{\begin{equation}}
\newcommand{\eeqn}{\end{equation}}
\newcommand{\balign}{\begin{align}}
\newcommand{\ealign}{\end{align}}

\newcommand{\beq}{\begin{equation}}
\newcommand{\eeq}{\end{equation}}

\newcommand{\diag}{\mathrm{diag}}

\usepackage{lastpage}
\jmlrheading{23}{2022}{1-\pageref{LastPage}}{1/21; Revised 5/22}{9/22}{21-0000}{Author One and Author Two}

\ShortHeadings{Unmixing Nonparametric Densities}{Jianqing Fan, Zheng Tracy Ke, Zhaoyang Shi}
\firstpageno{1}

\begin{document}

\title{Optimal Demixing of Nonparametric Densities}

\author{\name Jianqing Fan \email jqfan@princeton.edu \\
       \addr Department of Operations Research and Financial Engineering\\
Princeton University\\
       \AND
       \name Zheng Tracy Ke \email zke@fas.harvard.edu \\
       \addr Department of Statistics\\
Harvard University
        \AND
       \name Zhaoyang Shi \email zshi@fas.harvard.edu \\
       \addr Department of Statistics\\
Harvard University}

\editor{My editor}

\maketitle

\begin{abstract}
Motivated by applications in statistics and machine learning, we consider a problem of unmixing convex combinations of nonparametric densities.   Suppose we observe $n$ groups of samples, where the $i$th group consists of $N_i$ independent samples  from a $d$-variate density $f_i(x)=\sum_{k=1}^K \pi_i(k)g_k(x)$. Here, each $g_k(x)$ is a nonparametric density, and each $\pi_i$ is a $K$-dimensional mixed membership vector. We aim to estimate $g_1(x), \ldots,g_K(x)$.  This problem generalizes topic modeling from discrete to continuous variables and finds its applications in LLMs with word embeddings.


In this paper, we propose an estimator for the above problem, which modifies the classical kernel density estimator by assigning group-specific weights that are computed by topic modeling on histogram vectors and de-biased by U-statistics.  
For any $\beta>0$, assuming that each $g_k(x)$ is in the Nikol'ski class with a smooth parameter $\beta$, 
we show that the sum of integrated squared errors of the constructed estimators 
has a convergence rate that depends on $n$, $K$, $d$, and the per-group sample size $N$. 
We also provide a matching lower bound, which suggests that our estimator is rate-optimal. 
\end{abstract}

\begin{keywords}
Archetypal analysis, incomplete U statistics, kernel density estimation, minimax analysis, topic modeling
\end{keywords}

\tableofcontents

\section{Introduction} \label{sec:Intro}
Linear unmixing is a technique widely used in hyperspectral imaging \citep{bioucas2012hyperspectral}, chemical engineering \citep{kwan2006novel,ayhan2015use}, and biomedical data analysis \citep{dey2017visualizing, wang2022non}. Given $n$ data points, it aims to find $K$ base vectors (`end members') such that each data point is a convex combination of base vectors, where $K$ is often much smaller than $n$. In this paper, we consider a linear unmixing problem on the space of nonparametric densities. Suppose $f_1(x), \ldots, f_n(x)$ and $g_1(x),\ldots,g_K(x)$ are densities on $\mathbb{R}^d$ satisfying that
\beq \label{model-1}
f_i(x) = \sum_{k=1}^K \pi_i(k)g_k(x), \qquad\mbox{for $1\leq i\leq n$}, 
\eeq
where each $\pi_i$ takes value in the standard probability simplex ${\cal S}_0\subset\mathbb{R}^K$ (i.e., the entries of $\pi_i$ are nonzero and sum to $1$). 
We further assume that $N_i$ samples are drawn from each $f_i(x)$:
\beq \label{model-2}
X_{i1}, X_{i2}, \ldots, X_{iN_i}\; \overset{iid}{\sim}\; f_i(x). 
\eeq
The goal is to use $\{X_{ij}\}_{1\leq i\leq n,1\leq j\leq N_i}$ to estimate the base densities $g_1(x), \ldots,g_K(x)$.  


When $K=1$, all $X_{ij}$'s are independent samples from a single density $g_1(x)$, so the problem reduces to classical nonparametric density estimation. When $K>1$, however, the setting is fundamentally different, with few results in the literature. In this paper, we develop both new methodology and minimax theory for the above problem.

Our study is motivated by several application examples: 
\begin{itemize}
\item {\it Topic modeling with word embeddings}: Traditional topic modeling is based on word count data. Modern large language models (LLMs) provide word embeddings that contain richer information than word counts.  Recently,  
\cite{austern2025poisson} introduce a new topic model that is equivalent to \eqref{model-1}-\eqref{model-2}, where $X_{ij}$ is the contextualized word embedding of the $j$th word in the $i$th document, and each $g_k(x)$ represents an abstract ``topic'', depicting the distribution of the word embedding in topic $k$. The estimated $g_1(x),\ldots, g_K(x)$ are useful for uncovering the underlying topic structure in a corpus. 
\item {\it Archetypal analysis on distributional data}: Archetypal analysis \citep{cutler1994archetypal} aims to express $n$ data points as convex combinations of $K$ latent archetypes. In the original setting, `data' and `archetypes' are feature vectors, but in certain applications they can be distributional data \citep{bauckhage2014kernel,wu2017prototypal}. 
Existing methods typically apply the kernel trick, but the formulation in \eqref{model-1}-\eqref{model-2} provides an alternative solution to archetypal analysis on distributional data. 
\item {\it Decontamination of mutual contamination models}: \cite{katz2019decontamination} consider mutual contamination models, in which a learner observes random samples from different convex combinations of a set of unknown base distributions. The goal is to infer the base distributions (i.e., decontamination). Our model 
in \eqref{model-1}-\eqref{model-2} 
can be viewed a special mutual contamination model, where each base distribution has a continuous density. 
The decontamination problem has several applications, such as in multi-source domain adaption \citep{zhan2024domain}
\end{itemize}

\subsection{Why classical approaches do not work}
Kernel density estimator (KDE) is a text-book method for nonparametric density estimation. It assumes that i.i.d. samples are observed from the density to be estimated. 
Let ${\cal K}(\cdot)$ be a kernel function such that $\int {\cal K}(x)dx=1$, and write ${\cal K}_h(x):= \frac{1}{h^d}{\cal K}\bigl(\frac{1}{h}x \bigr)$ for brevity. 
In light of \eqref{model-2}, the KDE for $f_i(x)$ is 
\beq \label{KDE-fi}
\hat{f}_i^{\text{KDE}}(x) =\frac{1}{N_i}\sum_{j=1}^{N_i} {\cal K}_h(X_{ij}-x), \qquad 1\leq i\leq n.   
\eeq 
However, how to use $\hat{f}_1^{\text{KDE}}(x), \ldots, \hat{f}_n^{\text{KDE}}(x)$ to estimate $g_1(x), \ldots, g_K(x)$ is unclear. 
Moreover, the rate of estimating each $f_i(x)$ only depends on $N_i$, but we expect a much faster rate of convergence on estimating $g_k(x)$, which should depend on $\sum_{i=1}^n N_i$.

In the mixture density estimation (MDE) problem, we observe i.i.d. samples from a density 
$f(x)=\sum_{k=1}^K \beta_k g(x; \lambda_k)$, where $g(x;\lambda)$ is a parametric density family, and $\beta_1,\beta_2,\ldots,\beta_K$ are the mixing weights. This model can be recast as a latent variable model 
and solved by the expectation-maximization (EM) algorithm. 
In a similar spirit, we introduce a parametric version of the model in \eqref{model-1}-\eqref{model-2} by 
\beq \label{NMDE-to-latentV}
\pi_1, \ldots,\pi_n \, \overset{iid}{\sim}\, \mathrm{Dirichlet}(\alpha), \quad\;\; X_{i1}, \ldots, X_{iN_i} | \pi_i \; \overset{iid}{\sim}\;  f_i(x)=\sum_{k=1}^K \pi_i(k) g(\cdot; \lambda_k), 
\eeq
and the parameters can be estimated by the variational EM algorithm  \citep{blei2004variational}.
Unfortunately, 
this requires each $g_k(x)$ to have a parametric form.  

There exist approaches that approximate a nonparametric density by parametric ones, primarily for the case of estimating a single density $g(x)$. The Dirichlet Process Mixture Model (DPMM) \citep{escobar1995bayesian}  expresses a nonparametric density as an infinite mixture of parametric densities, with mixing weights generated from a `stick-breaking construction' process. 
The variational auto-encoder (VAE) expresses a nonparametric density as an infinite mixture of normal densities \citep{demange2023variational}, parametrized by neural networks. 
However, if we use these ideas to extend the model in \eqref{NMDE-to-latentV} to a nonparametric version, several issues arise: How to design a valid estimation procedure? 
Is the obtained $\hat{g}_k(x)$ smooth? What rate of convergence can be guaranteed? There are no clear answers.   

In the machine learning literature, \cite{katz2019decontamination} propose an interesting method for estimating $g_1(x), \ldots, g_K(x)$ when $n=K$. Let $\hat{F}_i(x)$ be the empirical distribution associated with $X_{i1}, \ldots, X_{iN_i}$. This method computes the ``residual'' of each $\hat{F}_i$ after removing a maximal linear effect of $\{\hat{F}_j: j\neq i\}$. These ``residuals'' are then used to construct estimators of base distributions. However, the estimated distributions typically do not admit smooth densities; moreover, this method  relies on strong conditions on the mixing weights $\pi_i$, which may be hard to satisfy in practice. 

\subsection{The induced topic models} \label{subsec:NMDE-to-TM}
Interestingly, the model in \eqref{model-1}-\eqref{model-2} has a connection to the topic model \citep{blei2003latent}, which is widely used in text analysis. In detail, fix a partition of $\mathbb{R}^d$ into $M$ bins, 
\beq \label{def-bins}
\mathbb{R}^d=\cup_{m=1}^M {\cal B}_m,
\eeq
and construct a `multivariate histogram' vector $Y_i\in\mathbb{R}^M$ for the $i$th group of samples by
\beq\label{def-Y}
Y_i(m)= |\{X_{ij}:1\leq j\leq N_i\}\cap {\cal B}_m|, \qquad 1\leq m\leq M. 
\eeq
Following \eqref{model-2}, it is easy to see that $Y_i \sim\mathrm{Multinomial}(N_i, \Omega^{\cal B}_i)$, where $\Omega^{\cal B}_i\in\mathbb{R}^M$ is a vector such that $\Omega^{\cal B}_i(m) = \int_{x\in {\cal B}_m}f_i(x)dx$. Combining this with the expression of $f_i(x)$ in \eqref{model-1}, we immediately have the following result: 
\beq \label{NMDE-to-TM}
Y_i\sim \mathrm{Multinimial}\biggl(N_i,\;\; \sum_{k=1}^K \pi_i(k)g^{\cal B}_k\biggr), \quad \mbox{where } g^{\cal B}_k(m)= \int_{x\in {\cal B}_m} g_k(x)dx. 
\eeq
This is in fact a topic model with $n$ documents, $K$ topics, and a size-$M$ vocabulary, where  $g_1^{\cal B}, \ldots, g_K^{B}\in\mathbb{R}^M$ are the ``topic vectors'' and $\pi_1, \ldots,\pi_n\in\mathbb{R}^K$ are the ``topic weight vectors.''   

Inspired by this connection, \cite{austern2025poisson} propose a two-step method for estimating the densities $g_1(x), \ldots, g_K(x)$:
\begin{itemize}
\item Apply topic modeling on the matrix $Y=[Y_1, Y_2,\ldots, Y_n]$ to obtain $\hat{g}_1^{\cal B}, \ldots, \hat{g}_K^{\cal B}$.
\item For each $1\leq k\leq K$, construct $\hat{g}_k(x)$ from the entries of $\hat{g}_k^{\cal B}$ by kernel smoothing. 
\end{itemize}
This provides the first method for the nonparametric linear unmixing problem in \eqref{model-1}-\eqref{model-2} with a provable rate of convergence. 
%
%
%
Unfortunately, the rate does not match the information-theoretic lower bound when $g_1(x), \ldots, g_K(x)$ have high smoothness (see Section~\ref{subsec:UB}). 

In this paper, we introduce a new estimator, which also leverages the induced topic model in \eqref{NMDE-to-TM}, but in a fundamentally different manner. We show that the new estimator addresses the above limitation and is minimax optimal.

\subsection{A new estimator} \label{subsec:our-method}
Let $Y$ be the same as defined in \eqref{def-Y}. Suppose we have applied an existing topic modeling algorithm on $Y$, and let $\widehat{G}=[\hat{g}_1^{\cal B}, \ldots, \hat{g}_K^{\cal B}]$ be the estimated topic matrix. 
Our estimator is inspired by several key insights:
\begin{itemize}
	\item First, we consider an idealized case where $\pi_1, \ldots, \pi_n$ are known. In this case, we construct an ideal estimator for $g_k(x)$, which is a weighted KDE by leveraging these known $\pi_i$'s.
	\item Next, we develop an estimator of $\pi_1, \ldots, \pi_n$ based on 
	the estimated topic matrix $\widehat{G}$ and  we plug $\hat{\pi}_1, \ldots, \hat{\pi}_n$ into the ideal estimator. This is a natural plug-in estimator for $g_k(x)$, but unfortunately it has a large bias. 
	\item Finally, we de-bias the above plug-in estimator by replacing some quadratic terms by corresponding U-statistics. 
\end{itemize}
The detailed derivation and rationale are deferred to Section~\ref{sec:Method}, but let us describe the estimator first.
With the notation in \eqref{def-bins}-\eqref{def-Y},  we define a matrix $T\in\mathbb{R}^{M\times M}$ with elements
\beq \label{def-T-U}
T_{mm'} =  \sum_{i=1}^n \frac{1}{N_i(N_i-1)}  \sum_{1\leq j\neq j'\leq N_i} U_{ijm}U_{ij'm'}, \quad \mbox{where}\;\; U_{ijm}=1\{X_{ij}\in {\cal B}_m\}. 
\eeq
For any $x\in\mathbb{R}^d$, we define a matrix $S(x)\in\mathbb{R}^{M\times n}$ by 
\beq \label{def-S-K}
S_{mi}(x) = \frac{1}{N_i(N_i-1)}\sum_{1\leq j\neq j'\leq N_i} K_{ij}(x) U_{ij'm}, \quad\mbox{where}\;\;  K_{ij}(x) = {\cal K}_h(x-X_{ij}),
\eeq
where ${\cal K}_h$ is defined in \eqref{KDE-fi}.
Our proposed estimator has the form: 
\beq \label{our-estimator}
\widehat{\boldsymbol g}(x) = \widehat{G}'\widehat{G}(\widehat{G}'T\widehat{G})^{-1}\widehat{G}'S(x){\bf 1}_n,  
\eeq
where for each $1\leq k\leq K$, $\hat{g}_k(x)$ is the $k$th entry of $\widehat{\boldsymbol g}(x)$. 

Here, each $\hat{g}_k(x)$ is a weighted U-statistic, with weights depending on the estimated topic matrix $\widehat{G}$. It has an entirely different form from the estimator in \cite{austern2025poisson}. 
As a consequence, the choice of the bin size $M$ is also fundamentally different.
They choose $M$ as a power of $Nn$, whereas we simply set $M\asymp K\log(Nn)$; 
see Section~\ref{sec:Method}. 

%

\subsection{The minimax optimal rate}
For any fixed $\beta>0$, let $\Theta^*_\beta$ denote the Nikol’ski class \citep{nikol2012approximation} of smooth probability densities with a smoothness parameter $\beta$. 
Write for brevity $\bm{g}(x)=(g_1(x), \ldots, g_K(x))'$ and $\widehat{\bm{g}}(x)=(\hat{g}_1(x), \ldots, \hat{g}_K(x))'$. We denote $\bm{g}(x)\in\Theta^*_\beta$ when each individual $g_k(x)$ belongs to the Nikol’ski class.  One of our main results is the minimax rate of convergence for the integrated squared error: Suppose $N_i\asymp N$ for $1\leq i\leq n$. As $n N\to \infty$, if $K=O((Nn)^{c^*_{\beta,d}})$, where $c^*_{\beta,d}\in (0,1/4)$ is a constant that only depends on $\beta$, then up to a permutation of $\hat{g}_1(x),\ldots,\hat{g}_K(x)$, 
 \beq \label{minimax}
\inf_{\widehat{\bm{g}}(x)} \sup_{\bm{g}(x)\in\Theta^*_\beta} \mathbb{E}\Biggl[\sum_{k=1}^K \int [\hat{g}_k(x)-g_k(x)]^2dx \Biggr]\;\; \asymp\;\; K\left(\frac{K}{N n}\right)^{\frac{2\beta}{2\beta+d}}. 
\eeq

To establish \eqref{minimax}, we need a minimax upper bound and a minimax lower bound. 
The minimax upper bound is obtained by analyzing the error rate of our estimator. In the proof, we have developed a large-deviation inequality for a specific class of incomplete U-statistics and an integrated variance bound for incomplete U-processes. The minimax lower bound is proved by choosing a least-favorable configuration to uncover the role of a diverging $K$.

%
%

\subsection{Summary, organization and notations}

We consider unmixing convex combinations of nonparametric densities, a problem with applications in topic modeling, archetypal analysis and multi-source domain adaption. Compared to classical nonparametric density estimation, this problem is more challenging and much less studied. Our contributions include:
\begin{itemize}
\item We propose a new estimator, which are significantly different from existing methods (e.g., \cite{katz2019decontamination, austern2025poisson}).
\item We derive an explicit rate of convergence for our estimator. Our results allow both $N$ and $K$ to grow with $n$ and cover a full range of smoothness on the base densities. 
\item We provide a matching lower bound and show that our estimator is optimal. In contrast, existing estimators either have no explicit error rate or suffer a non-optimal rate.  
\item As a component of our analysis, we also provide a new theoretical result on estimating a topic model under a growing $K$, which is of independent interest. 
\end{itemize}

The remainder of this paper is organized as follows: 
In Section~\ref{sec:Method}, we derive our estimator and explain the insights. Section~\ref{sec:TM} discusses the topic modeling on $Y$, focusing an existing algorithm Topic-SCORE. 
Our main theoretical results are presented in Section~\ref{sec:main}. 
Simulation studies are contained in Section~\ref{sec:Simu}.  
Sections~\ref{sec:UBproof}-\ref{sec:LBproof} prove the minimax upper bound and the minimax lower bound, respectively. Section~\ref{sec:Discuss} concludes the paper with discussions. Proofs of secondary lemmas are relegated to the supplementary material.

\section{Topic-Weighted Kernel Density Estimator} \label{sec:Method}

In this section, we explain the rationale of our proposed estimator \eqref{our-estimator}.  We show that it is equivalent to a de-biased weighted kernel density estimator with estimated weights from topic modeling.   

\subsection{An oracle estimator when $\Pi$ is known} \label{subsec:oracle}

We first consider an oracle setting where the mixed membership matrix $\Pi=[\pi_1,\ldots,\pi_n]'\in\mbb{R}^{n\times K}$ is given.  Let ${\boldsymbol g}(x)=(g_1(x),\ldots,g_K(x))'\in\mathbb{R}^K$ and ${\boldsymbol f}(x)=(f_1(x),\ldots,f_n(x))'\in\mathbb{R}^n$. It follows from the model in \eqref{model-1} that
\[
{\boldsymbol f}(x) = \Pi {\boldsymbol g}(x) \qquad \mbox{or} \qquad  {\boldsymbol g}(x) =  (\Pi'\Pi)^{-1}\Pi'  {\boldsymbol f}(x)
\]
The unknown density vector ${\boldsymbol f}(x)$ can be estimated by $\widehat{\boldsymbol f}^{\text{KDE}}(x)=(\hat{f}^{\text{KDE}}_1(x),\ldots,\hat{f}^{\text{KDE}}_n(x))'$, where $\hat{f}_i^{\text{KDE}}(x)$ is given by \eqref{KDE-fi}.  This leads naturally to 
\[
\widehat{\boldsymbol g}^{\text{oracle}}(x) = (\Pi'\Pi)^{-1}\Pi' \widehat{\boldsymbol f}^{\text{KDE}}(x).  
\]
The $k^{th}$ component of the above estimator can be written as
\beq \label{oracle-equivalence} 
\hat{g}_k^{\text{oracle}}(x) = \frac{1}{n}\sum_{i=1}^n b_i(k)\hat{f}_i^{\text{KDE}}(x), 
\eeq
where $b_i(k)$ is given by
\beq \label{B}
    B=(n^{-1}\Pi'\Pi)^{-1}\Pi':=[b_1,b_2, \ldots,b_n]\in\mbb{R}^{K\times n}.
\eeq
It is the weight on the $i$th group of samples when they are used to estimate $g_k(x)$.
Putting more explicitly, we propose a weighted kernel density estimator (KDE):
\beq  \label{oracle}
\hat{g}_k^{\text{oracle}}(x)= \frac{1}{n}\sum_{i=1}^n b_i(k)\frac{1}{N_i} \sum_{j=1}^{N_i}{\cal K}_h(x-X_{ij}), \qquad 1\leq k\leq K,
\eeq




The rate of convergence of this oracle estimator can be fairly easily derived (see the supplement), and it matches the lower bound to be introduced in Section~\ref{sec:main}. We thereby use this estimator as a good starting point.

\subsection{Estimation of $\Pi$ and the plug-in estimator} \label{subsec:estimate-Pi}
To extend the oracle estimator to the real case, we need an estimate of $\Pi$. In light of the induced topic model in \eqref{NMDE-to-TM}, we can apply topic modeling to $Y$. 
However, if we directly apply an existing topic modeling algorithm on estimating $\Pi$, the rate of convergence is not fast enough. Assuming $K$ is finite, the minimax optimal rate is \citep{klopp2023assigning,wu2023sparse}
\[
\max_{1\leq i\leq n}\|\hat\pi_i-\pi_i\|_1=O_{\mathbb{P}}\bigl(N^{-1/2}\bigr).
\]
This rate does not even decrease with $n$; consequently, the plug-in error caused by estimating $\Pi$ is too large. In order to get the desirable rate, we must account for the dependence among $\hat{\pi}_i$'s. This is made possible if 
\beq \label{hat-pi-requirement}
\mbox{each $\hat{\pi}_i$ can be well-approximated by a simple function of $X$}. 
\eeq

To find an estimator to satisfy \eqref{hat-pi-requirement}, we notice that, while the rate on estimating $\pi_i$ is slow, the rate on estimating the topic matrix $G$ can be made very fast. In fact, the minimax optimal rate \citep{ke2024using,bing2020fast} is 
\beq \label{TM-minimax-rate}
\sum_{k=1}^K\|\hat{g}_k-g_k\|_1 =\widetilde{O}_{\mathbb{P}}\left( K\sqrt{\frac{KM}{Nn}}\right).  
\eeq
Here, $M$ is the bin size that a user can choose. If we choose $M$ appropriately small, then the rate on estimating $G$ can be much faster than the optimal rate in \eqref{minimax}, 
so that we can almost treat $G$ as known. 

The above observation inspires us to consider a regression estimator of $\Pi$. In detail, let $L\in\mathbb{R}^{n\times n}$ be a diagonal matrix whose $i$th diagonal entry is equal to $N_i$. It follows from \eqref{NMDE-to-TM} that $\mathbb{E}Y = G\Pi'L$. We thereby estimate $\Pi$ by
\begin{align}\label{Pi-estimate}
\widehat{\Pi}= L^{-1}Y'\widehat{G}(\widehat{G}'\widehat{G})^{-1}. 
\end{align}
Since the plug-in error rate in $\widehat{G}$ is much faster than the desirable rate,  we can almost treat $G$ as known and approximate the $\widehat{\Pi}$ in \eqref{Pi-estimate} by 
\beq \label{Pi-proxy}
\widehat{\Pi}\quad \approx\quad \widehat{\Pi}^*:= L^{-1}Y'G(G'G)^{-1}. 
\eeq
The right hand side of \eqref{Pi-proxy} is a linear function of $Y$, and $Y$ is constructed from $X_{ij}$'s explicitly. Hence, the estimator $\widehat{\Pi}$ satisfies \eqref{hat-pi-requirement} as we hope. 

Note that this estimator does not yield a faster rate on estimating $\Pi$ than existing estimators (e.g., \cite{klopp2023assigning,wu2023sparse}). The advantage lies in avoiding a major analytical hurdle when we use $\widehat{\Pi}$ to construct other estimators: Since the dependence among $\hat{\pi}_1, \ldots, \hat{\pi}_n$ can be quantified explicitly as in \eqref{Pi-proxy}, we obtain a much tighter control on the plug-in error. 
On the other hand, this advantage is tied to our problem: As the choice of bins vary, we have many induced topic models, and all of them share the same $\Pi$. This gives us the freedom to pick a proper induced topic model so that  \eqref{Pi-proxy} holds.



We now plug the $\widehat{\Pi}$ in \eqref{Pi-estimate} into the oracle estimator in \eqref{B}-\eqref{oracle}. Write for brevity
\beq \label{def-hat-Q}
\widehat{Q}:=  \widehat{G}(\widehat{G}'\widehat{G})^{-1},\qquad \widetilde{W}:=  n (\widehat{Q}'YL^{-2} Y'\widehat{Q})^{-1} \widehat{Q}'.
\eeq
We use $\tilde{w}_i'\in\mathbb{R}^K$ to denote the $i$th row of $\widetilde{W}$. 
It is not hard to see that the plug-in estimator has the following form:  
\beq \label{naiveEstimate}
\hat{g}_k^{\text{plug-in}}(x):= \frac{1}{n}\sum_{i=1}^n \frac{1}{N_i^2}\sum_{j=1}^N \sum_{m=1}^M \tilde{w}_{m}(k) {\cal K}_h(x-X_{ij})Y_{i}(m),\quad 1\leq k\leq K.
\eeq
We write $\widehat{\boldsymbol g}^{\text{plug-in}}(x)=(\hat{g}^{\text{plug-in}}_1(x),\ldots, \hat{g}^{\text{plug-in}}_K(x))'$ for brevity.

\subsection{De-biasing the plug-in estimator} \label{subsec:de-biasing}
Unfortunately, the plug-in estimator in \eqref{naiveEstimate} has a non-negligible bias. 
In this subsection, we explain where the bias comes from and introduce a de-biasing technique.  
%
%
%
The following lemma is proved in the supplemental material: 
\begin{lemma} \label{lem:naiveEstimate}
Let $\mathbb{R}^d =\cup_{m=1}^M {\cal B}_m$ be the bins and $\widehat{Q}$ be as in \eqref{def-hat-Q}. Write  $U_{ijm}=1\{X_{ij}\in {\cal B}_m\}$, for all $1\leq i\leq n$, $1\leq j\leq N_i$ and $1\leq m\leq M$. Then, the plug-in estimator in \eqref{naiveEstimate} satisfies that $\widehat{\boldsymbol g}^{\text{plug-in}}(x)=\widetilde{\boldsymbol \Sigma}^{-1}\widetilde{\boldsymbol h}(x)$, where $\widetilde{\boldsymbol h}(x)\in\mathbb{R}^K$ and $\widetilde{\boldsymbol \Sigma} \in\mathbb{R}^{K\times K}$ are defined by
\begin{align*}
\widetilde{\boldsymbol h}_k(x) & = \frac{1}{n}\sum_{i=1}^n \frac{1}{N_i^2}\sum_{m=1}^M\sum_{j,j'=1}^{N_i} \widehat{Q}_{mk} {\cal K}_h(x-X_{ij})U_{ij'm},\quad 1\leq k\leq K,\cr
\widetilde{\boldsymbol \Sigma}_{k\ell} &= \frac{1}{n}\sum_{i=1}^n \frac{1}{N_i^2}\sum_{m, m'=1}^M \sum_{j,j'=1}^{N_i} \widehat{Q}_{mk}\widehat{Q}_{m'\ell}U_{ijm}U_{ij'm'}, \quad 1\leq k,\ell\leq K. 
\end{align*}
\end{lemma}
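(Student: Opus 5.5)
The identity is pure bookkeeping. The plan is to write the plug-in estimator in matrix form and then re-express the Gram matrix and the cross term through the indicators $U_{ijm}$.

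First, recall how the plug-in estimator arises: substitute $\widehat{\Pi}=L^{-1}Y'\widehat{Q}$ from \eqref{Pi-estimate} into $B=(n^{-1}\Pi'\Pi)^{-1}\Pi'$ from \eqref{B}. This gives
\[
\widehat{B} = n\,(\widehat{Q}'YL^{-2}Y'\widehat{Q})^{-1}\widehat{Q}'YL^{-1}.
\]
Comparing with \eqref{def-hat-Q}, we get $\widehat{B}=\widetilde{W}YL^{-1}$. The $i$th column is therefore $\hat b_i = N_i^{-1}\sum_m \tilde w_m Y_i(m)$, where $\tilde w_m$ is the $m$th column of $\widetilde{W}$ (the paper's ``row'' indexing is read accordingly). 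Plugging $\hat b_i$ into \eqref{oracle-equivalence} reproduces \eqref{naiveEstimate}. In vector form,
\[
\widehat{\boldsymbol g}^{\text{plug-in}}(x)=\frac1n\,\widetilde{W}\,Y L^{-1}\widehat{\boldsymbol f}^{\text{KDE}}(x) = \Bigl(\frac1n\widehat{Q}'YL^{-2}Y'\widehat{Q}\Bigr)^{-1}\frac1n\,\widehat{Q}'YL^{-1}\widehat{\boldsymbol f}^{\text{KDE}}(x).
\]
The factors of $n$ are arranged so that $\widetilde{W}/n$ times $n$ combine correctly; I will track them carefully.

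Next, identify the two factors using $Y_i(m)=\sum_{j'}U_{ij'm}$ and $\hat f_i^{\text{KDE}}(x)=N_i^{-1}\sum_j{\cal K}_h(x-X_{ij})$.

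For the vector factor, the $k$th entry of $n^{-1}\widehat{Q}'YL^{-1}\widehat{\boldsymbol f}^{\text{KDE}}(x)$ is
\[
n^{-1}\sum_i\sum_m \widehat{Q}_{mk}\,Y_i(m)\,N_i^{-1}\hat f_i^{\text{KDE}}(x) = n^{-1}\sum_i N_i^{-2}\sum_m\sum_{j,j'}\widehat{Q}_{mk}{\cal K}_h(x-X_{ij})U_{ij'm},
\]
which is exactly $\widetilde{\boldsymbol h}_k(x)$.

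For the matrix factor, the $(k,\ell)$ entry of $n^{-1}\widehat{Q}'YL^{-2}Y'\widehat{Q}$ is
\[
n^{-1}\sum_i N_i^{-2}\Bigl(\sum_m\widehat{Q}_{mk}Y_i(m)\Bigr)\Bigl(\sum_{m'}\widehat{Q}_{m'\ell}Y_i(m')\Bigr).
\]
Expanding each $Y_i$ as a sum over $j$ or $j'$ gives $\widetilde{\boldsymbol\Sigma}_{k\ell}$.

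Hence $\widehat{\boldsymbol g}^{\text{plug-in}}(x)=\widetilde{\boldsymbol\Sigma}^{-1}\widetilde{\boldsymbol h}(x)$, provided the matrix is invertible. Invertibility is implicitly assumed, since \eqref{def-hat-Q} already uses the inverse.

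The only delicate step is the normalization and indexing. One must check that the scalar $n$ in $\widetilde{W}$ combined with the $1/n$ in \eqref{oracle-equivalence} yields the $1/n$ normalizations in both $\widetilde{\boldsymbol h}$ and $\widetilde{\boldsymbol\Sigma}$. One must also check that the indexing $\tilde w_m(k)$ in \eqref{naiveEstimate} matches $\widetilde{W}$'s columns, which are indexed by bins. I would verify this by writing everything as matrix products before expanding entries.
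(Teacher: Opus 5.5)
Your proof is correct and is the direct matrix computation the lemma calls for. With $\widehat B=\widetilde W YL^{-1}$ you get $\widehat{\boldsymbol g}^{\text{plug-in}}(x)=(n^{-1}\widehat Q'YL^{-2}Y'\widehat Q)^{-1}\,n^{-1}\widehat Q'YL^{-1}\widehat{\boldsymbol f}^{\text{KDE}}(x)$. Expanding $Y_i(m)=\sum_{j}U_{ijm}$ then identifies the two factors with $\widetilde{\boldsymbol\Sigma}$ and $\widetilde{\boldsymbol h}(x)$, consistent with the paper's form $\widetilde{\boldsymbol\Sigma}=\widehat Q'[n^{-1}\sum_i\widehat{\xi_i\xi_i'}]\widehat Q$. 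The paper's supplementary proof of this lemma is not included in the text, so there is nothing further to compare against. Reading $\tilde w_m$ as the $m$th column of the $K\times M$ matrix $\widetilde W$, rather than a row, is the right way to resolve the paper's indexing slip.
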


Lemma~\ref{lem:naiveEstimate} gives an equivalent form of the plug-in estimator. We first study the matrix $\widetilde{\boldsymbol \Sigma}$. Let $\boldsymbol{U}_{ij}\in\mathbb{R}^M$ be the vector consisting of $\{U_{ijm}: 1\leq m\leq M\}$. For each $i$, $\boldsymbol{U}_{i1}, \ldots, \boldsymbol{U}_{iN_i}$ are i.i.d. from $\mathrm{Multinomial}(1, \xi_i)$, with $\xi_i := \mathbb{E}[\boldsymbol{U}_{i1}]$. It is seen that
\[
\widetilde{\boldsymbol \Sigma} = \widehat{Q}'\biggl[ \frac{1}{n}\sum_{i=1}^n \widehat{\xi_i\xi_i'}\biggr] \widehat{Q}, \qquad\mbox{where}\quad \widehat{\xi_i\xi_i'}:=\frac{1}{N^2_i}\sum_{j,j'=1}^{N_i}\boldsymbol{U}_{ij}\boldsymbol{U}'_{ij'}. 
\]
However, $\mathbb{E}[\boldsymbol{U}_{ij}\boldsymbol{U}'_{ij'}]\neq \xi_i\xi_i'$ when $j=j'$. Therefore, the estimator $\widehat{\xi_i\xi_i'}$ is biased. To fix this issue, we consider the following U-statistics, which employ a diagonal deletion strategy by excluding $j=j'$ in the sum:
\[
\widehat{\xi_i\xi_i'}^*:=\frac{1}{N_i(N_i-1)}\sum_{1\leq j\neq j'\leq N_i}\boldsymbol{U}_{ij}\boldsymbol{U}'_{ij'}.
\]
This is an unbiased estimator of $\xi_i\xi_i'$. 
Replacing $\widehat{\xi_i\xi_i'}$ by this estimator gives a modification of $\widetilde{\boldsymbol \Sigma}$ in \eqref{def-h-Sigma} below. We also modify $\widetilde{\boldsymbol h}_k(x)$ by a similar diagonal deletion strategy.  
%
%
Let $\widehat{\boldsymbol h}(x)\in\mathbb{R}^K$ and $\widehat{\boldsymbol \Sigma} \in\mathbb{R}^{K\times K}$ be such that 
\begin{align} \label{def-h-Sigma}
\widehat{\boldsymbol h}_k(x) & = \frac{1}{n}\sum_{i=1}^n \frac{1}{N_i(N_i-1)} \sum_{m=1}^M\sum_{1\leq j\neq j'\leq N_i} \widehat{Q}_{mk} {\cal K}_h(x-X_{ij})U_{ij'm},\quad 1\leq k\leq K,\cr
\widehat{\boldsymbol \Sigma}_{k\ell} &= \frac{1}{n}\sum_{i=1}^n \frac{1}{N_i(N_i-1)} \sum_{m, m'=1}^M \sum_{1\leq j\neq j'\leq N_i} \widehat{Q}_{mk}\widehat{Q}_{m'\ell}U_{ijm}U_{ij'm'}, \;\; 1\leq k,\ell\leq K. 
\end{align}
Our final estimator of $\boldsymbol{g}(x)$ is the diagnonal deletion version:
\beq \label{g-estimate-final}
\widehat{{\boldsymbol g}}(x)= (\hat{g}_1(x), \ldots, \hat{g}_K(x))':= \widehat{\boldsymbol \Sigma}^{-1}\widehat{\boldsymbol h}(x). 
\eeq 

Recall that 
$\widehat{Q}=  \widehat{G}(\widehat{G}'\widehat{G})^{-1}$ and $Q = G(G'G)^{-1}$. 
Using the matrices $S$ and $T$ defined in \eqref{def-T-U}-\eqref{def-S-K}, we can re-express $\widehat{\bh}(x)$ and $\widehat{\bdSigma}$ in \eqref{def-h-Sigma} by
\begin{align} \label{h-Sigma-matrix-form}
& \widehat{\boldsymbol h}(x) = n^{-1}\widehat{Q}'S(x) {\bf 1}_n = n^{-1} (\widehat{G}'\widehat{G})^{-1}\widehat{G}'S(x) {\bf 1}_n,\cr
& \widehat{\boldsymbol \Sigma} = n^{-1}\widehat{Q}'T\widehat{Q} = n^{-1} (\widehat{G}'\widehat{G})^{-1} \widehat{G}'T\widehat{G}(\widehat{G}'\widehat{G})^{-1}.
\end{align}
It follows immediately that 
\beq \label{our-estimator-repeat}
\widehat{\boldsymbol g}(x) = \widehat{G}'\widehat{G}(\widehat{G}'T\widehat{G})^{-1}\widehat{G}'S(x) {\bf 1}_n. 
\eeq
This simplifies the expression in \eqref{g-estimate-final}, which is also the one we introduce in \eqref{our-estimator}.

\subsection{The choice of tuning parameters and bins}  \label{subsec:tuning}

Our method requires two tuning parameters, the bandwidth $h$ and the bin size $M$. In addition, given $M$, we need to choose the bins ${\cal B}_1, \ldots, {\cal B}_M$. 
In this subsection, we discuss how to determine them in practice.

First, based on our theory in Section~\ref{sec:main}, the optimal  $M$ is at the order of $K\log(Nn)$, which does not depend on unknown parameters. In Section~\ref{sec:Simu}, we also observe that the performance of our estimator is insensitive to $M$ in a wide range (see Figure~\ref{fig:handM}). 
We thus fix $M$ as
\beq \label{chooseM}
M = 2 \lfloor K\log(Nn)\rfloor. 
\eeq

Next, we discuss how to choose the bins ${\cal B}_1, \ldots, {\cal B}_M$. 
Let $\bar{g}(x)=\frac{1}{K}\sum_{k=1}^K g_k(x)$.
For our theoretical analysis to carry through,  we hope: 
\beq \label{cond-bin}
\int_{x\in {\cal B}_m} \bar{g}(x)dx \asymp 1/M, \qquad\mbox{for all bins }{\cal B}_1, {\cal B}_2, \ldots, {\cal B}_M. 
\eeq
 When $\bar{g}(x)$ has a bounded support and is uniformly lower bounded by a constant, we can use a simple equi-size cubic bins, which guarantees \eqref{cond-bin}. 
In the more general case where $\bar{g}(x)$ has unbounded support, we can show that as long as the $K$ entries of $\alpha=\Pi'\diag(N_1,\ldots, N_n){\bf 1}_n$ are of the same order, 
\eqref{cond-bin} is equivalent to
\beq \label{cond-bin-2}
\mathbb{E}[\#\{X_{ij}\in {\cal B}_m\}] \asymp \bar{N}n/M. 
\eeq
This inspires a practical choice of bins: we make sure that the total number of samples falling into each bin is of the order $\bar{N}n/M$. For example, when $d=1$, we can sort 
$X_{ij}$'s and let $\delta_m$ be the $m/M$ quantile. The bins are 
\beq \label{bins-d=1}
(-\infty, \delta_1] \cup (\delta_1, \delta_2] \cup \ldots \cup (\delta_{M-2}, \delta_{M-1}] \cup  (\delta_{M-1}, \infty).  
\eeq

Finally, we describe how to select $h$. The optimal $h$ depends on the unknown smoothness of base densities. Therefore, we face the same challenge as in classical nonparametric density estimation. 
Fortunately, there is a careful study of bandwidth selection in that literature. One approach is to minimize the asymptotic mean integrated error squared error (AMISE), which can be estimated by leave-one-out cross validation (abbreviated as CV; e.g., see \cite{park1993cross}) or plug-in estimators (e.g., see \cite{hall1991optimal}). 
Another approach is the Lepski's method \citep{lepskii1992asymptotically}, which takes an ordered sequence of $h$ and finds $\hat{h}$ such that the resulting estimator does not differ too much from all estimators from smaller bandwidths. We can possibly extend these approaches to our setting. For convenience, we only consider the CV approach. Following \citet[Page 135-137]{wasserman2006all}, we define AMSE as 
\[
\text{AMISE}(h; \widehat{\bm{g}}) = \int \|\widehat{\bm{g}}(x)\|^2dx - 2\int \bigl\langle\bm{g}(x), \widehat{\bm{g}}(x)\bigr\rangle dx:= \int \|\widehat{\bm{g}}(x)\|^2dx - J(h; \widehat{\bm{g}}). 
\]
Only the second term  $\bm{g}(x)=B\bm{f}(x)$ is unknown, where $B =(n^{-1}\Pi'\Pi)^{-1}\Pi'=[b_1,b_2, \ldots,b_n]$ is as in \eqref{B}. Let $\{X^*_{ij}\}$ be an independent copy of $\{X_{ij}\}$. It is seen that 
\[
J(h; \widehat{\bm{g}}) = \frac{2}{n}\sum_{i=1}^n  \int  b_i' \widehat{\bm{g}}(x) f_i(x)dx = \frac{2}{n}\sum_{i=1}^n  \Biggl\{
\frac{1}{N_i}\sum_{j=1}^{N_i}
b_i'\mathbb{E}\bigl[\widehat{\bm{g}}(X^*_{ij})\bigr]\Biggr\}. 
\]
Borrowing the CV idea, we leave-out each $X_{ij}$ and use the remaining samples to construct $\widehat{\bm{g}}_{(-ij)}$. This also yields $\widehat{\Pi}_{-(ij)}$ as an intermediate quantity, which we plug into \eqref{Pi-estimate} to obtain $\widehat{B}_{-(ij)}=[\hat{b}_{1(-ij)}, \ldots, \hat{b}_{n(-ij)}]$.  
We select $h$ by minimizing the following criteria: 
\beq \label{CV}
\widehat{\text{AMISE}}(h; \widehat{\bm{g}})
=
\int \|\widehat{\bm{g}}(x)\|^2 dx - \frac{2}{n} \sum_{i=1}^n 
\Biggl[ \frac{1}{N_i} \sum_{j=1}^{N_i} \hat{b}_{i (-ij)}'\widehat{\bm{g}}_{(-ij)}(X_{ij})\Biggr]. 
\eeq

This procedure requires conducting topic modeling for $\sum_{i=1}^nN_i$ times. To save the computational cost, we propose a modified version that only runs topic modeling once. 
Note that $B=\boldsymbol{\Sigma}^{-1}\Pi'$. We plug in $\widehat{\boldsymbol{\Sigma}}$ in \eqref{h-Sigma-matrix-form} and $\widehat{\Pi}$ in \eqref{Pi-estimate} to get $\widehat{B}=n(\widehat{G}'\widehat{G})(\widehat{G}'T\widehat{G})^{-1}\widehat{G}'YL^{-1}$.
Here, only $\widehat{G}$ comes from topic modeling. Similarly, in the expression of $\widehat{\bm{g}}$ in \eqref{our-estimator-repeat}, only $\widehat{G}$ comes from topic modeling. Therefore, we use all data to obtain $\widehat{G}$ but perform leave-one-out in the remaining quantities. This gives the following leave-one-out estimators:
\begin{align} \label{CV-new}
& \widetilde{B}_{(-ij)} =n(\widehat{G}'\widehat{G})\bigl[\widehat{G}'T_{(-ij)}\widehat{G}\bigr]^{-1}\widehat{G}'(YL^{-1})_{-(ij)}, \cr
& \widetilde{\bm{g}}_{-(ij)}(x) = \widehat{G}'\widehat{G}\bigl[\widehat{G}'T_{(-ij)}\widehat{G}\bigr]^{-1}\widehat{G}'S_{(-ij)}(x) {\bf 1}_n,
\end{align}
where $T_{(-ij)}$, $S_{(-ij)}(x)$ and $(YL^{-1})_{-(ij)}$ are the corresponding leave-one-out versions of $T$, $S(x)$ and $YL^{-1}$, respectively. 
Let $\widetilde{\text{AMISE}}(h; \widehat{\bm{g}})$ be a counterpart of \eqref{CV}, where we modify  the second term using the quantities in \eqref{CV-new}. 
We select $h$ by minimizing $\widetilde{\text{AMISE}}(h; \widehat{\bm{g}})$. 
This procedure only requires conducting topic modeling once.


\subsection{Comparison with existing estimators} \label{subsec:comparison}
There are few existing methods for our problem. \cite{katz2019decontamination} consider a similar model where the base distributions do not necessarily have smooth densities. They define a quantity 
\[
\begin{array}{rl}
\kappa^*(f_i| \{f_j, j\neq i\})= \max\bigl\{\kappa\in [0,1]:  & \mbox{there exists $R_i$ such that $f_i=\kappa R_i + \sum_{j\neq i}\nu_j f_j$ for}\\
& \mbox{some $\nu_j\geq 0$ with $\sum_{j\neq i}\nu_j=1-\kappa$}.\bigr\}
\end{array}
\]
The resulting $R_i$ is also a distribution and called a ``residual'' of $f_i$. \cite{katz2019decontamination} show that $R_i$ can be estimated from the empirical CDFs $\hat{F}_1, \ldots, \hat{F}_n$. They also show that when $n=K$ and when $\Pi$ satisfies certain conditions, 
the base distributions can be recovered by recursively computing such ``residuals.'' 
The key insight is to exploit a simplex geometry in the distribution space. 

However, this approach imposes restrictive conditions on $\Pi$, which essentially requires that each $\pi_i$ is close to being degenerate. In contrast, our method bypasses this limitation by considering the induced topic model. While topic modeling also relies on a simplex geometry, the geometry arises in the standard Euclidean space, which can be effectively exploited using existing topic modeling algorithms, hence requiring very mild conditions on $\Pi$. 

In theory, \cite{katz2019decontamination} only establish a consistency guarantee but do not provide any explicit rate of convergence. In contrast, we not only derive the explicit rate of convergence for our estimator but also show that it is minimax optimal.

Our method is connected to the one in \cite{austern2025poisson} in leveraging the induced topic model. 
However, the two methods differ fundamentally in how to use the estimated topic model. 
 \cite{austern2025poisson} treat the topic matrix $G$ as a ``discretization'' of the continuous densities $g_1(x), \ldots, g_K(x)$. 
 The bin size $M$ leads to a bias-variance trade-off: on one hand, $M$ needs to be properly large so that $G$ contains sufficient information of $g_1(x), \ldots, g_K(x)$; 
on the other hand, since $M$ affects the estimation error on $G$, it cannot be too large. 
Unfortunately, in some parameter regime (e.g., when $\beta>1$), there does not exist a sweet spot of $M$. This explains why their method cannot be optimal across the whole parameter regime.  

In contrast, our estimator builds on the oracle estimator in \eqref{oracle}, which is already minimax optimal across the whole parameter regime. We only need to develop a $\widehat{\Pi}$ such that the plug-in error is negligible. 
This seems challenging, as $\Pi$ is a large matrix. Fortunately, we leverage the induced topic model to design a $\widehat{\Pi}$ in which the dependence among its entries can be well characterized. This allows us to have a sharp control of the plug-in error and eventually prove that our estimator has the same error rate as the oracle estimator, thus being optimal.

\section{Topic Modeling: Background and Algorithms} \label{sec:TM}
One step in our estimator is to conduct topic modeling on the matrix $Y$. In this section, we review the topic model and existing topic modeling algorithms. In particular, we describe Topic-SCORE \citep{ke2024using}, the algorithm we use in our numerical experiments and theoretical analysis.

\subsection{Topic modeling algorithms and their error rates} \label{subsec:TM-review}
Suppose we observe $n$ documents written on a vocabulary of $M$ words. Let $Y_i\in\mathbb{R}^M$ be the word count vector for the $i$th document, where $Y_i(m)$ is the count of the $m$th word, and $N_i=\sum_{m=1}^M Y_i(m)$ is the total length of this document. Let $g_1, g_2,\ldots, g_K\in\mathbb{R}^M$ be $K$ topic vectors, each being a probability mass function (PMF) and representing a discrete distribution on the vocabulary. 
Each document $i$ is associated with a topic weight vector $\pi_i\in\mathbb{R}^K$, where $\pi_i(k)$ is the fractional weight this document puts on the $k$th topic, for $1\leq k\leq K$.  The topic model assumes that $Y_1, Y_2, \ldots, Y_n$ are independent, and 
\beq \label{topic-model}
Y_i\sim \mathrm{Multinomial}(N_i, \Omega_i), \qquad \Omega_i = \sum_{k=1}^K \pi_i(k)g_k. 
\eeq
The goal of topic modeling is to use the data matrix $Y=[Y_1, \ldots,Y_n]$ to estimate the parameters $G=[g_1, \ldots,g_K]$ and $\Pi=[\pi_1,\ldots, \pi_n]'$. 

There are many existing methods for this problem. They can be roughly divided into two categories depending on the key assumption: In the first category, it is assumed that $\pi_i$'s are i.i.d. from a Dirichlet prior. 
A popular method in this category is the latent Dirichlet allocation \citep{blei2003latent}. 
In the second category, it is assumed that each topic has at least one anchor word, where a word $m$ is an anchor word if the $m$th row of $G$ has only one nonzero entry. 
Representative methods include the NMF approach \citep{arora2012learning}, the Topic-SCORE algorithm \citep{ke2024using}, and the LOVE algorithm \citep{bing2020fast}. 

We can plug in any of these algorithms into our estimator. However, to achieve the best performance, we prefer a topic modeling algorithm that has a fast rate of convergence. Define the $\ell^1$-error as   (subject to a column permutation of $\widehat{G}$)
$${\cal L}(\widehat{G}, G)=\sum_{1\leq m\leq M}\sum_{1\leq k\leq K}|\widehat{G}_{mk}-G_{mk}|.
$$
Assuming $K$ is fixed, \cite{ke2024entry} summarize the rates of convergence of ${\cal L}(\widehat{G}, G)$ for different algorithms and find that only Topic-SCORE \citep{ke2024using} and LOVE \citep{bing2020fast} attain the optima rate $\sqrt{M/(\bar{N}n)}$, up to logarithmic factors.  
In comparison, LOVE requires that $M\ll \bar{N}$, while Topic-SCORE does not have this requirement.  
When $K$ grows with $n$, LOVE has the following rate of convergence, 
\beq \label{TM-UB}
{\cal L}(\widehat{G}, G)\leq C(\bar{N}n)^{-1/2}\sqrt{K^{3}M\log(\bar{N}n)}, 
\eeq
which is again under the assumption of $M\ll \bar{N}$. The rate of convergence of Topic-SCORE for growing $K$ has not been given in the literature. In Section~\ref{subsec:rate-TM}, we close this gap and show that Topic-SCORE also attains the rate in \eqref{TM-UB}; and this is true for both the case of $M\ll \bar{N}$ and $M=O(\bar{N})$. The rate in \eqref{TM-UB} is minimax optimal subject to a logarithm factor, according to the lower bound in \cite{bing2020fast}. 
Based on these observations, we recommend using Topic-SCORE.

\subsection{The Topic-SCORE algorithm}
Topic-SCORE \citep{ke2024using} belongs to the category of topic modeling algorithms that assume the anchor word condition. A word $1\leq m\leq M$ is called an anchor word of topic $k$ if and only if
\beq \label{anchor-word}
g_k(m)\neq 0, \quad \mbox{and}\quad g_\ell(m)=0 \mbox{ for all }\ell\neq k. 
\eeq
The anchor word condition requires that each topic has at least one anchor word.

We now briefly describe the key idea behind Topic-SCORE.  Let $L=\diag(N_1, N_2, \ldots, N_n)$ and $\widetilde{Y}= YL^{-1}$. 
Let $\hat{\xi}_1, \ldots, \hat{\xi}_K\in\mathbb{R}^M$ be the first $K$ left singular vectors of $\widetilde{Y}$ and define 
\[
\widehat{R}=[\diag(\hat{\xi}_1)]^{-1}[\hat{\xi}_2, \ldots, \hat{\xi}_K] \; \in\; \mathbb{R}^{M\times (K-1)}.
\]
\cite{ke2024using} show that there exists a low-dimensional $K$-vertex simplex associated with the rows of $\widehat{R}$. As long as each topic has at least one anchor word, this simplex can be estimated from $\widehat{R}$. Once this simplex is available, it can be used to obtain an explicit formula for converting the singular vectors to valid estimates of the topic vectors. 
%
%
The full algorithm is as follows: 
\begin{enumerate}
\item {\it (SVD)}. Let $D=\diag(\widetilde{Y}{\bf 1}_n)$. Obtain the first $K$ left singular vectors $D^{-1/2}\widetilde{Y}$, and denote them by $\hat{\xi}_1, \ldots, \hat{\xi}_K\in\mathbb{R}^M$ . Construct a matrix $\widehat{R}=[\mathrm{diag}(\hat{\xi}_1)]^{-1}[\hat{\xi}_2, \ldots, \hat{\xi}_K]$. 
\item {\it (Simplex vertex hunting)}. Apply a vertex hunting algorithm \citep[Section 3.4]{ke2023special} to the rows of $\widehat{R}$ 
to obtain $\hat{v}_1, \hat{v}_2, \ldots, \hat{v}_K$. 
Write $\widehat{V}=[\hat{v}_1, \hat{v}_2, \ldots, \hat{v}_K]$. 

\item {\it (Topic matrix estimation)}. For each $1\leq m\leq M$, obtain $\hat{u}_m$ by solving the equations: $
{\bf 1}_K'\hat{u}_m=1$, and $\widehat{V}\hat{u}_m = \hat{r}_m$. 
If $\hat{u}_m$  contains negative entries, set these entries to zero and renormalize the whole vector to have a unit $\ell^1$-norm. For each $1\leq k\leq K$, estimate $g_k$ by normalizing the $k$th column of $D^{1/2}\diag(\hat{\xi}_1)[\hat{u}_1, \hat{u}_2, \ldots, \hat{u}_M]'$ to have a unit $\ell^1$-norm. 
\end{enumerate}
In Section~\ref{subsec:rate-TM}, we provide a tight error bound for this algorithm, which extends the existing bounds from fixed $K$ \citep{ke2024entry} to growing $K$. This result is not only useful for the analysis of our estimator but also of independent interest.

\section{Main Results} \label{sec:main}
In this section, we present our main theoretical results. For technical reasons, we introduce a slightly modified version of the estimator in \eqref{our-estimator}:
\beq \label{our-estimator2}
\widehat{\boldsymbol g}^+(x) = \widehat{G}'\widehat{G}\bigl(\widehat{G}'T\widehat{G} + \epsilon I_K \bigr)^{-1}\widehat{G}'S(x) {\bf 1}_n, 
\eeq
where
\[
\epsilon=\epsilon_n=  \frac{Kn}{M^2}\cdot 1\biggl\{\lambda_{\min}(\widehat{G}'T\widehat{G}) <  \frac{Kn}{M^2\log^2(Nn)}\biggr\}.  
\]
In fact, $\lambda_{\min}(\widehat{G}'T\widehat{G})$ is often much larger than $\frac{Kn}{M^2\log^2(Nn)}$, so that $\widehat{\boldsymbol g}^+(x)$ and $\widehat{\boldsymbol g}(x)$ are identical except for a negligible probability. However, when we bound the expected error of $\widehat{\boldsymbol g}(x)$, we must study the behavior of our estimator on this small-probability event, which is technically challenging. 
To avoid this hurdle, we study the above modified version. 
 
In Sections~\ref{subsec:reg-cond}-\ref{subsec:LB}, we focus on $d=1$, for which the regularity conditions and main results are both easy to present and interpret. The extension to $d>1$ is deferred to Section~\ref{subsec:d>1}. 

\subsection{Regularity conditions} \label{subsec:reg-cond}

When $d=1$, each $g_k(x)$ is a continuous density on $\mathbb{R}$. 
We assume they are smooth. 
Some commonly used smooth classes include the H\"{o}lder class, the Sobolev class, and the Nikol’ski class. 
The H\"{o}lder class focuses on a form of local smoothness, whereas the other two provide a form of integrated smoothness, with the Nikol’ski class broader than the Sobolev class.
Since we aim to control the integrated estimation error, we use the Nikol'ski class, with a smoothness parameter $\beta$: 

\begin{assumption}[Nikol'ski  Smoothness]\label{assump1}
Fix $\beta > 0$ and $L_0>0$. Let $\lfloor\beta\rfloor$ denote the largest integer that is strictly smaller than $\beta$. For each $1\leq k\leq K$, the  $\lfloor\beta\rfloor$th order derivative of $g_k$ exists and satisfies:
  \[
  \left( \int \left| g_k^{(\lfloor\beta\rfloor)}(x + t) - g_k^{(\lfloor\beta\rfloor)}(x) \right|^2 dx \right)^{1/2} \le L_0 |t|^{\beta - \lfloor\beta\rfloor},\qquad\mbox{for all $t\in\mathbb{R}$}.
  \]
\end{assumption}
Under this assumption, we use an order-$\beta$ kernel. 
Besides requiring that $\int {\cal K}(u)du=1$ as in \eqref{KDE-fi}, we also require:
\beq \label{cond-kernel}
\int u^j {\cal K}(u)du=0, \; 1\leq j\leq \lfloor \beta\rfloor, \quad  \int |u|^\beta |{\cal K}(u)|du +  \int |\mcal{K}(u)|du + \sup_u|\mcal{K}(u)| <\infty. 
\eeq

We also need an assumption to guarantee that the model is identifiable:
\begin{assumption}\label{assump-anchor}
For $1\leq k\leq K$, let $S_k=\{x\in\mathbb{R}: g_k(x)>0, g_\ell(x)=0 \mbox{ for all $\ell\neq k$}\}$. Suppose the Lesbegue measure of each $S_k$ is lower bounded by a constant $c_3>0$.  
\end{assumption}

Each $S_k$ is called the anchor region of $g_k(x)$. It is analogous to the anchor word definition (see \eqref{anchor-word}) in a classical topic model. 
Under this assumption, $g_1(x), \ldots, g_K(x)$ are identifiable \citep{austern2025poisson}. 
While other identifiability conditions may be used, a key advantage of adopting Assumption~\ref{assump-anchor} is that it naturally implies the anchor-word condition in the induced topic model. This allows us to apply existing anchor-word-based topic modeling algorithms, such as Topic-SCORE.

Note that unlike the classical nonparametric density estimation setting, we have $K$ different densities that are mixed together. We need additional conditions to ensure that ``unmixing'' them is possible. 
Intuitively, we require that (a) the $K$ densities are not too ``similar'', and (b) the ``effective sample size'' for estimating each individual $g_k(x)$ is properly large.  
We formally state these conditions as follows:

\begin{assumption} \label{assump2}
Let $\bar{g}(x)=\frac{1}{K}\sum_{k=1}^K g_k(x)dx$ and $\Sigma_{\boldsymbol g} = \frac{1}{K}\int {\boldsymbol g}(x) {\boldsymbol g}(x)' dx$. There exists a constant $c_1\in (0,1)$ such that 
\[
c_1\leq \lambda_{\min}(\Sigma_{\boldsymbol g}) \leq \lambda_{\max}(\Sigma_{\boldsymbol g})\leq c_1^{-1}, \qquad\mbox{and}\qquad \max_x \bar{g}(x) \leq c_1^{-1}. 
\]
Let $\Sigma_\Pi = (K/n)\Pi'\Pi$ and $\eta = \Pi'{\bf 1}_n$. Denote by $\eta_{\max}$ and $\eta_{\min}$ the maximum and minimum entries of $\eta$. There exists a constant $c_2\in (0,1)$ such that 
\[
c_2 <\lambda_{\min}(\Sigma_\Pi)\leq \lambda_{\max}(\Sigma_\Pi)\leq c_2^{-1}, \qquad\mbox{and}\qquad c_2K^{-1}n\leq \eta_{\min}\leq \eta_{\max}\leq c^{-1}_2K^{-1}n. 
\]
\end{assumption}

The $K\times K$ matrix $\Sigma_{\boldsymbol g}$ captures the `similarity' of $K$ densities. 
If $\Sigma_{\boldsymbol g}$ is ill-conditioned, some of the $g_k(x)$ are too similar to each other, making the unmixing task too difficult. 
The $K\times K$ matrix $\Sigma_\Pi$ and the vector ${\bf 1}_n'\Pi$ both describe the `sample size balance' on $K$ densities. When $\Sigma_\Pi$ is ill-conditioned or when ${\bf 1}_n'\Pi$ has severely unbalanced entries, there exists at least one $g_k(x)$ so that the $n$ samples' total weights on this density are significantly smaller than $n/K$; in such a case, this density cannot be estimated accurately enough. With the above being said, Assumption~\ref{assump2} is almost necessary for the success of the unmixing task.

\subsection{A perturbation bound for our estimator} \label{subsec:main-bound}
Similarly as in Section~\ref{subsec:TM-review}, let ${\cal L}(\widehat{G}, G)=\sum_{1\leq m\leq M}\sum_{1\leq k\leq K}|\widehat{G}_{mk}-G_{mk}|$ be the $\ell^1$-estimation error in the topic modeling step, whose order will be $\delta_n$ below. 
The following theorem is our main technical result and will be proved in Section~\ref{sec:UBproof}. \footnote{The estimation error on $g_1(x), \ldots, g_K(x)$ is measured up to a permutation of the $K$ estimated densities. For notational convenience, throughout this section, we omit this permutation in our theorem statements.}

\begin{thm} \label{thm:main}
Fix $d=1$. Consider the model \eqref{model-1}-\eqref{model-2}, where $K^2\log(Nn)=O(Nn)$ and Assumption~\ref{assump1} holds. Let $\Sigma_G = (M/K)G'G$ and $\Sigma_\Pi = (K/n)\Pi'\Pi$. Let $\delta_n$ be a sequence such that $M\delta_n^2=o(K)$. Suppose:
 \begin{itemize}
 \item[(i)] $\lambda_{\min}(\Sigma_G)\asymp \lambda_{\max}(\Sigma_G)\asymp 1$, and $\|G{\bf 1}_K\|_\infty = O(M^{-1}K)$. \footnote{This condition on $\Sigma_G$ is implied by the condition on $\Sigma_{\boldsymbol g}$ in Assumption~\ref{assump2}, as long as $M\to\infty$ and the bins are properly chosen (see Lemma~\ref{lem:key} in the supplement). On the other hand, even when the condition on $\Sigma_{\boldsymbol g}$ is not satisfied, the condition on $\Sigma_G$ may still hold. To maintain the broadness of this theorem, we choose to present the condition on $\Sigma_G$ directly.}
 \item[(ii)] $\lambda_{\min}(\Sigma_\Pi)\asymp \lambda_{\max}(\Sigma_\Pi)\asymp 1$, and $\|{\bf 1}_n'\Pi\|_\infty =O(K^{-1}n)$. 
 \item[(iii)] $N_i\asymp N$ for all $1\leq i\leq n$. 
 \item[(iv)] $h \to 0$ and $Nnh \to \infty$.
 \item[(v)] There is a constant $C>0$ such that  ${\cal L}(\widehat{G}, G)\leq C\delta_n$ with probability $1-o((Nn)^{-5})$.
 \end{itemize}
Let $\widehat{\boldsymbol g}^+(x)$ be the estimator in \eqref{our-estimator2}. Suppose $K\leq M\leq [Nn/\log^2(Nn)]^{1/2}$ and $ (Nn)^{-1}K\ll h\ll \log^{-1}(Nn)$. 
    Then, there exists a constant $C_0>0$ such that 
\beq \label{UB-main}
\mbb{E}\left[\int \|\widehat{\bm{g}}^+(x)-\bm{g}(x)\|^2dx\right]\le C_0 \cdot K\left( h^{2\beta}  + \frac{K}{Nnh}+\frac{M}{K}\delta_n^2  +  \frac{K^2}{Nn} \right). 
\eeq
\end{thm}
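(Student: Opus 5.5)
We will compare $\widehat{\bm g}^+(x)$ with an idealized estimator in which $\widehat G$ is replaced by the true $G$. Write $\widehat{\bm g}^+(x)=\widehat{\boldsymbol\Sigma}_\epsilon^{-1}\widehat{\bm h}(x)$, where $\widehat{\bm h}$ is as in \eqref{h-Sigma-matrix-form} and $\widehat{\boldsymbol\Sigma}_\epsilon = n^{-1}\widehat Q'T\widehat Q+\epsilon n^{-1}(\widehat G'\widehat G)^{-1}(\widehat G'\widehat G)^{-1}$. Let $\bm h^*(x)=n^{-1}Q'S(x){\bf 1}_n$ and $\boldsymbol\Sigma^*=n^{-1}Q'TQ$, with $Q=G(G'G)^{-1}$.

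The key population identities are $\mathbb{E}[S_{mi}(x)]=\xi_i(m)\,(\mathcal K_h*f_i)(x)$ and $\mathbb{E}T=\sum_i\xi_i\xi_i'$, where $\xi_i=G\pi_i$. Since $Q'G=I_K$, these give
\[
\mathbb{E}\bm h^*(x)=n^{-1}\Pi'\Pi\,(\mathcal K_h*\bm g)(x), \qquad \mathbb{E}\boldsymbol\Sigma^*=n^{-1}\Pi'\Pi .
\]
Hence $(\mathbb{E}\boldsymbol\Sigma^*)^{-1}\mathbb{E}\bm h^*=\mathcal K_h*\bm g$, and its bias is bounded by $\int\|\mathcal K_h*\bm g-\bm g\|^2\le CKh^{2\beta}$ via the standard Nikol'ski/order-$\beta$ kernel argument (Assumption~\ref{assump1}, \eqref{cond-kernel}). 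This gives the first term.

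We then decompose
\[
\widehat{\bm g}^+-\mathcal K_h*\bm g=(\mathbb{E}\boldsymbol\Sigma^*)^{-1}(\bm h^*-\mathbb{E}\bm h^*)+\big[\text{$\Sigma$-fluctuation}\big]+\big[\text{$\widehat G$ plug-in}\big]+\big[\text{$\epsilon$-event}\big].
\]

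\textbf{First term.} By (ii), $\|(\mathbb{E}\boldsymbol\Sigma^*)^{-1}\|\asymp K$, and $\bm h^*-\mathbb{E}\bm h^*$ is a sum over $i$ of independent, centered, within-group U-statistics. Its integrated variance is obtained from the Hoeffding decomposition. The linear part in $K_{ij}(x)$ contributes $\int\mathbb{E}\|\cdot\|^2\lesssim n^{-2}\sum_i N_i^{-1}h^{-1}\|Q'\xi_i\|^2$. With $Q'\xi_i=\pi_i$ and $\sum_i\|\pi_i\|^2\asymp n/K$, multiplying by $K^2$ gives $K\cdot K/(Nnh)$. The linear part in $U_{ij'm}$ contributes $O(K^2/(Nn))$ after using $\|Q\|^2\lesssim M/K$ and (i). The degenerate part is of smaller order.

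\textbf{Second term.} We show $\|\boldsymbol\Sigma^*-\mathbb{E}\boldsymbol\Sigma^*\|\lesssim K^{-1}\sqrt{K^2\log(Nn)/(Nn)}$ with probability $1-o((Nn)^{-5})$, using a matrix Bernstein inequality for the incomplete U-statistic $T$, whose entries are bounded after conditioning. This yields the $K^2/(Nn)$ term; the condition $K^2\log(Nn)=O(Nn)$ keeps it below $\lambda_{\min}$.

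\textbf{Third term.} On the event in (v), $\|\widehat Q-Q\|\lesssim \sqrt{M/K}\,\|\widehat G-G\|\cdot(M/K)$. Since $\|\widehat G-G\|_F\le{\cal L}$ and $M\delta_n^2=o(K)$, we get $\|\widehat Q'T\widehat Q-Q'TQ\|/\|Q'TQ\|\lesssim\sqrt{M/K}\,\delta_n$ relative error. Similarly, $\widehat{\bm h}-\bm h^*=n^{-1}(\widehat Q-Q)'S{\bf 1}_n$, and $\int\|S(x){\bf 1}_n\|^2dx$ concentrates near $\int\|\sum_i\xi_i f_i\|^2$. Together these give the $K\cdot (M/K)\delta_n^2$ term.

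\textbf{Fourth term.} On the complement event (probability $o((Nn)^{-5})$), the $\epsilon$-regularization guarantees $\|\widehat{\boldsymbol\Sigma}_\epsilon^{-1}\|\lesssim \mathrm{poly}(M,K)$. With the crude bound $\int\|S{\bf 1}_n\|^2\lesssim n^2 h^{-1}$, the contribution is $o(K^2/(Nn))$ provided $h\gg K/(Nn)$. We also check that $\epsilon=0$ on the good event, since $\lambda_{\min}(\widehat G'T\widehat G)\asymp n K/M^2$.

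\textbf{Main obstacle.} The main difficulty is the integrated variance of the cross terms involving both the random $\widehat Q$ and $S(x)$. Because $\widehat G$ depends on the same data, we cannot condition on it. We plan to bound $\sup_{\|v\|\le1}\int (v'(\widehat Q-Q)'(S{\bf 1}_n-\mathbb{E}S{\bf 1}_n))^2dx$ by $\|\widehat Q-Q\|^2\,\int\|S{\bf 1}_n-\mathbb{E}S{\bf 1}_n\|^2$, and control the latter in expectation via the U-process variance bound above. Cauchy--Schwarz is used on the bad event.
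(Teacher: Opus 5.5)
\paragraph{Overall.} Your decomposition is the paper's. The identity $(\mathbb{E}\boldsymbol\Sigma^*)^{-1}\mathbb{E}\bm h^*=\mathcal{K}_h*\bm g$ is $\bg^*(x)$. Your first term $(\mathbb{E}\boldsymbol\Sigma^*)^{-1}(\bm h^*-\mathbb{E}\bm h^*)$ equals $\Omega'(S-\mathbb{E}S){\bf 1}_n$, and your $\Sigma$-fluctuation is the $\Omega'(T-\mathbb{E}T)Q$ term of Lemma~\ref{lem:error2-decompose}. Your plug-in and bad-event steps mirror Lemmas~\ref{lem:error3-decompose}--\ref{lem:error3-prep}, including the device of bounding the $\widehat G$-dependent factor deterministically on the good event and taking expectations only of $\int\|S(x){\bf 1}_n\|^2dx$. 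The problem is the $\Sigma$-fluctuation step: as you state it, it does not give the rate in \eqref{UB-main}.

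\paragraph{The gap in the $\Sigma$-fluctuation step.} Your bound is $\|\boldsymbol\Sigma^*-\mathbb{E}\boldsymbol\Sigma^*\|\lesssim\sqrt{\log(Nn)/(Nn)}$, and $\|(\mathbb{E}\boldsymbol\Sigma^*)^{-1}\|\asymp K$. So the relative perturbation is of order $K\sqrt{\log(Nn)/(Nn)}$. Squaring and multiplying by $\int\|\bg^*(x)\|^2dx\asymp K$ gives $K\cdot K^2\log(Nn)/(Nn)$, not $K\cdot K^2/(Nn)$. This is not dominated by $K\bigl(K^2/(Nn)+K/(Nnh)\bigr)$ unless $h\lesssim 1/(K\log(Nn))$, which is not assumed.

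Two ingredients are missing.
\begin{enumerate}
\item The variance proxy is of order $1/K$, not $1$. By (i), each row of $Q$ has norm $O(1)$, so $|v'Q'U_{ij}|=O(1)$. For a unit vector $v$, using $\|G{\bf 1}_K\|_\infty=O(K/M)$ and $\|{\bf 1}_n'\Pi\|_\infty=O(n/K)$,
\[
n^{-1}\sum_i\Bigl(\sum_m (Qv)_m^2f_i^{\mathrm{hist}}(m)\Bigr)^2\le n^{-1}\|Qv\|^4\max_{m,m'}\sum_if_i^{\mathrm{hist}}(m)f_i^{\mathrm{hist}}(m')\lesssim (M/K)^2\cdot K/M^2=1/K .
\]
With this proxy, you can use a $9^K$-point net with Lemma~\ref{lem:ustats-bound}, or a matrix Bernstein applied after Hoeffding's permutation averaging (the summands of $T$ are dependent within a group). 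Either gives $\|\boldsymbol\Sigma^*-\mathbb{E}\boldsymbol\Sigma^*\|\lesssim\sqrt{(K+\log(Nn))/(KNn)}$, as in Lemma~\ref{lem:error22}. The contribution is then $K\cdot K(K+\log(Nn))/(Nn)$.
\item The leftover $K\cdot K\log(Nn)/(Nn)$ is absorbed into $K\cdot K/(Nnh)$ through the hypothesis $h\ll\log^{-1}(Nn)$. Your proposal never uses this hypothesis; it is exactly how the paper closes Lemma~\ref{lem:error23}.
\end{enumerate}

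\paragraph{Smaller slips.} In the first term, $\sum_i\|\pi_i\|^2=\mathrm{tr}(\Pi'\Pi)\asymp n$ under (ii), not $n/K$. With $n/K$ your own arithmetic gives $K/(Nnh)$; it is the correct value that yields the stated $K^2/(Nnh)$. In the plug-in term, $\|\widehat Q-Q\|\lesssim (M/K)\|\widehat G-G\|$, without your extra factor $\sqrt{M/K}$. With that factor, the term would come out as $K(M/K)^2\delta_n^2$ rather than $K(M/K)\delta_n^2$.

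Finally, your fourth step, like the paper's, takes for granted that adding $\epsilon I_K$ bounds $\lambda_{\min}(\widehat G'T\widehat G+\epsilon I_K)$ from below. Because of the diagonal deletion, $T=\sum_i\frac{1}{N_i(N_i-1)}\bigl(Y_iY_i'-\diag(Y_i)\bigr)$ is not positive semidefinite. On the bad event, this claim therefore needs a separate argument, for instance choosing $\epsilon$ to also offset any negative eigenvalue.
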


In \eqref{UB-main}, the first two terms are analogous to the `bias' and `variance' in classical nonparametric density estimation, which can be optimized by choosing a proper bandwidth $h$:
\begin{cor} \label{cor:rate1}
Under the conditions of Theorem~\ref{thm:main}, if $h\asymp [K/(Nn)]^{\frac{1}{2\beta+1}}$, then
\[
\mbb{E}\left[\int \|\widehat{\bm{g}}^+(x)-\bm{g}(x)\|^2dx\right]\le C_0 \cdot K \left[\Bigl(\frac{K}{Nn}\Bigr)^{\frac{2\beta}{2\beta+1}}+\frac{M}{K}\delta_n^2  +  \frac{K^2}{Nn} \right]. 
\]
\end{cor}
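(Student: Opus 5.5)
The corollary follows by substituting the stated bandwidth into the bound \eqref{UB-main} of Theorem~\ref{thm:main}, once we check that this bandwidth is admissible.

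\textbf{Step 1: admissibility of $h$.} Theorem~\ref{thm:main} requires $h\to 0$, $Nnh\to\infty$, and $(Nn)^{-1}K\ll h\ll \log^{-1}(Nn)$. Set $h\asymp [K/(Nn)]^{1/(2\beta+1)}$.

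The hypothesis $K^2\log(Nn)=O(Nn)$ gives $K/(Nn)\lesssim 1/(K\log(Nn))\to 0$. Hence $h\to 0$.

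For the upper condition, note that $K/(Nn)\le C/\log(Nn)$. Raising this to the power $1/(2\beta+1)<1$ gives $h\lesssim \log^{-1/(2\beta+1)}(Nn)$. This is not automatically $\ll\log^{-1}(Nn)$. It is, however, implied if $K/(Nn)$ decays polynomially, since then $h$ decays polynomially in $Nn$. I would read the corollary as implicitly assuming $h$ lies in the admissible window, which holds whenever $K\le (Nn)^{1-c}$ for some $c>0$.

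For the lower condition, write $h/(K/(Nn)) = [K/(Nn)]^{1/(2\beta+1)-1}$. The exponent $1/(2\beta+1)-1=-2\beta/(2\beta+1)$ is negative and $K/(Nn)\to 0$, so this ratio tends to infinity. Hence $h\gg K/(Nn)$. Since $K\ge 1$, this also gives $Nnh\to\infty$.

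\textbf{Step 2: balancing the first two terms.} With this $h$,
\[
h^{2\beta}\asymp \Bigl(\frac{K}{Nn}\Bigr)^{\frac{2\beta}{2\beta+1}},
\qquad
\frac{K}{Nnh}=\Bigl(\frac{K}{Nn}\Bigr)^{1-\frac{1}{2\beta+1}}=\Bigl(\frac{K}{Nn}\Bigr)^{\frac{2\beta}{2\beta+1}}.
\]
The two terms are of the same order. Their sum is therefore at most a constant times $(K/(Nn))^{2\beta/(2\beta+1)}$.

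\textbf{Step 3: conclusion.} Substitute these two bounds into \eqref{UB-main} and leave the terms $\frac{M}{K}\delta_n^2$ and $\frac{K^2}{Nn}$ unchanged. Enlarging $C_0$ to absorb the constants from the $\asymp$ relations yields the stated bound.

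The only non-mechanical point is Step 1, specifically the upper bandwidth constraint $h\ll\log^{-1}(Nn)$. As noted there, it holds under a mild polynomial-gap condition on $K$ relative to $Nn$.
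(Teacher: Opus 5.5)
Your argument is correct and follows the paper's route: the corollary is obtained by substituting $h\asymp[K/(Nn)]^{1/(2\beta+1)}$ into \eqref{UB-main}, where $h^{2\beta}$ and $K/(Nnh)$ both become $(K/(Nn))^{2\beta/(2\beta+1)}$.

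The one flaw is your caveat in Step 1: no extra assumption is needed for $h\ll\log^{-1}(Nn)$. Multiply your own bound $K/(Nn)\lesssim 1/(K\log(Nn))$ by $K/(Nn)$ to get $(K/(Nn))^2\lesssim 1/(Nn\log(Nn))$, that is, $K/(Nn)\lesssim (Nn\log(Nn))^{-1/2}$. Alternatively, use $K\le M\le[Nn/\log^2(Nn)]^{1/2}$. Either way $h\lesssim (Nn)^{-1/(4\beta+2)}$ decays polynomially, so the admissibility window already holds under the theorem's hypotheses as stated.
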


The last two terms in \eqref{UB-main} still persist here. They are the extra price paid for `unmixing'  densities. Specifically, the third term comes from the error of estimating the topic matrix $G$, and the last term is the additional error of using $\widehat{G}$ to estimate $\Pi$. 



\subsection{The error for estimating $G$} \label{subsec:rate-TM}
We now study the topic modeling error and derive the order of $\delta_n$. 
As explained in Section~\ref{sec:TM}, we apply the Topic-SCORE algorithm. The following theorem is proved in the supplement: 


\begin{thm}\label{thm:TM_errorbound}
    Consider the topic model in \eqref{topic-model}. Define $\Sigma_\Pi = (K/n)\Pi'\Pi$, and $\Sigma_G = (M/K)G'G$. 
    We assume:
    \begin{itemize}
        \item[(i')]  $\lambda_{\min}(\Sigma_G)\asymp \lambda_{\max}(\Sigma_G)\asymp 1$, and the smallest entry of $G{\bf 1}_K$ is of the order $M^{-1}K$. 
        \item[(ii')] $\lambda_{\min}(\Sigma_\Pi)\asymp \lambda_{\max}(\Sigma_\Pi)\asymp 1$, and all of the $K$ entries of $\Pi'{\bf 1}_n$ are of the order $K^{-1}n$. 
    \end{itemize}
    Additionally, suppose all $N_i$'s are of the order $N$, and each topic has at least one anchor word (see \eqref{anchor-word}). 
    Let $\widehat{G}=[\hat{g}_1, \ldots, \hat{g}_K]$ be the output of Topic-SCORE. 
    Suppose $\min\{M,N\}\ge \log^3(Nn)$, $\log(N)=O(\log(n))$, $K\le M$, and $K^3M\log^2(Nn)\leq  Nn$. 
There exists a constant $C_{\text{TM}}>0$ such that with probability $1- o((Nn)^{-5})$, 
\[
\Vert e_m'(\widehat{G} - G)\Vert_1 \leq \|e_m'G\|_1\cdot C_{\text{TM}} \sqrt{\frac{KM\log (Nn)}{Nn}};
\]
Furthermore, with probability $1-o((Nn)^{-5})$, the $\ell^1$-error satisfies that 
\begin{align}\label{tscore-op-norm}
\mathcal L (\widehat G, G)  
\leq C  \sqrt{\frac{K^3 M\log (Nn)}{Nn}}.
\end{align}
\end{thm}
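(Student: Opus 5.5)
We will follow the standard entry-wise analysis of Topic-SCORE from \cite{ke2024entry}, tracking the dependence on $K$ throughout. The first step is a population decomposition. Write $\widetilde{Y}=YL^{-1}$, $\Omega=\mathbb{E}\widetilde{Y}=G\Pi'$, $D_0=\diag(\Omega{\bf 1}_n)$, and $Z=\widetilde{Y}-\Omega$. By (i') and (ii'), every entry of $D_0$ is of order $n/M$, and the singular values of $D_0^{-1/2}\Omega$ satisfy $\sigma_k^2\asymp (M/n)\lambda_k(G'G\,\Pi'\Pi)\asymp (M/n)(K/M)(n/K)\cdot\lambda_k(\Sigma_G\Sigma_\Pi)$. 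Hence all $K$ nonzero singular values are of order $\sqrt{n/K}\cdot\sqrt{K/M}\cdot$const. In particular $\sigma_1$ is separated from $\sigma_2$ by a constant factor, via Perron--Frobenius applied to the positive matrix $D_0^{-1/2}\Omega\Omega'D_0^{-1/2}$, and $\sigma_K\gtrsim \sigma_1/\sqrt{K}$. We will also record the population simplex structure: the rows $r_m$ of $R=[\diag(\xi_1)]^{-1}[\xi_2,\ldots,\xi_K]$ are convex combinations of $K$ vertices $v_k$. The anchor rows sit exactly at the vertices, and the vertex matrix has $\|v_k-v_\ell\|\gtrsim\sqrt{K}$ together with a well-conditioned barycentric map.

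The second step is the noise control. The matrix $D^{-1/2}\widetilde{Y}$ differs from $D_0^{-1/2}\Omega$ in two ways: through the diagonal $D-D_0$, whose relative error is $O(\sqrt{M\log(Nn)/(Nn)})$ per entry by Bernstein, and through $D_0^{-1/2}Z$. We will analyse the Gram matrix $D_0^{-1/2}(ZZ'-\mathbb{E}ZZ')D_0^{-1/2}$ together with the cross term $\Omega Z'$ by matrix Bernstein, using the multinomial structure. This gives $\|D_0^{-1/2}Z\|\lesssim \sqrt{M\log(Nn)/N}$ up to the diagonal contribution $\mathbb{E}ZZ'$, which is of order $n/N$ times a diagonal and is handled separately. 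The next piece is the row-wise (two-to-infinity) eigenvector bound. The aim is to show that there is an orthogonal $O$ such that $\|e_m'(\widehat\Xi-\Xi O)\|\le C\|e_m'\Xi\|\sqrt{KM\log(Nn)/(Nn)}$, where $\|e_m'\Xi\|\asymp\sqrt{K/M}\,\cdot\sqrt{M}\|e_m'G\|_1/\sqrt{K}$. We will use the leave-one-out technique over words, exactly as in \cite{ke2024entry}. The conditions $K^3M\log^2(Nn)\le Nn$ and $\min\{M,N\}\ge\log^3(Nn)$ are what make the eigen-gap dominate the perturbation with room for the extra $\sqrt K$ factors. Dividing by $\hat\xi_1(m)$, whose relative error is controlled by the same bound because $\xi_1$ is entrywise positive of order $\sqrt{\|e_m'G\|_1}$, yields $\|\hat r_m-O_2r_m\|\le C\sqrt{K}\sqrt{KM\log(Nn)/(Nn)}$.

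The third step propagates this error through vertex hunting and reconstruction. The vertex hunting procedure of \cite[Section 3.4]{ke2023special} is stable, giving $\max_k\|\hat v_k-v_k\|\lesssim\max_m\|\hat r_m-r_m\|$. Because the simplex edge lengths are of order $\sqrt K$, solving for the barycentric coordinates gives $\|\hat u_m-u_m\|_1\lesssim \sqrt{K}\cdot\max\text{err}/\sqrt{K}\cdot\sqrt K$. Tracking these factors carefully should give $\|\hat u_m-u_m\|_1\lesssim\sqrt{KM\log(Nn)/(Nn)}$, and truncation and renormalisation do not increase this error. Multiplying by $D^{1/2}\diag(\hat\xi_1)$ and normalising the columns then gives the row-wise bound $\|e_m'(\widehat G-G)\|_1\le C_{\text{TM}}\|e_m'G\|_1\sqrt{KM\log(Nn)/(Nn)}$. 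The column normalisation constants have relative error of the same order, since they are sums of these rows. Summing over $m$ yields $\mathcal L\le C_{\text{TM}}\sum_m\|e_m'G\|_1\sqrt{KM\log/(Nn)}=C K\sqrt{KM\log(Nn)/(Nn)}$, which is \eqref{tscore-op-norm}. A union bound over $m$, with all Bernstein bounds taken at level $(Nn)^{-6}$, gives the probability $1-o((Nn)^{-5})$.

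The main obstacle is the row-wise eigenvector bound with sharp $K$-dependence. Naive Davis--Kahan arguments lose powers of $K$, both through the gap $\sigma_K$ and through the $\sqrt K$ scaling of the simplex. The plan is to apply the leave-one-out argument to the normalised Gram matrix, with the diagonal bias $\mathbb{E}ZZ'$ removed beforehand, and to verify that each step loses at most the factors that cancel in the final sum.
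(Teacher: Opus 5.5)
Your skeleton is the paper's: entry-wise eigenvectors, SCORE ratios, vertex hunting, barycentric coordinates, column normalisation and summation over rows, i.e.\ Ke--Wang with $K$ tracked. So the only new content is the $K$-dependence, and that is exactly the step you assert rather than derive. Write $\rho=\sqrt{KM\log(Nn)/(Nn)}$. With your own $\max\mathrm{err}=\sqrt K\rho$, the product $\sqrt K\cdot\max\mathrm{err}/\sqrt K\cdot\sqrt K$ in step~3 equals $K\rho$, not $\rho$, and the bookkeeping genuinely does not close.

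Since $\Omega'\mathbf 1_M=\mathbf 1_n$ and $\Omega\mathbf 1_n=D_0\mathbf 1_M$, one has exactly $\xi_1=D_0^{1/2}\mathbf 1_M/\sqrt n$. Hence $\xi_1(m)\asymp M^{-1/2}$, a factor $\sqrt K$ below the $\sqrt{\|e_m'G\|_1}$ you use; this is why $\|r_m\|$ and the simplex edges are of order $\sqrt K$. The $K\times K$ matrix $Q_V$ with columns $(1,v_k')'$ then has all singular values of order $\sqrt K$. Because $u_m$ is a probability vector, $\|\hat u_m-u_m\|_1\le\sqrt K\,\|\hat Q_V^{-1}\|\,(\|\hat r_m-r_m\|+\max_k\|\hat v_k-v_k\|)\lesssim\max_{m'}\|\hat r_{m'}-r_{m'}\|$. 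Your embedding bound $\sqrt K\rho$ therefore yields only $\|\hat u_m-u_m\|_1\lesssim\sqrt K\rho$, and hence $\mathcal L\lesssim K^2\sqrt{M\log(Nn)/(Nn)}$.

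The column normalisation has the same problem. Let $s_k$ be the column sums of $D^{1/2}\mathrm{diag}(\hat\xi_1)[\hat u_1,\dots,\hat u_M]'$. Summing row-wise $\ell^1$ errors gives only $|\hat s_k-s_k|\le\rho\sum_{k'}s_{k'}\asymp K\rho\,s_k$, because a row bound says nothing about how the error splits across columns.

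To reach the stated rate you need $\max_m\|\hat r_m-r_m\|\lesssim\rho$, i.e.\ $\|e_m'(\widehat\Xi-\Xi O)\|\lesssim\xi_1(m)\rho\asymp\sqrt{K\log(Nn)/(Nn)}$, plus column-level control of the $s_k$. On anchor-type rows, where $\|e_m'\Xi\|\asymp\sqrt{K/M}$, this is $\sqrt K$ sharper than your target $C\|e_m'\Xi\|\rho$. It is plausible: the leading term $e_m'D_0^{-1/2}ZV_R\Sigma^{-1}$, with $V_R$ the right singular vectors of $D_0^{-1/2}\Omega$, has mean-square norm $O(K/(nN))$. But proving it needs a direct variance computation, not $\epsilon$-net/Bernstein bounds that each carry an extra $\sqrt K$.

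The paper's chain cannot be cited to fill this. It gets $\|\hat r_j-\Omega r_j\|\lesssim\rho$ (Theorem~\ref{thm:word_embeddings}) only via $\xi_1(j)\asymp\sqrt{h_j}$ (Lemma~\ref{lem:eigen}) and $\|r_j\|=O(1)$ (proof of Theorem~\ref{thm:A_k}). Both are off by the same $\sqrt K$: they conflict with $\xi_1=M_0^{1/2}\mathbf 1_p$ and with the paper's own remark that $M_0(j,j)\asymp p^{-1}$ while $h_j\asymp K/p$.

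Your eigen-gap step also fails as argued. Perron--Frobenius gives no quantitative margin, and $D_0^{-1/2}\Omega\Omega'D_0^{-1/2}$ need not be positive. In fact (i')--(ii') do not imply a constant gap. Take $K=2$, disjoint uniform topic supports, half the documents at $\pi_i=(1-\delta,\delta)'$ and half at $(\delta,1-\delta)'$. All hypotheses hold uniformly in $\delta\le 1/4$, yet $\sigma_1^2=1$ and $\sigma_2^2=(1-2\delta)^2$. The paper does not derive the gap either; it imports it through Lemma~\ref{lem:eigen}.

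You can avoid the gap altogether. The identity above also holds empirically, since $\widetilde Y'\mathbf 1_M=\mathbf 1_n$. So $\hat\xi_1=D^{1/2}\mathbf 1_M/\sqrt n$ whenever the top singular value is simple (or by convention). This gives $\hat\xi_1(m)/\xi_1(m)=\sqrt{D(m,m)/D_0(m,m)}=1+O(\sqrt{M\log(Nn)/(Nn)})$ directly by Bernstein. Moreover, the output depends on $\hat\xi_2,\dots,\hat\xi_K$ only through their span, which is governed by $\sigma_K$.

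On that point, your claim $\sigma_K\gtrsim\sigma_1/\sqrt K$ contradicts your own computation, which gives $\sigma_K\asymp\sigma_1=1$. Feeding the weaker bound into Davis--Kahan would cost further powers of $K$.

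Finally, in your normalisation the gap is of order one. So $\|D_0^{-1/2}Z\|\lesssim\sqrt{M\log(Nn)/N}$ does not make the perturbation small when $M\gtrsim N$, a regime the theorem is meant to cover. What must be bounded is the projected cross term, $\|D_0^{-1/2}\Omega Z'D_0^{-1/2}\|\lesssim\sqrt{KM\log(Nn)/(Nn)}$. The paper does this for $E_2$ via an $\epsilon$-net on $\mathbb R^K$.
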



Theorem~\ref{thm:TM_errorbound} extends the analysis of Topic-SCORE from finite $K$  \citep{ke2024using, ke2024entry} to growing $K$. A lower bound for the rate of convergence is presented in \cite{bing2020fast}, which is  $\sqrt{K^3M/(nN)}$. 
Therefore, our result shows that Topic-SCORE is rate-optimal (up to a logarithmic factor) in the growing-$K$ setting. This is also the first method that achieves the optimal rate in both regimes of $M=O(N)$ and $M\gg N$ (in contrast, the method in \cite{bing2020fast} only applies to the regime of $M=O(N)$).



\subsection{The rate of convergence of our estimator} \label{subsec:UB}
We insert the topic modeling error bound from Section~\ref{subsec:rate-TM} into the error bound from Section~\ref{subsec:main-bound}. It follows that 
\[
\mbb{E}\left[\int \|\widehat{\bm{g}}^+(x)-\bm{g}(x)\|^2dx\right]\le C_0 \cdot K \left[\Bigl(\frac{K}{Nn}\Bigr)^{\frac{2\beta}{2\beta+1}}+\frac{K^2M^2\log(Nn)}{Nn}  +  \frac{K^2}{Nn} \right]. 
\]
This bound holds under the conditions of Theorem~\ref{thm:main} and Theorem~\ref{thm:TM_errorbound}. 
These conditions are implied by Assumptions~\ref{assump1}-\ref{assump2}, 
if $M\to\infty$ and the bins are properly chosen (see Section~\ref{subsec:tuning} and Lemma~\ref{lem:key} in the supplement). 
Moreover, when $K$ satisfies some conditions, the second and third terms are dominated by the first term. Combining these observations gives the following theorem: 

\begin{thm}\label{thm:upperbound}
Consider the model \eqref{model-1}-\eqref{model-2} with $d=1$, where Assumptions \ref{assump1}-\ref{assump2} hold, and $N_i\asymp N$ for all $i$. Suppose $
K=O\bigl(\max\bigl\{(Nn)^{\frac{1}{4}}[\log(Nn)]^{-1},(Nn)^{\frac{1}{4\beta+3}}\log^{-\frac{4\beta+2}{4\beta+3}}(Nn)\bigr\}\bigr)$, $N\geq \log(n)$, and $\log(N)=O(\log(n))$. Let $\widehat{\boldsymbol g}^+(x)$ be the estimator in \eqref{our-estimator2}, where the kernel ${\cal K}(\cdot)$ satisfies \eqref{cond-kernel}, 
the plug-in $\widehat{G}$ is obtained from Topic-SCORE, 
and the bin size $M$ and the bandwidth $h$ satisfy that $M\asymp K\log(Nn)$ and $h\asymp [K/(Nn)]^{\frac{1}{2\beta+1}}$. Then, 
\[
\mbb{E}\left[\int \|\widehat{\bm{g}}(x)^+-\bm{g}(x)\|^2dx\right]\le C \cdot K\left( \frac{K}{Nn} \right)^{\frac{2\beta}{2\beta+1}}. 
\]
\end{thm}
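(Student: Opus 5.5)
The plan is to combine Corollary~\ref{cor:rate1} with Theorem~\ref{thm:TM_errorbound}, after checking that Assumptions~\ref{assump1}--\ref{assump2}, the bin choice, and the stated ranges of $K,M,h$ imply all hypotheses (i)--(v) of Theorem~\ref{thm:main} and (i')--(ii') of Theorem~\ref{thm:TM_errorbound}.

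\textbf{Step 1: the induced topic model.} With $M\asymp K\log(Nn)\to\infty$ and bins satisfying \eqref{cond-bin}, I would invoke Lemma~\ref{lem:key} of the supplement. It converts $c_1\le\lambda(\Sigma_{\boldsymbol g})\le c_1^{-1}$ and $\max_x\bar g\le c_1^{-1}$ into $\lambda(\Sigma_G)\asymp 1$ and $(G{\bf 1}_K)_m\asymp K/M$. The latter gives both the upper bound in (i) and the lower bound in (i'). Conditions (ii) and (ii') are exactly the $\Pi$ part of Assumption~\ref{assump2}.

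Assumption~\ref{assump-anchor} supplies anchor words. Since each $S_k$ has Lebesgue measure at least $c_3$, at least one bin lies inside $S_k$ (up to null sets), for $M$ large with a fine enough partition. I would choose the bins to guarantee this; this is a bin-construction issue, and I expect it to be addressed in the supplement alongside Lemma~\ref{lem:key}.

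The remaining conditions are routine. $\min\{M,N\}\ge\log^3(Nn)$ needs care, since $M\asymp K\log(Nn)$ may only be about $\log(Nn)$. I would enlarge $M$ by a logarithmic factor if needed, or rely on the supplement's version of this requirement. Finally, $K^3M\log^2(Nn)\le Nn$ follows from $K^4\log^3(Nn)\ll Nn$, which holds under the stated upper bound on $K$ in either branch of the maximum.

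\textbf{Step 2: plugging in $\delta_n$.} Theorem~\ref{thm:TM_errorbound} gives (v) with $\delta_n^2=K^3M\log(Nn)/(Nn)$. The requirement $M\delta_n^2=o(K)$ reads $K^2M^2\log(Nn)\ll Nn$, i.e.\ $K^4\log^3(Nn)\ll Nn$, which again follows from the $K$ condition.

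Next I check the ranges in Theorem~\ref{thm:main}. $M\le [Nn/\log^2(Nn)]^{1/2}$ holds since $K\lesssim (Nn)^{1/4}$. For $h\asymp (K/Nn)^{1/(2\beta+1)}$, we have $h\gg K/(Nn)$ because $K/(Nn)\to 0$. We also have $h\ll 1/\log(Nn)$ because $K/(Nn)\le (Nn)^{-3/4}$ decays polynomially.

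Corollary~\ref{cor:rate1} then yields the bound
\[
K\Bigl[(K/Nn)^{\frac{2\beta}{2\beta+1}}+K^2M^2\log(Nn)/(Nn)+K^2/(Nn)\Bigr].
\]

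\textbf{Step 3: dominance of the first term.} This is the main obstacle, and it is arithmetic. Substituting $M\asymp K\log(Nn)$, the middle term becomes $K^4\log^3(Nn)/(Nn)$. I need this to be at most a constant times $(K/Nn)^{2\beta/(2\beta+1)}$, which is equivalent to
\[
K^{\frac{6\beta+4}{2\beta+1}}\log^3(Nn)\lesssim (Nn)^{\frac{1}{2\beta+1}}.
\]
Solving for $K$ gives the polynomial-in-$Nn$ threshold appearing in the statement; the exponent $1/(4\beta+3)$ and its log power should come out of this calculation. I would verify the exponents of $Nn$ and $\log(Nn)$ in each branch of the maximum, and check that the weaker term $K^2/(Nn)$ is automatically dominated.

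If the branch $(Nn)^{1/4}/\log(Nn)$ does not by itself force dominance for large $\beta$, I would fall back on the sharper per-row bound in Theorem~\ref{thm:TM_errorbound}. Using it inside the proof of Theorem~\ref{thm:main} replaces $(M/K)\delta_n^2$ with a $K^2M\log(Nn)/(Nn)$-type term, and I would re-run the comparison with that term.

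Once dominance is established, the conclusion follows with $C$ absorbing $C_0$ and the constants from Steps 1--2.
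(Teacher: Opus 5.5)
Your route is the paper's: plug Theorem~\ref{thm:TM_errorbound} into Theorem~\ref{thm:main} (Corollary~\ref{cor:rate1}) and check that the extra terms are dominated. However, Step~3 has a real gap. You assert that the $\beta$-dependent threshold ``should come out of'' your calculation, but it does not. Solving your own condition $K^{(6\beta+4)/(2\beta+1)}\log^3(Nn)\lesssim (Nn)^{1/(2\beta+1)}$ gives
\[
K\lesssim (Nn)^{\frac{1}{6\beta+4}}\log^{-\frac{3(2\beta+1)}{6\beta+4}}(Nn).
\]
This is strictly smaller than the stated $(Nn)^{1/(4\beta+3)}\log^{-(4\beta+2)/(4\beta+3)}(Nn)$. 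So with the $\ell^1$ rate $\delta_n^2=K^3M\log(Nn)/(Nn)$ you only prove the theorem on a smaller range of $K$; the intermediate display in Section~\ref{subsec:UB} has the same mismatch.

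What you call a fallback is in fact the argument that yields the stated exponent and log power. In the proofs of Lemmas~\ref{lem:error3-prep} and~\ref{lem:error3}, $\delta_n$ enters only as a bound on the spectral norm $\|\widehat G-G\|$, including wherever $M\delta_n^2=o(K)$ is invoked. The row-wise bound of Theorem~\ref{thm:TM_errorbound}, together with $\|e_m'G\|_1\lesssim K/M$, gives $\|\widehat G-G\|\le\|\widehat G-G\|_F\lesssim\sqrt{K^3\log(Nn)/(Nn)}$, which is the spectral bound in Theorem~\ref{thm:A_k}. The middle term then becomes $K\cdot K^2M\log(Nn)/(Nn)\asymp K\cdot K^3\log^2(Nn)/(Nn)$. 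Its domination by $K(K/Nn)^{2\beta/(2\beta+1)}$ is exactly $K^{4\beta+3}\log^{4\beta+2}(Nn)\lesssim Nn$. The term $K^2/(Nn)$ is then dominated because $2\beta+2<4\beta+3$. Make this the main line, and say explicitly that you re-enter the proof of Theorem~\ref{thm:main}, since its hypothesis (v) is phrased in $\ell^1$.

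Second, the range of $K$ must be read as a minimum of the two bounds, and your claim that the needed conditions hold ``in either branch of the maximum'' is wrong.
\begin{itemize}
\item For $\beta>1/4$, the maximum is $(Nn)^{1/4}/\log(Nn)$. At that size of $K$, even the sharper bracket term $K^3\log^2(Nn)/(Nn)$ is of order $(Nn)^{-1/4}$ up to logarithms. Meanwhile $(K/Nn)^{2\beta/(2\beta+1)}$ is of order $(Nn)^{-3\beta/(4\beta+2)}$, which is smaller when $\beta>1/4$. So no version of Step~3, including your fallback, handles that branch.
\item For $\beta<1/4$, the maximum is the second branch, which exceeds $(Nn)^{1/4}$ polynomially. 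Then the hypothesis $K^3M\log^2(Nn)\le Nn$ of Theorem~\ref{thm:TM_errorbound} fails.
\end{itemize}
The first bound is what makes Theorems~\ref{thm:main} and~\ref{thm:TM_errorbound} applicable, and the second is what gives dominance. You need both, as the paper's discussion ($c^*_{\beta,d}<1/4$) indicates and as its two-line proof tacitly assumes.

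Your concern about $\min\{M,N\}\ge\log^3(Nn)$ versus $M\asymp K\log(Nn)$ and $N\ge\log(n)$ is legitimate. The paper's proof does not resolve it either, so it is not what separates your argument from the paper's.
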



It is interesting to compare this rate with those of existing methods. The method in \cite{katz2019decontamination} is only shown to be consistent under a relatively weak guarantee, without an explicit error rate. When $K$ is fixed, the method in \cite{austern2025poisson} has an error rate of $(Nn)^{-\theta^*(\beta)}$, where \footnote{Precisely, the paper considers a H\"{o}lder class with smoothness parameter $\beta$ (a more specialized class when the support is bounded), but the rate is transferable.}
\[
\theta^*(\beta) = \begin{cases}
2\beta/(2\beta+1), & \mbox{when }\beta\leq 1,\cr
2/3, & \mbox{when }1<\beta\leq 2, \cr
2\beta/(5\beta+2), & \mbox{when }\beta>2. 
\end{cases}
\]
Their rate is strictly slower than ours for $
\beta>1$. 
Moreover, they don't provide any result for the growing-$K$ setting. 


\subsection{A matching lower bound} \label{subsec:LB}
To show the optimality of the error rate established above, we also provide a matching lower bound:
\begin{thm}\label{thm:lowerbound}
Let $\Theta_{\beta}$ be the collection of all $({\boldsymbol g}(x), \Pi)$ such that Assumptions~\ref{assump1}-\ref{assump2} are satisfied. 
    Under the model in \eqref{model-1}-\eqref{model-2}, there exists a constant $C_0'>0$ such that
    \[
        \inf_{\widehat{\bm{g}}(x)}\sup_{(\bm{g}(x), \Pi)\in\Theta_\beta}\mbb{E}\left[\int \|\widehat{\bm{g}}(x)- \bm{g}(x)\|^2dx\right]\ge C_0'K\left(\frac{K}{N n}\right)^{\frac{2\beta}{2\beta+1}}.
\]
\end{thm}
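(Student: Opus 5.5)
We prove the lower bound by a multi-hypothesis construction: a Varshamov--Gilbert packing of local bumps, combined with Fano's lemma (or Assouad's lemma). The least-favorable configuration fixes $\Pi$ to be a pure-membership matrix. Take $n$ divisible by $K$, and let each group $i$ have $\pi_i=e_{k(i)}$, with exactly $n/K$ groups assigned to each topic. Then $\Sigma_\Pi=I_K$ and $\Pi'{\bf 1}_n=(n/K){\bf 1}_K$, so the conditions on $\Pi$ in Assumption~\ref{assump2} hold. Under this $\Pi$ the data split into $K$ independent blocks, and block $k$ consists of $\asymp Nn/K$ i.i.d. draws from $g_k$. We expect each $g_k$ to contribute a lower bound of order $(K/(Nn))^{2\beta/(2\beta+1)}$, which is the classical one-dimensional rate at sample size $Nn/K$. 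Summing over the $K$ blocks then gives the factor $K$.

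\textbf{Base densities.} We take $K$ densities with disjoint supports, $g^0_k(x)=\phi(x-3k)$, where $\phi$ is a smooth density equal to a constant $\ge c$ on $[0,2]$ and supported in $[-1/2,5/2]$. Each $g^0_k$ has an anchor region of length $\ge 2$, so Assumption~\ref{assump-anchor} holds. Since the supports are disjoint, $\Sigma_{\boldsymbol g}=\frac{1}{K}\int\phi^2\cdot I_K$ is well conditioned; the $1/K$ normalization is matched by $K$ and can be absorbed by a fixed rescaling of $\phi$, and we check that the constant lands in $[c_1,c_1^{-1}]$. Finally, $\max\bar g\le \max\phi/K$ is bounded.

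\textbf{Perturbations.} Inside $[0,2]+3k$ we place $m\asymp h^{-1}$ bumps $\psi_j(x)=L h^{\beta}\psi_0((x-x_j)/h)$, where $\psi_0$ is smooth, compactly supported and has zero integral. Set $g_{k,\omega}=g^0_k+\sum_{j}\omega_{kj}\psi_j$ with $\omega\in\{0,1\}^{K\times m}$. For $L$ small these are valid densities (positivity holds because $\phi\ge c$ on the bump region). The Nikol'ski condition holds uniformly in $\omega$ by the standard computation: the bumps have disjoint supports, so the $L^2$ modulus of the $\lfloor\beta\rfloor$th derivative is of order $h^{\beta-\lfloor\beta\rfloor}\wedge|t|^{\beta-\lfloor\beta\rfloor}$ per unit $L^2$ norm. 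Perturbing only inside the anchor region preserves the anchor regions, and it keeps $\Sigma_{\boldsymbol g}$ within constant factors once $h$ is small.

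\textbf{Assouad step.} Flipping one coordinate $\omega_{kj}$ changes $\int\|{\boldsymbol g}_\omega-{\boldsymbol g}_{\omega'}\|^2$ by $\asymp h^{2\beta+1}$. It changes the law of the full data only through the $\asymp Nn/K$ samples of block $k$, so the KL divergence is at most
\[
\frac{Nn}{K}\int\frac{\psi_j^2}{g_k^0}\;\lesssim\; \frac{Nn}{K}h^{2\beta+1}.
\]
Choosing $h\asymp (K/(Nn))^{1/(2\beta+1)}$ makes this KL divergence $O(1)$. Assouad's lemma then gives a risk lower bound of
\[
c\cdot Km\cdot h^{2\beta+1}\asymp K h^{2\beta}=K\left(\frac{K}{Nn}\right)^{2\beta/(2\beta+1)}.
\]

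\textbf{Main obstacle.} The delicate part is the bookkeeping for permutations together with the uniform verification of Assumption~\ref{assump2}. Because the loss is taken up to a permutation of the $\hat g_k$, we must prevent the estimator from gaining by relabeling. The disjoint, separated supports ensure that any nontrivial permutation incurs loss $\gtrsim 1$, which dominates the target rate, so the infimum over permutations can be reduced to the identity. We also need $h\to0$, i.e. $K=o(Nn)$, for the construction to fit. In any regime where that fails, we fall back on a trivial constant-order bound.
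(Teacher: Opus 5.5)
Your verification of the $\Sigma_{\boldsymbol g}$ condition in Assumption~\ref{assump2} fails once $K$ grows. With disjoint supports, $\Sigma_{\boldsymbol g}=K^{-1}\|\phi\|_2^2 I_K$. Making $\|\phi\|_2^2\gtrsim K$ would need a $K$-dependent spike of height $\asymp K$ and width $\asymp 1/K$, and such a spike has Nikol'ski constant of order $K^{\beta+1/2}$.

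This is not specific to your construction. Split $\hat g_k$ at $|\xi|\asymp 1$: use $|\hat g_k|\le 1$ at low frequencies and the Nikol'ski bound on dyadic shells at high frequencies. This gives $\int g_k^2\le C(\beta)(1+L_0^2)$ for every density allowed by Assumption~\ref{assump1}. Hence $\lambda_{\min}(\Sigma_{\boldsymbol g})\le C/K$ always, and read literally $\Theta_\beta$ is empty once $K>C/c_1$.

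The paper's configuration fails in the same way. A common bump and common tails plus disjoint anchor bumps give $K\Sigma_{\boldsymbol g}\approx dI_K+c{\bf 1}_K{\bf 1}_K'$, whose smallest eigenvalue is $d/K$. What its proof actually establishes is the binned condition used by Theorems~\ref{thm:main} and~\ref{thm:TM_errorbound}: for bins of equal $\bar g$-mass, $\Sigma_G=(M/K)G'G$ is well conditioned and $G{\bf 1}_K$ has entries of order $K/M$. It then asserts that this transfers back to $\Sigma_{\boldsymbol g}$, which by the bound above it cannot.

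You should verify the binned condition instead. Your construction passes it trivially. For $M/K\in\mathbb{N}$, every equal-mass bin lies in a single support, both before and after perturbation. So $\Sigma_G=I_K$ exactly, with none of the paper's case analysis (a), (b), (c1)--(c3).

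Apart from this, your route is valid and differs from the paper's mainly in the construction. Both arguments fix a pure-membership $\Pi$ with $\asymp n/K$ groups per topic. Both perturb each density with $\asymp (Nn/K)^{1/(2\beta+1)}$ bumps of height $\asymp (K/(Nn))^{\beta/(2\beta+1)}$.

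The paper keeps the base densities overlapping: a common bump on $[-2T,2T]$ where all of them are bounded below, separate anchor bumps, and common tails. It perturbs in the shared region $[-T,T]$. It then applies Varshamov--Gilbert on $\{0,1\}^{KB}$ with Tsybakov's Theorem~2.7, bounding the total KL by $Nn\delta^2\asymp KB\asymp\log J$.

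You separate the supports and perturb inside each anchor region. The experiment then factors into $K$ independent classical problems with $\asymp Nn/K$ observations each. Assouad over $\{0,1\}^{K\times m}$ gives $K$ copies of the classical rate from per-flip KL bounds, with no packing. Because $\Pi$ is pure in both constructions, the paper's overlap adds nothing to the information bound; it only lengthens the conditioning check.

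Two smaller points. First, the displayed statement has no permutation, and the paper proves only that version. If you want the permutation-invariant loss, your claim that a nontrivial permutation costs $\gtrsim 1$ ``which dominates the target rate'' holds only when $K(K/(Nn))^{2\beta/(2\beta+1)}=O(1)$. In general you need a per-block argument. Truncate each block's loss at a small constant; an estimate within that constant of $g_{\omega,k}$ must be nearest to $g^0_k$. So relabelling can only let several estimates compete for one block, which leaves as many other blocks at the truncation level.

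Second, the $\Sigma_\Pi$ condition forces $K\le n$, so $h\asymp (K/(Nn))^{1/(2\beta+1)}\le 1$ automatically and no fallback regime is needed. A constant-order fallback would not carry the factor $K$ in any case.
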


Comparing Theorem~\ref{thm:lowerbound} with Theorem~\ref{thm:upperbound}, we obtain the optimality of our estimator. 

The proof of Theorem~\ref{thm:lowerbound} is more subtle than that of the lower bound for standard nonparametric density estimation, because we need the rate to depend on $K$. We address this by using a proper least-favorable configuration. See Section~\ref{sec:LBproof} for details.  


\subsection{Extension to a general $d$} \label{subsec:d>1}
Our results can be extended to any fixed dimension $d$. In this case, each $g_k(x)$ is a multivariate density. 
We assume that $g_1(x), \ldots, g_K(x)$ belong to the anisotropic Nikol'ski class \citep{lepski2013multivariate} defined as follows.
{\renewcommand{\theassumption}{1$'$}
\setcounter{assumption}{0}
\begin{assumption}[Multivariate Nikol’ski Smoothness]\label{assump1prime}
Fix $d \ge 1$, $\bm{\beta} = (\beta_1,\dots,\beta_d) \in (0,\infty)^d$ 
and $L > 0$. For each $j \in \{1,\dots,d\}$, let 
$r_j = \lfloor \beta_j \rfloor$ denote the largest integer strictly smaller than $\beta_j$, 
and write $\bm{r} = (r_1,\dots,r_d)$. For each $1 \le k \le K$, the mixed partial derivative 
$\partial^{\bm{r}} g_k 
= \partial_1^{\,r_1} \cdots \partial_d^{\,r_d} g_k$ 
exists and satisfies
\[
\left(
\int_{\mathbb{R}^d}
\left|
\partial^{\bm{r}} g_k(x+t)
-
\partial^{\bm{r}} g_k(x)
\right|^2dx 
\right)^{1/2}
\le
L
\sum_{j=1}^d |t_j|^{\beta_j - r_j},
\qquad
\text{for all } t \in \mathbb{R}^d.
\]
\end{assumption}}
Our estimator requires having a multivariate kernel ${\cal K}(x)$ to obtain $K_{ij}(x) = {\cal K}_h(x-X_{ij})$ (see \eqref{def-S-K}), where ${\cal K}_h(x)=\frac{1}{h^d}{\cal K}(\frac{x}{h})$ for a bandwidth $h>0$.  
However, under Assumption~\ref{assump1prime}, we should use different bandwidths for different coordinates.  Let $\bm{h}=(h_1, h_2, \ldots, h_d)'$ be a bandwidth matrix. 
We define 
\beq \label{cond-kernel-d}
{\cal K}_{\bm{h}}(x) = \prod_{j=1}^d \frac{1}{h_j}{\cal K}_j\bigl(\frac{x_j}{h_j}\bigr), \quad\mbox{where ${\cal K}_j$ satisfies \eqref{cond-kernel} for $\beta=\beta_j$.}
\eeq
In the supplementary material, we present a counterpart of Theorem~\ref{thm:main} when Assumption~\ref{assump1} is replaced by Assumption~\ref{assump1prime}. The error bound has the following form: 
\[
\mbb{E}\left[\int_{\mbb{R}^d} \|\widehat{\bm{g}}^+(\bm{x})-\bm{g}(\bm{x})\|^2d\bm{x}\right]\le C_0 \cdot K\left( \sum_{j=1}^d h_j^{2\beta_j}  + \frac{K}{Nn\prod_{j=1}^d h_j}+\frac{M}{K}\delta_n^2  +  \frac{K^2}{Nn} \right). 
\]
Compared to \eqref{UB-main}, only the first two terms are different. The last two terms, which correspond to the extra price paid for ``unmixing''  densities, are the same as before. 
We recall that these two terms arise from the error of estimating $G$ and the additional error of using $\widehat{G}$ to estimate $\Pi$. These errors are only related to the induced topic model, which are independent of the dimension $d$ (especially, the bin size $M$ does not depend on $d$).


Similarly as in the case of $d=1$, we choose $\bm{h}$ to optimize the first two terms; in addition, when $K$ grows with $Nn$ at a speed not too fast, the last two terms will be negligible. We thus have the following theorem:  
\begin{thm}\label{thm:upperboundind}
Fix $d\geq 1$ and consider the model \eqref{model-1}-\eqref{model-2}, where Assumptions~\ref{assump1prime} and Assumptions~\ref{assump-anchor}-\ref{assump2} hold, and $N_i\asymp N$ for all $1\leq i\leq n$. 
Define the harmonic mean $(\beta^*)^{-1}=d^{-1}\sum_{j=1}^d\beta_j^{-1}$. 
Suppose $
K=O\bigl(\max\bigl\{(Nn)^{\frac{1}{4}}[\log(Nn)]^{-1},(Nn)^{\frac{d}{4\beta^*+3d}}\log^{-\frac{4\beta^*+2d}{4\beta^*+3d}}(Nn)\bigr\}\bigr)$, $N\geq \log(n)$, and $\log(N)=O(\log(n))$. 
Let $\widehat{\boldsymbol g}^+(\bm{x})$ be the estimator in \eqref{our-estimator2}, where $\mcal{K}_{\bm{h}}(x)$ has the product form  in \eqref{cond-kernel-d}, the plug-in $\widehat{G}$ is obtained by Topic-SCORE, 
and the bin size $M$ and the bandwidth vector $\bm{h}$ satisfy that $M\asymp K\log(Nn)$ and $h_j\asymp [K/(Nn)]^{\frac{1}{2\beta_j+d(\beta^*)^{-1}\beta_j}}$ for all $1\le j\le d$. Then, 
\[
\mbb{E}\left[\int_{\mbb{R}^d} \|\widehat{\bm{g}}(\bm{x})^+-\bm{g}(\bm{x})\|^2d\bm{x}\right]\le C \cdot K\left( \frac{K}{Nn} \right)^{\frac{2\beta^*}{2\beta^*+d}}. 
\]
\end{thm}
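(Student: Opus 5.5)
The plan mirrors the derivation of Theorem~\ref{thm:upperbound} from Theorem~\ref{thm:main} and Theorem~\ref{thm:TM_errorbound}, with the one-dimensional ingredients replaced by their $d$-dimensional counterparts. I first invoke the $d$-dimensional version of Theorem~\ref{thm:main} announced in Section~\ref{subsec:d>1}, which gives
\[
\mbb{E}\int_{\mbb{R}^d}\|\widehat{\bm{g}}^+(\bm{x})-\bm{g}(\bm{x})\|^2d\bm{x}\le C_0K\Bigl(\sum_{j=1}^d h_j^{2\beta_j}+\frac{K}{Nn\prod_j h_j}+\frac{M}{K}\delta_n^2+\frac{K^2}{Nn}\Bigr).
\]
Its proof is that of Theorem~\ref{thm:main}, with only the kernel bias and variance steps changed. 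For the bias, I write $\int\mcal{K}_{\bm h}(\bm x-\bm y)g_k(\bm y)d\bm y-g_k(\bm x)$ as a telescoping sum over coordinates, changing one coordinate at a time. Each one-dimensional increment is then handled by a Taylor expansion of order $r_j$, the vanishing moments of $\mcal{K}_j$, and the Nikol'ski bound from Assumption~\ref{assump1prime} together with Minkowski's integral inequality. This yields an $L^2$ bias of order $\sum_j h_j^{\beta_j}$. For the variance, $\int\mcal{K}_{\bm h}^2=\prod_j h_j^{-1}\prod_j\int\mcal{K}_j^2$ replaces $1/h$. The terms involving $\widehat G$, $T$ and the regularization $\epsilon$ do not depend on $d$, because the bins enter only through the induced topic model.

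Second, I verify the hypotheses (i)--(v). Choose bins satisfying \eqref{cond-bin}; in $\mbb{R}^d$ this can be done with product or quantile-type cells of $\bar g$-mass $\asymp 1/M$. By Lemma~\ref{lem:key} in the supplement, Assumption~\ref{assump2} then implies the conditions on $\Sigma_G$, $G\mathbf{1}_K$, $\Sigma_\Pi$ and $\Pi'\mathbf 1_n$. The step I expect to need most care is that Assumption~\ref{assump-anchor} implies the anchor-word condition of the induced topic model. The plan is to make sure each anchor region $S_k$ contains at least one whole bin. This holds if the bins are fine enough inside $S_k$, for example by refining bins that meet $S_k$; since $M\asymp K\log(Nn)\to\infty$, this can be done without breaking \eqref{cond-bin} up to constants. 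Theorem~\ref{thm:TM_errorbound} then gives, with probability $1-o((Nn)^{-5})$,
\[
\delta_n\asymp\sqrt{K^3M\log(Nn)/(Nn)}.
\]
I also check $M\delta_n^2=o(K)$ and $K^3M\log^2(Nn)\le Nn$, and both follow from the growth condition on $K$.

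Third, I balance the terms. Set $h_j=\rho^{1/\beta_j}$. Then $\sum_j h_j^{2\beta_j}\asymp\rho^2$ and $\prod_j h_j=\rho^{\sum_j 1/\beta_j}=\rho^{d/\beta^*}$. Equating $\rho^2$ with $K/(Nn\rho^{d/\beta^*})$ gives $\rho=(K/Nn)^{\beta^*/(2\beta^*+d)}$, hence
\[
h_j=(K/Nn)^{\beta^*/(\beta_j(2\beta^*+d))},
\]
which is the prescribed bandwidth. The first two terms then give $K(K/Nn)^{2\beta^*/(2\beta^*+d)}$. The remaining terms are of order
\[
K\Bigl[\frac{K^2M^2\log(Nn)}{Nn}+\frac{K^2}{Nn}\Bigr]\asymp \frac{K^5\log^3(Nn)}{Nn}
\]
after substituting $M\asymp K\log(Nn)$. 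Requiring this to be at most $K(K/Nn)^{2\beta^*/(2\beta^*+d)}$ amounts to
\[
K^{4-2\beta^*/(2\beta^*+d)}\log^3(Nn)\lesssim(Nn)^{d/(2\beta^*+d)},
\]
that is, $K\lesssim(Nn)^{d/(6\beta^*+4d)}$ up to logarithms. I will check this exponent arithmetic against the stated condition, which is the $d$-analogue of the $d=1$ case, and adjust the logarithmic exponents as needed. The conditions $(Nn)^{-1}K\ll\prod_j h_j$ and $\max_j h_j\ll\log^{-1}(Nn)$ hold automatically under this choice of $\rho$.

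The main obstacle is the anisotropic bias bound, specifically making the telescoping-plus-Minkowski argument work in $L^2$ with the mixed-derivative Nikol'ski condition of Assumption~\ref{assump1prime}. If the mixed derivative $\partial^{\bm r}$ does not directly control the one-coordinate increments, I would fall back on the standard embedding of Nikol'ski classes (Nikol'ski's theory), which gives each $\partial_j^{r_j}g_k$ a directional Nikol'ski bound with exponent $\beta_j-r_j$.
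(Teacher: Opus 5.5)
Your architecture is the paper's: the $d$-dimensional main bound of Theorem~\ref{thm:mainind}, then the Topic-SCORE rate, then the balancing $h_j=\rho^{1/\beta_j}$, which correctly reproduces the stated bandwidths. The genuine gap is in the rate you feed in as $\delta_n$.

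With the $\ell^1$ rate $\delta_n^2\asymp K^3M\log(Nn)/(Nn)$, the unmixing terms are of order $K^5\log^3(Nn)/(Nn)$. As you found, dominating them forces $K\lesssim (Nn)^{d/(6\beta^*+4d)}$ up to logarithms. Since $6\beta^*+4d>4\beta^*+3d$, this range is \emph{polynomially} smaller than the stated one, so adjusting logarithmic exponents cannot reconcile them. For $(Nn)^{d/(6\beta^*+4d)}\ll K\lesssim (Nn)^{d/(4\beta^*+3d)}\log^{-(4\beta^*+2d)/(4\beta^*+3d)}(Nn)$, your bound does not give the claimed rate.

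The missing observation is that $\delta_n$ enters the proof of Theorem~\ref{thm:main}, and hence of Theorem~\ref{thm:mainind}, only through the spectral norm. Lemma~\ref{lem:error3-prep} bounds everything by $\|\widehat G-G\|$, and Lemma~\ref{lem:error3} works on the event $\{\|\widehat G-G\|\le\delta_n\}$. Hypothesis (v), stated in terms of $\mathcal L$, is only a sufficient condition. The row-wise bound of Theorem~\ref{thm:TM_errorbound}, together with $\|e_m'G\|_1=O(K/M)$, gives
\[
\|\widehat G-G\|\le\|\widehat G-G\|_F\le\sqrt{M}\max_{m}\|e_m'(\widehat G-G)\|_1\lesssim \sqrt{M}\cdot\frac{K}{M}\sqrt{\frac{KM\log(Nn)}{Nn}}=K\sqrt{\frac{K\log(Nn)}{Nn}}.
\]
This is the spectral bound of Theorem~\ref{thm:A_k}, and it has no extra $\sqrt{M}$. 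With it, $\frac{M}{K}\delta_n^2\asymp K^3\log^2(Nn)/(Nn)$. The requirement $K^3\log^2(Nn)/(Nn)\lesssim (K/(Nn))^{2\beta^*/(2\beta^*+d)}$ is equivalent to $K^{(4\beta^*+3d)/(2\beta^*+d)}\lesssim (Nn)^{d/(2\beta^*+d)}\log^{-2}(Nn)$. That is exactly the second entry of the stated condition. The $K^2/(Nn)$ term only needs $K\lesssim (Nn)^{d/(2\beta^*+2d)}$.

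The display at the start of Section~\ref{subsec:UB} also uses the $\ell^1$ rate, but the $K$-condition actually stated corresponds to the spectral-norm rate. Also read that condition with $\min$ rather than $\max$. The entry $(Nn)^{1/4}\log^{-1}(Nn)$ is what secures $K^3M\log^2(Nn)\le Nn$ in Theorem~\ref{thm:TM_errorbound}, and it binds when $d>4\beta^*$. With a literal $\max$ and $d<4\beta^*$, one could take $K\asymp (Nn)^{1/4}/\log(Nn)$, where $K^3\log^2(Nn)/(Nn)$ is no longer dominated.

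On the bias term, your route genuinely differs from the paper's Lemma~\ref{lem:multivariate-lemma4.3}. The paper does one multivariate Taylor expansion to total order $|\bm r|$ and lets the product kernel annihilate the polynomial part. Your coordinate-by-coordinate telescoping is the standard device for anisotropic classes, and it is the more robust of the two:
\begin{itemize}
\item Each increment needs only an order-$r_j$ expansion in direction $j$, which the vanishing moments of $\mcal K_j$ match exactly. The other factors are controlled by $\int|\mcal K_i|<\infty$ through Minkowski's inequality.
\item A joint expansion to total order $|\bm r|$ contains pure powers $u_j^{a}$ with $r_j<a\le|\bm r|$. A kernel of order $r_j$ in coordinate $j$ does not kill these in general.
\end{itemize}
The price is that you need directional bounds $\|\partial_j^{r_j}g_k(\cdot+se_j)-\partial_j^{r_j}g_k\|_{L^2}\le L|s|^{\beta_j-r_j}$. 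As you suspect, the single mixed-derivative condition of Assumption~\ref{assump1prime} does not obviously supply these. However, the paper's own lemma also assumes more than that assumption: it bounds every $\partial^{\bm r'}$ with $\bm r'\le\bm r$, which yields your directional bounds via $\bm r'=r_je_j$ and $t=se_j$. So adopting the directional definition is consistent with what the paper actually uses.
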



A special case is that $g_1(x), \ldots, g_K(x)$ belong to the isotropic Nikol'ski class, corresponding to $\beta_j=\beta$ for all $1\leq j\leq d$. The rate becomes $K[K/(Nn)]^{\frac{2\beta}{2\beta+d}}$. 
A matching lower bound can also be proved. It is a straightforward extension of Theorem~\ref{thm:lowerbound} (see Section~\ref{sec:LBproof}). Combining these observations, we conclude the minimax optimal of our estimator under isotropic Nikol'ski smoothness.

\section{Simulations} \label{sec:Simu}
We investigate the performance of our estimator on synthetic data. 
For a given $K$, we construct $g_1(x), \ldots, g_K(x)$ as follows. 
For any $a, \beta>0$, let $\Phi_{\beta}(x; a)$ be a bump function  
defined as follows: 
\beq \label{bumps}
\Phi_{\beta}(x;a)
=
\bigl( 1 - |x/a|^{\beta}\bigr) \, 1\{ |x| < a\}. 
\eeq
It can be verified that $\Phi_{\beta}(x;a)$ is in the  Nikol’ski  class with a smoothness parameter $\beta$. 
For $k=1,\dots,K$, let $a_k = 11 + 2k$,
and define the unnormalized density as 
\beq \label{g-in-simu}
\tilde g_k(x)
=3\,\Phi_\beta(x;12)+
2\,\Phi_\beta(x-a_k;1)+
2\,\Phi_\beta(x+a_k;1).
\eeq
Let $
g_k(x) = \left(\int_{\mathbb{R}} \tilde g_k(u)du\right)^{-1}\tilde g_k(x)$. By construction, all densities are in the  Nikol’ski  class with smoothness of $\beta$, and they share a common central support $[-12,12]$. Additionally, each $g_k(x)$ has an anchor region as defined in Assumption \ref{assump-anchor}. The membership vectors $\{\pi_i\}_{i=1}^n$ are generated from a multivariate logistic-normal model: 
\[
\pi_i = \mathrm{softmax}(Z_i / \tau), \qquad Z_i \sim \mathcal{N}(0, I_K),
\] 
where the temperature parameter is $\tau=0.5$.  
Finally, we generate $X_{ij}$ from the model \eqref{model-1}-\eqref{model-2}, letting $N_i=N$ for all $1\leq i\leq n$. 
We measure the performance of estimators by the mean
integrated squared error (MISE), defined by $\int\|\widehat{\bm{g}}(x)-\bm{g}(x)\|^2dx$. Here, the integral is numerically approximated by a fine evaluation grid with 400 points.  

\begin{figure}[tbp]
\centering

\includegraphics[width=0.25\textwidth]{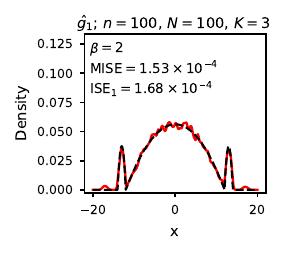}\hfill
\includegraphics[width=0.25\textwidth]{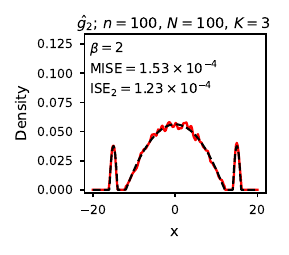}\hfill
\includegraphics[width=0.25\textwidth]{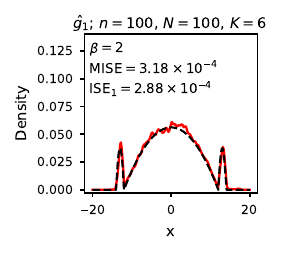}\hfill
\includegraphics[width=0.25\textwidth]{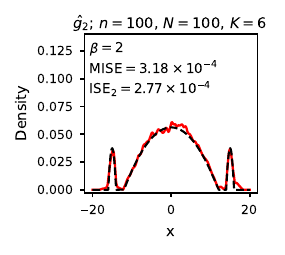}

\vspace{0.3em}

\includegraphics[width=0.25\textwidth]{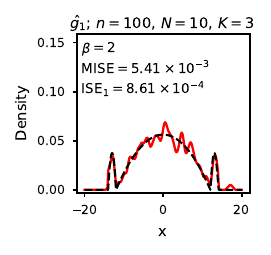}\hfill
\includegraphics[width=0.25\textwidth]{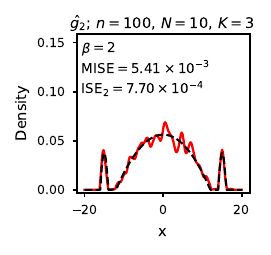}\hfill
\includegraphics[width=0.25\textwidth]{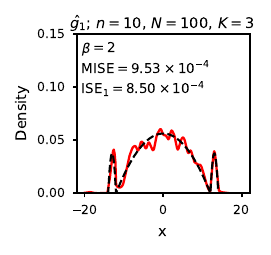}\hfill
\includegraphics[width=0.25\textwidth]{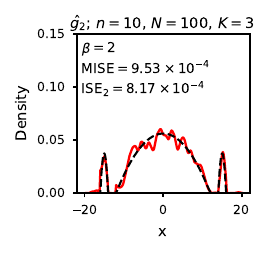}
\caption{MISE and estimated densities for different $n,N$ and $K$ (Experiment~1).  
Black:  true density; red: estimated density from one realization.  Only the estimates for the first two components are presented; other looks similar.}
\label{fig:MISE}
\end{figure}

{\bf Experiment 1}. 
In this experiment, we study the performance of our method under different values of $(n, N, K)$.    
In the baseline setting, $(n, N, K)=(100, 100, 3)$. We then consider three other settings by (i) fixing $(n, N)$ as in the baseline and changing $K$ to $6$, (ii) fixing $(n, K)$ as in the baseline and changing $N$ to $10$, and (iii) fixing $(N, K)$ as in the baseline and changing $n$ to $10$. In each setting, the true densities $g_1(x), \ldots, g_K(x)$ are constructed as in \eqref{g-in-simu}, where the bump function is $\Phi_{\beta}(x; a)$ in \eqref{bumps} with $\beta = 2$. 
In our method, we use a Gaussian kernel. We set $M=2\lfloor K\log(Nn)\rfloor$ as in \eqref{chooseM} and select the bins as in \eqref{bins-d=1}. The bandwidth is selected by minimizing the the data-driven criteria, $\widetilde{\text{AMISE}}(h; \widehat{\bm{g}})$ (see \eqref{CV} and Section~\ref{subsec:tuning}). 
Recall that the optimal bandwidth is $h\asymp [K/(Nn)]^{1/(2\beta+1)}$. 
We thus re-parametrize $h=h_t = [K/(Nn)]^t$ and minimize over $t$ on a grid of 20 equally spaced points on $(0,1)$.  
This grid choice does not use any knowledge of $\beta$ and covers a wide range of $h$.

In Figure~\ref{fig:MISE}, for each of the four settings, we plot $\hat{g}_k(x)$ (red solid line) in one realization and the true $g_k(x)$ (black dashed line); to save space, we only show these plots for the first two densities, which results in a total of 8 plots. We also report  the integrated squared error (ISE$_k$) for individual $\hat{g}_k(x)$ and the MISE (equal to $\frac{1}{K}\sum_{k=1}^K$ISE$_k$), averaged on 100 repetitions. 
%
The results suggest that the performance decreases as $K$ increases and as $N$ or $n$ decreases. This is consistent with our theory.

{\bf Experiment 2}. 
In this experiment, we study the sensitivity of our estimator to the tuning parameters $h$ and $M$. We consider the baseline setting in Experiment~1, where $(n, N, K)=(100, 100, 3)$ and $\beta = 2$.
When examining the effect of the bandwidth \(h\), the number of bins \(M\) is fixed at $M=2\lfloor K\log(Nn)\rfloor$ as in Experiment 1. We then let the bandwidth $h$ vary over a grid of the form $h_\ell = [K/(Nn)]^{\alpha_\ell}, \ 1 \le \ell \le L$, where $\{\alpha_\ell\}_{\ell=1}^L$ are equally spaced points in $(0,1)$ with $L = 20$. Conversely, when studying the effect of \(M\), the bandwidth \(h\) is fixed at its optimal value $h=0.31$ (obtained from Experiment 1) and we also let $M$ vary over a logarithmically spaced grid centered at $M_0$: $M = M_0 \cdot 10^{t},\ t \in [-0.2,\,0.8]$, where 10 points are uniformly spaced in $t$. 
This two-step procedure isolates the influence of each tuning parameter while holding the other at the optimal level.  The results are shown in Figure~\ref{fig:handM}. 
As $h$ increases, the performance of our estimator initially improves and then deteriorates, leading to a sweet spot. This is consistent with our theory, as $h$ controls the bias-variance trade-off. 
The performance is insensitive to $M$ in a wide range. This is also as anticipated, because by the design of our estimator, $M$ only affects the topic modeling error, which is not the  leading error term.  

\begin{figure}[tbp]
\centering
\includegraphics[width=0.5\textwidth]{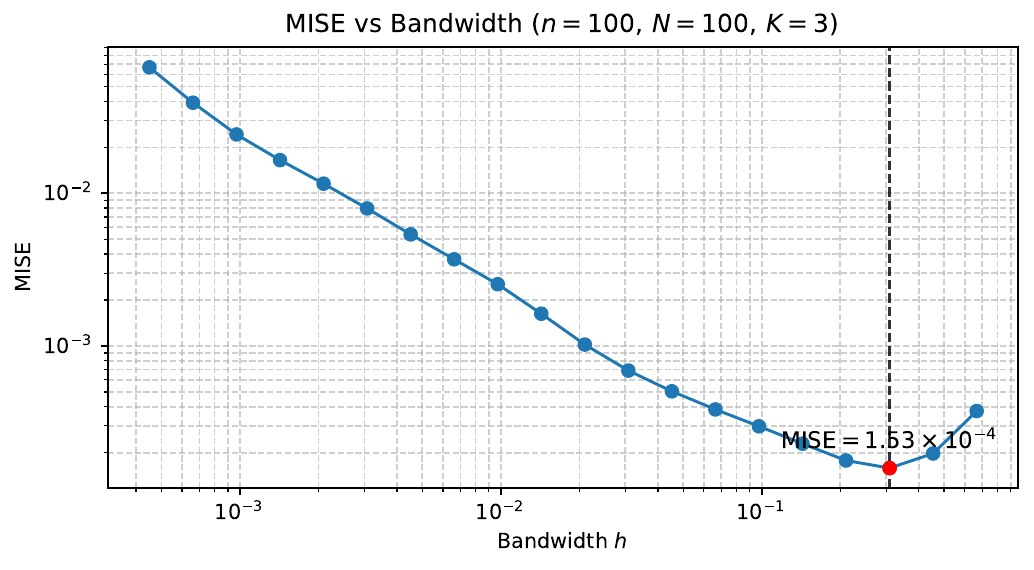}\hfill
\includegraphics[width=0.5\textwidth]{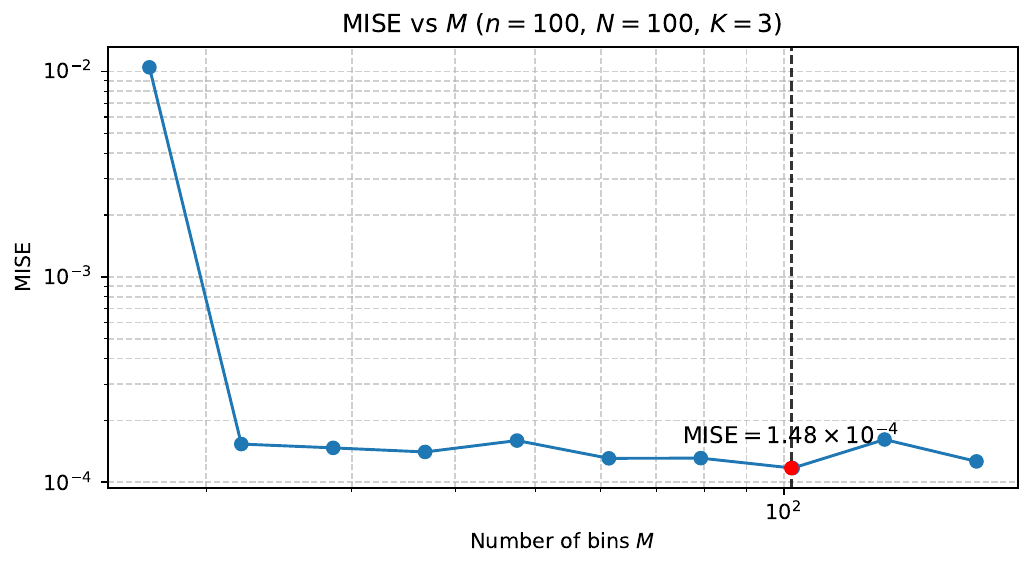}
\caption{Effect of the bandwidth \(h\) and the number of bins \(M\) on MISE for $n=100$, $N=100$ and $K=3$ (Experiment~2).}
\label{fig:handM}
\end{figure}

{\bf Experiment 3}. 
In this experiment, we compare our estimator with the one in \cite{austern2025poisson}. 
Let $(n, N, K)=(100, 100, 3)$. We construct $g_1(x), \ldots, g_K(x)$ as in \eqref{bumps}-\eqref{g-in-simu}. 
We take two values of $\beta$: $\beta=2$ (high smoothness) and $\beta=0.6$ (moderate smoothness). 

Both methods have two tuning parameters, the bandwidth $h$ and the bin size $M$. 
For a fair comparison, we use the ideal tuning parameters for both methods, which are determined by minimizing the true MISE. In detail, for the estimator in \cite{austern2025poisson}, 
the optimal $M$ is of the order $M_0' = \lfloor (Nn/(\log n))^{\frac{1}{2\theta_1(\beta)+1}}\rfloor$, with $\theta_1(\beta)=(\beta\wedge 1)\cdot1(\beta<2)+\frac{2\beta \cdot 1(\beta\ge 2)}{2+\beta}$.  
We consider a grid,  $M = M_0' \cdot 10^{t}$, for 10 points of $t$ that are uniformly spaced in $[-0.2,\,0.8]$. 
Similarly, let $h_0' = (Nn)^{-1/(2\beta+1)}$ and consider a grid, $h = h_0' \cdot 10^{t}$, for 20 points of $t$ that are uniformly spaced in $[-0.7,\,0.7]$. Combining them gives $10\times 20=200$ pairs of $(M, h)$. For all pairs, $M$ and $h$ are at their optimal orders, and the differences across pairs lie in the constants. 
We select the $(M^*, h^*)$ that minimizes the true MISE over 100 repetitions. 
For our estimator, the optimal $M$ is of the order $M_0=\lfloor K\log(Nn)\rfloor$ and the optimal $h$ is of the order $h_0= (K/(Nn))^{1/(2\beta+1)}$. Since the performance is insensitive to $M$, we fix $M=2M_0$. Meanwhile, we consider a grid, $h=h_0 \cdot 10^{t}$, for 20 equally spaced $t$ in $[-0.7, \, 0.7]$. We select the $h^{*}$ that minimizes the true MISE over 100 repetitions.  
By using such ideal tuning parameters, we can compare the best performances of these methods, without worrying about the effect of data-driven tuning parameter selection.

In Figure \ref{fig:comparsion}, the first and second plots correspond to the case of $\beta = 2$, 
and the third and the fourth plots correspond to $\beta = 0.6$. 
The estimated densities by our method (from one realization) are plotted in red, and the ones by the method in \cite{austern2025poisson} are plotted in blue. 
As discussed right below Theorem \ref{thm:upperbound}, two estimators have similar performances when $0<\beta<1$, while our method has a strictly better performance when $\beta>1$. 
The numerical results here support our theoretical arguments. 
\begin{figure}[htbp]
\centering

\includegraphics[width=0.24\textwidth]{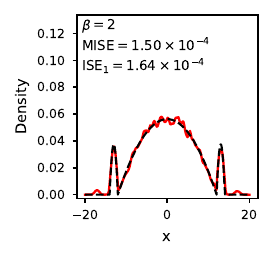}\hfill
\includegraphics[width=0.24\textwidth]{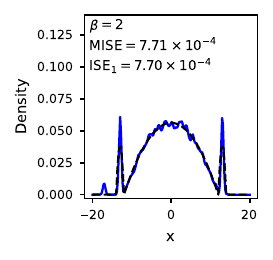}\hfill
\includegraphics[width=0.24\textwidth]{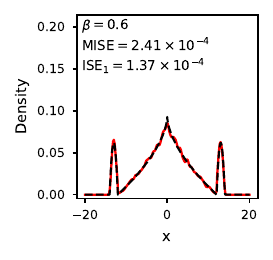}\hfill
\includegraphics[width=0.24\textwidth]{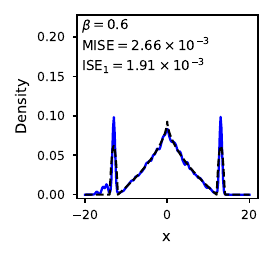}
\caption{Comparison of our estimator, which corresponds to the first and third plots,  with the one in \cite{austern2025poisson}, which corresponds to the second and fourth plots (Experiment~3).}
\label{fig:comparsion}
\end{figure}

\section{Proof of the Minimax Upper Bound}\label{sec:UBproof}



In this section, we prove Theorem~\ref{thm:main}. Specifically, Section~\ref{subsec:UBproof-preliminary} contains preliminary lemmas, 
Section~\ref{subsec:UBproof-decompose} presents a decomposition of the error into three terms, and Sections~\ref{subsec:UBproof-bias}-\ref{subsec:stochastic-error-2} analyze each of the three terms separately. 
Proofs of secondary lemmas are relegated to the supplementary material.

\subsection{Preliminaries}\label{subsec:UBproof-preliminary}
Our analysis frequently uses properties of a certain type of incomplete $U$-statistics. 
Suppose we have random variables:
\beq \label{x-rv}
\mbox{$ \{x_{ij}\}_{1\le i\le n,1\le j\le N_i}$ are independent across $i$; for any fixed $i$, they are i.i.d. across $j$}. 
\eeq

We first consider an incomplete U-process defined by these random variables and an arbitrary trivariate function $h(x; y,z)$:
\beq \label{incomplete-U-process}
    H_{n,N}(x)=\frac{1}{nN(N-1)}\sum_{i=1}^n\sum_{1\le j\neq j'\le N}h(x;x_{ij},x_{ij'}).
\eeq
\begin{lemma}[Integrated variance bound for incomplete $U$-processes]\label{lem:uprocess-bound}
Consider the incomplete $U$-process in \eqref{incomplete-U-process}. 
Suppose there exists $b_i>0$ such that   $\mbb{E}[\int h^2(x;x_{i1},x_{i2})dx]\le b_i^2$, for each $1\leq i\leq n$.  Let $\theta_i(x)=\mbb{E}[h(x;x_{i1},x_{i2})]$ and $\theta(x)=n^{-1}\sum_{i=1}^{n}\theta_i(x)$. Then,  
    \[
        \mbb{E}\biggl[\int (H_{n,N}(x)-\theta(x))^2dx\biggr]\le \frac{4(\sigma^2+b^2)}{nN}, 
    \]
where $\sigma^2=n^{-1}\sum_{i=1}^{n}\mbb{E}\left[\int (h(x;x_{i1},x_{i2})-\theta_i(x))^2dx\right]$ and $b^2=n^{-1}\sum_{i=1}^n b_i^2$. 
\end{lemma}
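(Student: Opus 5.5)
The plan is to reduce the claim to a variance computation for each group-level U-statistic and then use independence across groups. Write $H_{n,N}(x)-\theta(x)=n^{-1}\sum_{i=1}^n (U_i(x)-\theta_i(x))$, where
\[
U_i(x)=\frac{1}{N(N-1)}\sum_{1\le j\neq j'\le N} h(x;x_{ij},x_{ij'}).
\]
Each $U_i(x)$ is an (asymmetric-kernel) U-statistic with $\mathbb{E}U_i(x)=\theta_i(x)$, since $j\neq j'$ and the $x_{ij}$ are i.i.d. within group $i$. The groups are independent, so by Fubini the cross terms vanish:
\[
\mathbb{E}\int (H_{n,N}-\theta)^2dx=\frac{1}{n^2}\sum_{i=1}^n \int \mathrm{Var}(U_i(x))\,dx.
\]
It therefore suffices to show $\int \mathrm{Var}(U_i(x))dx\le 4(\sigma_i^2+b_i^2)/N$, where $\sigma_i^2=\mathbb{E}\int (h(x;x_{i1},x_{i2})-\theta_i(x))^2dx$. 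Averaging over $i$ then gives exactly $4(\sigma^2+b^2)/(nN)$.

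For fixed $i$ and $x$, center the kernel by setting $\tilde h(y,z)=h(x;y,z)-\theta_i(x)$ and expand
\[
\mathrm{Var}(U_i(x))=\frac{1}{N^2(N-1)^2}\sum_{(j,j')}\sum_{(l,l')}\mathbb{E}\bigl[\tilde h(x_{ij},x_{ij'})\tilde h(x_{il},x_{il'})\bigr].
\]
Any pair of ordered index pairs with $\{j,j'\}\cap\{l,l'\}=\emptyset$ contributes zero, by independence and mean zero. The remaining pairs split into two groups.
\begin{itemize}
\item Pairs with $\{j,j'\}=\{l,l'\}$: there are $2N(N-1)$ of them. By Cauchy--Schwarz each term is at most $\mathbb{E}\tilde h^2$, for either ordering of the arguments. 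After integrating over $x$, this group contributes at most $2\sigma_i^2/(N(N-1))\le 4\sigma_i^2/N^2$.
\item Pairs sharing exactly one index: there are at most $4N(N-1)(N-2)$ of them. Each term is again bounded via Cauchy--Schwarz by the second moment of $\tilde h$ with the arguments possibly swapped. After integrating, this group contributes at most $4\sigma_i^2/N$ up to the swapped-argument issue.
\end{itemize}

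The point needing care is that the kernel is not symmetric, so Cauchy--Schwarz produces $\mathbb{E}\tilde h(x_{i2},x_{i1})^2$. Since $x_{i1},x_{i2}$ are i.i.d., this equals $\mathbb{E}\tilde h(x_{i1},x_{i2})^2$, so the bound is unaffected.

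To land exactly on the stated constant $4(\sigma^2+b^2)$ rather than, say, $6\sigma^2$, I would do a Hoeffding decomposition instead of the crude count. Write
\[
\tilde h(y,z)=\phi_1(y)+\phi_2(z)+r(y,z),\qquad \phi_1(y)=\mathbb{E}\tilde h(y,x_{i2}),\quad \phi_2(z)=\mathbb{E}\tilde h(x_{i1},z),
\]
where $r$ is degenerate. The linear part of $U_i$ is $N^{-1}\sum_j(\phi_1+\phi_2)(x_{ij})$, whose variance is at most $2(\mathbb{E}\phi_1^2+\mathbb{E}\phi_2^2)/N\le 4\sigma_i^2/N$ by Jensen. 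The degenerate part has variance $O(\mathbb{E}r^2/N^2)$, and $\mathbb{E}r^2\le \mathbb{E}h^2\lesssim b_i^2$. Since $N^{-2}\le N^{-1}$, this yields the term $4b_i^2/N$.

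The step I expect to be the main obstacle is the bookkeeping of constants in the degenerate part. Its $L^2$ norm must be bounded by $b_i$ through the uncentered second moment $\mathbb{E}\int h^2\le b_i^2$, using $\mathbb{E}r^2\le \mathbb{E}\tilde h^2\le \mathbb{E}h^2$. Then $1/(N(N-1))\le 2/N^2\le 1/N$ for $N\ge 2$ has to be absorbed so that the total is at most $4(\sigma_i^2+b_i^2)/N$. Measurability and Fubini are routine because all integrals are of nonnegative functions.
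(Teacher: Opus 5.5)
Your argument is correct, and everything through your bullet list is the paper's own proof. Independence across groups removes the cross terms. The within-group double sum is then split into disjoint pairs (which contribute zero), pairs on the same index set, and pairs sharing one index, and each of the last two is handled by Cauchy--Schwarz. In the paper, the one-overlap terms are bounded through $b_i^2$, which is where $b^2$ enters. Your count of $2N(N-1)$ same-set pairs is more careful than the paper's Case~2, which lists only $(j,j')=(l,l')$ and omits the swapped ordering. This omission is harmless given the slack in the constant.

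Where you go astray is in thinking the crude count misses the stated constant. Your own numbers already close:
\[
\frac{4\sigma_i^2}{N}+\frac{4\sigma_i^2}{N^2}\le \frac{4(\sigma_i^2+b_i^2)}{N}.
\]
This holds because $\sigma_i^2=\int\mathrm{Var}\,h(x;x_{i1},x_{i2})\,dx\le\mathbb{E}\int h^2\,dx\le b_i^2$, a fact you use yourself later. An exact tally even gives
\[
\frac{\bigl[2N(N-1)+4N(N-1)(N-2)\bigr]\sigma_i^2}{N^2(N-1)^2}=\frac{(4N-6)\,\sigma_i^2}{N(N-1)}\le\frac{4\sigma_i^2}{N},
\]
with no $b_i$ needed at all.

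So the Hoeffding decomposition is an unnecessary detour. It is valid, and it buys structural information: the non-linear remainder is genuinely of order $\mathbb{E}r^2/N^2$. If you keep it, you should state explicitly that the linear and degenerate parts are uncorrelated, since that is what lets you add their variances. This follows from degeneracy of $r$: $\mathbb{E}[\phi_k(x_{ij})\,r(x_{il},x_{il'})]=0$ for $k=1,2$ in every index configuration.
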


We then consider an incomplete U-statistics defined by $x_{ij}$'s and an arbitrary bi-viarate function $h^*(x_1, x_2)$:
\beq \label{eq:u-stat}
    H^*_{n,N}=\frac{1}{nN(N-1)}\sum_{i=1}^n\sum_{1\le j\neq j'\le N}h^*(x_{ij},x_{ij'}). 
\eeq
\begin{lemma}[Bernstein inequality for incomplete $U$-statistics]\label{lem:ustats-bound}
Consider the incomplete $U$-statistic in \eqref{eq:u-stat}. Suppose there exists $b>0$ such that $\max_{1\leq i\leq n, 1\leq j\neq j'\leq N}|h^*(x_{ij},x_{ij'})|\le b$. 
Let  $\theta=n^{-1}\sum_{i=1}^{n} \mbb{E}h^*(x_{i1},x_{i2})$ and $\sigma^2=n^{-1}\sum_{i=1}^{n}\Var(h^*(x_{i1},x_{i2}))$. For all $\delta\in (0,1)$, with probability $1-\delta$,
    \[
        |H^*_{n,N}-\theta|\le 2\sigma\sqrt{\frac{2\log(2/\delta)}{nN}}+\frac{16b\log(2/\delta)}{3nN}. 
    \] 
\end{lemma}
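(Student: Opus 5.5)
The plan is to reduce $H^*_{n,N}-\theta$ to a sum of independent, bounded, mean-zero pieces and then apply the classical Bernstein inequality. The tool is Hoeffding's decoupling for U-statistics, done separately within each group $i$. For fixed $i$, write $U_i=\frac{1}{N(N-1)}\sum_{j\neq j'}h^*(x_{ij},x_{ij'})$. Let $m=\lfloor N/2\rfloor$. For a permutation $\sigma$ of $\{1,\dots,N\}$, define
\[
V_i(\sigma)=\frac1m\sum_{r=1}^m h^*\bigl(x_{i\sigma(2r-1)},x_{i\sigma(2r)}\bigr).
\]
Then $U_i=\frac{1}{N!}\sum_\sigma V_i(\sigma)$, because each ordered pair $(j,j')$ appears with equal frequency. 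Consequently
\[
H^*_{n,N}-\theta=\frac{1}{N!}\sum_\sigma\Bigl[\frac1n\sum_{i=1}^n\bigl(V_i(\sigma)-\mathbb{E}V_i(\sigma)\bigr)\Bigr].
\]
Here $\mathbb{E}V_i(\sigma)=\mathbb{E}h^*(x_{i1},x_{i2})$, which is why the centering matches $\theta$. If $N$ is odd, we simply drop one index in each permutation; the averaging identity still holds.

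For a fixed $\sigma$, the inner quantity $W(\sigma)=\frac{1}{nm}\sum_{i}\sum_{r}\bigl(h^*(\cdot,\cdot)-\mathbb{E}\bigr)$ is an average of $nm$ independent terms. Independence holds across $i$ by assumption, and within $i$ because the pairs are disjoint. Each term is bounded by $2b$ after centering. The total variance satisfies $\sum\Var\le m\sum_i\Var(h^*(x_{i1},x_{i2}))=nm\sigma^2$.

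Next, use convexity: $\mathbb{E}\exp(\lambda(H^*-\theta))\le \frac{1}{N!}\sum_\sigma\mathbb{E}\exp(\lambda W(\sigma))$, by Jensen applied to the average over $\sigma$. The Bernstein moment-generating-function bound for sums of independent bounded variables is
\[
\mathbb{E}e^{\lambda W(\sigma)}\le\exp\Bigl(\frac{\lambda^2\sigma^2/(nm)}{2(1-2b\lambda/(3nm))}\Bigr).
\]
Applying Chernoff to both tails gives, with probability $1-\delta$,
\[
|H^*-\theta|\le\sigma\sqrt{\frac{2\log(2/\delta)}{nm}}+\frac{4b\log(2/\delta)}{3nm}.
\]
Finally, substitute $m\ge N/4$ when $N\ge2$; for example $N=3$ gives $m=1\ge 3/4$, and in general $\lfloor N/2\rfloor\ge N/4$. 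This gives $\sqrt{1/(nm)}\le 2/\sqrt{nN}$ and $1/(nm)\le 4/(nN)$, so the bound becomes
\[
2\sigma\sqrt{\frac{2\log(2/\delta)}{nN}}+\frac{16b\log(2/\delta)}{3nN},
\]
which is exactly the claimed form.

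The main obstacle is the step where the Bernstein bound is carried through the average over permutations. Bernstein's inequality cannot be applied directly to $U_i$, because its summands are dependent. The fix is to keep everything at the level of the moment generating function and apply Jensen before Chernoff, as Hoeffding does; a union bound over permutations is not needed. I would also check the constants: the centered bound $2b$ contributes the $2b/3$ in the Bernstein denominator, and combined with $m\ge N/4$ this yields the $16/3$.
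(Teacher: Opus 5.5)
Your proof is correct and follows essentially the same route as the paper: write each group's U-statistic as a permutation average of block means over disjoint pairs, apply Jensen to that average inside the moment generating function, bound the MGF of the resulting sums of independent bounded terms by a Bernstein-type bound, and apply Chernoff to both tails. The differences are minor and still give the paper's exact constants. You use $\lfloor N/2\rfloor$ blocks, which keeps the averaging identity and the centering exact for odd $N$; the paper's $\lceil N/2\rceil$ with a zero-padded last pair slightly breaks both. You also combine a sharper Bernstein inversion with $m\ge N/4$, where the paper uses a cruder max-type inversion with $k\ge N/2$.
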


Both lemmas are proved in the supplementary material. The proofs use some known techniques for analyzing U-statistics, specially tailored to our setting where the random variables satisfy \eqref{x-rv}. For instance, the proof of Lemma~\ref{lem:ustats-bound} relies on decoupling the randomness in the tuple $(x_{ij},x_{ij'},x_{i\ell},x_{i\ell'})$ for $j\neq j'$ and $\ell\neq \ell'$.

\subsection{A decomposition of the error} \label{subsec:UBproof-decompose}
For any $\bar{\epsilon}>0$ and matrices $\bar{G}\in\mathbb{R}^{M\times K}$, $\bar{S}\in\mathbb{R}^{n\times M}$, and $\bar{T}\in\mathbb{R}^{M\times M}$, we define a mapping:
\beq \label{g(GST)}
{\boldsymbol g}(\bar{G}, \bar{S}, \bar{T}, \bar{\epsilon}) := \bar{G}'\bar{G} \bigl(\bar{G}'\bar{T}\bar{G} + \bar{\epsilon}I_K\bigr)^{-1}\bar{G}'\bar{S}{\bf 1}_n. 
\eeq
Our estimator in \eqref{our-estimator2} is defined as
\[
\widehat{\boldsymbol g}^+(x)  = {\boldsymbol g}\bigl(\widehat{G}, S(x), T, \epsilon \bigr).
\] 
To facilitate the analysis, we further introduce two intermediate quantities: 
\beq \label{all-estimators}
{\boldsymbol g}^*(x) = {\boldsymbol g}\bigl(G,\, \mathbb{E}[S(x)],\, \mathbb{E}[T], 0\bigr),\qquad \widehat{\boldsymbol g}^*(x) = {\boldsymbol g}\bigl(G, S(x), T, \epsilon \bigr). 
\eeq
The first estimator ${\boldsymbol g}^*(x)$ is the population counterpart of our estimator. The second one $\widehat{\boldsymbol g}^*(x)$ is an ideal version of our estimator assuming that the true topic matrix $G$ is given. 

Now, we decompose the error $\widehat{\boldsymbol g}(x)-{\boldsymbol g}(x)$ into three terms: 
\beq \label{error-decomposition}
\widehat{\boldsymbol g}^+(x)-{\boldsymbol g}(x) = \underbrace{{\boldsymbol g}^*(x)-{\boldsymbol g}(x)}_{\text{bias}} + \underbrace{\widehat{\boldsymbol g}^*(x)-{\boldsymbol g}^*(x)}_{\text{variance}} + \underbrace{\widehat{\boldsymbol g}^+(x)-\widehat{\boldsymbol g}^*(x)}_{\text{secondary error}}. 
\eeq
The first term is the non-stochastic `bias' term, 
the second term is the main stochastic error term, arising from the randomness in $(S, T)$, and the third term is the secondary stochastic error term, due to estimating $G$ by  $\widehat{G}$. We will control these three terms separately.

\subsection{The non-stochastic `bias' term} \label{subsec:UBproof-bias}
We bound the first term in \eqref{error-decomposition}. Recall that $T$ and $S(x)$ are defined in \eqref{def-T-U}-\eqref{def-S-K}. Let $K_{ij}(x)$ and $U_{ijm}$ be the same as in those definitions. 
We introduce a vector
\beq \label{def-k(x)}
\bm{k}(x) = \bigl(K_{11}(x), K_{21}(x), \ldots, K_{n1}(x)\bigr)'\in\mathbb{R}^n.  
\eeq
For any fixed $(i, m)$, $\mathbb{E}[U_{ijm}]$ and $\mathbb{E}[K_{ij}(x)]$ are the same across $j$. In addition, $(U_{ijm}, K_{ij}(x))$ is independent of $U_{ij'm'}$ when $j\neq j'$ (this is true even when $m=m'$).  We immediately have:
\[
\mathbb{E}[T_{mm'}] = \sum_{i=1}^n \mathbb{E}[U_{i1m}]\cdot \mathbb{E}[U_{i1m'}], \qquad \mathbb{E}[S_{mi}(x)] = \mathbb{E}[k_i(x)]\cdot\mathbb{E}[U_{i1m}]. 
\]  
Moreover, we note that $
\mathbb{E}[U_{i1m}]=\mathbb{P}(X_{ij}\in {\cal B}_m)=\sum_{k=1}^{K}\pi_i(k)G_{mk} = (G\Pi')_{mi}$. 
Combining it with the above equations gives
\[
\mathbb{E}[S(x)] =G\Pi' \diag\bigl(\mbb{E}[k(x)]\bigr), \qquad \mathbb{E}[T] = G\Pi'\Pi G'. 
\]
We plug $(\bar{G}, \bar{S}, \bar{T})=(G, \mathbb{E}[S(x)], \mathbb{E}[T])$  into \eqref{g(GST)}. It follows by direct calculations that 
\beq \label{g-star-expression}
\bg^*(x)= (\Pi'\Pi)^{-1}\Pi'\mathbb{E}[{\boldsymbol k}(x)]. 
\eeq
Meanwhile, since our model implies that ${\boldsymbol f}(x)=\Pi {\boldsymbol g}(x)$, we can write 
\beq \label{g-true-expression}
{\boldsymbol g}(x)=(\Pi'\Pi)^{-1}\Pi'{\boldsymbol f}(x).
\eeq
Comparing \eqref{g-true-expression} with \eqref{g-star-expression}, the `bias' term comes from approximating ${\boldsymbol f}(x)$ by $\mathbb{E}[{\boldsymbol k}(x)]$. Using properties of the kernel function ${\cal K}(\cdot)$, we can prove the following lemma:   

\begin{lemma} \label{lem:error1}
Under Assumptions~\ref{assump1}-\ref{assump2}, there exists a constant $C_1>0$ such that
\[
\int \|{\boldsymbol g}^*(x)-{\boldsymbol g}(x)\|^2dx\leq C_1 Kh^{2\beta}.
\]

\end{lemma}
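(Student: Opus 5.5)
Starting from \eqref{g-star-expression} and \eqref{g-true-expression}, I would write
\[
\bg^*(x)-\bg(x)=(\Pi'\Pi)^{-1}\Pi'\bigl(\mathbb{E}[\bm{k}(x)]-\bm{f}(x)\bigr).
\]
Since $\mathbb{E}[K_{i1}(x)]=({\cal K}_h* f_i)(x)$ and $f_i=\sum_k\pi_i(k)g_k$, linearity of convolution gives $\mathbb{E}[\bm{k}(x)]-\bm{f}(x)=\Pi\bigl(({\cal K}_h*\bg)(x)-\bg(x)\bigr)$, where the convolution acts entrywise. The projection then collapses: $(\Pi'\Pi)^{-1}\Pi'\Pi=I_K$. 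So
\[
g_k^*(x)-g_k(x)=({\cal K}_h*g_k)(x)-g_k(x)\quad\mbox{for every }k.
\]
This reduces the lemma to the classical integrated squared bias of a KDE for each $g_k$, summed over $k$. The only role of Assumption~\ref{assump2} is to guarantee that $\Pi'\Pi$ is invertible, since $\lambda_{\min}(\Sigma_\Pi)>c_2$.

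It then suffices to prove $\int[({\cal K}_h*g)(x)-g(x)]^2dx\le C_1h^{2\beta}$ for any $g$ satisfying Assumption~\ref{assump1}, with $C_1$ depending only on $L_0$, $\beta$ and ${\cal K}$. Summing over the $K$ densities then gives $C_1Kh^{2\beta}$.

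For this I would follow the standard Nikol'ski argument (Tsybakov, Prop.~1.5). Write $\ell=\lfloor\beta\rfloor$ and $({\cal K}_h*g)(x)-g(x)=\int{\cal K}(u)[g(x-hu)-g(x)]du$. Apply Taylor's formula with integral remainder to order $\ell$:
\[
g(x-hu)-g(x)=\sum_{j=1}^{\ell-1}\frac{(-hu)^j}{j!}g^{(j)}(x)+\frac{(-hu)^\ell}{(\ell-1)!}\int_0^1(1-\tau)^{\ell-1}g^{(\ell)}(x-\tau hu)\,d\tau.
\]
The vanishing moments in \eqref{cond-kernel} kill the polynomial terms. Using $\int_0^1(1-\tau)^{\ell-1}d\tau=1/\ell$ together with $\int u^\ell{\cal K}(u)du=0$, one may subtract $g^{(\ell)}(x)$ inside the remainder. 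The bias becomes
\[
\int\!\!\int_0^1{\cal K}(u)\frac{(-hu)^\ell}{(\ell-1)!}(1-\tau)^{\ell-1}\bigl[g^{(\ell)}(x-\tau hu)-g^{(\ell)}(x)\bigr]d\tau\,du.
\]
When $\ell=0$ the expression is simply $\int{\cal K}(u)[g(x-hu)-g(x)]du$.

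Next apply the generalized Minkowski inequality, so that the $L^2(dx)$ norm passes inside the $du\,d\tau$ integral. Assumption~\ref{assump1} gives $\|g^{(\ell)}(\cdot-\tau hu)-g^{(\ell)}\|_2\le L_0|\tau hu|^{\beta-\ell}$. The $L^2$ norm of the bias is therefore at most
\[
\frac{L_0h^\beta}{\ell!}\int|u|^\beta|{\cal K}(u)|du,
\]
which is finite by \eqref{cond-kernel}. Squaring gives the claim.

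The main obstacle is mild. One must justify that the Taylor expansion with integral remainder holds for a.e.\ $x$, which requires $g^{(\ell)}$ to be absolutely continuous (implicit in the Nikol'ski class). One must also justify the Fubini and Minkowski interchanges, which is fine because all the integrands are absolutely integrable given \eqref{cond-kernel}. Apart from these, the identity $\bg^*-\bg=({\cal K}_h*\bg)-\bg$ makes the dependence on $K$ trivial.
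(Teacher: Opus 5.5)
Your proposal is correct and follows essentially the same route as the paper: the identity $(\Pi'\Pi)^{-1}\Pi'\Pi=I_K$ (the paper's $n^{-1}B\Pi=I$) reduces each $g_k^*$ to ${\cal K}_h*g_k$, and the standard argument (Taylor expansion with integral remainder, vanishing kernel moments, subtracting $g^{(\ell)}(x)$, Minkowski's integral inequality, then the Nikol'ski condition) gives the $h^{2\beta}$ bound per component, summed over the $K$ densities. Your write-up is in fact slightly more careful than the paper's. You use the correct sign $g(x-hu)$ in the convolution, whereas the paper writes $g(x+uh)$, which is harmless by symmetry of the moment conditions. You also treat the $\ell=0$ case explicitly.
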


\subsection{The main stochastic error} \label{subsec:stochastic-error-1}
We study the second term in \eqref{error-decomposition}. Define an event 
\beq \label{small-prob-event}
{\cal F} = \biggl\{ \lambda_{\min}(\widehat{G}'T\widehat{G}) \geq   \frac{\tilde{c} Kn}{M^2\log^2(n)} \biggr\}.
\eeq
Let $I_{\cal F}$ be a Bernoulli variable indicating that the event ${\cal F}$ happens.  
We write 
\begin{align*}
\|\widehat{\boldsymbol g}^*(x)-{\boldsymbol g}^*(x)\|^2 &= \|\widehat{\boldsymbol g}^*(x)-{\boldsymbol g}^*(x)\|^2 \cdot I_{\cal F} +  \|\widehat{\boldsymbol g}^*(x)-{\boldsymbol g}^*(x)\|^2\cdot I_{{\cal F}^c} \cr
&\leq \|\widehat{\boldsymbol g}^*(x)-{\boldsymbol g}^*(x)\|^2 \cdot I_{\cal F} + 2\|\widehat{\boldsymbol g}^*(x)\|^2 \cdot I_{{\cal F}^c} + 2\|{\boldsymbol g}^*(x)\|^2 \cdot I_{{\cal F}^c}.  
\end{align*}
It follows that
\begin{align} \label{hat-g-star-decompose}
\mathbb{E}\biggl[ \int\|\widehat{\boldsymbol g}^*(x) & -{\boldsymbol g}^*(x)\|^2dx  \biggr]  \leq 
\mathbb{E}\left[ \int\|\widehat{\boldsymbol g}^*(x)-{\boldsymbol g}^*(x)\|^2 dx \cdot I_{\cal F}\right] \cr
&\qquad +2 \mathbb{E}\left[ \int \|\widehat{\boldsymbol g}^*(x)\|^2dx \cdot I_{{\cal F}^c} \right] + 2\mathbb{P}({\cal F}^c)\cdot \int \|{\boldsymbol g}^*(x)\|^2dx. 
\end{align}
On the right hand side of \eqref{hat-g-star-decompose}, the first term is dominating. Therefore, we focus on controlling the first term. The other two terms will be considered in the end of this subsection. 


To study the first term in \eqref{hat-g-star-decompose}, we notice that on the event 
${\cal F}$, the quantity $\epsilon$ in \eqref{our-estimator2} is exactly zero, so that  
\beq \label{hat-g-star-2}
\widehat{\boldsymbol g}^{*}(x) = {\boldsymbol g}\bigl(G, S(x), T, 0 \bigr). 
\eeq
When \eqref{hat-g-star-2} holds, we can express $\widehat{\bg}^*(x)$ in terms of $\bg^*(x)$ and some error terms. Recall that $Q=G(G'G)^{-1}$. We further write $\Omega=Q(\Pi'\Pi)^{-1}$ for brevity. 
\begin{lemma} \label{lem:error2-decompose}
Let $\widehat{\bg}^*(x)$ and $\bg^*(x)$ be as defined in \eqref{all-estimators}. On the event ${\cal F}$, 
\beq \label{error2-decompose}
\widehat{\boldsymbol g}^*(x) = \bigl[I_K +\Omega'(T-\mathbb{E}T)Q\bigr]^{-1}\bigl[\bg^*(x) + \Omega'(S-\mathbb{E}S){\bf 1}_n\bigr]. 
\eeq 
\end{lemma}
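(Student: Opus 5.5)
On the event ${\cal F}$ we have $\epsilon=0$, so by \eqref{hat-g-star-2} the quantity $\widehat{\bg}^*(x)$ equals $G'G(G'TG)^{-1}G'S(x){\bf 1}_n$. The plan is a purely algebraic manipulation built on two identities: $Q'G=I_K$ and the population formulas $\mathbb{E}T=G\Pi'\Pi G'$, $\mathbb{E}[S(x)]=G\Pi'\diag(\mathbb{E}[\bm{k}(x)])$ from Section~\ref{subsec:UBproof-bias}.

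\textbf{Step 1: rewrite in terms of $Q$.} Since $Q=G(G'G)^{-1}$, we have $G=Q(G'G)$. Substituting gives $G'TG=(G'G)Q'TQ(G'G)$ and $G'S{\bf 1}_n=(G'G)Q'S{\bf 1}_n$. Hence
\[
\widehat{\bg}^*(x)=(G'G)(G'G)^{-1}(Q'TQ)^{-1}(G'G)^{-1}(G'G)Q'S{\bf 1}_n=(Q'TQ)^{-1}Q'S(x){\bf 1}_n .
\]
Invertibility of $Q'TQ$ on ${\cal F}$ follows from the invertibility of $G'TG$, which holds because $\widehat{G}$ in ${\cal F}$ is replaced by $G$ in the definition of $\widehat{\bg}^*$. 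Strictly, we need $\lambda_{\min}(G'TG)>0$; the $\epsilon$ indicator should guarantee this on the relevant event, and I expect this to be the only delicate point. If ${\cal F}$ is stated only for $\widehat{G}$, I would take ${\cal F}$ to also force $\epsilon=0$ and assume $G'TG$ is invertible there, as the statement implicitly does.

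\textbf{Step 2: the population pieces.} Using $Q'G=I_K$,
\[
Q'(\mathbb{E}T)Q=\Pi'\Pi, \qquad Q'(\mathbb{E}S){\bf 1}_n=\Pi'\mathbb{E}[\bm{k}(x)] .
\]
Combined with \eqref{g-star-expression}, this gives $(\Pi'\Pi)^{-1}Q'(\mathbb{E}S){\bf 1}_n=\bg^*(x)$.

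\textbf{Step 3: factor.} Write
\[
Q'TQ=\Pi'\Pi+Q'(T-\mathbb{E}T)Q=\Pi'\Pi\bigl[I_K+(\Pi'\Pi)^{-1}Q'(T-\mathbb{E}T)Q\bigr].
\]
Since $\Omega'=(\Pi'\Pi)^{-1}Q'$ (because $\Pi'\Pi$ is symmetric), the bracket is exactly $I_K+\Omega'(T-\mathbb{E}T)Q$, and it is invertible because $Q'TQ$ and $\Pi'\Pi$ are. Therefore
\[
(Q'TQ)^{-1}=\bigl[I_K+\Omega'(T-\mathbb{E}T)Q\bigr]^{-1}(\Pi'\Pi)^{-1}.
\]
Applying this to $Q'S{\bf 1}_n=Q'(\mathbb{E}S){\bf 1}_n+Q'(S-\mathbb{E}S){\bf 1}_n$ gives
\[
(\Pi'\Pi)^{-1}Q'S{\bf 1}_n=\bg^*(x)+\Omega'(S-\mathbb{E}S){\bf 1}_n,
\]
which yields \eqref{error2-decompose}.

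The argument is routine; the only obstacle is the invertibility bookkeeping addressed in Step 1.
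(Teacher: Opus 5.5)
Your argument is correct. It uses the same ingredients as the paper ($Q'G=I_K$, the formulas for $\mathbb{E}T$ and $\mathbb{E}S$, and $\Omega'=(\Pi'\Pi)^{-1}Q'$), but organizes them differently.

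The paper never simplifies $\widehat{\bg}^*(x)$. It writes out $\widehat{\bg}^*(x)-\bg^*(x)$ and applies the resolvent identity $A^{-1}-B^{-1}=B^{-1}(B-A)A^{-1}$ to $(G'TG)^{-1}-(G'\mathbb{E}TG)^{-1}$. This yields the implicit linear equation $\widehat{\bg}^*-\bg^*=\Omega'(S-\mathbb{E}S){\bf 1}_n-\Omega'(T-\mathbb{E}T)Q\,\widehat{\bg}^*$, and the paper then says ``solving this equation gives the claim.'' That step tacitly needs $I_K+\Omega'(T-\mathbb{E}T)Q$ to be invertible.

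Your route is more direct: you reduce to $\widehat{\bg}^*(x)=(Q'TQ)^{-1}Q'S(x){\bf 1}_n$ and then factor $Q'TQ=\Pi'\Pi\,[I_K+\Omega'(T-\mathbb{E}T)Q]$. This makes explicit what the paper leaves unstated, namely that the bracket is invertible exactly when $G'TG$ is.

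On the point you flag, your stated justification does not work. ${\cal F}$ constrains $\widehat{G}'T\widehat{G}$, not $G'TG$, so the fact that $\widehat{G}$ ``is replaced by $G$'' in $\widehat{\bg}^*$ proves nothing about invertibility of $G'TG$. The paper makes the same implicit assumption when it writes \eqref{hat-g-star-2}, so you are no worse off than the paper. Your factorization also shows how to secure the assumption: intersect ${\cal F}$ with the high-probability event of Lemma~\ref{lem:error22}. On that event $\|\Omega'(T-\mathbb{E}T)Q\|=o(1)<1$, so $Q'TQ$, and hence $G'TG$, is invertible.
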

When $\Omega'(S-\mathbb{E}S){\bf 1}_n$ and $\Omega'(T-\mathbb{E}T)Q$ are zero, the right hand side of \eqref{error2-decompose} is exactly equal to $\bg^*(x)$. 
Consequently, the difference between $\widehat{\bg}^*(x)$ and $\bg^*(x)$ can be controlled by
\[
\|\Omega'(S-\mathbb{E}S){\bf 1}_n\|\qquad \mbox{and}\qquad  \|\Omega'(T-\mathbb{E}T)Q\|.
\]
Below, we study these two error terms separately.  

First, we consider the $K$-dimensional random vector $\Omega'(S -\mathbb{E}S){\bf 1}_n$. 
For each $1\leq k\leq K$, define a tri-variate function 
\[
h_k(x; x_1, x_2)=\sum_{m=1}^M\Omega_{mk}{\cal K}_h(x-x_1)1\{x_2\in {\cal B}_m\}.
\]
Using the definition of $S=S(x)$ in \eqref{def-S-K}, we can immediately deduce that 
\beq \label{error21}
e_k'\Omega'(S -\mathbb{E}S){\bf 1}_n =u_k(x)-\mathbb{E}u_k(x), \quad u_k(x):= \sum_{i=1}^n\frac{\sum_{1\leq j\neq j'\leq N_i}h_k(x; X_{ij}, X_{ij'})}{N_i(N_i-1)}.  
\eeq
Here, the variables $\{X_{ij}\}_{1\le i\le n,1\le j\le N}$ are independent across $i$; and for any fixed $i$, they are i.i.d. across $j$. Hence, $u_k(x)$ is an incomplete $U$-process indexed by $x$. 
Applying Lemma~\ref{lem:uprocess-bound} to the specific form in \eqref{error21}, we can prove the following result:
\begin{lemma} \label{lem:error21}
Under Assumption~\ref{assump2}, 
there exists a constant $C_2>0$ such that 
\[
\mbb{E}\left[\int \bigl\|\Omega'[S(x)-\mathbb{E}S(x)]{\bf 1}_n\bigr\|^2dx\right] \le C_2 (Nnh)^{-1}K^2. 
\]
\end{lemma}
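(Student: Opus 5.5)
We apply Lemma~\ref{lem:uprocess-bound} coordinatewise and then sum over $k$. Fix $k$. By \eqref{error21}, the $k$th coordinate equals $u_k(x)-\mathbb{E}u_k(x)$, where $u_k(x)=n\cdot H_{n,N}(x)$ for the kernel $h_k(x;y,z)=\sum_m\Omega_{mk}{\cal K}_h(x-y)1\{z\in{\cal B}_m\}$ (with $N_i\asymp N$; unequal $N_i$ only change constants in the proof of Lemma~\ref{lem:uprocess-bound}). Lemma~\ref{lem:uprocess-bound} therefore gives
\[
\mathbb{E}\int (u_k-\mathbb{E}u_k)^2dx\le n^2\cdot \frac{8b^2}{nN}=\frac{8n b^2}{N},
\]
where $b^2=n^{-1}\sum_i b_{i}^2$ and $\sigma^2\le b^2$. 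Summing over $k$ reduces the claim to showing $\sum_k b^2_{(k)}\lesssim K^2/(n^2h)$.

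To bound $b_i^2$, use independence of $X_{i1}$ and $X_{i2}$ and $\int{\cal K}_h^2(x-y)dx=h^{-1}\int{\cal K}^2$:
\[
\mathbb{E}\int h_k^2(x;X_{i1},X_{i2})dx=\frac{\|{\cal K}\|_2^2}{h}\,\mathbb{E}\Bigl[\Omega_{m(X_{i2}),k}^2\Bigr]=\frac{\|{\cal K}\|_2^2}{h}\sum_m (G\Pi')_{mi}\,\Omega_{mk}^2.
\]
Here $\|{\cal K}\|_2^2\le \sup|{\cal K}|\int|{\cal K}|<\infty$ by \eqref{cond-kernel}. Summing over $k$ gives $\sum_k b_{i,(k)}^2\le C h^{-1}\sum_m \|G\Pi'\|_{\max}\,\|e_m'\Omega\|^2\le Ch^{-1}\|G{\bf 1}_K\|_\infty\,\|\Omega\|_F^2$, using $(G\Pi')_{mi}\le \max_m (G{\bf 1}_K)_m$ since $\pi_i$ lies in the simplex.

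The main step is bounding $\|\Omega\|_F^2$, with $\Omega=G(G'G)^{-1}(\Pi'\Pi)^{-1}$. Under the eigenvalue conditions (condition (i) of Theorem~\ref{thm:main}, implied via Lemma~\ref{lem:key} by Assumption~\ref{assump2}), we have $G'G=(K/M)\Sigma_G$ and $\Pi'\Pi=(n/K)\Sigma_\Pi$. Hence
\[
\|\Omega\|_F^2\le \|G\|_F^2\,\|(G'G)^{-1}\|^2\,\|(\Pi'\Pi)^{-1}\|^2\lesssim K\cdot\frac{K}{M}\cdot\frac{M^2}{K^2}\cdot\frac{K^2}{n^2}=\frac{MK^2}{n^2}.
\]
Combining this with $\|G{\bf 1}_K\|_\infty\lesssim K/M$ gives $\sum_k b_{i,(k)}^2\lesssim h^{-1}(K/M)(MK^2/n^2)=K^3/(n^2h)$. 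This is one factor of $K$ too large, so I would refine the estimate by avoiding the crude $\|(G\Pi')_{\cdot i}\|_\infty$ bound.

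The refinement averages over $i$ first: $n^{-1}\sum_i (G\Pi')_{mi}=(G\eta)_m/n$ with $\eta=\Pi'{\bf 1}_n$, and $\eta_{\max}\lesssim n/K$, so $n^{-1}\sum_i(G\Pi')_{mi}\lesssim (G{\bf 1}_K)_m/K\lesssim 1/M$. Then $\sum_k b^2\lesssim h^{-1}M^{-1}\|\Omega\|_F^2\lesssim K^2/(n^2h)$. Plugging this into the display above yields $\mathbb{E}\int\|\Omega'(S-\mathbb{E}S){\bf 1}_n\|^2dx\le C(n/N)\cdot K^2/(n^2h)=CK^2/(Nnh)$.

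I expect the only delicate point to be this bookkeeping: using the averaged quantity $b^2$ from Lemma~\ref{lem:uprocess-bound} together with the balance of $\eta$ from Assumption~\ref{assump2}, rather than worst-case bounds over $i$, so as to recover exactly $K^2$ rather than $K^3$.
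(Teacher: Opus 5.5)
Your proposal is correct and, once you discard the crude $K^3$ bound, it is essentially the paper's proof. You apply Lemma~\ref{lem:uprocess-bound} coordinatewise, compute $b_i^2=h^{-1}\|{\cal K}\|_2^2\sum_m f_i^{\text{hist}}(m)\Omega_{mk}^2$, average over $i$ using the balance of $\Pi'{\bf 1}_n$ (which is Lemma~\ref{lem:key}(b)) to get the $M^{-1}$ factor, and finish with $\sum_{k,m}\Omega_{mk}^2\lesssim MK^2/n^2$; the only cosmetic difference is that the paper bounds this last quantity as $K\|\Omega\|^2\le K\|Q\|^2\|(\Pi'\Pi)^{-1}\|^2$ rather than through $\|G\|_F^2$.
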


Next, we study $\Omega'(T-\mathbb{E}T)Q$. This is an asymmetric matrix. We first shift our attention to a symmetric matrix as follows: Note that  $\Omega= Q(\Pi'\Pi)^{-1}$. Since $\|(\Pi'\Pi)^{-1}\|\leq Cn^{-1}K$ by Assumption~\ref{assump2}(a), it follows that 
\beq  \label{error22-temp1}
\|\Omega'(T-\mathbb{E}T)Q\|\leq Cn^{-1}K\cdot \|Q'(T-\mathbb{E}T)Q\|. 
\eeq 
It remains to bound $\|Q'(T-\mathbb{E}T)Q\|$.

We apply the approach in \cite{vershynin2010introduction} to bound the spectral norm of the symmetric matrix $Q'(T-\mathbb{E}T)Q$. 
Let ${\cal N}_\epsilon$ denote an $\epsilon$-net of the unit sphere of $\mathbb{R}^K$.  By \cite[Lemma 5.4]{vershynin2010introduction}, 
\beq \label{error22-temp2}
\|W\|\leq (1-2\epsilon)^{-1}\max_{v\in{\cal N}_\epsilon}|v'Wv|, \qquad \mbox{for any symmetric matrix $W\in\mathbb{R}^{K\times K}$}. 
\eeq
To this end, the analysis reduces to bounding $|v'Q'(T-\mathbb{E}T)Qv|$ for a fixed unit-norm vector $v\in\mathbb{R}^K$.
By \eqref{def-T-U}, 
\[
T = \sum_{i=1}^n \frac{1}{N_i(N_i-1)} \sum_{1\leq j\neq j'\leq N} Q'U_{ij}U_{ij'}'Q.
\]
Let $s_{ij}=v'Q'U_{ij}$. 
We have
\beq \label{u-stat-T-ET}
v'Q'(T-\mathbb{E}T)Qv 
= \frac{1}{N(N-1)}\sum_{i=1}^n \sum_{1\leq j\neq j'\leq N}\bigl(s_{ij}s_{ij'}'  - \mathbb{E}[s_{ij}s_{ij'}']\bigr). 
\eeq
Since $\{s_{ij}\}_{1\leq i\leq n,1\leq j\leq N}$ are independent random variables, the above defines an incomplete $U$-statistic. We apply Lemma~\ref{lem:ustats-bound} to bound the right hand side of \eqref{u-stat-T-ET} and further combine this result with \eqref{error22-temp2}. It leads to the following lemma:
\begin{lemma} \label{lem:error22}
Under the conditions of Theorem~\ref{thm:main}, 
there exists a constant $C_3>0$ such that with probability $1-o((Nn)^{-5})$,  
\[
\|\Omega'(T-\mathbb{E}T)Q\|\le C_3(nN)^{-1/2}\sqrt{K(\log(Nn)+K)}.
\]
\end{lemma}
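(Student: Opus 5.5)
The plan follows the route sketched before the statement. By \eqref{error22-temp1} it suffices to show that, with probability $1-o((Nn)^{-5})$,
\[
\|Q'(T-\mathbb{E}T)Q\|\le C\,\frac{n}{K}\,(nN)^{-1/2}\sqrt{K(\log(Nn)+K)}=C\sqrt{\frac{n(\log(Nn)+K)}{NK}}.
\]
We proceed in three steps: control the size of $Q$, prove a tail bound for a fixed direction $v$, and take a union bound over a net.

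\textbf{Step 1 (size of $Q$).} By condition (i), $\lambda_{\min}(G'G)\gtrsim K/M$, so $\|(G'G)^{-1}\|\lesssim M/K$. For a fixed unit vector $v$, set $q=Qv=G(G'G)^{-1}v\in\mathbb{R}^M$ and bound its sup norm. Each row of $G$ satisfies $\|e_m'G\|_2\le\|e_m'G\|_1=(G{\bf 1}_K)_m=O(K/M)$. Hence
\[
|q_m|\le \|e_m'G\|\cdot\|(G'G)^{-1}\|=O(1).
\]
The variable $s_{ij}=v'Q'U_{ij}=q_{m}$ for the bin $m$ containing $X_{ij}$, so $|s_{ij}|\le b_0=O(1)$. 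This gives the boundedness constant $b=O(1)$ for the kernel $h^*(x,y)=s(x)s(y)$ in Lemma~\ref{lem:ustats-bound}.

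\textbf{Step 2 (variance proxy).} Write $\mathbb{E}[s_{i1}^2]=\sum_m q_m^2\,\mathbb{P}(X_{i1}\in\mathcal{B}_m)$. Since $\mathbb{P}(X_{i1}\in\mathcal{B}_m)\le (G{\bf 1}_K)_m=O(K/M)$, we get
\[
\mathbb{E}[s_{i1}^2]\lesssim \frac{K}{M}\,\|q\|^2\le \frac{K}{M}\,\|(G'G)^{-1}\|\lesssim 1 .
\]
Because $s_{i1}$ and $s_{i2}$ are independent, $\mathrm{Var}(s_{i1}s_{i2})\le(\mathbb{E}s_{i1}^2)^2=O(1)$, so $\sigma^2=O(1)$.

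\textbf{Step 3 (fixed-$v$ tail bound).} The statistic in \eqref{u-stat-T-ET} equals $n$ times $H^*_{n,N}-\theta$ in the normalization of \eqref{eq:u-stat}. Condition (iii) lets us treat $N_i\asymp N$ by absorbing constants, or by applying the lemma with weights. Lemma~\ref{lem:ustats-bound} then gives, with probability $1-\delta$,
\[
|v'Q'(T-\mathbb{E}T)Qv|\lesssim n\left(\sqrt{\frac{\log(2/\delta)}{nN}}+\frac{\log(2/\delta)}{nN}\right).
\]

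\textbf{Step 4 (net argument).} Take a $1/4$-net $\mathcal{N}$ of the unit sphere in $\mathbb{R}^K$, with $|\mathcal{N}|\le 9^K$. Choose $\delta=(Nn)^{-6}9^{-K}$, so that $\log(2/\delta)\asymp \log(Nn)+K$. A union bound and \eqref{error22-temp2} then give
\[
\|Q'(T-\mathbb{E}T)Q\|\lesssim \sqrt{\frac{n(\log(Nn)+K)}{N}}+\frac{\log(Nn)+K}{N}
\]
with probability $1-o((Nn)^{-5})$. The second term is dominated by the first whenever $\log(Nn)+K\lesssim nN$, which follows from $K^2\log(Nn)=O(Nn)$.

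\textbf{Main obstacle.} Multiplying the Step 4 bound by $Cn^{-1}K$ from \eqref{error22-temp1} yields $CK\sqrt{(\log(Nn)+K)/(nN)}$. This is a factor $\sqrt{K}$ larger than the claimed $\sqrt{K(\log(Nn)+K)/(nN)}$. The crude bounds $\sup|q_m|=O(1)$ and $\sigma^2=O(1)$ are therefore too loose by factors of $K$ and must be sharpened.

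For the variance, use instead
\[
\mathbb{E}s_{i1}^2=q'\,\mathrm{diag}(G\pi_i)\,q .
\]
Averaging over $i$ and using $\|{\bf 1}_n'\Pi\|_\infty=O(n/K)$ gives
\[
\frac1n\sum_i \mathbb{E}s_{i1}^2 = q'\,\mathrm{diag}\!\Big(G\,\tfrac{\Pi'{\bf 1}_n}{n}\Big)\,q \lesssim \frac{1}{K}\,q'\,\mathrm{diag}(G{\bf 1}_K)\,q \lesssim \frac{1}{M}\|q\|^2\lesssim \frac1K .
\]
This improves the average second moment by a factor $K$. Feeding it into $\sigma^2$ requires handling $\mathrm{Var}(s_{i1}s_{i2})\le \mathbb{E}s_{i1}^2\cdot\mathbb{E}s_{i2}^2$, averaged over $i$. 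Bounding this average by $\max_i\mathbb{E}s_{i1}^2\cdot\frac1n\sum_i\mathbb{E}s_{i1}^2$ only gives $O(1/K)$ if the per-document second moments are balanced, so I would expect to need some care here, or else a tighter bound on $\|q\|_\infty$ through the anchor and row structure of $G$. With $\sigma\lesssim K^{-1/2}$, the fixed-$v$ bound improves to $\sqrt{n\log(2/\delta)/(NK)}$, which gives exactly the claimed rate. This sharpened variance computation is the step I expect to be hardest, and it is where I would concentrate the effort.
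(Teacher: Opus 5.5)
Your route is the paper's route. You reduce to $\|Q'(T-\mathbb{E}T)Q\|$ via \eqref{error22-temp1}, take $b=O(1)$ from $|q_m|\le\|e_m'G\|\,\|(G'G)^{-1}\|$, apply Lemma~\ref{lem:ustats-bound} for a fixed $v$, and union-bound over a $1/4$-net of size $9^K$ with $\log(2/\delta)\asymp\log(Nn)+K$.

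The step you flag as the main obstacle is already closed by your own estimates, and no balance condition is needed. Your Step 2 gives $\mathbb{E}s_{i1}^2\lesssim 1$ \emph{uniformly in $i$}; indeed $\mathbb{E}s_{i1}^2\le\|q\|_\infty^2=O(1)$ by Step 1. Combined with your averaged bound $q'\diag(G\Pi'{\bf 1}_n/n)\,q\lesssim\|q\|^2/M\lesssim 1/K$, this yields
\[
\sigma^2\le\frac1n\sum_{i=1}^n(\mathbb{E}s_{i1}^2)^2\le\Bigl(\max_i\mathbb{E}s_{i1}^2\Bigr)\cdot\frac1n\sum_{i=1}^n\mathbb{E}s_{i1}^2\lesssim\frac1K .
\]
So $\sigma\lesssim K^{-1/2}$, which is exactly the rate required. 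The $b=O(1)$ bound is fine as it stands, since $b$ only enters the lower-order Bernstein term, so no sharper control of $\|q\|_\infty$ is needed. Discard the crude $\sigma^2=O(1)$ version of Steps 2--4.

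The paper reaches the same $\sigma^2=O(1/K)$ by a slightly different factorization. It writes
\[
\sum_i(\mathbb{E}s_{i1}^2)^2=\sum_{m,m'}q_m^2q_{m'}^2\,(G\Pi'\Pi G')_{mm'}\le\|q\|^4\max_{m,m'}(G\Pi'\Pi G')_{mm'} .
\]
It then bounds the maximal entry by $\|\Pi'\Pi\|_{\max}\max_m\|e_m'G\|_1^2=O(n/K)\cdot O(K^2/M^2)$, which with $\|q\|^2=O(M/K)$ gives $O(n/K)$. Your version and the paper's use the same two inputs: column sums of $\Pi$ of order $n/K$, and row sums of $G$ of order $K/M$.

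With the sharpened $\sigma$, redo your dominance check. The linear Bernstein term $(\log(Nn)+K)/N$ is dominated by $\sqrt{n(\log(Nn)+K)/(NK)}$ if and only if $K(\log(Nn)+K)\lesssim Nn$. This follows from $K^2\log(Nn)=O(Nn)$. Multiplying by $\|(\Pi'\Pi)^{-1}\|=O(K/n)$ then gives the claimed bound.
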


Finally, we combine Lemmas~\ref{lem:error21}-\ref{lem:error22} with the expression in  \eqref{error2-decompose}. By some elementary analysis, we can prove the following lemma. 



\begin{lemma}\label{lem:error23}
Under the conditions of Theorem~\ref{thm:main}, there is a constant $C_4>0$ such that
\begin{align*}
    \mbb{E}\left[\int \|\bm{\widehat{g}}^*(x)-\bm{g}^*(x)\|^2dx\cdot I_{\cal F}\right]\le C_4K\left(\frac{K^2}{Nn}+\frac{K}{Nnh}\right).
\end{align*}
\end{lemma}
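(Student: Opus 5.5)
We write $A:=\Omega'(T-\mathbb{E}T)Q$ and $\bm{r}(x):=\Omega'(S(x)-\mathbb{E}S(x)){\bf 1}_n$. On ${\cal F}$, Lemma~\ref{lem:error2-decompose} gives
\[
\widehat{\bm g}^*(x)-\bm g^*(x) = (I_K+A)^{-1}\bigl[\bm r(x) - A\,\bm g^*(x)\bigr].
\]
This follows from the identity $(I+A)^{-1}(\bm g^*+\bm r)-\bm g^* = (I+A)^{-1}(\bm r - A\bm g^*)$. The plan is to split on a high-probability event ${\cal E}$ on which $\|A\|\le 1/2$. On ${\cal E}\cap{\cal F}$ we have $\|(I+A)^{-1}\|\le 2$, and hence
\[
\|\widehat{\bm g}^*(x)-\bm g^*(x)\|^2 \le 8\|\bm r(x)\|^2 + 8\|A\|^2\|\bm g^*(x)\|^2.
\]
We take ${\cal E}$ to be the event of Lemma~\ref{lem:error22}, on which $\|A\|\le C_3(Nn)^{-1/2}\sqrt{K(\log(Nn)+K)}$. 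This is $o(1)$ under $K^2\log(Nn)=O(Nn)$, at least after adjusting constants; if needed, one uses that $K^2=o(Nn)$ is implicit in the regime of interest. The event ${\cal E}^c$ has probability $o((Nn)^{-5})$.

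\emph{Main terms on ${\cal E}\cap{\cal F}$.} Integrating, the first term is at most $8\,\mathbb{E}\int\|\bm r(x)\|^2dx\le 8C_2K^2/(Nnh)$ by Lemma~\ref{lem:error21}, which is $K\cdot K/(Nnh)$. For the second term we use the deterministic bound on ${\cal E}$ and need $\int\|\bm g^*(x)\|^2dx\lesssim K$. By \eqref{g-star-expression}, $\bm g^*(x)=(\Pi'\Pi)^{-1}\Pi'\,\mathbb{E}\bm k(x)$, and $\mathbb{E}k_i(x)=({\cal K}_h*f_i)(x)=\pi_i'({\cal K}_h*\bm g)(x)$. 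Hence $\bm g^*(x)=({\cal K}_h*\bm g)(x)$ componentwise. Young's inequality then gives $\int\|\bm g^*\|^2\le \|{\cal K}\|_1^2\int\|\bm g\|^2 = \|{\cal K}\|_1^2 K\,\mathrm{tr}(\Sigma_{\bm g})/K\cdot K$, which I expect to be $\lesssim K$ using $\lambda_{\max}(\Sigma_{\bm g})\le c_1^{-1}$. More precisely, $\int\|\bm g\|^2=K\,\mathrm{tr}(\Sigma_{\bm g})$, and this is $\le K^2/c_1$, not $K$. This needs care: use instead $\int g_k^2\le \max_x g_k\cdot 1$. Because $\bar g\le c_1^{-1}$ only controls the average, I would bound $\sum_k\int g_k^2\le \int \sum_k g_k\cdot\max_k g_k\le$ something. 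The cleanest route is $\sum_k g_k^2\le(\sum_k g_k)^2 = K^2\bar g^2$, which is again too big. So I would fall back on $\int\|\bm g\|^2 = K\,\mathrm{tr}\Sigma_{\bm g}\le K^2c_1^{-1}$. Combined with $\|A\|^2\lesssim K(\log(Nn)+K)/(Nn)$, this gives $K^3(K+\log)/(Nn)$, which is too large. Therefore I would tighten by not using the operator norm. Instead, write $\int\|A\bm g^*\|^2dx=\mathrm{tr}(A\,\Gamma A')$ with $\Gamma=\int\bm g^*\bm g^{*\prime}\preceq C\,\Sigma_{\bm g}K\cdot$, so that $\|\Gamma\|\lesssim K$. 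This yields $\|A\|^2\|\Gamma\|\lesssim K\cdot K(K+\log(Nn))/(Nn)$, which matches the target $K\cdot K^2/(Nn)$ up to the $\log$ term, absorbed when $K\gtrsim\log(Nn)$ or by a refined version of Lemma~\ref{lem:error22} in expectation. The correct observation is that the operator bound of $\Gamma$ (not its trace) is what enters, since $\|A\bm g^*\|^2\le\|A\|^2\|\bm g^*\|^2$ and $\int\|\bm g^*\|^2=\mathrm{tr}\Gamma$. I would therefore use $\mathrm{tr}(A\Gamma A')\le \|\Gamma\|\,\|A\|_F^2$ and bound $\mathbb{E}\|A\|_F^2$ directly via second moments of the incomplete U-statistics, similarly to Lemma~\ref{lem:uprocess-bound}. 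Each of the $K^2$ entries has variance $\lesssim (K/n)^2\cdot n(M/K)^{2}\cdot(K/M)^2/(nN)$-type terms, and I expect $\mathbb{E}\|A\|_F^2\lesssim K^2/(Nn)$ after using conditions (i)-(ii).

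\emph{Remainder on ${\cal F}\cap{\cal E}^c$.} On ${\cal F}$ the matrix $G'TG$ (with the regularization, $\epsilon$) has smallest eigenvalue at least of order $Kn/(M^2\log^2)$, so $\|\widehat{\bm g}^*(x)\|$ is bounded polynomially in $Nn$ times $\|S(x){\bf 1}_n\|$, whose integrated square is polynomial in $Nn/h$. Multiplying by $\mathbb{P}({\cal E}^c)=o((Nn)^{-5})$, via Cauchy–Schwarz, makes this negligible relative to $K^3/(Nn)$.

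The main obstacle is the $A\bm g^*$ term: getting $K\cdot K^2/(Nn)$ rather than an extra factor of $K$ or $\log$. This requires controlling $\|\int \bm g^*\bm g^{*\prime}\|\lesssim K$ together with a Frobenius/second-moment bound on $A$, rather than the crude product of operator norm and trace.
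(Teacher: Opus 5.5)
Your final argument works and has the same skeleton as the paper's: restrict to the event of Lemma~\ref{lem:error22}, use Lemma~\ref{lem:error2-decompose} to write $\widehat{\bm{g}}^*-\bm{g}^*=(I_K+A)^{-1}(\bm{r}-A\bm{g}^*)$, bound the $\bm{r}$ part by Lemma~\ref{lem:error21}, and treat the complement by a crude bound times $o((Nn)^{-5})$. The genuine difference is how you handle the cross term $A\bm{g}^*$.

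The paper bounds it by $\|A\|^2\int\|\bm{g}^*\|^2dx$, using the high-probability bound $\|A\|^2\le C_3^2K(\log(Nn)+K)/(Nn)$ and $\int\|\bm{g}^*\|^2dx=O(K)$. It removes the logarithm through the standing assumption $h\ll\log^{-1}(Nn)$, which gives $K^2\log(Nn)/(Nn)\le K^2/(Nnh)$. That is the absorption you were missing; no condition like $K\gtrsim\log(Nn)$ is needed.

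You instead use $\int\|A\bm{g}^*\|^2dx=\mathrm{tr}(A\Gamma A')\le\|\Gamma\|\,\|A\|_F^2$, together with $\|\Gamma\|\lesssim K$ and $\mathbb{E}\|A\|_F^2\lesssim K^2/(Nn)$. The last bound is true but only asserted, and the per-entry heuristic you sketch does not yield it. It does follow because $\|Q'U_{ij}\|=\|(G'G)^{-1}G'e_m\|=O(1)$ almost surely, with $m$ the bin of $X_{ij}$, exactly as in the proof of Lemma~\ref{lem:spectralnorms}. The counting in the proof of Lemma~\ref{lem:uprocess-bound} then gives $\mathbb{E}\|Q'(T-\mathbb{E}T)Q\|_F^2\lesssim n/N$, and combining with $\|(\Pi'\Pi)^{-1}\|\lesssim K/n$ finishes it. Your intermediate use of $\|A\|^2\|\Gamma\|$ as a bound on $\mathrm{tr}(A\Gamma A')$ is invalid (take $A=\Gamma=I_K$), so the Frobenius step is essential.

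As for what each route buys: the paper's recycles a lemma it needs anyway, but relies on $\int\|\bm{g}\|^2dx=O(K)$. As you noticed, this does not follow from Assumption~\ref{assump2}; its lower eigenvalue bound even forces $\int\|\bm{g}\|^2dx\ge c_1K^2$. It does follow from Assumption~\ref{assump1} with fixed $L_0$, since the bias argument of Lemma~\ref{lem:error1} with $h=1$ gives $\|g_k\|_{L^2}\le\|{\cal K}\|_{L^2}+CL_0$. Your route needs only $\lambda_{\max}(\Sigma_{\bm{g}})=O(1)$ and produces no logarithm, at the price of one extra second-moment computation.

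On ${\cal F}\cap{\cal E}^c$ you copy the paper's step, including its flaw. ${\cal F}$ and $\epsilon$ are defined through $\widehat G'T\widehat G$, and $\epsilon=0$ on ${\cal F}$, so nothing there bounds $\lambda_{\min}(G'TG)$ from below. Your claimed order $Kn/(M^2\log^2)$ is therefore unjustified, as is the paper's $\|(G'TG)^{-1}\|\le CK^{-1}M^2n^{-1}\log(Nn)$. On ${\cal E}$ itself there is no issue, since $G'TG=G'G\,\Pi'\Pi\,(I_K+A)\,G'G$ is invertible when $\|A\|<1$. The almost-sure bound on the complement, however, needs a separate argument in both versions.
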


Lemma~\ref{lem:error23} bounds the first term in \eqref{hat-g-star-decompose}. We still need to bound the other two terms. In our analysis, we first deduce a high-probability lower bound for $\lambda_{\min}(\widehat{G}'T\widehat{G})$. It implies that
\[
\mathbb{P}({\cal F}^c) = o((Nn)^{-5}). 
\] 
This permits us to control the third term in \eqref{hat-g-star-decompose}. 
Moreover, on the event ${\cal F}^c$, $\lambda_{\min}(\widehat{G}'T\widehat{G})$ is either larger than $\frac{Kn}{M^2\log^3(n)}$, or smaller. In the first case, $\epsilon=0$; and in the second case, $\epsilon=Kn/M^2$. In both cases, it holds that
\[
\lambda_{\min}\bigl( \widehat{G}'T\widehat{G} + \epsilon I_K\bigr)\geq \frac{Kn}{M^2\log^3(n)}. 
\]
Using this result, we can derive an upper bound for $\|\widehat{\boldsymbol g}^*(x)\|^2$ over the event ${\cal F}^c$. Denote this upper bound by $\zeta_n$. 
Then, the second term in \eqref{hat-g-star-decompose} is upper bounded by $\zeta_n^2\cdot \mathbb{P}({\cal F}^c)$, which can also be well-controlled. 
We formally prove the following lemma:

\begin{lemma}\label{lem:error2}
Under the conditions of Theorem~\ref{thm:main}, there is a constant $C'_4>0$ such that
\begin{align*}
    \mbb{E}\left[\int \|\bm{\widehat{g}}^*(x)-\bm{g}^*(x)\|^2dx\right]\le C'_4K\left(\frac{K^2}{Nn}+\frac{K}{Nnh}\right).
\end{align*}
\end{lemma}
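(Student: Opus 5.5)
We start from the decomposition \eqref{hat-g-star-decompose}. Its first term is already bounded by Lemma~\ref{lem:error23} by $C_4K(K^2/(Nn)+K/(Nnh))$. It therefore suffices to show that the two remaining terms, $2\mathbb{E}[\int\|\widehat{\bg}^*(x)\|^2dx\cdot I_{{\cal F}^c}]$ and $2\mathbb{P}({\cal F}^c)\int\|\bg^*(x)\|^2dx$, are of smaller order, say $O(K^3/(Nn))$. Every piece of this reduction rests on one high-probability lower bound for $\lambda_{\min}(\widehat{G}'T\widehat{G})$.

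\textbf{Step 1: $\mathbb{P}({\cal F}^c)=o((Nn)^{-5})$.}
Write $\widehat{G}'T\widehat{G}=\widehat{G}'\mathbb{E}[T]\widehat{G}+\widehat{G}'(T-\mathbb{E}T)\widehat{G}$, with $\mathbb{E}T=G\Pi'\Pi G'$. By conditions (i)--(ii), $\lambda_{\min}(G'G\,\Pi'\Pi\,G'G)\gtrsim (K/M)^2(n/K)=Kn/M^2$.
\begin{itemize}
\item Replacing $G$ by $\widehat{G}$ costs a relative error of order $\|\widehat{G}-G\|\,\|G\|^{-1}\lesssim \sqrt{M/K}\,\delta_n$. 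This is $o(1)$ because $M\delta_n^2=o(K)$, on the event of condition (v), which has probability $1-o((Nn)^{-5})$.
\item The fluctuation $\|\widehat{G}'(T-\mathbb{E}T)\widehat{G}\|$ is handled by the $\epsilon$-net argument \eqref{error22-temp2} together with Lemma~\ref{lem:ustats-bound}, exactly as in Lemma~\ref{lem:error22}. This gives a bound of order $(K/M)^2 n\sqrt{K(\log(Nn)+K)/(Nn)}$, again with probability $1-o((Nn)^{-5})$. Under $K^2\log(Nn)=O(Nn)$ this is $o(Kn/M^2)$.
\end{itemize}
Hence $\lambda_{\min}(\widehat{G}'T\widehat{G})\gtrsim Kn/M^2$ with probability $1-o((Nn)^{-5})$, which exceeds the threshold in \eqref{small-prob-event}. (If $\widehat{G}$ were not column-normalised we would first reduce to the same statement for $G$, since $\widehat{\bg}^*$ uses $G$ while $\epsilon$ uses $\widehat G$; the event ${\cal F}$ is defined through $\widehat G$, so the bound above is what is needed.)

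\textbf{Step 2: the third term.}
By \eqref{g-star-expression}, $\int\|\bg^*\|^2dx\le\|(\Pi'\Pi)^{-1}\Pi'\|^2\int\|\mathbb{E}\bm{k}(x)\|^2dx$. Using Assumption~\ref{assump2}, the kernel bounds \eqref{cond-kernel} and Young's inequality, $\int(\mathbb{E}K_{i1}(x))^2dx\le C$ uniformly in $i$. Together with $\|(\Pi'\Pi)^{-1}\Pi'\|^2\lesssim K/n$, this gives $\int\|\bg^*\|^2dx\le CK$. Multiplying by $\mathbb{P}({\cal F}^c)=o((Nn)^{-5})$ makes the third term negligible.

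\textbf{Step 3: the second term (the main obstacle).}
This requires a deterministic bound $\zeta_n$ on $\int\|\widehat{\bg}^*(x)\|^2dx$ on ${\cal F}^c$. On ${\cal F}^c$ we only know $\lambda_{\min}(\widehat{G}'T\widehat{G}+\epsilon I)\ge Kn/(M^2\log^3 n)$, and the matrix inverted in $\widehat{\bg}^*$ is $G'TG+\epsilon I$, not $\widehat{G}'T\widehat{G}+\epsilon I$. I plan to proceed as follows.
\begin{enumerate}
\item Use $T\succeq 0$ (it is a sum of $N(N-1)$-normalised Gram-type terms $\frac{1}{N_i(N_i-1)}[(\sum_j U_{ij})(\sum_j U_{ij})'-\sum_j U_{ij}U_{ij}']$). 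If positive semidefiniteness fails, bound $\|T\|\le n$ directly.
\item Whenever $\epsilon=Kn/M^2$, we have $\lambda_{\min}(G'TG+\epsilon I)\ge Kn/M^2$.
\item If $\epsilon=0$ on ${\cal F}^c$, fall back on a crude bound for $G'TG$ obtained from $\widehat{G}'T\widehat{G}$ via $\|T\|\le n$ and $\|\widehat{G}-G\|\le\delta_n$. Failing that, use a Cauchy--Schwarz split $\mathbb{E}[\cdot\, I_{{\cal F}^c}]\le(\mathbb{E}[(\cdot)^2])^{1/2}\mathbb{P}({\cal F}^c)^{1/2}$, combined with an inverse bound valid on the high-probability event of Step 1.
\end{enumerate}
Next, $\|G'G\|\lesssim K/M$. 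Since $\int\|G'S(x)\mathbf{1}_n\|^2dx\le \|G\|^2 n\sum_i\int S_{\cdot i}^2$ and $|K_{ij}|\le h^{-1}\sup|\mathcal K|$, we get $\int\|G'S\mathbf{1}_n\|^2dx\lesssim (K/M)n^2h^{-1}$ deterministically. Hence $\zeta_n\lesssim \mathrm{poly}(M,\log n)\cdot Kh^{-1}$, which is at most polynomial in $Nn$ because $h\gg K/(Nn)$. Multiplying by $\mathbb{P}({\cal F}^c)=o((Nn)^{-5})$ gives $o(K^3/(Nn))$.

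Summing the three pieces yields the lemma with $C_4'=C_4+1$.
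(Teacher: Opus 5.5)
Your plan follows the paper's own route. The main text bounds the first term of \eqref{hat-g-star-decompose} by Lemma~\ref{lem:error23}, derives $\mathbb{P}({\cal F}^c)=o((Nn)^{-5})$ from a high-probability lower bound on $\lambda_{\min}(\widehat G'T\widehat G)$, and handles the other two terms by an almost-sure bound on $\int\|\widehat{\bg}^*(x)\|^2dx$ times that probability. (The appendix makes the same split, with the event of Lemma~\ref{lem:error22} in place of ${\cal F}$.)

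The genuine gap is Step~3. You never produce $\zeta_n$, because none of your routes gives a deterministic lower bound on $\sigma_{\min}(G'TG+\epsilon I_K)$ on ${\cal F}^c$.
\begin{itemize}
\item $T$ is not positive semidefinite. We have $T=\sum_i[Y_iY_i'-\mathrm{diag}(Y_i)]/(N_i(N_i-1))$, and diagonal deletion destroys positivity: if $N_i=2$ and the two points fall in bins $m_1\neq m_2$, the $i$th summand is $\frac12(e_{m_1}e_{m_2}'+e_{m_2}e_{m_1}')$, which has eigenvalue $-\frac12$. In general one only has $\lambda_{\min}(T)\geq-\sum_i(N_i-1)^{-1}$. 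So for $\epsilon=Kn/M^2$ the available bound is $\lambda_{\min}(G'TG+\epsilon I_K)\geq Kn/M^2-CKn/(NM)$, which is informative only when $M\leq cN$; Theorem~\ref{thm:main} imposes no such restriction. Your fallback $\|T\|\lesssim n$ is weaker still, giving only $Kn/M^2-CKn/M$.
\item When $\epsilon=0$ on ${\cal F}^c$, you only know that $\widehat G'T\widehat G$ is well conditioned. Transferring this to $G'TG$ through $\|\widehat G-G\|\leq C\delta_n$ is not deterministic, since that bound holds only on the event of condition~(v).
\item The Cauchy--Schwarz split needs a polynomial bound on $\mathbb{E}[(\int\|\widehat{\bg}^*(x)\|^2dx)^2]$. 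That requires exactly the same inverse control on the bad event, so the argument is circular.
\end{itemize}

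For comparison, the paper closes this step by asserting that, ``since we add a small-order perturbation'', $\|(G'TG)^{-1}\|\leq CK^{-1}M^2n^{-1}\log(Nn)$ almost surely. It combines this with $\int\|S(x)\mathbf{1}_n\|^2dx\leq Ch^{-1}n^2$, which you also derive, to get $\zeta_n\lesssim KMh^{-1}\log^2(Nn)$. Your remark that such a bound does not follow directly is well taken: the trigger for $\epsilon$ is $\lambda_{\min}(\widehat G'T\widehat G)$, whereas $\widehat{\bg}^*$ inverts $G'TG+\epsilon I_K$, and the indefiniteness of $T$ makes matters worse. But identifying the obstruction is not removing it. You need either a real argument for an almost-sure inverse bound, or a modified intermediate estimator. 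Such an estimator would trigger its regularization on $G'TG$ itself, make it large enough to absorb the negative part of $T$, and reconcile the two triggers inside Lemma~\ref{lem:error3} on the high-probability event where both vanish.

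Step~1 also needs two routine repairs.
\begin{itemize}
\item You cannot run the $\epsilon$-net/U-statistic argument with $\widehat G$ in place of $G$, because $\widehat G$ is computed from the same counts as $T$. Write $\widehat G=G+(\widehat G-G)$ instead.
\item The main term is then $\|G'(T-\mathbb{E}T)G\|\leq\|G'G\|^2\|Q'(T-\mathbb{E}T)Q\|\lesssim(K/M)^2\sqrt{n(\log(Nn)+K)/(NK)}$ by Lemma~\ref{lem:spectralnorms}. Its relative size is $\sqrt{K(\log(Nn)+K)/(Nn)}=o(1)$. The order you state is larger by a factor $K$, and with it the claimed $o(Kn/M^2)$ would not follow from $K^2\log(Nn)=O(Nn)$.
\item The cross terms are $\lesssim\delta_n\sqrt{K/M}\,n/M$ on the event ${\cal E}$ of Lemma~\ref{lem:error3-prep}, of relative size $\sqrt{M/K}\,\delta_n=o(1)$.
\end{itemize}
Once repaired, this step supplies a detail the paper only asserts.
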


\subsection{The secondary stochastic error, and proof of Theorem~\ref{thm:main}} \label{subsec:stochastic-error-2}
We now control the last term in \eqref{error-decomposition}. This error arises from not knowing $G$ and estimating it by $\widehat{G}$. 

Let ${\cal F}$ be the event in \eqref{small-prob-event}. 
Over the event ${\cal F}$, we re-write $\widehat{\bg}(x)$ as a more explicit expression of $\widehat{\bg}^*(x)$. 

\begin{lemma} \label{lem:error3-decompose}
Write for brevity $R = (Q'TQ)^{-1}(G'G)^{-1}$, $\Delta_1 = \widehat{G}'\widehat{G}- G'G$, and $\Delta_2=\widehat{G}'T\widehat{G}-G'TG$. Over the event ${\cal F}$, 
\beq \label{error3-decompose}
\widehat{\boldsymbol g}(x) = \bigl[ I_K + (R\Delta_2-\Delta_1)(G'G+\Delta_1)^{-1}\bigr]^{-1}\bigl[ \widehat{\bg}^*(x) + R(\widehat{G}-G)'S{\bf 1}_n\bigr]. 
\eeq 
\end{lemma}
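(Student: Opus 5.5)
This is a purely algebraic identity, so the plan is to verify it by direct matrix manipulation. First fix the objects. On ${\cal F}$ we have $\epsilon=0$, so
\[
\widehat{\bg}(x)=\widehat{\bg}^+(x)=\widehat{G}'\widehat{G}(\widehat{G}'T\widehat{G})^{-1}\widehat{G}'S{\bf 1}_n .
\]
Here the lower bound defining ${\cal F}$ guarantees that $\widehat{G}'T\widehat{G}$ is invertible. Similarly, $\widehat{\bg}^*(x)=G'G(G'TG)^{-1}G'S{\bf 1}_n$, where I will take the invertibility of $G'TG$ on ${\cal F}$ as given; it is implicitly assumed wherever $R$ is used.

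Next, simplify $R$. Since $Q=G(G'G)^{-1}$, we get $Q'TQ=(G'G)^{-1}G'TG(G'G)^{-1}$, and hence
\[
R=(Q'TQ)^{-1}(G'G)^{-1}=G'G(G'TG)^{-1}.
\]
Two consequences follow immediately: $\widehat{\bg}^*(x)=RG'S{\bf 1}_n$, and therefore
\[
\widehat{\bg}^*(x)+R(\widehat{G}-G)'S{\bf 1}_n=R\widehat{G}'S{\bf 1}_n .
\]
So the right-hand bracket in \eqref{error3-decompose} is just $R\widehat{G}'S{\bf 1}_n$.

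It remains to show
\[
\widehat{G}'\widehat{G}(\widehat{G}'T\widehat{G})^{-1}=\bigl[I_K+(R\Delta_2-\Delta_1)(G'G+\Delta_1)^{-1}\bigr]^{-1}R .
\]
Write $A=G'G+\Delta_1=\widehat{G}'\widehat{G}$ and $B=G'TG+\Delta_2=\widehat{G}'T\widehat{G}$. Inverting both sides, the claim is equivalent to
\[
BA^{-1}=R^{-1}\bigl[I_K+(R\Delta_2-\Delta_1)A^{-1}\bigr].
\]
Expanding the right side gives $R^{-1}+(\Delta_2-R^{-1}\Delta_1)A^{-1}$. Multiplying on the right by $A$, it becomes $R^{-1}A+\Delta_2-R^{-1}\Delta_1=R^{-1}G'G+\Delta_2$. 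Using $R^{-1}=G'TG(G'G)^{-1}$, this equals $G'TG+\Delta_2=B$, which matches the left side.

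The only point requiring care is invertibility of the bracket $I_K+(R\Delta_2-\Delta_1)A^{-1}$. It follows from the identity just derived, since the bracket equals $RBA^{-1}$, a product of invertible matrices on ${\cal F}$. The invertibility of $A=\widehat{G}'\widehat{G}$ is in turn implied by that of $B$.
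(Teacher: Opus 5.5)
Your proof is correct, and it reaches the identity by a different route than the paper.

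The paper works with the difference $\widehat{\bg}(x)-\widehat{\bg}^*(x)$. It splits this into three pieces $I_1,I_2,I_3$: it swaps $G$ for $\widehat{G}$ first in the right factor $G'S{\bf 1}_n$, then in the middle inverse (via $A^{-1}-B^{-1}=B^{-1}(B-A)A^{-1}$), and then in the left factor $G'G$. It rewrites $I_2$ and $I_3$ in terms of $\widehat{\bg}(x)$ itself, which gives the implicit equation $\widehat{\bg}(x)-\widehat{\bg}^*(x)=R(\widehat{G}-G)'S{\bf 1}_n+(\Delta_1-R\Delta_2)(G'G+\Delta_1)^{-1}\widehat{\bg}(x)$. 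Finally it ``solves'' this equation for $\widehat{\bg}(x)$.

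You instead collapse $R$ to $G'G(G'TG)^{-1}$ and notice that the right-hand vector is simply $R\widehat{G}'S{\bf 1}_n$. That reduces the lemma to the $x$-free matrix identity $\widehat{G}'\widehat{G}(\widehat{G}'T\widehat{G})^{-1}=[I_K+(R\Delta_2-\Delta_1)A^{-1}]^{-1}R$, which you check by cross-multiplication.

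\textbf{What each route buys.} The paper's derivation shows where the formula comes from. It also lands directly in the perturbative form $u=(I_K+E)^{-1}(\nu+e)$ that is fed into Lemma~\ref{lem:lem-lem-error23}. Your verification is shorter and, more importantly, actually proves that the bracket is invertible, since it equals $RBA^{-1}$. The paper's step ``solving this equation gives the claim'' uses that invertibility without justification.

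Both arguments also need $G'TG$ to be invertible. The event ${\cal F}$ constrains only $\widehat{G}'T\widehat{G}$, so it does not guarantee this. You are right to flag it. In the paper it is effectively supplied only later, on the event ${\cal E}$, where $\|Q'(T-\mathbb{E}[T])Q\|$ is small relative to $\lambda_{\min}(Q'\mathbb{E}[T]Q)\asymp n/K$.
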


We note that the matrix $R$ is relatively easy to study, using the result of Lemma~\ref{lem:error22}. What remains is to study the matrices $\Delta_1$ and $\Delta_2$ and the vector $R(\widehat{G}-G)'S{\bf 1}_n$. All three quantities are related to the deviation of $\widehat{G}$ from $G$ and $T$ from $\mbb{E}[T]$. We need the following lemma:
\begin{lemma} \label{lem:error3-prep}
Suppose the conditions of Theorem~\ref{thm:main} hold. Let ${\cal E}$ denote the event that $\|T-\mbb{E}[T]\|\leq M^{-1}n$ and $\|Q'(T-\mbb{E}[T])Q\|\leq n/(K\log^{1/2}(Nn))$, and let $I_{\cal E}$ be the Bernoulli variable indicating that this event happens. There exist positive constants  $C_5$-$C_7$ such that the following statements are true:
\begin{itemize}
\item $\|\Delta_1\|\le C_5\sqrt{\frac{K}{M}}\|\widehat{G}- G\|$, over the event ${\cal E}$. 
\item $\|R\Delta_2\|\leq C_6\sqrt{\frac{K}{M}}\|\widehat{G}- G\|$, over the event ${\cal E}$. 
\item $\mbb{E}\left[\int \|R(\widehat{G}-G)'S{\bf 1}_n\|^2dx\cdot I_{\cal E}\right]\le  C_7M\|\widehat{G}- G\|^2$.
\end{itemize}
\end{lemma}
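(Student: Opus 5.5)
We treat the three bullets in turn. Throughout we use the scalings implied by conditions (i)--(ii): $\|G\|\asymp\sqrt{K/M}$, $\|(G'G)^{-1}\|\asymp M/K$, $\|Q\|\asymp\sqrt{M/K}$ and $\|(\Pi'\Pi)^{-1}\|\asymp K/n$. From $\mathbb{E}T=G\Pi'\Pi G'$ we get $Q'(\mathbb{E}T)Q=\Pi'\Pi$, and $\|\mathbb{E}T\|\le\|G\|^2\|\Pi'\Pi\|\lesssim n/M$.

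\textbf{First bullet.} Write
\[
\Delta_1=(\widehat G-G)'G+G'(\widehat G-G)+(\widehat G-G)'(\widehat G-G).
\]
Then $\|\Delta_1\|\le 2\|G\|\,\|\widehat G-G\|+\|\widehat G-G\|^2$. The quadratic term is absorbed once $\|\widehat G-G\|\lesssim\sqrt{K/M}$. This follows from the entrywise control: by Theorem~\ref{thm:TM_errorbound}-type bounds, $\|\widehat G-G\|\le\|\widehat G-G\|_F\le\mathcal{L}(\widehat G,G)\le C\delta_n$, and $M\delta_n^2=o(K)$. (Alternatively, normalize so that $\widehat G$ also has columns summing to one, which keeps $\|\widehat G\|\lesssim\sqrt{K/M}$.) This gives $\|\Delta_1\|\le C_5\sqrt{K/M}\,\|\widehat G-G\|$.

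\textbf{Second bullet.} Similarly,
\[
\Delta_2=(\widehat G-G)'TG+G'T(\widehat G-G)+(\widehat G-G)'T(\widehat G-G).
\]
First we bound $R$. On $\mathcal E$ we have $\|Q'(T-\mathbb{E}T)Q\|\le n/(K\log^{1/2}(Nn))$, which is $o(\lambda_{\min}(\Pi'\Pi))$ since $\lambda_{\min}(\Pi'\Pi)\gtrsim n/K$. Hence $\lambda_{\min}(Q'TQ)\gtrsim n/K$, and
\[
\|R\|\le\|(Q'TQ)^{-1}\|\,\|(G'G)^{-1}\|\lesssim (K/n)(M/K)=M/n.
\]
Next, on $\mathcal E$, $\|T\|\le\|\mathbb{E}T\|+\|T-\mathbb{E}T\|\lesssim n/M$. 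Therefore
\[
\|\Delta_2\|\lesssim (n/M)\bigl(\sqrt{K/M}\,\|\widehat G-G\|+\|\widehat G-G\|^2\bigr),
\qquad
\|R\Delta_2\|\lesssim \sqrt{K/M}\,\|\widehat G-G\|.
\]
The quadratic piece is again absorbed as in the first bullet.

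\textbf{Third bullet.} This bullet is the main obstacle, because $\widehat G$ is dependent on $S$, so we cannot take expectations with $\widehat G$ fixed. We pull $\widehat G$ out deterministically:
\[
\|R(\widehat G-G)'S{\bf 1}_n\|\le\|R\|\,\|\widehat G-G\|\,\|S(x){\bf 1}_n\|,
\]
with $\|R\|\lesssim M/n$ on $\mathcal E$. This reduces the bullet to proving
\[
\mathbb{E}\int\|S(x){\bf 1}_n\|^2dx\lesssim n^2/M,
\]
and we treat $\|\widehat G-G\|$ as the norm appearing in the bound. (If $\|\widehat G-G\|$ is random, we bound it on the high-probability event by $C\delta_n$, so it can be pulled out of the expectation.) The $m$th entry of $S{\bf 1}_n$ is $\sum_i$ of an incomplete $U$-process with kernel $\mathcal K_h(x-x_1)1\{x_2\in\mathcal B_m\}$.

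For the mean part, $\mathbb{E}S{\bf 1}_n=G\Pi'\mathbb{E}\bm k(x)$, and
\[
\int\|G\Pi'\mathbb{E}\bm k\|^2dx\le\|G\|^2\,\|\Pi\|^2\int\|\mathbb{E}\bm k\|^2dx\lesssim (K/M)(n/K)\cdot n=n^2/M.
\]
Here $\int(\mathbb{E}k_i)^2\lesssim 1$, by Young's inequality together with boundedness of $f_i$ (which follows from $\max_x\bar g\le c_1^{-1}$, $f_i\le K\bar g$ being too crude; instead use $\int f_i^2$ bounded via each $g_k$ bounded in $L^2$).

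For the fluctuation part, apply Lemma~\ref{lem:uprocess-bound} coordinatewise (with the scaling by $n$). For coordinate $m$, $\int h^2$ has expectation of order $h^{-1}\mathbb{P}(X\in\mathcal B_m)$, which gives
\[
\sum_m\mathbb{E}\int(\cdot)^2\lesssim n^2\,\frac{\sum_m\mathbb{P}(\mathcal B_m)}{nNh}=\frac{n}{Nh}\lesssim\frac{n^2}{M},
\]
using $Nnh\gg K$ and $M\lesssim\sqrt{Nn}$ (checking that $M\le Nh$ up to constants is the delicate part; if this fails, keep the extra $n/(Nh)$ term, which only contributes at the order $K/(Nnh)$ already present in \eqref{UB-main}). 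Combining the two parts,
\[
\mathbb{E}\int\|R(\widehat G-G)'S{\bf 1}_n\|^2I_{\mathcal E}\,dx\lesssim (M/n)^2\,\|\widehat G-G\|^2\,n^2/M=M\|\widehat G-G\|^2.
\]
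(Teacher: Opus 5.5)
Your proposal follows the paper's proof essentially step for step: the same expansions of $\Delta_1$ and $\Delta_2$, with $\|\widehat G-G\|\lesssim\delta_n$ and $M\delta_n^2=o(K)$ absorbing the quadratic terms, and the same bound $\|R\|\lesssim M/n$ on $\mathcal{E}$. For the third bullet it uses the same deterministic pull-out of $\|R\|\,\|\widehat G-G\|$, then splits $\mathbb{E}\int\|S(x)\mathbf{1}_n\|^2dx$ into a mean part of order $n^2/M$ (both you and the paper need $\|g_k\|_{L^2}=O(1)$ there) and a $U$-process fluctuation of order $n/(Nh)$ via Lemma~\ref{lem:uprocess-bound}. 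Two small corrections: absorbing $n/(Nh)$ into $n^2/M$ requires $M\lesssim Nnh$, not $M\le Nh$. The paper also asserts this without proof; it holds for $M\asymp K\log(Nn)$ with the optimal $h$. Also, your parenthetical alternative in the first bullet fails, since unit column sums alone give only $\|\widehat G\|\le\sqrt K$, not $\sqrt{K/M}$.
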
 

In order to apply Lemma~\ref{lem:error3-prep}, we must study the probability of the event $\mathcal{E}$. 
This is contained in  Lemma~\ref{lem:spectralnorms} of the supplement, where we bound $\|T-\mathbb{E}[T]\|$ and $\|Q'(T-\mathbb{E}[T])Q\|$ and show that the event $\mathcal{E}$ has an overwhelming probability.  

We now combine the statements in Lemma~\ref{lem:error3-prep} with the expression of $\widehat{\bg}(x)$ in Lemma~\ref{lem:error3-decompose}. 
Since this expression is only correct on the event ${\cal F}$ and the statements in Lemma~\ref{lem:error3-prep} are with respect to the event ${\cal E}$, we will consider an event 
\[
{\cal F}^* = {\cal F}\cap {\cal E}. 
\] 
We then decompose the error similarly as in  \eqref{hat-g-star-decompose}. We no longer provide these details but directly present the following lemma, whose proof is in the supplementary material. 
\begin{lemma} \label{lem:error3}
Under the conditions of Theorem~\ref{thm:main}, there is a constant $C_8>0$ such that
\[
    \mbb{E}\left[\int \|\widehat{\bm{g}}^*(x)-\widehat{\bm{g}}(x)\|^2dx\right]\le C_8\left[M\delta_n^2+\frac{KM^4\log^4(Nn)}{(Nn)^5h}\right].
\]
\end{lemma}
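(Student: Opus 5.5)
The plan is to split on the event ${\cal F}^*={\cal F}\cap{\cal E}$, in the same spirit as \eqref{hat-g-star-decompose}:
\[
\mbb{E}\Bigl[\int\|\widehat{\bg}^*-\widehat{\bg}\|^2dx\Bigr]\le \mbb{E}\Bigl[\int\|\widehat{\bg}^*-\widehat{\bg}\|^2dx\cdot I_{{\cal F}^*}\Bigr]+2\mbb{E}\Bigl[\int(\|\widehat{\bg}^*\|^2+\|\widehat{\bg}\|^2)dx\cdot I_{({\cal F}^*)^c}\Bigr].
\]
We further intersect with the event ${\cal G}=\{{\cal L}(\widehat G,G)\le C\delta_n\}$ from condition (v). Since $\|\widehat G-G\|\le \|\widehat G-G\|_F\le {\cal L}(\widehat G,G)$, on ${\cal G}$ we get $\|\widehat G-G\|\le C\delta_n$, and $\sqrt{K/M}\,\delta_n=o(1)$ because $M\delta_n^2=o(K)$ and $K\le M$.

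On ${\cal F}^*\cap{\cal G}$ we use Lemma~\ref{lem:error3-decompose}. Set $A=(R\Delta_2-\Delta_1)(G'G+\Delta_1)^{-1}$. By condition (i), $\lambda_{\min}(G'G)\asymp K/M$. Lemma~\ref{lem:error3-prep} gives $\|\Delta_1\|\lesssim\sqrt{K/M}\,\delta_n=o(K/M)$? This needs checking: we need $\sqrt{K/M}\,\delta_n\ll K/M$, i.e.\ $\delta_n\ll\sqrt{K/M}$, which is exactly $M\delta_n^2=o(K)$. Hence $\|(G'G+\Delta_1)^{-1}\|\lesssim M/K$, and $\|A\|\lesssim (M/K)\sqrt{K/M}\,\|\widehat G-G\|=\sqrt{M/K}\,\|\widehat G-G\|=o(1)$. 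Thus $\|(I+A)^{-1}\|\le 2$ and $\|(I+A)^{-1}-I\|\lesssim\|A\|$, which yields
\[
\|\widehat{\bg}-\widehat{\bg}^*\|\lesssim \|A\|\,\|\widehat{\bg}^*\|+\|R(\widehat G-G)'S{\bf 1}_n\|.
\]
Integrating over $x$, the second term contributes $C_7M\|\widehat G-G\|^2\lesssim M\delta_n^2$ by the third bullet of Lemma~\ref{lem:error3-prep}. For the first term, $\|A\|^2\lesssim (M/K)\delta_n^2$ on ${\cal G}$. We then need $\mbb{E}\int\|\widehat{\bg}^*\|^2 I_{\cal F}\lesssim K$, which follows from Lemma~\ref{lem:error2} together with $\int\|\bg^*\|^2\le 2\int\|\bg\|^2+2C_1Kh^{2\beta}\lesssim K$ (using Assumption~\ref{assump2}, $\int\|\bg\|^2=K\,\mathrm{tr}(\Sigma_{\bg})/K\cdot K$ scale, giving $O(K)$). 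The product is therefore $O(M\delta_n^2)$. One subtlety is that $\|A\|$ is random, but on ${\cal G}$ it is bounded deterministically, so we can pull it out of the expectation.

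On the complement event $({\cal F}\cap{\cal E}\cap{\cal G})^c$, Lemma~\ref{lem:spectralnorms}, the high-probability bound on $\lambda_{\min}(\widehat G'T\widehat G)$ and condition (v) give probability $o((Nn)^{-5})$. We also need a crude deterministic bound. Because of the regularization, $\lambda_{\min}(\widehat G'T\widehat G+\epsilon I)\ge Kn/(M^2\log^3 n)$, and the same holds for $G$ in $\widehat{\bg}^*$. Also $\|\widehat G\|,\|G\|\lesssim\sqrt{K}$ (column $\ell^1$ norms are $1$, after a possible truncation), and $\|\widehat G'\widehat G\|\le K$. Finally, $\int\|S(x){\bf 1}_n\|^2dx\le \sum_m(\sum_i\cdots)^2$ is controlled by $n^2\int{\cal K}_h^2\lesssim n^2/h$ times $O(1)$ up to the bin-sum factors. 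Together these give $\int\|\widehat{\bg}\|^2\lesssim K^2\cdot M^4\log^6(n)/(K^2n^2)\cdot Kn^2/h$, roughly ${\rm poly}(K,M,\log)/h$. Multiplying by the $o((Nn)^{-5})$ probability produces the remainder term $KM^4\log^4(Nn)/((Nn)^5h)$; the exact log powers need to be tracked so that they match.

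The main obstacle is this last step: getting a deterministic worst-case bound on $\int\|\widehat{\bg}\|^2dx$ on the bad event whose dependence on $M$, $K$ and the log factors matches the stated remainder. The other delicate point is verifying that $\|A\|=o(1)$ uniformly on the good event, which is what the condition $M\delta_n^2=o(K)$ is designed to ensure.
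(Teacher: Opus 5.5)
This is essentially the paper's own proof. On the good event (the paper's $\mathcal{E}\cap\Delta^*$; your extra intersection with $\mathcal{F}$ is a sensible addition, since Lemma~\ref{lem:error3-decompose} is stated on $\mathcal{F}$), you apply Lemma~\ref{lem:error3-decompose} with $\|(R\Delta_2-\Delta_1)(G'G+\Delta_1)^{-1}\|\lesssim\sqrt{M/K}\,\delta_n=o(1)$ and the third bullet of Lemma~\ref{lem:error3-prep} to get $M\delta_n^2$. On the complement, you multiply the same crude almost-sure bounds by the $o((Nn)^{-5})$ probability.

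Three small points:
\begin{itemize}
\item The bound $\int\|\bm g\|^2dx=O(K)$ should be justified by $\|g_k\|_{L^2}=O(1)$. This follows from Assumption~\ref{assump1} for a density, and it is what the paper uses. It does not follow from Assumption~\ref{assump2}, whose trace bound only gives $K\,\mathrm{tr}(\Sigma_{\bm g})=O(K^2)$ and would cost a factor $K$.
\item The threshold in $\epsilon$ is $Kn/(M^2\log^2(Nn))$, so the regularized inverse is at most $M^2\log^2(Nn)/(Kn)$. This gives exactly the $\log^4(Nn)$ remainder, so the log mismatch you flagged goes away.
\item A caveat shared with the paper: the lower bound on $\lambda_{\min}(\widehat{G}'T\widehat{G}+\epsilon I_K)$ tacitly needs $\widehat{G}'T\widehat{G}\succeq 0$, which is not automatic for the diagonal-deleted $T$. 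Moreover, the analogous bound for $G'TG+\epsilon I_K$ in $\widehat{\bm g}^*$ is not implied at all, because $\epsilon$ is triggered by $\widehat{G}$, not $G$.
\end{itemize}
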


The conclusion of Theorem~\ref{thm:main} follows immediately by plugging Lemma~\ref{lem:error1}, Lemma~\ref{lem:error2} and Lemma~\ref{lem:error3} into the decomposition in \eqref{error-decomposition}.

\section{Proof sketch of the Minimax Lower Bound}\label{sec:LBproof}
In this section, we provide a sketch of the proof of Theorem~\ref{thm:lowerbound}, leaving the full version in the supplementary material. 
 
Let ${\cal F}_K^*$ be the space of $K$-densities, i.e., each element in ${\cal F}_K^*$ takes the form of $\bm{g}(x)=(g_1(x),\ldots,g_K(x))'$. For any $\bm{g}, \widetilde{\bm{g}}\in {\cal F}_K^*$, let  
\[
d(\bm{g}, \widetilde{\bm{g}}) := \left(\int \|\bm{g}(x)-\widetilde{\bm{g}}(x)\|^2dx\right)^{1/2}=\left(\sum_{k=1}^K\|g_k-\tilde{g}_K\|^2_{L^2}\right)^{1/2}, 
\]
where $\|\cdot\|_{L^2}$ is the standard $L^2$-norm. It is easy to see that $d(\cdot,\cdot)$ is a distance defined on $\Theta^*$ (i.e., the triangular inequality holds). Recall that $\Theta_{\beta}$ is the collection of all $({\boldsymbol g}, \Pi)$ such that Assumptions~\ref{assump1}-\ref{assump2} are satisfied. We fix $\Pi^*$ such that $\pi^*_i=e_k$ for $n_k$ out of $n$ groups, where $n_k\asymp n/K$ and $n_1+n_2+\cdots +n_K=n$. Let ${\cal F}_{K}(\beta, \Pi^*)$ be the collection of $\bm{g}$ such that $(\bm{g}, \Pi^*)\in\Theta_\beta$. To show the claim, it is sufficient to show that 
\beq \label{LB-illustration1}
\inf_{\widehat{g}} \sup_{\bm{g}\in{\cal F}_K(\beta, \Pi^*)} \mathbb{E}_{(\bm{g}, \Pi^*)} \bigl[d^2(\widehat{\bm{g}}, \bm{g})\bigr]\geq c\epsilon^2_n, \qquad \mbox{with}\quad \epsilon^2_n:= K\Bigl(\frac{K}{Nn}\Bigr)^{\frac{2\beta}{2\beta+1}}. 
\eeq
According to Theorem 2.7 of \cite{tsybakov2009introduction}, \eqref{LB-illustration1} holds if for some integer $J\geq 2$, we can construct $\bm{g}^{(0)}, \bm{g}^{(1)}, \ldots, \bm{g}^{(J)}$ such that 
\beq \label{LB-illustration2}
\min_{0\leq s\neq s'\leq J}d^2(\bm{g}^{(s)}, \bm{g}^{(s')})\geq c\epsilon_n^2, \qquad \frac{1}{J}\sum_{s=1}^J \mathrm{KL}(\mbb{P}_s,\mbb{P}_0) = O(\log(J)),
\eeq
where $\mbb{P}_s$ denotes the probability measure associated with the model \eqref{model-1}-\eqref{model-2} with parameters $\bm{g}^{(s)}$ and $\Pi^*$, and $\text{KL}(\cdot,\cdot)$ is the Kullback-Leibler divergence. 

We outline how to construct $\{\bm{g}^{(s)}\}_{s=0}^J$. In the first step, we fix a properly large constant $T>0$ and construct $\bm{g}^{(0)}$ such that (i) each $g_k^{(0)}$ is in the Nikol’ski class with smoothness $\beta$ (i.e., satisfying Assumption \ref{assump1}), (b) each $g_k^{(0)}$ has an anchor region (see Assumption~\ref{assump-anchor}) that is located in $(-\infty, -T)\cup (T, \infty)$, and (c) each $g_k^{(0)}$ is uniformly lower bounded by a constant in $[-T, T]$. This step uses certain bump functions as in \eqref{bumps}, and the construction details are in the supplement. 

In the second step, we perturb each $g_k^{(0)}$ in the region of $[-T, T]$. Let 
\[
B=\Big\lceil\Bigl(\frac{Nn}{K}\Bigr)^{\frac{1}{2\beta+1}}\Bigr\rceil, \qquad \delta = a_0\Bigl(\frac{K}{Nn}\Bigr)^{\frac{\beta}{2\beta+1}},\;\; \mbox{for a properly small constant $a_0>0$}. 
\]
We divide $[-T, T]$ into $B$ equal intervals, and let $x_1, x_2, \ldots, x_B$ be the centers of these intervals.
Let $\psi$ be a function that is in the Nikol’ski class with smoothness $\beta$ and satisfies:  
\[
\mathrm{Support}(\psi)=[-1,1], \quad \int_0^1 \psi(x)dx=0, \quad \int_0^1 \psi^2(x)dx=1, \quad \|\psi\|_{\infty}<\infty. 
\]
It can be shown that such $\psi$ exists. We let
\[
    \phi_b(x):=\delta\cdot \psi\left(Bx-x_b\right), \qquad 1\leq b \leq B. 
\]
It is seen that $\{\phi_b\}_{b=1}^B$ have disjoint support within $[-T,T]$, $\int \phi_b(u)du=0$, and 
\[
\int \phi^2_b(u)du\asymp \delta^2B^{-1}.
\] 
Let $J=\lceil 2^{KB/8}\rceil$. The Gilbert-Varshamov bound implies that there exist $(J+1)$ binary vectors $\nu^{(0)},\ldots,\nu^{(J)}\in \{0,1\}^{KB}$, such that $\nu^{(0)}={\bf 0}$ and $\|\nu^{(s)}-\nu^{(s')}\|_1\ge KB/8$ for all $0\leq s\neq s'\leq J$. We re-arrange each $\nu^{(s)}$ into an $K\times B$ matrix $\Omega^{(s)}$, and let $\omega_{k,b}^{(s)}$ denote its $(k, b)$th entry, for $1\leq k\leq K$ and $1\leq b\leq B$. It follows that
\beq\label{LB-illustration3}
|\Omega^{(s)}-\Omega^{(s')}|_1:= \sum_{k=1}^K \sum_{b=1}^B |\omega_{k,b}^{(s)}-\omega_{k,b}^{(s')}|\geq KB/8. 
\eeq 
We then construct $\bm{g}^{(1)},\ldots, \bm{g}^{(J)}$ by 
\beq \label{LB-illustration4}
    g_k^{(s)}(x)=g_k^{(0)}(x)+ \sum_{b=1}^B\omega_{k,b}^{(s)}\phi_b(x),\quad \text{for }\ 1\leq s\leq J \mbox{ and }1\leq k\leq K.
\eeq

To show that these constructed $\bm{g}^{(s)}$ satisfy \eqref{LB-illustration2}, we note that by our construction, each $\phi_b$ belongs to the Nikol’ski class with smoothness $\beta$, $\phi_b$'s have disjoint supports, $\int \phi_b(u)du=0$, and $\|\phi_b\|^2_{L^2}=\|\phi_1\|^2_{L^2}\asymp \delta^2 B^{-1}$. It follows that each $g_k^{(s)}$ is a density belonging to the Nikol’ski class, and by direct calculations, 
\[
d^2\bigl(\bm{g}^{(s)}, \bm{g}^{(s')}\bigr) = |\Omega^{(s)}-\Omega^{(s')}|\cdot \|\phi_1\|^2_{L^2} \geq C K\delta^2\geq c\epsilon_n^2.  
\]
This proves the first part of \eqref{LB-illustration2}. Furthermore, in the supplementary material, we show that 
\[
\max_{1\leq s\leq J} \mathrm{KL}(\mathbb{P}_s, \mathbb{P}_0) = O(Nn\delta^2). 
\]
Noticing that $Nn\delta^2 = K^{\frac{2\beta}{2\beta+1}}(Nn)^{\frac{1}{2\beta+1}}= KB$, we immediately obtain:
\[
    \frac{1}{J}\sum_{s=1}^J \mathrm{KL}(\mbb{P}_s,\mbb{P}_0)=O(Nn\delta^2)=O(KB)=O(\log J).
\]
This proves the second part of \eqref{LB-illustration2} and completes the proof.

\section{Discussion} \label{sec:Discuss}
We consider linearly unmixing $n$ convex combinations of nonparametric densities. 
We propose a novel estimator and show that it attains the optimal rate of convergence in the whole smooth regime, offering a much stronger theoretical guarantee than existing methods in the literature.  
One limitation of our work is that we need $K=O((Nn)^{c^*_{\beta,d}})$ for some constant $c^*_{\beta,d}\in (0,1/4)$, in order to achieve the optimal rate. 
When $K\gg (Nn)^{c^*_{\beta d}}$, we still obtain an upper bound (by letting $M=K\log(Nn)$ in the first equation of Section~\ref{subsec:UB}):
\[
\mbb{E}\left[\int \|\widehat{\bm{g}}^+(x)-\bm{g}(x)\|^2dx\right]\le C_0 \cdot K \left[\Bigl(\frac{K}{Nn}\Bigr)^{\frac{2\beta}{2\beta+1}}+\frac{K^4\log^2(Nn)}{Nn} \right]. 
\]
Compared to the lower bound, there is an extra term. Whether this term can be improved is unclear. The first term above arises from the oracle estimator that is given $\Pi$, and the second term comes from the error of estimating $\Pi$. 
We conjecture that when $K$ grows with $Nn$ at a fast speed, the plug-in error of $\Pi$ is fundamentally non-negligible. To support this conjecture, we may need tighter lower bound and/or tighter upper bound. We leave it to future work. 

In our framework, the dimension $d$ is fixed. When $d$ grows with $Nn$, we may assume that $X_{ij}$'s are mappings of latent low-dimensional variables (e.g., this may hold in the first application in Section~\ref{sec:Intro}, where $X_{ij}$'s represent the contextual word embeddings from a pre-trained language model). Suppose the latent variables $Z_{ij}$ are in dimension $r$. Let $\epsilon_{ij}$ be i.i.d. variables from ${\cal N}(0, \sigma^2I_d)$, and let $B$ be a $d\times r$ matrix such that $B'B=I_r$. A simple model is $X_{ij}=BZ_{ij}+\epsilon_{ij}$, where $Z_{ij}$ follow the model in \eqref{model-1}-\eqref{model-2}. 
In this set-up, we let $X^{\text{vect}}\in\mathbb{R}^{d\times \bar{N}n}$ be obtained by re-arranging all data points into a matrix. 
We can estimate $Z_{ij}$'s from the first $r$ right singular vectors of $X^{\text{vect}}$.  
Then, estimating $g_1(z), \ldots, g_K(z)$ based on $\hat{Z}_{ij}$'s is an error-in-variable version of our problem. 
It is possible to adapt our estimator to this setting. This is an interesting future direction. 

While we focus on the nonparametric setting, our methodology is also useful in the parametric setting where $g_k(x)=g(x; \lambda_k)$ (e.g., see \cite{zhang2009generalized, doss2023optimal} about related problems).  
The two steps of binning data into counts and applying topic modeling to estimate $\Pi$ are still applicable. 
In the last step, instead of using KDE, we can consider a plug-in maximum likelihood estimator (MLE) for $\lambda_1,\ldots, \lambda_K$, possibly with de-biasing. Compared to the variational EM, this approach will be computationally more efficient, due to leveraging existing topic modeling algorithms. 

Our model is closely related to mixed membership models in various applications \citep{agterberg2025estimating,ma2021determining}. 
In these problems, the settings are typically fully parametric. 
Our methodology and theory together offer a possible way of extending them to nonparametric settings. 
In a broad context, our model is also related to many statistical problems involving latent structure and nonparametric components, where the estimation difficulty depends jointly on structural complexity and function smoothness; see, e.g., graphon estimation \citep{gao2015rate}, semiparametric latent variable models \citep{liu2012high}, and smoothed tensor estimation \citep{lee2025statistical, han2024tensor}. 




In this paper, we assume the data are i.i.d. within each of the $n$ groups, and our analysis relies on properties of incomplete U-statistics and incomplete U-processes. When the data within each group are non-independent, we may continue to apply our estimator, but the analysis will require more sophisticated technical tools (e.g., \cite{wu2008empirical}).

\appendix

\section{Analysis of our estimator} \label{supp:UBproofs}


\subsection{General tools}
\begin{lemma}[Minkowski's integral inequality]\label{lem:Minkowski}
    Suppose that $(S_1,\mu_1)$ and $(S_2,\mu_2)$ are two sigma-finite measure spaces and $F:S_1\times S_2\rightarrow \mbb{R}$ is measurable. Then, it holds that
    \[
        \left[ \int_{S_2} \left| \int_{S_1} F(x, y) \, \mu_1(dx) \right|^p \mu_2(dy) \right]^{1/p}\le\int_{S_1} \left( \int_{S_2} |F(x, y)|^p \, \mu_2(dy) \right)^{1/p} \mu_1(dx),
    \]
    for $1\le p<\infty$.
\end{lemma}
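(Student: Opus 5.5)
The plan is to reduce the case of general $p$ to the triangle inequality in $L^p(\mu_2)$, applied to a vector-valued (Bochner-type) integral; equivalently, to use duality of $L^p$ norms. The duality route avoids measurability subtleties, so I would use it. Assume without loss of generality that $F\ge 0$; otherwise replace $F$ by $|F|$, which only increases the left-hand side, since $|\int F\,d\mu_1|\le\int|F|\,d\mu_1$. Define $G(y)=\int_{S_1}F(x,y)\,\mu_1(dx)$. By Tonelli's theorem, which applies because the spaces are $\sigma$-finite and $F\ge0$ is measurable, $G$ is measurable on $S_2$ with values in $[0,\infty]$.

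For $p=1$ the claim is an equality by Tonelli. For $1<p<\infty$, let $q$ be the conjugate exponent. I would first handle the case where $\|G\|_{L^p(\mu_2)}$ is finite and positive. Write
\[
\int G^p\,d\mu_2=\int_{S_2}\int_{S_1}F(x,y)\,G(y)^{p-1}\,\mu_1(dx)\,\mu_2(dy).
\]
Swap the order of integration by Tonelli. Then apply Hölder's inequality in $y$ for each fixed $x$. This gives
\[
\int G^p\,d\mu_2\le\int_{S_1}\Bigl(\int_{S_2}F(x,y)^p\,\mu_2(dy)\Bigr)^{1/p}\mu_1(dx)\cdot\Bigl(\int G^{(p-1)q}\,d\mu_2\Bigr)^{1/q}.
\]
Since $(p-1)q=p$, dividing by $\|G\|_p^{p/q}=\|G\|_p^{p-1}$ yields the inequality.

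The main obstacle is the case $\|G\|_p=\infty$, where the division is not justified. I would handle it by truncation and exhaustion. Using $\sigma$-finiteness, choose sets $E_k\uparrow S_2$ of finite $\mu_2$-measure, and let $G_k=\min(G,k)\,1_{E_k}$. Then $\|G_k\|_p<\infty$. Repeat the computation above with $G_k^{p-1}$ in place of $G^{p-1}$, using $G_k^p\le G\,G_k^{p-1}$. This gives $\|G_k\|_p\le$ RHS for every $k$. Letting $k\to\infty$ and applying monotone convergence gives the claim in general. The case $\|G\|_p=0$ is trivial.
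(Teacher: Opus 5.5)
Your proof is correct and complete. The paper does not prove this lemma: it lists it among the ``general tools'' as a standard fact and only invokes it, with $p=2$, in the proof of Lemma~\ref{lem:error1}, where it moves the $L^2(dx)$ norm inside the $du$-integral. So there is no argument in the paper to compare yours with.

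What you give is the standard textbook proof via the identity $\int G^p = \int G\cdot G^{p-1}$, a Tonelli swap, and H\"{o}lder's inequality in $y$. Your treatment of the delicate case is right. Let $R$ denote the right-hand side. Since $G_k\le G$ gives $G_k^p\le G\,G_k^{p-1}$, the same chain yields $\|G_k\|_p^p\le R\,\|G_k\|_p^{p-1}$. Here $\|G_k\|_p\le k\,\mu_2(E_k)^{1/p}<\infty$, so the division is legitimate. Monotone convergence along $G_k\uparrow G$ then finishes the argument.

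Two small remarks. First, the truncation argument alone already covers the case $\|G\|_p<\infty$, so your separate first case is redundant. Second, the reduction to $F\ge 0$ tacitly assumes that $\int_{S_1}F(x,y)\,\mu_1(dx)$ is defined for $\mu_2$-a.e.\ $y$. This is the implicit convention of the statement, and it is automatic whenever $R<\infty$: applying the nonnegative case to $|F|$ shows $\int_{S_1}|F(x,y)|\,\mu_1(dx)<\infty$ for $\mu_2$-a.e.\ $y$.
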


\begin{lemma}[Lemma 7.26 in \cite{lafferty2008concentration}]\label{lem:mgfbound}
    Suppose that $|X|\le c$, $\mbb{E}X=0$ and $\Var(X)=\sigma^2$. Then, for any $t>0$,
    \begin{align*}
        \mbb{E}e^{tX}\le \text{exp}\left(t^2\sigma^2\left(\frac{e^{tc}-1-tc}{(tc)^2}\right)\right).
    \end{align*}
\end{lemma}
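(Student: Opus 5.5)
This is the moment-generating-function bound for a bounded, centered random variable. The plan is to expand the exponential as a power series, bound each moment of order $k\ge 2$ using boundedness and the variance, and then use $1+u\le e^u$.

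\emph{Expansion.} Since $|X|\le c$, the series $e^{tX}=\sum_{k\ge0}(tX)^k/k!$ converges uniformly on the event where $X$ lives, so we may take expectations term by term (dominated convergence with dominating function $e^{tc}$). Using $\mbb{E}X=0$,
\[
\mbb{E}e^{tX}=1+\sum_{k\ge2}\frac{t^k\,\mbb{E}X^k}{k!}.
\]

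\emph{Moment bounds.} For each $k\ge2$ we have $|\mbb{E}X^k|\le \mbb{E}\bigl[X^2|X|^{k-2}\bigr]\le c^{k-2}\sigma^2$. Since $t>0$, every term is bounded by $t^k c^{k-2}\sigma^2/k!$. Summing gives
\[
\mbb{E}e^{tX}\le 1+\frac{\sigma^2}{c^2}\sum_{k\ge2}\frac{(tc)^k}{k!}=1+\frac{\sigma^2}{c^2}\bigl(e^{tc}-1-tc\bigr)=1+t^2\sigma^2\,\frac{e^{tc}-1-tc}{(tc)^2}.
\]

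\emph{Conclusion.} Applying $1+u\le e^u$ with $u\ge0$ yields the stated bound. The degenerate case $c=0$ forces $X=0$, and the claim is then trivial with the usual convention that the ratio is read as its limit $1/2$.

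None of these steps is a real obstacle. The only care needed is to justify the term-by-term expectation and to use $t>0$ so that the sign of the odd moments does not matter. Both points follow from boundedness.
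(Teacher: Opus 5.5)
Your proposal is correct: the term-by-term expansion, the moment bound $\mathbb{E}X^k \le c^{k-2}\sigma^2$ for $k\ge 2$, and the final step $1+u\le e^u$ together give exactly the stated inequality. The paper gives no proof of its own and simply cites Lemma 7.26 of Lafferty et al.; your argument is the standard power-series proof behind that cited result.
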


\begin{lemma}[Bernstein inequality] \label{lem:Bernstein}
Let $Z_1, Z_2, \ldots, Z_m$ be independent mean-zero variables. Suppose $|Z_i|\leq b$ and $\sum_{i=1}^m\mathrm{Var}(Z_i)\leq \sigma^2$. For any $\delta\in (0,1)$, with probability $1-\delta$, 
\begin{align*}
\Bigl|\sum_{i=1}^m Z_i\Bigr|\leq 2\sigma\sqrt{\log(2/\delta)} + (4b/3)\log(2/\delta). 
\end{align*}
\end{lemma}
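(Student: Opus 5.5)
The plan is the classical Chernoff argument, using the moment generating function bound of Lemma~\ref{lem:mgfbound}, followed by a union bound over the two tails and an explicit inversion of the tail bound.

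\textbf{Step 1: the MGF of each term.} Let $S=\sum_{i=1}^m Z_i$ and let $\sigma_i^2=\mathrm{Var}(Z_i)$. For $t>0$, Lemma~\ref{lem:mgfbound} with $c=b$ gives $\mathbb{E}e^{tZ_i}\le \exp\bigl(t^2\sigma_i^2\,\varphi(tb)\bigr)$, where $\varphi(u)=(e^u-1-u)/u^2$. Expanding the series, $\varphi(u)=\sum_{k\ge 2}u^{k-2}/k!$. Since $k!\ge 2\cdot 3^{k-2}$ for $k\ge 2$, this gives
\[
\varphi(u)\le \frac{1}{2}\sum_{k\ge2}(u/3)^{k-2}=\frac{1}{2(1-u/3)}, \qquad 0<u<3.
\]

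\textbf{Step 2: the one-sided Bernstein tail.} By independence,
\[
\log \mathbb{E}e^{tS}\le \frac{t^2\sigma^2}{2(1-tb/3)}, \qquad 0<t<3/b.
\]
Markov's inequality then gives $\mathbb{P}(S\ge x)\le \exp\bigl(-tx+t^2\sigma^2/(2(1-tb/3))\bigr)$. Take $t=x/(\sigma^2+bx/3)$, which lies in $(0,3/b)$. Then $1-tb/3=\sigma^2/(\sigma^2+bx/3)$, and the exponent simplifies to
\[
\mathbb{P}(S\ge x)\le \exp\Bigl(-\frac{x^2}{2(\sigma^2+bx/3)}\Bigr).
\]
Applying the same argument to $-Z_i$ gives the identical bound for $\mathbb{P}(S\le -x)$. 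A union bound yields
\[
\mathbb{P}(|S|\ge x)\le 2\exp\Bigl(-\frac{x^2}{2(\sigma^2+bx/3)}\Bigr).
\]

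\textbf{Step 3: inverting the tail bound.} Let $L=\log(2/\delta)$. It suffices to find $x$ with $x^2\ge 2L\sigma^2+(2bL/3)x$, since then the right-hand side above is at most $\delta$. Take $x_0=\sqrt{2L}\,\sigma+(2/3)bL$. Direct expansion gives
\[
x_0^2=2L\sigma^2+\tfrac{4bL}{3}\sqrt{2L}\,\sigma+\tfrac{4b^2L^2}{9},
\qquad
\tfrac{2bL}{3}x_0=\tfrac{2bL}{3}\sqrt{2L}\,\sigma+\tfrac{4b^2L^2}{9}.
\]
Hence $x_0^2-2L\sigma^2-\tfrac{2bL}{3}x_0=\tfrac{2bL}{3}\sqrt{2L}\,\sigma\ge 0$, so $x_0$ satisfies the requirement. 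Therefore, with probability at least $1-\delta$,
\[
|S|\le x_0\le 2\sigma\sqrt{L}+(4b/3)L,
\]
because $\sqrt2\le 2$ and $2/3\le 4/3$. This is the stated bound.

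\textbf{Main obstacle.} The only non-mechanical step is the elementary estimate $\varphi(u)\le 1/(2(1-u/3))$ in Step 1. Once that is in hand, the choice of $t$ in Step 2 and the inversion in Step 3 are routine algebra.
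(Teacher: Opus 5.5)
Your proof is correct: the bound $k!\ge 2\cdot 3^{k-2}$, the choice $t=x/(\sigma^2+bx/3)$ and the exact inversion all check out. Add one line for the degenerate case $\sigma=0$ (or $b=0$). There your $t$ equals $3/b$ and $1-tb/3=0$, but every $Z_i=0$ almost surely, so the claim is trivial.

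The paper does not prove Lemma~\ref{lem:Bernstein}; it lists it among the general tools as a standard fact. The closest argument it writes out is the proof of Lemma~\ref{lem:ustats-bound}. That proof uses the same Chernoff template with Lemma~\ref{lem:mgfbound}, but treats $\varphi(u)=(e^u-1-u)/u^2$ differently:
\begin{itemize}
\item It keeps $\varphi$ exact and optimizes the Chernoff parameter, which gives Bennett's bound with $\phi(x)=(1+x)\log(1+x)-x$.
\item It then uses $\phi(x)\ge x^2/(2+2x/3)$ and reaches the same tail $2\exp\{-x^2/(2\sigma^2+2bx/3)\}$ that you obtain.
\item It inverts crudely: it lower-bounds the exponent by $\tfrac12\min\{x^2/(2\sigma^2),\,3x/(2b)\}$ and takes $x=\max\{2\sigma\sqrt{L},\,4bL/3\}$ with $L=\log(2/\delta)$. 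This is exactly where the constants $2$ and $4/3$ in the statement come from.
\end{itemize}

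What each route buys:
\begin{itemize}
\item Your route is more self-contained. The geometric-series bound on $\varphi$ removes the need to verify the calculus inequality for $\phi$.
\item Your exact quadratic inversion gives the sharper bound $\sqrt{2L}\,\sigma+2bL/3$, which implies the stated one.
\item The Bennett route passes through a strictly stronger intermediate inequality. That only helps when $bx\gg\sigma^2$, which this lemma does not need.
\end{itemize}
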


\subsection{A useful  lemma and its proof}

In this subsection, we present a technical lemma. It contains a collection of statements that will be used repeatedly in the later proofs. 

Let $f_i^{\text{hist}}\in\mathbb{R}^M$ be defined such that $f_i^{\text{hist}}(m)=\int_{x\in {\cal B}_m}f_i(x)dx$. Then, we write $F^{\text{hist}}=[f_1^{\text{hist}}, \ldots, f_n^{\text{hist}}]\in\mathbb{R}^{M\times n}$. By our model, for $1\leq i\leq n,1\leq j\leq N,1\leq m\leq M$, 
\[
\mathbb{E}[U_{ijm}] = f_i^{\text{hist}}(m)= \sum_{k=1}^K\pi_i(k)g_k^{\text{hist}}(m). 
\]
We can prove the following lemma:
\begin{lemma} \label{lem:key}
Under Assumption \ref{assump2}, there exist positive constants $c_5$-$c_7$ with $c_5\ge  1$ and $c_6\ge 1$ such that the following statements are true:
\begin{itemize}
\item[(a)] $\mathbb{E}[S(x)] =\diag(N^{-1}\mbb{E}[K(x)]\bm{1}_N)\Pi G' $, and $\mathbb{E}[T] = G\Pi'\Pi G' $.
\item[(b)] There exists a set of bins $\{\mcal{B}_m\}_{m=1}^M$ such that for all $1\le m\le M$, if $M/K\rightarrow \infty$, $n^{-1}\sum_{i=1}^nf_i^{\text{hist}}(m) \leq c_5M^{-1}$. Additionally, if we assume $\min_{1\le k\le K}\sum_{i=1}^n\pi_{ik}\ge c_2^{-1}K^{-1}n$ and $\min_{1\le m\le M}\sum_{k=1}G_{mk}\ge c_4^{-1}M^{-1}K$ for some constants $c_2>1$ and $c_4>1$, then $n^{-1}\sum_{i=1}^nf_i^{\text{hist}}(m) \geq c_5^{-1}M^{-1}$.
\item [(c)] If $M/K\rightarrow\infty$, there exists a set of bins $\{\mcal{B}_m\}_{m=1}^M$ such that $\lambda_{\min}(\Sigma_G)\asymp \lambda_{\max}(\Sigma_G)\asymp 1$, $\|G\mbf{1}_K\|_{\infty}=O(M^{-1}K)$ and the smallest entry of $G\mbf{1}_K$ is of the order $M^{-1}K$. Consequently, $c_6^{-1}(K/M)^{1/2}\le \|G\|\le c_6(K/M)^{1/2}$, $ \|Q\|\le c_7(M/K)^{1/2}$.
\end{itemize}
\end{lemma}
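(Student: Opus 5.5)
For (a) I will compute entrywise expectations. For $j\neq j'$, the pair $(K_{ij}(x),U_{ijm})$ is independent of $U_{ij'm'}$. Also $\mathbb{E}[U_{ijm}]=f_i^{\text{hist}}(m)=(G\Pi')_{mi}$ and $\mathbb{E}[K_{ij}(x)]=N^{-1}\mathbb{E}[K(x)]\bm{1}_N$ entry $i$. Averaging over the $N_i(N_i-1)$ ordered pairs then gives $\mathbb{E}[S_{mi}(x)]=\mathbb{E}[K_{i1}(x)](G\Pi')_{mi}$ and $\mathbb{E}[T]=\sum_i (G\pi_i)(G\pi_i)'=G\Pi'\Pi G'$. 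The statement writes $\mathbb{E}[S]$ in transposed/diagonal form, and I will simply match that convention. This part is routine.

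For (b) I will use quantile-type bins for $\bar g$. Since $\bar g$ is continuous (Assumption~\ref{assump1}) and bounded by $c_1^{-1}$, I can choose cut points $\delta_m$ with $\int_{\mathcal{B}_m}\bar g=1/M$ for every $m$. I will also split at the boundaries of the anchor regions $S_k$ so that each bin lies, up to $O(K)$ exceptional bins, either inside a single anchor region or in the common region; the $O(K)$ exceptions are handled by $M/K\to\infty$. Then $n^{-1}\sum_i f_i^{\text{hist}}(m)=\sum_k (n^{-1}\eta_k)G_{mk}\le c_2^{-1}K^{-1}\sum_k G_{mk}=c_2^{-1}\int_{\mathcal{B}_m}\bar g=c_2^{-1}/M$. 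The reverse inequality follows in the same way from $\eta_k\ge c_2 n/K$ and the assumed lower bound $\sum_k G_{mk}\ge c_4^{-1}K/M$, which gives $\ge c_2c_4^{-1}M^{-1}$. Setting $c_5$ to the larger of the two constants finishes (b).

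For (c), with the same bins, $(G\bm{1}_K)_m=K\int_{\mathcal{B}_m}\bar g=K/M$ exactly, which gives both the sup bound and the minimal-entry order. The main obstacle is the eigenvalues of $\Sigma_G=(M/K)G'G$. I will compare $G'G$ with $\int \bm g\bm g'$ through the histogram projection. Let $P_M$ be the $L^2$ projection onto piecewise constants weighted by the bins. For a unit vector $v$, set $\phi_v=v'\bm g$; then $(M/K)v'G'Gv=(M/K)\sum_m(\int_{\mathcal{B}_m}\phi_v)^2$. I intend to avoid Lebesgue bin lengths and work in the weighted space $L^2(\bar g\,dx)$ with ratios $\rho_k=g_k/(K\bar g)\in[0,1]$, using $\int_{\mathcal{B}_m}\bar g=1/M$. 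This yields $(M/K)v'G'Gv=K^{-1}\sum_m M\bigl(\int_{\mathcal{B}_m}\bar g\,\rho_v\bigr)^2\cdot K^2$, i.e. $K\|E_M[\rho_v]\|^2_{L^2(\bar g)}$, which is a conditional expectation of $\rho_v=v'\bm\rho$ given the bin partition. The upper bound then follows from Jensen: $\le K\|\rho_v\|^2_{L^2(\bar g)}=\frac1K\int (v'\bm g)^2/\bar g$. For the lower bound I will use the anchor bins. Because the bins inside the anchor regions have $\rho=e_k$ exactly, the conditional expectation restricted to those bins recovers $v_k$ with $\bar g$-mass $\asymp 1/K$ per $k$. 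This gives $\gtrsim K\sum_k v_k^2/K\asymp 1$, provided each anchor region carries $\bar g$-mass of order $1/K$. That last fact comes from $\int_{S_k} g_k\gtrsim$ const, which I would derive from Assumption~\ref{assump-anchor} combined with the eigenvalue condition on $\Sigma_{\bm g}$; this is the step most likely to need an extra argument. Finally, $\|G\|=(K/M)^{1/2}\lambda_{\max}(\Sigma_G)^{1/2}$ and $\|Q\|=\|G(G'G)^{-1}\|=\sigma_{\min}(G)^{-1}\le c_7(M/K)^{1/2}$ follow directly.
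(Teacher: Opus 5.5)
Your parts (a) and (b) are fine, and so is the upper half of (c). The identity $v'\Sigma_Gv=K\|E_M[\rho_v]\|^2_{L^2(\bar g)}$ is correct. The contraction bound closes in one line: $\bm{\rho}(x)$ is a probability vector, so $\rho_v^2\le\sum_k v_k^2\rho_k$, hence $v'\Sigma_Gv\le K\sum_k v_k^2\int\rho_k\bar g=1$ for unit $v$. Do add that line, because $\frac1K\int(v'\bm{g})^2/\bar g$ is not controlled by $\lambda_{\max}(\Sigma_{\bm{g}})$ when $\bar g$ is not bounded below. With it, your upper bound is cleaner than the paper's route through $\Sigma_{\tilde{\bm{g}}}$ and the $\epsilon$-splitting.

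The gap is the lower bound on $\lambda_{\min}(\Sigma_G)$, on which the lower bound for $\|G\|$ and the bound $\|Q\|=\sigma_{\min}(G)^{-1}\le c_7(M/K)^{1/2}$ both rest. Keeping only bins inside anchor regions gives at best $\lambda_{\min}(\Sigma_G)\gtrsim\min_k\int_{S_k}g_k$. The fact you then need, $\int_{S_k}g_k\gtrsim 1$, cannot be derived from Assumption~\ref{assump-anchor} plus the conditioning of $\Sigma_{\bm{g}}$. Assumption~\ref{assump-anchor} controls only the Lebesgue measure of $S_k$, and the conditioning of $\Sigma_{\bm{g}}$ can come entirely from the region where the densities overlap.

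Here is a counterexample with $K=2$. Let $p_1,p_2$ be smooth densities positive on the same set with a well-conditioned Gram matrix. Let $a_1,a_2$ be smooth densities on disjoint intervals of length $c_3$ away from that set, and put $g_k=(1-\epsilon)p_k+\epsilon a_k$. Then $\Sigma_{\bm{g}}$ stays well-conditioned and $\bar g$ stays bounded, but $\int_{S_k}g_k=\epsilon\to0$. So your argument yields only $\lambda_{\min}(\Sigma_G)\gtrsim\epsilon$, even though the true value is of constant order. Separately, ``all but $O(K)$ bins lie inside some $S_k$ or in the common region'' presupposes each $S_k$ is a union of boundedly many intervals, which Assumption~\ref{assump-anchor} does not give.

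What is missing is a way to transfer the conditioning of $\Sigma_{\bm{g}}$ to $\Sigma_G$ through the binning, which is how the paper argues. In your notation:
\begin{itemize}
\item $v'\Sigma_{\bm{g}}v=K\int\bar g^2\rho_v^2\le c_1^{-1}K\|\rho_v\|^2_{L^2(\bar g)}$, so $K\|\rho_v\|^2_{L^2(\bar g)}\ge c_1^2$.
\item Since $E_M$ is an orthogonal projection in $L^2(\bar g)$, $v'\Sigma_Gv=K\|\rho_v\|^2_{L^2(\bar g)}-K\|\rho_v-E_M[\rho_v]\|^2_{L^2(\bar g)}$.
\item It remains to show that the projection error is $o(1)$ uniformly over unit $v$.
\end{itemize}
This is the paper's comparison $\|\Sigma_G-\Sigma_{\tilde{\bm{g}}}\|=o(1)$, where $\tilde{\bm{g}}=K\bm{\rho}\circ H^{-1}$, so that $v'\Sigma_{\tilde{\bm{g}}}v=K\|\rho_v\|^2_{L^2(\bar g)}$. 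It genuinely needs regularity relative to the bin scale, namely Assumption~\ref{assump1} together with $M\to\infty$. Without it, two densities oscillating out of phase on a scale finer than the bins have nearly equal bin masses, making $\Sigma_G$ nearly singular while $\Sigma_{\bm{g}}$ is well-conditioned.
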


\noindent
{\it Proof of Lemma~\ref{lem:key}:} For (a), we have: for $1\le i\le n$ and $1\le m\le M$,
\begin{align*}
    \mbb{E}[S_{mi}(x)] &= \frac{1}{N(N-1)}\sum_{1\leq j\neq j'\leq N} \mbb{E}[K_{ij}(x)] \mbb{E}[U_{ij'm}]\\
    &=\frac{1}{N}\sum_{1\leq j\leq N} \mbb{E}[K_{ij}(x)] \sum_{k=1}^K\pi_i(k)g_k^{\text{hist}}(m).
\end{align*}
This proves the first identity. Moreover, for $1\le m,m'\le M$, we also have:
\begin{align*}
    \mbb{E}[T_{mm'}]&=\frac{1}{N(N-1)}  \sum_{i=1}^n \sum_{1\leq j\neq j'\leq N} \mbb{E}[U_{ijm}]\mbb{E}[U_{ij'm'}]=\sum_{i=1}^n\mbb{E}[U_{i1m}]\mbb{E}[U_{i2m'}]\\
    &=\sum_{k,k=1}^Kg_k^{\text{hist}}(m)\left[\sum_{i=1}^n\pi_i(k)\pi_i(k')\right]g_{k'}^{\text{hist}}(m').
\end{align*}
This proves the second equality.

In the following, we first prove the last statement (c) as the proof of statement (b) relies on it. This is actually a discretized version of Assumption \ref{assump2}. Recall $\bar{g}(x)=K^{-1}\sum_{k=1}^Kg_k(x)$. Define a CDF $H(x):=\int_{-\infty}^x\bar{g}(x)dx$. Using the change of variable $u=H(x)$, we have:
\begin{align*}
    \Sigma_{\bm{g}}=\frac{1}{K}\int \bm{g}(x)\bm{g}(x)'dx=\frac{1}{K}\int_0^1 \tilde{\bm{g}}(u)\tilde{\bm{g}}(u)'\omega(u)du,\quad \omega(u):=\bar{g}(x(u)),
\end{align*}
where $\tilde{\bm{g}}(u)=(\tilde{g}_1(u),\ldots,\tilde{g}_K(u))'$ with $\tilde{g}_k(u)=g_k(x(u))/\omega(u)$ for $1\le k\le K$. Now, we have translated the integral from $\mbb{R}$ to $[0,1]$. Letting $I_m=\left(\frac{m-1}{M},\frac{m}{M}\right]$, we then construct $\mcal{B}_m=H^{-1}(I_m)=x(I_m)$. Hence, by construction, it holds that 
\begin{align*}
    \int_{\mcal{B}_m}\bar{g}(x)dx=|I_m|=\frac{1}{M}.
\end{align*}
This implies for any $1\le m\le M$, $(G\mbf{1}_K)_m=\sum_{k=1}^K\int_{\mcal{B}_m}g_k(x)dx=\frac{K}{M}$. Then, we focus on proving the eigenvalue bounds for $\Sigma_G$. Its proof mainly follows \cite[Proof of Lemma C.3]{austern2025poisson}, which we extend to the full domain $\mbb{R}$ instead of a compact set and  growing $K$ here. Using the same change of variable above, we have:
\begin{align*}
    G_{mk}=\int_{\mcal{B}_m}g_k(x)dx = \int_{I_m}\tilde{g}_k(u)du,
\end{align*}
and
\begin{align*}
    \Sigma_G = \frac{M}{K}\sum_{m=1}^M\left(\int_{I_m}\tilde{\bm{g}}(u)du\right)\left(\int_{I_m}\tilde{\bm{g}}(u)du\right)'.
\end{align*}
Now, we define
\begin{align*}
    \Sigma_{\tilde{\bm{g}}}=\frac{1}{K}\int_0^1 \tilde{\bm{g}}(u)\tilde{\bm{g}}(u)'du.
\end{align*}
Since both the integrals in $\Sigma_G$ and $\Sigma_{\tilde{\bm{g}}}$ are defined on $[0,1]$, following \cite[Proof of Lemma C.3]{austern2025poisson}, we immediately obtain $\|\Sigma_G-\Sigma_{\tilde{\bm{g}}}\|\le CKM^{-\beta\wedge 1}=o(1)$ for some constant $C>0$. Using Assumption \ref{assump2}, this implies $\lambda_{\min}(\Sigma_G)\ge \lambda_{\min}(\Sigma_{\tilde{\bm{g}}})-o(1)$ and $\lambda_{\max}(\Sigma_G)\le \lambda_{\max}(\Sigma_{\tilde{\bm{g}}})+o(1)$. It then remains to prove $\lambda_{\max}(\Sigma_{\tilde{\bm{g}}})\asymp \lambda_{\min}(\Sigma_{\tilde{\bm{g}}})\asymp 1$. Since $\max_x \bar{g}(x)=O(1)$, we have $C^{-1}\lambda_{\min}(\Sigma_{\bm{g}})\le \lambda_{\min}(\Sigma_{\tilde{\bm{g}}})$ for some constant $C>0$. This proves the lower bound for the smallest eigenvalue. As for the upper bound for the largest eigenvalue, fix $\epsilon>0$ and let $S_{\epsilon}=\{x:\bar{g}(x)\ge \epsilon\}$ and $U_{\epsilon}=H(S_{\epsilon})\subset[0,1]$. We then immediately have $|U_{\epsilon}^c|=\int_{S_\epsilon^c}\bar{g}(x)dx\rightarrow 0$ as $\epsilon\rightarrow 0$. We decompose:
\begin{align*}
    \Sigma_{\tilde{\bm{g}}}=\Sigma_{\tilde{\bm{g}}}^{(\epsilon)}+R_{\epsilon},
\end{align*}
where $\Sigma_{\tilde{\bm{g}}}^{(\epsilon)}=K^{-1}\int_{U_\epsilon}\bm{\tilde{g}}(u)\bm{\tilde{g}}(u)'du$ and $R_\epsilon=K^{-1}\int_{U_\epsilon^c}\bm{\tilde{g}}(u)\bm{\tilde{g}}(u)'du$. As a result, we have $\lambda_{\max}(\Sigma_{\tilde{\bm{g}}})\le \lambda_{\max}(\Sigma_{\tilde{\bm{g}}}^{(\epsilon)})+\lambda_{\max}(R_\epsilon)$. First, on $S_\epsilon$, it holds that
\begin{align*}
    \lambda_{\max}(\Sigma_{\tilde{\bm{g}}}^{(\epsilon)})\le \epsilon^{-1}\lambda_{\max}(\Sigma_{\bm{g}}).
\end{align*}
Moreover, on $S_\epsilon^c$, we can trivially bound $\lambda_{\max}(R_\epsilon)=\|R_\epsilon\|\le K^{-1}|U_\epsilon^c|\|\tilde{\bm{g}}(u)\|^2\le C|U_\epsilon^c|\rightarrow 0$ as $\epsilon\rightarrow 0$ for some constant $C>0$. Hence, by picking $\epsilon$ properly large, we obtain $\lambda_{\max}(\Sigma_{\tilde{\bm{g}}})=O(1)$. Putting all bounds together, we proved the first statement in (c). For the rest of the bounds for $\|G\|$ and $\|Q\|$, the bound on $\|G\|$ is a direct consequence of the bound on eigenvalues of $\Sigma_G$. As for $Q$, it is seen that $\|Q\|=\|G(G'G)^{-1}\|\le \|G\|\|(G'G)^{-1}\|=O((K/M)^{1/2})\cdot O(M/K)=O((M/K)^{1/2})$.

Finally, for (b), note that for all $1\le m\le M$,
\[
\sum_{i=1}^n f_i^{\text{hist}}(m)  = \sum_{k=1}^K g_k^{\text{hist}}(m)\Bigl[ \sum_{i=1}^n \pi_i(k)\Bigr]\leq \underbrace{\biggl[\sum_{k=1}^K g_k^{\text{hist}}(m)
\biggr]}_{\leq c_4M^{-1}K}\underbrace{\max_{k}\biggl[ \sum_{i=1}^n \pi_i(k)\biggr]}_{\leq c_2K^{-1}n} =O(M^{-1}n),
\]
where we have used Assumption \ref{assump2} and Lemma \ref{lem:key} (c) in the inequality. The lower bound is similar.

\subsection{Proof of Lemma~\ref{lem:error1}}
Let $B=\bm{\Sigma}^{-1} \Pi'$, where $\bm{\Sigma}=n^{-1}\Pi'\Pi$.
Combining these notations with 
\eqref{g-star-expression}, we have $\bg^*(x)=(Nn)^{-1}B\mathbb{E}[K(x)]{\bf 1}_N$. 
Therefore, for each $1\leq k\leq K$, 
\[
g^*_k(x)=\frac{1}{nN}\sum_{i=1}^n b_i(k)\sum_{j=1}^N\mbb{E}[\mcal{K}_h(x-X_{ij})]=\frac{1}{n}\sum_{i=1}^n b_i(k)\mbb{E}[\mcal{K}_h(x-X_{i1})],
\]
where the last equality is because $\{X_{ij}\}_{j=1}^N$ are i.i.d. variables. 
Recall that  $f_i(x)$ is the density of $X_{i1}$; and under our model, $f_i(x)=\sum_{\ell=1}^K \pi_i(\ell)g_\ell(x)$. It follows that 
\[
g^*_k(x) = \frac{1}{n}\sum_{i=1}^nb_i(k)\int \mcal{K}_h(x-y)f_i(y)dy
=\sum_{\ell=1}^K\left(\frac{1}{n}\sum_{i=1}^nb_i(k)\pi_i(\ell)\right)\int\mcal{K}_h(x-y)g_\ell(y)dy.
\]
Since $n^{-1}B\Pi=n^{-1}(n^{-1}\Pi'\Pi)^{-1}\Pi'\Pi=I$, we have $\frac{1}{n}\sum_{i=1}^nb_i(k)\pi_i(\ell)$ is equal to 1 if $k=\ell$ and 0 otherwise. 
It follows that 
\beq \label{proof-bias-1}
g_k^*(x)=\int \mcal{K}_h(x-y)g_k(y)dy. 
\eeq

We now use \eqref{proof-bias-1} to bound the difference between $g_k^*(x)$ and $g_k(x)$. Recall that ${\cal K}_h(x) = \frac{1}{h}{\cal K}(\frac{x}{h})$. 
For $1\le k\le K$, it is seen that
\begin{align*}
    e_k'({\boldsymbol g}^*(x)-{\boldsymbol g}(x))=g_k^*(x)-g_k(x)
\end{align*}
is actually the bias of the KDE of $g_k$ with bandwidth $h$. Therefore, it is bounding the mean integrated squared error (MISE). The proof is relatively standard. See \cite[Proposition 5.1]{tsybakov2009introduction}. For completeness, we include it in the following. Since we assume all $g_k$'s belong to the Nikol’ski class with smoothness $\beta>0$, using Taylor expansion (with integral form of the remainder, see \cite{apostol1991calculus}), it holds that
\begin{align*}
    g_k(uh+x)=g(x)+g'(x)uh+\ldots+\frac{(uh)^{\ell}}{(\ell-1)!}\int_{0}^{1}(1-\tau)^{\ell-1}g^{(\ell)}(x+\tau uh)d\tau,
\end{align*}
where $\ell=\lfloor \beta \rfloor$. Then, using the assumption that the kernel $\mcal{K}$ is of order $\ell$ (Assumption \ref{assump2} (c)) that $\int u^j\mcal{K}(u)du=0$ for $j=1,\ldots,\ell$, we have:
\begin{align*}
    g_k^*(x)-g_k(x)&=\int \mcal{K}(u)(g_k(uh+x)-g_k(x))du\\
    &=\int \mcal{K}(u)\frac{(uh)^{\ell}}{(\ell-1)!}\int_{0}^{1}(1-\tau)^{\ell-1}g^{(\ell)}(x+\tau uh)d\tau du\\
    &=\int \mcal{K}(u)\frac{(uh)^{\ell}}{(\ell-1)!}\int_{0}^{1}(1-\tau)^{\ell-1}(g^{(\ell)}(x+\tau uh)-g^{(\ell)}(x))d\tau du.
\end{align*}
Then, using Minkowski's integral inequality in Lemma \ref{lem:Minkowski} twice and the assumption that the kernel $\mcal{K}$ is of order $\ell$ (Assumption \ref{assump2} (c)) that $\int |u|^\beta|\mcal{K}(u)|du<\infty$, we can further obtain the bound:
\begin{align*}
    \int (g_k^*(x)-g_k(x))^2dx&\le \int \Bigg(\int |\mcal{K}(u)|\frac{(uh)^{\ell}}{(\ell-1)!}\times\\
    &\qquad\int_{0}^{1}(1-\tau)^{\ell-1}|g^{(\ell)}(x+\tau uh)-g^{(\ell)}(x)|d\tau du\Bigg)^2 dx\\
    &\le \Bigg(\int |\mcal{K}(u)|\frac{(uh)^{\ell}}{(\ell-1)!}\times\\
    &\qquad\left[\int \left(\int_{0}^{1}(1-\tau)^{\ell-1}|g^{(\ell)}(x+\tau uh)-g^{(\ell)}(x)|d\tau\right)^2dx\right]^{\frac{1}{2}}du\Bigg)^2\\
    &\le \left(\int |\mcal{K}(u)| \frac{(uh)^{\ell}}{(\ell-1)!}\left[\int_{0}^{1}(1-\tau)^{\ell-1}L|uh|^{\beta-\ell}d\tau\right]du\right)^2\\
    &\le \left(\frac{L}{\ell!}\int |u|^\beta|\mcal{K}(u)|du\right)^2h^{2\beta},
\end{align*}
where $\int |u|^\beta|\mcal{K}(u)|du<\infty$. Taking the sum over all $1\le k\le K$, it then implies the desired bound.

\subsection{Proof of Lemma~\ref{lem:error2-decompose}}
Using the definitions in \eqref{all-estimators}, we immediately have:
\begin{align} \label{proof-erro2decomp}
\widehat{\boldsymbol g}^*&(x)-{\boldsymbol g}^*(x) 
= G'G(G'TG)^{-1}G'S{\bf 1}_n - G'G(G'\mathbb{E}TG)^{-1}G'\mathbb{E}S{\bf 1}_n \cr
&= G'G(G'\mbb{E}TG)^{-1}G'(S-\mathbb{E}S){\bf 1}_n + G'G\bigl[(G'TG)^{-1}-(G'\mathbb{E}T G)^{-1}\bigr]G'S{\bf 1}_n\cr
&= G'G(G'\mbb{E}TG)^{-1}\Bigl[ G'(S-\mathbb{E}S){\bf 1}_n + (G'\mathbb{E}T G -G'TG)(G'TG)^{-1}G'S{\bf 1}_n\Bigr]\cr
&= (\Pi'\Pi)^{-1}\bigl[ Q'(S-\mathbb{E}S){\bf 1}_n + Q'(\mathbb{E}T-T)Q \, \widehat{\boldsymbol g}^*(x)\bigr]\cr
&= \Omega'(S-\mathbb{E}S){\bf 1}_n - \Omega'(T-\mathbb{E}T)Q\,\widehat{\bg}^*(x), 
\end{align}
where in the third line we have used the equality of $A^{-1}-B^{-1}=B^{-1}(B-A)A^{-1}$ (for any symmetric matrices $A$ and $B$), in the fourth line we have plugged in the expression of $\mathbb{E}[T]$ in Lemma \ref{lem:key} (a), in the fifth line we have used  $Q=G(G'G)^{-1}$ and the definition of $\widehat{\boldsymbol g}^*(x)$, and the last line is due to the definition of $\Omega=Q(\Pi'\Pi)^{-1}$. 

Note that \eqref{proof-erro2decomp} is a linear equation on $\widehat{\bg}^*(x)$. 
Solving this equation gives the claim. \qed

\subsection{Proof of Lemma \ref{lem:uprocess-bound}}\label{sec:pf-lem:uprocess-bound}
We decompose $H_{n,N}(x)$ as follows: 
\[
 H_{n,N}(x)=\frac{1}{n}\sum_{i=1}^n H_i(x), \qquad        H_i(x):=\frac{1}{N(N-1)}\sum_{1\le j\neq j'\le N}h(x;x_{ij},x_{ij'}), 
\]
where $H_i(x)$'s are independent across $i$. 
Meanwhile, $\theta(x)=\frac{1}{n}\sum_{i=1}^n \theta_i(x)$. 
Hence, using Fubini's theorem,
    \begin{align} \label{proof-intvarU-1}
\mbb{E}\biggl[&\int (H_{n,N}(x)-\theta(x))^2dx\biggr]=\int \mbb{E}\Bigl(\frac{1}{n}\sum_{i=1}^{n}(H_i(x)-\theta_i(x))\Bigr)^2dx\cr
&= \frac{1}{n^2}\sum_{1\leq i, i'\leq n}\int \mathbb{E}\bigl[(H_i(x)-\theta_i(x))(H_{i'}(x)-\theta_{i'}(x))\bigr]dx\cr
&= \frac{1}{n^2}\sum_{i=1}^n\int \mathbb{E}\bigl[ (H_i(x)-\theta_i(x))^2]dx,  
    \end{align}
where 
in the third line we have used the independence across $H_i(x)$ and the fact that $\mathbb{E}[H_i(x)]=\theta_i(x)$.

It remains to bound $\int\mathbb{E}[(H_i(x)-\theta_i(x))^2]dx$. 
Let $\bar{h}_{i,j,j'}(x):=h(x;x_{ij},x_{ij'})-\theta_i(x)$. Then, $H_i(x)-\theta_i(x)=\frac{1}{N(N-1)}\sum_{1\le j\neq j'\le N}\bar{h}_{i,j,j'}(x)$. 
We immediately have
\beq  \label{proof-intvarU-2}
  \int \mathbb{E}\bigl[ (H_i(x)-\theta_i(x))^2]dx =\frac{1}{N^2(N-1)^2}\sum_{j\neq j'}\sum_{l\neq l'}\int \mbb{E}\bigl[\bar{h}_{i,j,j'}(x)\bar{h}_{i,l,l'}(x)\bigr]dx.
\eeq
    Now, we divide the double sum into three parts.

    \textbf{Case 1.} The disjoint pairs are $\{j,j'\}\cap\{l,l'\}=\emptyset$. Due to independence, such terms are zero.
    
    \textbf{Case 2.} The diagonal terms correspond to the pairs $(j,j')=(l,l')$. There are $N(N-1)$ terms and each of them satisfies (using Fubini's theorem)
    \begin{align*}
        \int \mbb{E}[\bar{h}_{i,j,j'}(x)\bar{h}_{i,l,l'}(x)]dx=\mbb{E}\left[\int \bar{h}_{i,j,j'}(x)\bar{h}_{i,l,l'}(x)dx\right]=\sigma_i^2.
    \end{align*}
    
    \textbf{Case 3.} The overlapping pairs correspond to the case $|\{j,j'\}\cap\{l,l'\}|=1$, where $\bar{h}_{i,j,j'}(x)$ and $\bar{h}_{i,l,l'}(x)$ are still dependent. There are at most $4N(N-1)(N-2)$ such pairs with each satisfying
    \begin{align*}
        \left|\int \mbb{E}[\bar{h}_{i,j,j'}(x)\bar{h}_{i,l,l'}(x)]dx\right|&\le \left(\int \mbb{E}[\bar{h}^2_{i,j,j'}(x)]dx\right)^{\frac{1}{2}}\left(\int \mbb{E}[\bar{h}^2_{i,l,l'}(x)]dx\right)^{\frac{1}{2}}\le 4b_i^2,
    \end{align*}
    where we used Cauchy–Schwarz inequality and the assumption $\int \mbb{E}[h^2(x;x_{i1},x_{i2})]dx=\mbb{E}[\int h^2(x;x_{i1},x_{i2})dx]\le b_i^2$ for $1\le i\le n$ by Fubini's theorem.

    Finally, combing all cases, there exists a constant $C>0$ such that
    \begin{align*}
        \mbb{E}\left[\int (H_i(x)-\theta_i(x))^2dx\right]&\le \frac{1}{N^2(N-1)^2}\left(N(N-1)\sigma_i^2+4N(N-1)(N-2)b_i^2\right)\\
        &\le \frac{4(\sigma_i^2+b_i^2)}{N}.
    \end{align*}
    Plugging it in \eqref{proof-intvarU-1}, it yields that
    \begin{align*}
        \mbb{E}\left[\int (H_{n,N}(x)-\theta(x))^2dx\right]\le \frac{4(\sigma^2+b^2)}{nN}.
    \end{align*}
    This is the desired integrated variance bound.

\subsection{Proof of Lemma~\ref{lem:error21}}
Recall that in \eqref{error21}, it is seen that for each $1\le k\le K$, $n^{-1}e_k'\Omega'(S -\mathbb{E}[S]){\bf 1}_n$ can be written as a (centered) U-process in \eqref{incomplete-U-process} indexed by $x$ with
\begin{align*}
    h(x;X_{ij},X_{ij'})=\sum_{m=1}^M \Omega_{mk} {\cal K}_h(x-X_{ij})1\{X_{ij'}\in\mcal{B}_m\}.
\end{align*}
Hence, for each $1\le k\le K$, we shall aply Lemma \ref{lem:uprocess-bound}. To this end, in the following, we will bound the quantities $b^2$ and $\sigma^2$ given in Lemma \ref{lem:uprocess-bound}.

First, we focus on $b^2$. For each $1\le k\le K$ and $1\le i\le n$, 
    \begin{align*}
         b_i^2&=\mbb{E}\left[\int h^2(x;x_{i1},x_{i2})dx\right]=\mbb{E}\left[\int\mcal{K}^2_h(x-x_{i1})dx\right]\cdot \mbb{E}\left[\left(\sum_{m=1}^M\Omega_{mk}1\{x_{i2}\in\mcal{B}_m\}\right)^2\right]\\
         &=h^{-1}\int \mcal{K}^2(z)dz\cdot \sum_{m=1}^M f_i^{\text{hist}}(m)\Omega_{mk}^2\le Ch^{-1}\sum_{m=1}^M f_i^{\text{hist}}(m)\Omega_{mk}^2,
    \end{align*}
where we have used the condition \eqref{cond-kernel}. Using Lemma \ref{lem:key} (b), it then yields that
\begin{align*}
 b^2 = \frac{1}{n}\sum_{i=1}^n b_i^2\le Ch^{-1}M^{-1}\sum_{m=1}^M\Omega_{mk}^2.
\end{align*}

Moreover, for any $1\le k\le K$ and $1\le i\le n$, note that by Fubini's theorem,
    \begin{align*}
        \sigma_i^2&=\mbb{E}\left[\int (h(x;X_{i1},X_{i2})-\mbb{E}[h(x;X_{i1},X_{i2})])^2dx\right]=\int \Var[h(x;X_{i1},X_{i2})]dx\\
        &\le \int \mbb{E}[{\cal K}^2_h(x-X_{i1})]dx\cdot \mbb{E}\left[\sum_{m=1}^M\gamma_{km} 1\{X_{i2}\in {\cal B}_m\}\right]^2.
    \end{align*}
This is the exactly the same bound in $b_i^2$ above. Using the same argument, we obtain:
\begin{align*}
    \sigma^2=\frac{1}{n}\sum_{i=1}^n \sigma_i^2\le Ch^{-1}M^{-1}\sum_{m=1}^M\Omega_{mk}^2.
\end{align*}

Finally, using the bound in Lemma \ref{lem:uprocess-bound} and summing over $k=1,\ldots,K$, it yields that
\begin{align*}
    \mbb{E}\left[\int \bigl\|n^{-1}\Omega'[S(x)-\mathbb{E}S(x)]{\bf 1}_n\bigr\|^2dx\right]\le C(Nnh)^{-1}M^{-1}\sum_{k=1}^K\sum_{m=1}^M\Omega_{mk}^2.
\end{align*}
Note that the definition of the operator norm is $\|\Omega\|=\sup_{\|v\|=1}\|\Omega v\|$. Taking a special case $v=e_k$ for $1\le k\le K$, it holds that $\sum_{m=1}^M \Omega_{mk}^2\le \|\Omega\|^2$ for $1\le k\le K$. Thus, we conclude
\begin{align*}
    \mbb{E}\left[\int \bigl\|n^{-1}\Omega'[S(x)-\mathbb{E}S(x)]{\bf 1}_n\bigr\|^2dx\right]\le C(Nnh)^{-1}M^{-1}K\|\Omega\|^2.
\end{align*}
Furthermore, using Lemma \ref{lem:key} (c), we have:
\begin{align*}
    \|\Omega\|^2\le \|Q\|^2\|(\Pi'\Pi)^{-1}\|^2= n^{-2}\|Q\|^2\|\bm{\Sigma}^{-1}\|^2\le Cn^{-2}\frac{M}{K}K^2=Cn^{-2}KM.    
\end{align*}
Therefore, we obtain:
\begin{align*}
    \mbb{E}\left[\int \bigl\|\Omega'[S(x)-\mathbb{E}S(x)]{\bf 1}_n\bigr\|^2dx\right]\le C(Nnh)^{-1}K^2.
\end{align*}

\subsection{Proof of Lemma \ref{lem:ustats-bound}}
    Let $k=\lceil \frac{N}{2} \rceil$, the smallest integer not less than $\frac{N}{2}$. For each $1\le i\le n$, set
    \begin{align*}
        W_i(x_{i1},\ldots,x_{iN}):=\frac{h(x_{i1},x_{i2})+h(x_{i3},x_{i4})+\ldots+h(x_{i,2k-1},x_{i,2k})}{k},
    \end{align*}
    where we simply set $h(x_{i,2k-1},x_{i,2k})=0$ if $2k>N$. Here, for each $1\le i\le n$, we break our sample into $k$ non-overlapping blocks of size 2. We then have:
    \begin{align*}
        k\sum_{\sigma=(\sigma(1),\ldots,\sigma(N))\in S^N}W_i(x_{i\sigma(1)},\ldots,x_{i\sigma(N)})=k(N-2)!\sum_{1\le j\neq j'\le N}h(x_{ij},x_{ij'}),
    \end{align*}
    where $S^N$ is the permutation group on $\{1,\ldots,N\}$. This implies:
    \begin{align*}
        H_{n,N}=\frac{1}{nN!}\sum_{i=1}^{n}\sum_{\sigma=(\sigma_1,\ldots,\sigma_n)\in S^N}W_i(x_{i\sigma(1)},\ldots,x_{i\sigma(N)}).
    \end{align*}
    Let $T_{i,\sigma}:=W_i(x_{i\sigma(1)},\ldots,x_{i\sigma(N)})-\theta_i$. Note that for each $i$, $W_i(x_{i\sigma(1)},\ldots,x_{i\sigma(N)})$ is an average of $k$ i.i.d. random variables with mean $\theta_i$ such that $T_{i,\sigma}$ is mean zero. We then obtain:
    \begin{align*}
        H_{n,N}-\theta=:\frac{1}{n}\sum_{i=1}^{n}H_{i},
    \end{align*}
    where $H_i$ is given by
    \begin{align*}
        H_i=\frac{1}{N!}\sum_{\sigma=(\sigma_1,\ldots,\sigma_n)\in S^N}T_{i,\sigma}.
    \end{align*}
    Now, for $t>0$, we apply the Chernoff technique: for any $\lambda>0$,
    \begin{align}\label{chernoff}
        \mbb{P}(H_{n,N}-\theta>t)&=\mbb{P}\left(\sum_{i=1}^{n}H_i>nt\right)\le e^{-\lambda nt}\mbb{E}e^{\lambda\sum_{i=1}^{n}H_i}\nonumber\\
        &=e^{-\lambda nt}\prod_{i=1}^{n}\mbb{E}e^{\lambda H_i}\le e^{-\lambda nt}\prod_{i=1}^{n}\left(\frac{1}{N!}\sum_{\sigma\in S^N}\mbb{E}e^{\lambda T_{i,\sigma}}\right),
    \end{align}
    where the last step is from Jensen's inequality. Here, note that according to the definition of $T_{i,\sigma}$, it is an average of $k$ i.i.d. random variables. By independence and Lemma \ref{lem:mgfbound}, we have:
    \begin{align*}
        \mbb{E}e^{\lambda T_{i,\sigma}}=\prod_{j=1}^{k}\mbb{E}e^{\lambda \frac{1}{k}(h(x_{i,2j-1},x_{i,2j})-\theta_i)}\le\text{exp}\left(\lambda^2\tilde{\sigma}_i^2\frac{e^{2\tilde{b}\lambda}-1-2\tilde{b}\lambda}{(2\tilde{b}\lambda)^2}\right),
    \end{align*}
    where $\tilde{b}=k^{-1}b$ and $\tilde{\sigma}_i^2=k^{-1}\sigma_i^2$. Plugging this back in \eqref{chernoff}, it yields that
    \begin{align*}
        \mbb{P}(H_{n,N}-\theta>t)&\le e^{-\lambda nt}\prod_{i=1}^{n}\text{exp}\left(\lambda^2\tilde{\sigma}_i^2\frac{e^{2\tilde{b}\lambda}-1-2\tilde{b}\lambda}{(2\tilde{b}\lambda)^2}\right)\\
        &=e^{-\lambda nt}\text{exp}\left(\lambda^2n\tilde{\sigma}^2\frac{e^{2\tilde{b}\lambda}-1-2\tilde{b}\lambda}{(2\tilde{b}\lambda)^2}\right),
    \end{align*}
    where $\tilde{\sigma}^2=n^{-1}\sum_{i=1}^{n}\tilde{\sigma}_i^2=n^{-1}\sum_{i=1}^{n}k^{-1}\sigma_i^2$. Take $\lambda=(2\tilde{b})^{-1}\log\left(1+2\tilde{b}t\tilde{\sigma}^{-2}\right)$ to obtain
    \begin{align*}
         \mbb{P}(H_{n,N}-\theta>t)\le\text{exp}\left(-\frac{n\tilde{\sigma}^2}{(2\tilde{b})^2}\phi\left(\frac{2\tilde{b}t}{\tilde{\sigma}^2}\right)\right),
    \end{align*}
    where $\phi(x)=(1+x)\log(1+x)-x$ and it holds that $\phi(x)\ge x^2/(2+2x/3)$ for all $x\ge 0$. Then, it implies
    \begin{align*}
        \mbb{P}(H_{n,N}-\theta>t)\le\text{exp}\left(-\frac{nt^2}{2\tilde{\sigma}^2+4\tilde{b}t/3}\right)\le \text{exp}\left(-\frac{1}{2}\min\left\{\frac{nt^2}{2\tilde{\sigma}^2},\frac{3nt^2}{4\tilde{b}t}\right\}\right).
    \end{align*}
    Repeating for the other side gives us:
    \begin{align}\label{twosidebound}
        \mbb{P}(|H_{n,N}-\theta|>t)\le 2\text{exp}\left(-\frac{nt^2}{2\tilde{\sigma}^2+4\tilde{b}t/3}\right)\le 2\text{exp}\left(-\frac{1}{2}\min\left\{\frac{nt^2}{2\tilde{\sigma}^2},\frac{3nt}{4\tilde{b}}\right\}\right).
    \end{align}
    Setting $$t=\max\left\{2\tilde{\sigma}\sqrt{\frac{\log(2/\delta)}{n}},\frac{8\tilde{b}\log(2/\delta)}{3n}\right\}=\max\left\{2\sigma\sqrt{\frac{\log(2/\delta)}{nk}},\frac{8b\log(2/\delta)}{3nk}\right\},$$ 
    and noting that $k\ge N/2$, we obtain the first concentration inequality.

\subsection{Proof of Lemma~\ref{lem:error22}}\label{sec:pf-lem:error22}

We present a technical lemma as follows: 
\begin{lemma} \label{lem:spectralnorms}
Under the conditions of Theorem~\ref{thm:main}, 
there exists a constant $C_3>0$ such that with probability $1-o((Nn)^{-5})$ simultaneously, 
\begin{align} 
   &\|T-\mbb{E}[T]\|\le C_3\left(\sqrt{\frac{nK(\log (Nn)+M)}{NM^2}}+\frac{\log (Nn)+M}{N}\right), \label{spec-norms-1}\\
    &\|Q'(T-\mbb{E}[T])Q\|\le C_3(NK)^{-1/2}\sqrt{n(\log(Nn)+K)}. \label{spec-norms-2}
\end{align}
Furthermore, it holds that $\mbb{P}(\mcal{E})\ge 1-o((Nn)^{-5})$.
\end{lemma}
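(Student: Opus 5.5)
We bound both spectral norms through the net argument in \eqref{error22-temp2}: control the quadratic form $v'(\cdot)v$ for a fixed unit vector with the incomplete-U Bernstein inequality (Lemma~\ref{lem:ustats-bound}), then take a union bound over an $\epsilon$-net.

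\textbf{Bound \eqref{spec-norms-2}.} Fix a unit $v\in\mathbb{R}^K$ and set $s_{ij}=v'Q'\boldsymbol{U}_{ij}$. Since $\boldsymbol{U}_{ij}$ is a standard basis vector, $s_{ij}=(Qv)_m$ when $X_{ij}\in\mathcal{B}_m$. Hence
\[
|s_{ij}|\le\|Qv\|_\infty\le\|Q\|\le c_7(M/K)^{1/2}
\]
by Lemma~\ref{lem:key}(c), and the kernel $h^*(x,y)=s(x)s(y)$ is bounded by $b\lesssim M/K$. For the variance, $\mathbb{E}[s_{i1}^2]=\sum_m f_i^{\text{hist}}(m)(Qv)_m^2$. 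After averaging over $i$, Lemma~\ref{lem:key}(b) gives $n^{-1}\sum_i\mathbb{E}s_{i1}^2\lesssim M^{-1}\|Qv\|^2\lesssim K^{-1}$. Bounding $\mathbb{E}s_{i2}^2\le\max|s|^2$ naively would be too crude. Instead I use that $\mathbb{E}[s_{i1}^2s_{i2}^2]=(\mathbb{E}s_{i1}^2)^2$ and that $\mathbb{E}s_{i1}^2\le \max_m f_i^{\text{hist}}(m)\|Qv\|^2$. I will need $\max_m f^{\text{hist}}_i(m)\lesssim K/M$, which follows from $\|G\mathbf{1}_K\|_\infty=O(K/M)$. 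This gives $\mathbb{E}s_{i1}^2\lesssim 1$ per group, and with the averaged bound,
\[
\sigma^2\lesssim n^{-1}\sum_i(\mathbb{E}s_{i1}^2)^2\lesssim K^{-1}.
\]
Lemma~\ref{lem:ustats-bound} applies after multiplying by $n$, since $T$ is a sum over $i$ rather than an average. With $\log(2/\delta)\asymp\log(Nn)+K$, chosen to absorb the $5^K$ net points, it yields
\[
|v'Q'(T-\mathbb{E}T)Qv|\lesssim n\sqrt{\frac{\log(Nn)+K}{KnN}}+\frac{M(\log(Nn)+K)}{KN}.
\]
The first term is the claimed $\sqrt{n(\log(Nn)+K)/(NK)}$. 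The second is dominated by it under $M\le[Nn/\log^2(Nn)]^{1/2}$ and $K^2\log(Nn)=O(Nn)$. This domination is where I expect to spend effort; if it fails, I will refine via a standard Hoeffding decomposition into a linear part and a degenerate part.

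\textbf{Bound \eqref{spec-norms-1}.} Repeat the argument with unit $v\in\mathbb{R}^M$ and $s_{ij}=v_m$, so $b\le1$. The variance is $\sigma^2\lesssim n^{-1}\sum_i(\sum_m f_i^{\text{hist}}(m)v_m^2)^2\lesssim (K/M)\cdot M^{-1}$, using the max bound once and the average bound once. The net now has $5^M$ points, so $\log(2/\delta)\asymp\log(Nn)+M$. This yields $\sqrt{nK(\log(Nn)+M)/(NM^2)}+(\log(Nn)+M)/N$.

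\textbf{The event $\mathcal{E}$.} Substitute $M\asymp$ the admissible range into the two bounds. \eqref{spec-norms-1} is $\le n/M$ because $K\le M$, $M^2\log(Nn)\lesssim Nn$ and $NnM\gg M^2$. \eqref{spec-norms-2} is $\le n/(K\log^{1/2}(Nn))$ because $K^2\log(Nn)(\log(Nn)+K)\ll Nn\cdot K$, which needs the growth conditions on $K$. Choosing $\delta=(Nn)^{-6}$ makes every failure probability $o((Nn)^{-5})$, which gives the claims and $\mathbb{P}(\mathcal{E})\ge1-o((Nn)^{-5})$.
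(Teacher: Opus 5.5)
Your proposal is correct and is essentially the paper's proof: the same $\epsilon$-net reduction, Lemma~\ref{lem:ustats-bound} applied to the fixed-$v$ quadratic form with the same variance orders ($K/M^2$ and $K^{-1}$), a union bound giving $\log(2/\delta)\asymp\log(Nn)+M$ (resp.\ $+K$), and the same verification of $\mathcal{E}$ from $K\le M\le[Nn/\log^2(Nn)]^{1/2}$.

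Two small remarks. First, the paper gets $b=O(1)$ via $\|Q'e_m\|\le\|(G'G)^{-1}\|\,\|G'e_m\|_1=O(1)$. Your cruder $b\lesssim M/K$ is harmless: the extra term is smaller than the main one by the factor $M\sqrt{(\log(Nn)+K)/(KNn)}\le\sqrt{1/(K\log(Nn))+1/\log^{2}(Nn)}=o(1)$, so no Hoeffding refinement is needed. Second, $5^K$ net points corresponds to $\epsilon=1/2$, where \eqref{error22-temp2} is vacuous; take $\epsilon=1/4$ and $9^K$ (resp.\ $9^M$) points instead, which leaves every rate unchanged.
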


Given Lemma~\ref{lem:spectralnorms}, Lemma~\ref{lem:error22} follows immediately by inserting \eqref{spec-norms-2} into \eqref{error22-temp1}.

\bigskip

We now prove Lemma~\ref{lem:spectralnorms}. Recall that
\begin{align*}
    T-\mathbb{E}[T] 
= \frac{1}{N(N-1)}\sum_{i=1}^n \sum_{1\leq j\neq j'\leq N}\bigl(U_{ij}U_{ij'}'  - \mbb{E}[U_{ij}]\mbb{E}[U_{ij}']\bigr).
\end{align*}
We use the tool in \cite{vershynin2010introduction} to bound the spectral norm. Fix an $\epsilon$-net, ${\cal N}_\epsilon$, of the unit sphere of $\mathbb{R}^M$. By \cite[Lemma 5.4]{vershynin2010introduction}, $\|T-\mathbb{E}[T]\|\leq (1-2\epsilon)^{-1}\max_{v\in{\cal N}_\epsilon}|v'(T-\mathbb{E}T)v|=2\max_{v\in{\cal N}_\epsilon}|v'(T-\mathbb{E}T)v|$ if we set $\epsilon=1/4$. Then, note that
\begin{align*}
    n^{-1}v'(T-\mathbb{E}T)v=\frac{1}{nN(N-1)}\sum_{i=1}^n \sum_{1\leq j\neq j'\leq N}\left[(v'U_{ij})(v'U_{ij'})-(v'\mbb{E}[U_{ij}])(v'\mbb{E}[U_{ij'}])\right]
\end{align*}
can be viewed as a U-statistic in \eqref{eq:u-stat} with
\begin{align*}
    h(X_{ij},X_{ij'})=(v'U_{ij})(v'U_{ij'}).
\end{align*}
In order to apply Lemma \ref{lem:ustats-bound}, we will bound $b$ and $\sigma$ in Lemma \ref{lem:ustats-bound} in the following.

First, it is seen that $|v'U_{ij}|\le \|v\|\|U_{ij}\|$. Here, $\|v\|=1$ and $\|U_{ij}\|^2=\sum_{m=1}^M U_{ijm}^2=1$. It yields that
\begin{align*}
    |v'U_{ij}||v'U_{ij'}|\le \|v\|^2\|U_{ij}\|\|U_{ij'}\|\le 1=:b
\end{align*}

Next, for $1\le i\le n$, due to the independence of $U_{ij}$ and $U_{ij'}$ for $j\ne j'$, we have:
\begin{align*}
    \Var\left[(v'U_{ij})(v'U_{ij'})\right]\le \mbb{E}\left[(v'U_{ij})^2\right]\mbb{E}\left[(v'U_{ij'})^2\right].
\end{align*}
Then, it is seen that
\begin{align*}
    \mbb{E}\left[(v'U_{ij})^2\right]&=\mbb{E}\left[\sum_{m=1}^M v_mU_{ijm}\right]^2=\sum_{m=1}^M v_{m}^2f_i^{\text{hist}}(m).
\end{align*}
The same bound also holds for $j'$ similarly. Thus, we obtain:
\begin{align*}
    \sum_{i=1}^n\Var\left[(v'U_{ij})(v'U_{ij'})\right]\le  \sum_{m=1}^M \sum_{m'=1}^Mv_{m}^2v_{m'}^2\sum_{i=1}^nf_i^{\text{hist}}(m) f_i^{\text{hist}}(m').
\end{align*}
We'd like to bound the entrywise maximum norm of the $M\times M$ matrix $F^{\text{hist}}(F^{\text{hist}})'$. Lemma \ref{lem:key} (c) implies that $\|G'e_m\|_1=O(K/M)$ for each $1\leq m\leq M$. As a result, for any $1\leq m,s\leq M$, 
\begin{align*}
e_m'F^{\text{hist}}(F^{\text{hist}})'e_{s} = e_m'G(\Pi'\Pi) G'e_{s}& \leq \|\Pi'\Pi\|_{\max} \|G' e_m\|_1 \|G'e_s\|_1 \cr
&= O(n/K)\cdot O(K^2/M^2)=  O(nKM^{-2}).  
\end{align*}
Since $\|Q\|^2=O(M/K)$ by Lemma \ref{lem:key} (c), we immediately have 
\[
\sum_{i=1}^n\Var\left[(v'U_{ij})(v'U_{ij'})\right]\leq \|v\|^4\cdot \|(F^{\text{hist}})'F^{\text{hist}}\|_{\max}= O(nKM^{-2}). 
\]
This then implies $\sigma^2=n^{-1}\sum_{i=1}^n\Var\left[(v'U_{ij})(v'U_{ij'})\right]\le CKM^{-2}$. Therefore, using Lemma \ref{lem:ustats-bound}, it holds that 
\begin{align*}
    \mbb{P}\left(|n^{-1}v'(T-\mathbb{E}T)v|\ge 2\sigma\sqrt{\frac{2\log(2/\delta)}{nN}}+\frac{16b\log(2/\delta)}{3nN}\right)< \delta.
\end{align*}
Taking the union bound over all vectors in $\mcal{N}_{\epsilon}$, we have:
\begin{align*}
    \mbb{P}\left(\max_{v\in\mcal{N}_\epsilon}|n^{-1}v'(T-\mathbb{E}T)v|\ge 2\sigma\sqrt{\frac{2\log(2/\delta)}{nN}}+\frac{16b\log(2/\delta)}{3nN}\right)< |\mcal{N}_{\epsilon}|\delta.
\end{align*}
Here, since we set $\epsilon=1/4$, according to \cite[Lemma 5.2]{vershynin2010introduction}, $|\mcal{N}_{1/4}|\le 9^M$. It then yields that
\begin{align*}
    \mbb{P}\left(\max_{v\in\mcal{N}_\epsilon}|n^{-1}v'(T-\mathbb{E}T)v|\ge 2\sigma\sqrt{\frac{2\log(2/\delta)}{nN}}+\frac{16b\log(2/\delta)}{3nN}\right)< 9^M\delta.
\end{align*}
Setting $9^M\delta=o((Nn)^{-5})$ and plugging in the bounds of $b$ and $\sigma^2$ above, we then have: with probability $1-o((Nn)^{-5})$,
\begin{align*}
    &\|n^{-1}(T-\mbb{E}[T])\|\le 2\max_{v\in{\cal N}_\epsilon}|n^{-1}v'(T-\mathbb{E}T)v|\le 4\sigma\sqrt{\frac{2\log(2/\delta)}{nN}}+\frac{32b\log(2/\delta)}{3nN}\\
    &\le C\left(\sqrt{\frac{K(\log (Nn)+M)}{NnM^2}}+\frac{\log (Nn)+M}{nN}\right).
\end{align*}
This shows that
\begin{align*}
    \|T-\mbb{E}[T]\|\le C\left(\sqrt{\frac{nK(\log (Nn)+M)}{NM^2}}+\frac{\log (Nn)+M}{N}\right).
\end{align*}
Moreover, under the assumption that $K\leq M\leq [Nn/\log^2(Nn)]^{1/2}$, it implies:
\begin{align}\label{eventE1}
    \|T-\mbb{E}[T]\|\le CM^{-1}n.
\end{align}

The proof for the second bound on $\|\Omega'(T-\mbb{E}[T])Q\|$ is similar. Let $V_{ij}=Q'U_{ij}$. Recall that
\begin{align*}
    Q'(T-\mathbb{E}[T])Q 
= \frac{1}{N(N-1)}\sum_{i=1}^n \sum_{1\leq j\neq j'\leq N}\bigl(V_{ij}V_{ij'}'  - \pi_i\pi_i'\bigr).
\end{align*}
Fix an $\epsilon$-net, ${\cal N}_\epsilon$, of the unit sphere of $\mathbb{R}^K$. By \cite[Lemma 5.4]{vershynin2010introduction}, $\|Q'(T-\mathbb{E}[T])Q\|\leq (1-2\epsilon)^{-1}\max_{v\in{\cal N}_\epsilon}|v'Q'(T-\mathbb{E}T)Qv|=2\max_{v\in{\cal N}_\epsilon}|v'Q'(T-\mathbb{E}T)Qv|$ if we set $\epsilon=1/4$. Then, note that
\begin{align*}
    n^{-1}v'Q'(T-\mathbb{E}T)Qv=\frac{1}{nN(N-1)}\sum_{i=1}^n \sum_{1\leq j\neq j'\leq N}\left[(v'V_{ij})(v'V_{ij'})-(v'\pi_i)(v'\pi_i)\right]
\end{align*}
can be viewed as a U-statistic in \eqref{eq:u-stat} with
\begin{align*}
    h(X_{ij},X_{ij'})=(v'V_{ij})(v'V_{ij'}).
\end{align*}
In order to apply Lemma \ref{lem:ustats-bound}, we will bound $b$ and $\sigma$ in Lemma \ref{lem:ustats-bound} in the following.

First, it is seen that $|v'V_{ij}|\le \|v\|\|V_{ij}\|\le \|v\|\|\|(G'G)^{-1}\|\|G'U_{ij}\|$. Here, $\|v\|=1$ and according to Lemma \ref{lem:key} (c), $\|(G'G)^{-1}\| = O(MK^{-1})$. Moreover, according to Lemma \ref{lem:key} (c), we have: 
\begin{align*}
    \|G'U_{ij}\|\le \|G'U_{ij}\|_1\le \max_{1\le m\le M}\|G'e_m\|_1\le CM^{-1}K.
\end{align*}
Then, it yields that $|v'V_{ij}|\le C $ for some constant $C>0$. Similarly, we also have $|v'\pi_{i}|\le C$. Then, it is seen that we can take $b=C$ for some constant $C>0$.

Next, for $1\le i\le n$, due to the independence of $V_{ij}$ and $V_{ij'}$ for $j\ne j'$, we have:
\begin{align*}
    \Var\left[(v'V_{ij})(v'V_{ij'})\right]\le \mbb{E}\left[(v'V_{ij})^2\right]\mbb{E}\left[(v'V_{ij'})^2\right].
\end{align*}
Let $\tilde{v}=Qv$ such that $v'V_{ij}=\tilde{v}'U_{ij}$. Then, it is seen that
\begin{align*}
    \mbb{E}\left[(v'V_{ij})^2\right]&=\mbb{E}\left[\sum_{m=1}^M\tilde{v}_mU_{ijm}\right]^2=\sum_{m=1}^M\tilde{v}_{m}^2f_i^{\text{hist}}(m).
\end{align*}
The same bound also holds for $j'$ similarly. Thus, we obtain:
\begin{align*}
    \sum_{i=1}^n\Var\left[(v'V_{ij})(v'V_{ij'})\right]\le \sum_{i=1}^n\left(\sum_{m=1}^M\tilde{v}_{m}^2f_i^{\text{hist}}(m)\right)^2=\sum_{m,m'=1}^M\tilde{v}_{m}^2\tilde{v}_{m'}^2\sum_{i=1}^n f_i^{\text{hist}}(m)f_i^{\text{hist}}(m').
\end{align*}
Since $\|Q\|^2=O(M/K)$ by Lemma \ref{lem:key} (c) and we have shown above that $\|(F^{\text{hist}})'F^{\text{hist}}\|_{\max}=O(nKM^{-2})$, we immediately have 
\begin{align*}
\sum_{i=1}^n\Var\left[(v'V_{ij})(v'V_{ij'})\right]&\leq \|Qv\|^4\cdot \|(F^{\text{hist}})'F^{\text{hist}}\|_{\max}\leq O(M^2/K^2)\cdot O(nKM^{-2}) \\
&= O(nK^{-1}). 
\end{align*}
This implies $\sigma^2=n^{-1}\sum_{i=1}^n\Var\left[(v'V_{ij})(v'V_{ij'})\right]\le CK^{-1}$. Therefore, using Lemma \ref{lem:ustats-bound}, it holds that 
\begin{align*}
    \mbb{P}\left(|n^{-1}v'Q'(T-\mathbb{E}T)Qv|\ge 2\sigma\sqrt{\frac{2\log(2/\delta)}{nN}}+\frac{16b\log(2/\delta)}{3nN}\right)< \delta.
\end{align*}
Taking the union bound over all vectors in $\mcal{N}_{\epsilon}$, we have:
\begin{align*}
    \mbb{P}\left(\max_{v\in\mcal{N}_\epsilon}|n^{-1}v'Q'(T-\mathbb{E}T)Qv|\ge 2\sigma\sqrt{\frac{2\log(2/\delta)}{nN}}+\frac{16b\log(2/\delta)}{3nN}\right)< |\mcal{N}_{\epsilon}|\delta.
\end{align*}
Here, since we set $\epsilon=1/4$, according to \cite[Lemma 5.2]{vershynin2010introduction}, $|\mcal{N}_{1/4}|\le 9^K$. It then yields that
\begin{align*}
    \mbb{P}\left(\max_{v\in\mcal{N}_\epsilon}|n^{-1}v'Q'(T-\mathbb{E}T)Qv|\ge 2\sigma\sqrt{\frac{2\log(2/\delta)}{nN}}+\frac{16b\log(2/\delta)}{3nN}\right)< 9^K\delta.
\end{align*}
Setting $9^K\delta=o((Nn)^{-5})$ and plugging in the bounds of $b$ and $\sigma^2$ above, we then have: with probability $1-o((Nn)^{-5})$,
\begin{align*}
    &\|n^{-1}Q'(T-\mbb{E}[T])Q\|\le 2\max_{v\in{\cal N}_\epsilon}|n^{-1}v'Q'(T-\mathbb{E}T)Qv|\le 4\sigma\sqrt{\frac{2\log(2/\delta)}{nN}}+\frac{32b\log(2/\delta)}{3nN}\\
    &\le C\left(\sqrt{\frac{\log (Nn)+K}{KNn}}+\frac{\log (Nn)+K}{nN}\right)\le C\sqrt{\frac{\log (Nn)+K}{NnK}},
\end{align*}
where we have used the assumptions $K\leq M\leq [Nn/\log^2(Nn)]^{1/2}$ and $ (Nn)^{-1}K\ll h\ll \log^{-1}(Nn)$ in the last step. Then we have:
\begin{align}\label{bound-QTQ}
    \|Q'(T-\mbb{E}[T])Q\|\le C\sqrt{\frac{n(\log n+K)}{NK}}.
\end{align}
Finally, since $\Omega'=(\Pi'\Pi)^{-1}Q'$ with $\|(\Pi'\Pi)^{-1}\|=O(n^{-1}K)$ by Assumption \ref{assump2}, we have: $\|\Omega'(T-\mathbb{E}T)Q\|\leq O(n^{-1}K)\cdot \|Q'(T-\mathbb{E}T)Q\|$. This leads to the desired bound with \eqref{bound-QTQ}. Moreover, under the assumption that $K\leq M\leq [Nn/\log^2(Nn)]^{1/2}$, we further have:
\begin{align}\label{eventE2}
    \|Q'(T-\mbb{E}[T])Q\|\leq n/(K\log^{1/2}(Nn)).
\end{align}
Combing \eqref{eventE1} and \eqref{eventE2}, it shows $\mbb{P}(\mcal{E})\ge 1-o((Nn)^{-5})$.

\subsection{Proof of Lemma \ref{lem:error23}}
We divide the error into two parts as follows:
\begin{align*}
    &\mbb{E}\left[\int \|\bm{\widehat{g}}^*(x)-\bm{g}^*(x)\|^2dx\right]\\
    &=\mbb{E}\left[1\{\Delta\}\int \|\bm{\widehat{g}}_k^*(x)-\bm{g}^*(x)\|^2dx\right]+\mbb{E}\left[1\{\Delta^c\}\int \|\bm{\widehat{g}}_k^*(x)-\bm{g}^*(x)\|^2dx\right],
\end{align*}
where $\Delta$ is the event on which Lemma \ref{lem:error22} holds with $\mbb{P}(\Delta)\ge 1-o((Nn)^{-5})$.

We first bound the error on $\Delta$. Recall the expression of $\widehat{\bm{g}}^*(x)$ in \eqref{error2-decompose}:
\begin{align}\label{pf-hatg*-decompose} 
\widehat{\boldsymbol g}^*(x) = \bigl[I_K +\Omega'(T-\mathbb{E}T)Q\bigr]^{-1}\bigl[\bg^*(x) + \Omega'(S-\mathbb{E}S){\bf 1}_n\bigr]. 
\end{align}
We claim the following auxiliary lemma.
\begin{lemma}\label{lem:lem-lem-error23}
    Let $u$ and $\nu$ be two vectors in $\mbb{R}^K$ for $K\ge 1$. Moreover, let $E\in\mbb{R}^{K\times K}$ be a matrix such that $\|E\|<1$ and $e\in\mbb{R}^K$ be a vector. They satisfy the following identity:
    \begin{align*}
        u=(I_K+E)^{-1}(\nu+e).
    \end{align*}
    Then, it holds that
    \begin{align*}
        \|u-\nu\|\le \frac{\|E\|}{1-\|E\|}(\|\nu\|+\|e\|)+\|e\|.
    \end{align*}
\end{lemma}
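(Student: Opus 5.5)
Since $\|E\|<1$, the Neumann series shows that $I_K+E$ is invertible. Its inverse satisfies $\|(I_K+E)^{-1}\|\le \sum_{t\ge 0}\|E\|^t = (1-\|E\|)^{-1}$. The plan is to rewrite $u-\nu$ so that the perturbation $E$ appears explicitly, and then bound each piece with this operator-norm estimate.

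Multiplying the identity by $I_K+E$ gives $u+Eu=\nu+e$, which rearranges to
\[
u-\nu = e - Eu .
\]
By the triangle inequality, $\|u-\nu\|\le \|e\|+\|E\|\,\|u\|$. It remains to control $\|u\|$. Applying the operator-norm bound to $u=(I_K+E)^{-1}(\nu+e)$ yields
\[
\|u\|\le \frac{\|\nu\|+\|e\|}{1-\|E\|}.
\]
Substituting this into the previous display gives exactly
\[
\|u-\nu\|\le \frac{\|E\|}{1-\|E\|}\bigl(\|\nu\|+\|e\|\bigr)+\|e\|.
\]

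No step here is a real obstacle. The only point needing care is the invertibility of $I_K+E$ together with the bound $(1-\|E\|)^{-1}$ on the norm of its inverse. Both follow from the convergent Neumann series $\sum_{t\ge0}(-E)^t$ under $\|E\|<1$, and this convergence is the only place the hypothesis $\|E\|<1$ is used.
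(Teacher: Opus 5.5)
Your proof is correct and follows essentially the paper's argument: your identity $u-\nu=e-Eu$ is the paper's decomposition $u-\nu=-E(I_K+E)^{-1}(\nu+e)+e$. (The paper writes it without the minus sign, which is harmless for norms.) Both proofs then bound $\|(I_K+E)^{-1}\|$ by $(1-\|E\|)^{-1}$ via the Neumann series, exactly as you do.
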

\noindent
{\it Proof of Lemma~\ref{lem:lem-lem-error23}:} To prove it, using the fact that $A^{-1}-B^{-1}=B^{-1}(B-A)A^{-1}$, we have:
\begin{align*}
    u-\nu &=(I_K+E)^{-1}(\nu+e)-(\nu+e)+e\\
    &=((I_K+E)^{-1}-I_K)(\nu+e)+e\\
    &=E(I_K+E)^{-1}(\nu+e)+e.
\end{align*}
Here, since $\|E\|<1$, the Neumann series $(I_K+E)^{-1}=\sum_{k=0}^\infty(-E)^k$ converges absolutely in operator norm, and it yields that
\begin{align*}
    \|(I_K+E)^{-1}\|\le \sum_{k=0}^\infty\|E\|^k=\frac{1}{1-\|E\|}.
\end{align*}
Thus, we obtain:
\begin{align*}
    \|u-\nu \|\le \frac{\|E\|}{1-\|E\|}(\|\nu\|+\|e\|)+\|e\|.
\end{align*}

Recall \eqref{pf-hatg*-decompose}. We can set $u=u(x)=\bm{\widehat{g}}^*(x)$, $\nu=\nu(x)=\bm{g}^*(x)$, $E=\Omega'(T-\mathbb{E}T)Q$ and $e=e(x)=\Omega'(S-\mathbb{E}S){\bf 1}_n$. According to Lemma \ref{lem:error22}, there exists a constant $C_3>0$ such that
\begin{align*}
    \|E\|\le C_3(Nn)^{-\frac{1}{2}}\sqrt{K(\log (Nn) +K)}=o(1).
\end{align*}
Moreover, Lemma \ref{lem:error1} implies that $\int \|\nu(x)\|^2dx=\int \|\bm{g}^*(x)\|^2dx\le \int \|\bm{g}(x)\|^2dx+o(K)=O(K)$ and it is seen in Lemma \ref{lem:error21} that $$\int\mbb{E}[\|e(x)\|^2]dx\le C(Nnh)^{-1}K^2=O(K).$$

Consequently, with the bounds about $E,\nu(x)$ and $e(x)$ above, using Lemma \ref{lem:lem-lem-error23} and the fact that $(a+b)^2\le 2(a^2+b^2)$, we have: on the event $\Delta$, there exists a constant $C>0$ such that
\begin{align*}
    &\mbb{E}\left[1\{\Delta\}\int \|\bm{\widehat{g}}^*(x)-\bm{g}^*(x)\|^2dx\right]\\
    &\le 2\mbb{E}\left[\left(\frac{\|E\|}{1-\|E\|}\right)^2\int (\|\nu(x)\|+\|e(x)\|)^2dx\right]+2\mbb{E}\left[\int \|e(x)\|^2dx\right]\\
    &\le C\left\{\left[(Nn)^{-\frac{1}{2}}\sqrt{K(\log (Nn) +K)}\right]^2\cdot K+(Nnh)^{-1}K^2\right\}\\
    &\le CK^2\left(\frac{\log (Nn)+K}{Nn}+\frac{1}{Nnh}\right).
\end{align*}

On the event $\Delta^c$, we will directly bound 
\begin{align*}
    \int \|\bm{\widehat{g}}^*(x)-\bm{g}^*(x)\|^2dx\le \int \|\bm{g}^*(x)\|^2dx+\int \|\bm{\widehat{g}}^*(x)\|^2dx.
\end{align*}
Lemma \ref{lem:error1} implies that $\int \|\bm{g}^*(x)\|^2dx\le \int \|\bm{g}(x)\|^2dx+o(K)=O(K)$. Next, note that
$\widehat{\bg}^*(x)=G'G(G'TG)^{-1}G'S{\bf 1}_n$. 

Since we add a small-order perturbation in \eqref{our-estimator2}
, it then implies $\|(G'TG)^{-1}\|\le CK^{-1}M^2n^{-1}\log (Nn)$. Then, we have: almost surely,
\begin{align*}
    &\int \|\bm{\widehat{g}}_k^*(x)\|^2dx\le C\|G'G\|^2\|(G'TG)^{-1}\|^2\|G\|^2\int \|S(x)\mbf{1}_n\|^2dx\\
    &\le C\frac{K^2}{M^2}\frac{M^4\log^2(Nn)}{K^2n^2}\frac{K}{M}\int \|S(x)\mbf{1}_n\|^2dx\le C\frac{KM\log^2(Nn)}{n^2}\int \|S(x)\mbf{1}_n\|^2dx,
\end{align*}
where we have used Lemma \ref{lem:key} (c) in the above bound. It then remains to bound $\int \|S(x)\mbf{1}_n\|^2dx$. We have:
\begin{align*}
    \int \|S(x)\mbf{1}_n\|^2dx&=\int\sum_{m=1}^M \left(\frac{1}{N(N-1)}\sum_{i=1}^n \sum_{j\neq j'}\mcal{K}_h(x-X_{ij})U_{ij'm}\right)^2dx\\
    &\le h^{-1}n\|\mcal{K}\|_{\infty}\frac{1}{N(N-1)}\sum_{i=1}^n \sum_{j\neq j'}\sum_{m=1}^MU_{ij'm}\int|\mcal{K}_h(x-X_{ij})|dx.
\end{align*}
Since $\sum_{m=1}^MU_{ij'm}=1$ and $\int|\mcal{K}_h(x-X_{ij})|dx=\int|\mcal{K}(u)|du<\infty$, we have: almost surely,
\begin{align}\label{intS1n-crude-bound}
    \int \|S(x)\bm{1}_n\|^2dx\le Ch^{-1}n^2.
\end{align}
Then, we obtain: almost surely,
\begin{align}\label{hatgstar-crude-bound}
    \int\|\bm{\widehat{g}}^*(x)\|^2dx\le CKMh^{-1}\log^2 (Nn).
\end{align}
Together, it yields that almost surely,
\begin{align*}
    \int \|\bm{\widehat{g}}^*(x)-\bm{g}^*(x)\|^2dx\le CKMh^{-1}\log^2(Nn).
\end{align*}
Consequently, on the event $\Delta^c$, we have:
\begin{align*}
    \mbb{E}\left[1\{\Delta^c\}\int \|\bm{\widehat{g}}^*(x)-\bm{g}^*(x)\|^2dx\right]\le C\frac{KM\log^2(Nn)}{(Nn)^5h}.
\end{align*}
Finally, combining all bounds above, we conclude there exists a constant $C>0$ such that
\begin{align*}
    \mbb{E}\left[\int \|\bm{\widehat{g}}^*(x)-\bm{g}^*(x)\|^2dx\right]\le CK\left[\frac{K(\log (Nn)+K)}{Nn}+\frac{K}{Nnh}+\frac{M\log^2(Nn)}{(Nn)^5h}\right].
\end{align*}
Using the assumption $K\leq M\leq [Nn/\log^2(Nn)]^{1/2}$ and $ (Nn)^{-1}K\ll h\ll \log^{-1}(Nn)$, we obtain the final bound.

\subsection{Proof of Lemma~\ref{lem:error3-decompose}}
By definitions, $\widehat{\bg}(x)=\widehat{G}'\widehat{G}(\widehat{G}'T\widehat{G})^{-1}\widehat{G}'S{\bf 1}_n$, and $\widehat{\bg}^*(x)=G'G(G'TG)^{-1}G'S{\bf 1}_n$, 
Therefore, we can write $\widehat{\boldsymbol g}(x) - \widehat{\bg}^*(x) =I_1 + I_2 + I_3$, where
\begin{align*}
I_1 & = G'G(G'TG)^{-1}(\widehat{G}-G)'S{\bf 1}_n,\cr
I_2 &= G'G\bigl[(\widehat{G}'T\widehat{G})^{-1} - (G'TG)^{-1}\bigr]\widehat{G}'S{\bf 1}_n,\cr
I_3 &= (\widehat{G}'\widehat{G}- G'G) (\widehat{G}'T\widehat{G})^{-1}\widehat{G}'S{\bf 1}_n. 
\end{align*}
For $I_1$, the definition of $Q=G(G'G)^{-1}$ implies that 
\beq \label{proof-error3decomp-I1}
I_1 =(Q'TQ)^{-1}(G'G)^{-1}(\widehat{G}-G)'S{\bf 1}_n = R(\widehat{G}-G)'S{\bf 1}_n.  
\eeq
For $I_3$, using $\widehat{\bg}(x) = \widehat{G}'\widehat{G}(\widehat{G}'T\widehat{G})^{-1}\widehat{G}'S{\bf 1}_n$ again, we deduce that 
\beq \label{proof-error3decomp-I3}
I_3 = (\widehat{G}'\widehat{G}- G'G)(\widehat{G}'\widehat{G})^{-1} \widehat{\bg}(x)=\Delta_1(G'G+\Delta_1)^{-1}\,\widehat{\bg}(x). 
\eeq
For $I_2$, using the equality that $A^{-1}-B^{-1}=B^{-1}(B-A)A^{-1}$, we have:
\begin{align} \label{proof-error3decomp-I2}
I_2 &= G'G(G'TG)^{-1}(G'TG-\widehat{G}'T\widehat{G})(\widehat{G}'T\widehat{G})^{-1}\widehat{G}'S{\bf 1}_n \cr
& =  G'G(G'TG)^{-1}(G'TG-\widehat{G}'T\widehat{G})(\widehat{G}'\widehat{G})^{-1}\widehat{\bg}(x),\cr
&= (Q'TQ)^{-1}(G'G)^{-1}(G'TG-\widehat{G}'T\widehat{G})(\widehat{G}'\widehat{G})^{-1} \widehat{\bg}(x)\cr
&= - R \Delta_2 (G'G+\Delta_1)^{-1}\,  \widehat{\bg}(x). 
\end{align}
Combining \eqref{proof-error3decomp-I1}-\eqref{proof-error3decomp-I3} gives 
\[
\widehat{\boldsymbol g}(x) - \widehat{\bg}^*(x)  = R(\widehat{G}-G)'S{\bf 1}_n + (\Delta_1-R\Delta_2  )(G'G+\Delta_1)^{-1}\,\widehat{\bg}(x). 
\]
This gives a linear equation on $\widehat{\bg}(x)$. The claim follows from solving this equation.

\subsection{Proof of Lemma \ref{lem:error3-prep}}
We first present an auxiliary lemma as follows.
\begin{lemma} \label{lem:error21*}
Under Assumption~\ref{assump2}, 
there exists a constant $C_2>0$ such that 
\[
\mbb{E}\left[\int \bigl\|[S(x)-\mathbb{E}S(x)]{\bf 1}_n\bigr\|^2dx\right] \le C_2n(Nh)^{-1}. 
\]
\end{lemma}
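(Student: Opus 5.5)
The plan is to follow the proof of Lemma~\ref{lem:error21} almost verbatim, with the weight matrix $\Omega$ removed. Expanding the squared norm coordinatewise, we have
\[
\bigl\|[S(x)-\mathbb{E}S(x)]{\bf 1}_n\bigr\|^2=\sum_{m=1}^M \bigl(e_m'[S(x)-\mathbb{E}S(x)]{\bf 1}_n\bigr)^2 .
\]
For each fixed $m$, $n^{-1}e_m'S(x){\bf 1}_n$ is exactly the incomplete $U$-process \eqref{incomplete-U-process} with kernel
\[
h_m(x;x_1,x_2)={\cal K}_h(x-x_1)\,1\{x_2\in{\cal B}_m\},
\]
using $N_i\asymp N$; if the $N_i$ are unequal, the normalizations $1/(N_i(N_i-1))$ only change constants. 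So I would apply Lemma~\ref{lem:uprocess-bound} to each $m$ separately and then sum over $m$.

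To bound the constants, I would use the independence of $x_{i1}$ and $x_{i2}$ together with the kernel condition \eqref{cond-kernel}. This gives
\[
b_{i,m}^2=\mathbb{E}\!\int h_m^2(x;x_{i1},x_{i2})\,dx=\int{\cal K}_h^2(u)\,du\cdot f_i^{\text{hist}}(m)\le Ch^{-1}f_i^{\text{hist}}(m),
\]
and $\sigma_{i,m}^2\le b_{i,m}^2$ holds as well. Lemma~\ref{lem:uprocess-bound} then yields
\[
\mathbb{E}\!\int\bigl(n^{-1}e_m'[S-\mathbb{E}S]{\bf 1}_n\bigr)^2dx\le \frac{C}{nNh}\cdot\frac1n\sum_{i=1}^n f_i^{\text{hist}}(m).
\]

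The key point is the summation over $m$. Since $\sum_{m=1}^M f_i^{\text{hist}}(m)=1$ for every $i$, the sum of the right-hand side over $m$ is at most $C/(nNh)$; no bin balance from Lemma~\ref{lem:key}(b) is needed, only that the histogram masses sum to one. Multiplying by $n^2$ then gives the claimed bound $C_2\,n(Nh)^{-1}$.

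I expect essentially no obstacle here. The only care needed is to keep the $n^{-1}$ normalization of Lemma~\ref{lem:uprocess-bound} straight, since $S{\bf 1}_n$ is a sum rather than an average over $i$, and to verify the diagonal and overlapping-pair counts in that lemma's proof when $N_i$ varies with $i$. That amounts to applying the lemma groupwise, since its proof already treats each $i$ separately.
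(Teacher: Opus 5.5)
Your proposal is correct and follows the paper's proof: both apply Lemma~\ref{lem:uprocess-bound} bin by bin with $b_i^2,\sigma_i^2\le Ch^{-1}f_i^{\text{hist}}(m)$, sum over $m$, and rescale by $n^2$. The one difference is that the paper first bounds each bin's average mass by $c_5M^{-1}$ via Lemma~\ref{lem:key}(b) and then sums $M$ such terms, whereas your identity $\sum_m f_i^{\text{hist}}(m)=1$ gives the same total with no bin-balance condition, so your argument does not actually need Assumption~\ref{assump2}.
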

\noindent
{\it Proof of Lemma~\ref{lem:error21*}:} 
It is seen that for each $1\le m\le M$, $n^{-1}e_m'(S -\mathbb{E}[S]){\bf 1}_n$ can be written as a (centered) U-process in \eqref{incomplete-U-process} indexed by $x$ with
\begin{align*}
    h(x;X_{ij},X_{ij'})={\cal K}_h(x-X_{ij})1\{X_{ij'}\in\mcal{B}_m\}.
\end{align*}
Hence, for each $1\le m\le M$, we shall aply Lemma \ref{lem:uprocess-bound}. To this end, in the following, we will bound the quantities $b^2$ and $\sigma^2$ given in Lemma \ref{lem:uprocess-bound}.

First, we focus on $b^2$. For each $1\le m\le M$ and $1\le i\le n$, 
    \begin{align*}
         b_i^2&=\mbb{E}\left[\int h^2(x;x_{i1},x_{i2})dx\right]=\mbb{E}\left[\int\mcal{K}^2_h(x-x_{i1})dx\right]\cdot \mbb{E}\left[1\{x_{i2}\in\mcal{B}_m\}\right]\\
         &=h^{-1}\int \mcal{K}^2(z)dz\cdot f_i^{\text{hist}}(m)\le Ch^{-1}f_i^{\text{hist}}(m),
    \end{align*}
where we have used the condition of the kernel \eqref{cond-kernel} that $\int \mcal{K}^2(z)dz<\infty$ . With Lemma \ref{lem:key} (b), it then implies:
\begin{align*}
 b^2 = \frac{1}{n}\sum_{i=1}^n b_i^2\le Ch^{-1}\frac{1}{n}\sum_{i=1}^n f_i^{\text{hist}}(m)\le  Ch^{-1}M^{-1}.
\end{align*}

Moreover, for any $1\le m\le M$ and $1\le i\le n$, note that by Fubini's theorem,
    \begin{align*}
        \sigma_i^2&=\mbb{E}\left[\int (h(x;X_{i1},X_{i2})-\mbb{E}[h(x;X_{i1},X_{i2})])^2dx\right]=\int \Var[h(x;X_{i1},X_{i2})]dx\\
        &\le \int \mbb{E}[{\cal K}^2_h(x-X_{i1})]dx\cdot \mbb{E}\left[1\{X_{i2}\in {\cal B}_m\}\right].
    \end{align*}
This is the exactly the same bound in $b_i^2$ above. Using the same argument, we obtain:
\begin{align*}
    \sigma^2=\frac{1}{n}\sum_{i=1}^n \sigma_i^2\le Ch^{-1}M^{-1}.
\end{align*}

Finally, using the bound in Lemma \ref{lem:uprocess-bound} and summing over $m=1,\ldots,M$, it yields that
\begin{align*}
    \mbb{E}\left[\int \bigl\|n^{-1}[S(x)-\mathbb{E}S(x)]{\bf 1}_n\bigr\|^2dx\right]\le C(Nnh)^{-1}.
\end{align*}
Therefore, we obtain:
\begin{align*}
    \mbb{E}\left[\int \bigl\|[S(x)-\mathbb{E}S(x)]{\bf 1}_n\bigr\|^2dx\right]\le Cn(Nh)^{-1}.
\end{align*}

We now focus on the main bounds. For simplicity, let $\Delta G=\widehat{G}-G$. For the first bound, note that
\begin{align*}
    \widehat{G}'\widehat{G}=(G+\Delta G)'(G+\Delta G)=G'G+G'\Delta G+(\Delta G)'G+(\Delta G)'\Delta G.
\end{align*}
Then, we have:
\begin{align*}
    \|\Delta_1\|&=\|\widehat{G}'\widehat{G}-G'G\|\le 2\|G\|\|\Delta G\|+\|\Delta G\|^2\\
    &\le C\|\Delta G\|\left(\sqrt{\frac{K}{M}}+\|\Delta G\|\right)\le C\sqrt{\frac{K}{M}}\|\Delta G\|,
\end{align*}
where we have used Lemma \ref{lem:key} (c) and the assumption $M\delta_n^2=o(K)$.

For the second bound, letting $\Delta T=T-\mbb{E}[T]$, note that
\begin{align*}
    R\Delta_2&=(Q'TQ)^{-1}(G'G)^{-1}(\widehat{G}'T\widehat{G}-G'TG)\\
    &=(Q'\mbb{E}[T]Q+Q'\Delta T Q)^{-1}(G'G)^{-1}\left[(G+\Delta G)'T(G+\Delta G)-G'TG\right]\\
    &=(Q'\mbb{E}[T]Q+Q'\Delta T Q)^{-1}(G'G)^{-1}(G'T\Delta G+(\Delta G)'TG+(\Delta G)'T\Delta G)\\
    &=:(Q'\mbb{E}[T]Q+Q'\Delta T Q)^{-1}(G'G)^{-1}(J_1+J_2+J_3).
\end{align*}
In the following, we analyze $J_1,J_2,J_3$ and $(Q'\mbb{E}[T]Q+Q'\Delta T Q)^{-1}$ respectively.

First, for $J_1$, it is seen that
\begin{align*}
    J_1=G'T\Delta G=G'(\Delta T+\mbb{E}[T])\Delta G = G'\mbb{E}[T]\Delta G+G'\Delta T \Delta G.
\end{align*}
According to Assumption \ref{assump2}, Lemma \ref{lem:key} (a) and (c) as well as the assumption that $\|T-\mbb{E}[T]\|\leq M^{-1}n$, we further have:
\begin{align*}
   \|J_1\|&\le \|G'G\Pi'\Pi G'\|\|\Delta G\|+\|G\|\|\Delta T\|\|\Delta G\|\\
   &\le Cn\left(\sqrt{\frac{K}{M^3}}+\sqrt{\frac{K}{M^3}}\right)\|\Delta G\|=2Cn\sqrt{\frac{K}{M^3}}\|\Delta G\|.
\end{align*}
Similarly, the same bound also holds for $J_2$. Also, it holds that
\begin{align*}
    J_3=(\Delta G)'T\Delta G = (\Delta G)'(\Delta T+\mbb{E}[T])\Delta G = (\Delta G)'\mbb{E}[T]\Delta G+(\Delta G)'\Delta T\Delta G.
\end{align*}
Then, we have:
\begin{align*}
    \|J_3\|&\le \|\Delta G\|^2\left(\|G\Pi'\Pi G'\|+\|\Delta T\|\right)\\
    &\le Cn\left(\frac{1}{M}+\frac{1}{M}\right)\|\Delta G\|^2=2Cn\frac{1}{M}\|\Delta G\|^2.
\end{align*}
Thus, combining all the bounds above, we have:
\begin{align*}
    \|J_1+J_2+J_3\|
    &\le Cn\Bigg(\sqrt{\frac{K}{M^3}}\|\Delta G\|+\frac{1}{M}\|\Delta G\|^2\Bigg)\\
    &\le Cn\sqrt{\frac{K}{M^3}}\|\Delta G\|\left(1+\sqrt{\frac{M}{K}}\|\Delta G\|\right)\le Cn\sqrt{\frac{K}{M^3}}\|\Delta G\|,
\end{align*}
where in the last step, we have used the assumption $M\delta_n^2=o(K)$.

Next, we focus on $(Q'\mbb{E}[T]Q+Q'\Delta T Q)^{-1}$. Note that
\begin{align*}
    (Q'\mbb{E}[T]Q+Q'\Delta T Q)^{-1}=(Q'\mbb{E}[T]Q)^{-1}(I_K+Q'\Delta T Q(Q\mbb{E}[T]Q)^{-1})^{-1}.
\end{align*}
Here, according to Assumption \ref{assump2}, Lemma \ref{lem:key} (a) and (c), and the assumption that $\|Q'(T-\mbb{E}[T])Q\|\leq n/(K\log^{1/2}(Nn))$, it holds that 
\begin{align*}
    \|Q'\Delta T Q(Q'\mbb{E}[T]Q)^{-1}\|&\le n(K\log^{1/2}(Nn))^{-1}\|(Q'G\Pi'\Pi G'Q)^{-1}\|\\
    &\le C\log^{-1/2}(Nn)=o(1).
\end{align*}
Thus, letting $F=Q'\Delta T Q(Q'\mbb{E}Q)^{-1}$, the Neumann series $(I_K+F)^{-1}=\sum_{k=0}^\infty(-F)^k$ converges absolutely in operator norm, and it yields that
\begin{align*}
    \|(I_K+F)^{-1}\|\le \sum_{k=0}^\infty\|F\|^k=\frac{1}{1-\|F\|}.
\end{align*}
Hence, with Assumption \ref{assump2}, Lemma \ref{lem:key} (a) and (c), it holds that
\begin{align*}
    \|(Q'\mbb{E}[T]Q+Q'\Delta T Q)^{-1}\|&=\|(Q'\mbb{E}[T]Q)^{-1}(I_K+Q'\Delta T Q(Q'\mbb{E}[T]Q)^{-1})\|\\
    &\le C\|(Q'G\Pi'\Pi G'Q)^{-1}\|\frac{1}{1-\|F\|}\le C\frac{K}{n}.
\end{align*}

Eventually, putting all bounds above together, we obtain:
\begin{align*}
    \|R\Delta_2\|&\le C\frac{K}{n}\frac{M}{K}n\sqrt{\frac{K}{M^3}}\|\Delta G\|\le C\sqrt{\frac{K}{M}}\|\Delta G\|.
\end{align*}

For the third bound, note that
\begin{align*}
    \|R(\widehat{G}-G)'S{\bf 1}_n\|\le \|R\|\|\Delta G\|\|S\mbf{1}_n\|.
\end{align*}
We have already studied $\|R\|$ above. It is seen that
\begin{align*}
    \|R\|=\|(Q'\mbb{E}[T]Q+Q'\Delta T Q)^{-1}(G'G)^{-1}\|\le C\frac{K}{n}\frac{M}{K}=C\frac{M}{n}.
\end{align*}
It remains to study $\|S\mbf{1}_n\|$. Note that
\begin{align*}
    \mbb{E}\left[\int \|S(x)\mbf{1}_n\|^2dx\right]\le \int \|\mbb{E}[S(x)]\mbf{1}_n\|^2dx+\mbb{E}\left[\int \|(S(x)-\mbb{E}[S(x)])\mbf{1}_n\|^2dx\right].
\end{align*}
According to Lemma \ref{lem:error21*}, we have:
\begin{align*}
    \mbb{E}\left[\int \|(S(x)-\mbb{E}[S(x)])\mbf{1}_n\|^2dx\right]\le Cn(Nh)^{-1}.
\end{align*}
Moreover, note that
\begin{align*}
    \|\mbb{E}[S(x)]\mbf{1}_n\|^2&=\sum_{m=1}^M \left(\frac{1}{N(N-1)}\sum_{i=1}^n \sum_{j\neq j'}\mbb{E}[\mcal{K}_h(x-X_{ij})]\mbb{E}[U_{ij'm}]\right)^2\\
    &=\sum_{m=1}^M \left(\frac{1}{N(N-1)}\sum_{i=1}^n \sum_{j\neq j'}\mbb{E}[\mcal{K}_h(x-X_{i1})]f_i^{\text{hist}}(m)\right)^2\\
    &=\sum_{m=1}^M\left(\sum_{i=1}^n \mbb{E}[\mcal{K}_h(x-X_{i1})]f_i^{\text{hist}}(m)\right)^2.
\end{align*}
Here, it is seen that
\begin{align*}
    \mbb{E}[\mcal{K}_h(x-X_{i1})]=\int h^{-1}\mcal{K}\left(\frac{x-z}{h}\right)f_i(z)dz=\int \mcal{K}(u)f_i(uh+x)du.
\end{align*}
We further have:
\begin{align*}
    &\int \|\mbb{E}[S(x)]\mbf{1}_n\|^2dx\\&= \sum_{m=1}^M\int \left(\int \mcal{K}(u)\sum_{i=1}^nf_i(uh+x)f_i^{\text{hist}}(m)du\right)^2dx\\
    &= \sum_{m=1}^M\int \int \int\mcal{K}(u)\mcal{K}(u')\sum_{i=1}^nf_i(uh+x)f_i^{\text{hist}}(m) \sum_{i'=1}^nf_{i'}(u'h+x)f_{i'}^{\text{hist}}(m)dudu'dx.
\end{align*}
We first take integral with respect to $x$. Note that
\begin{align*}
    \int f_i(uh+x)f_{i'}(uh+x)dx\le \left(\int f_i^2(uh+x)dx\right)^{\frac{1}{2}}\left(\int f_{i'}^2(uh+x)dx\right)^{\frac{1}{2}}=O(1),
\end{align*}
where it holds that $\int f_i^2(uh+x)dx=\int f_i^2(z)dz=O(1)$ since we assume $\int g_k^2(z)dz=O(1)$ and $f_i(z)=\sum_{k=1}^K \pi_i(k)g_k(z)$. Then, using the kernel condition \eqref{cond-kernel} and Lemma \ref{lem:key} (b), we have:
\begin{align*}
    \int \|\mbb{E}[S(x)]\mbf{1}_n\|^2dx
    &\le \sum_{m=1}^M\int \mcal{K}(u)du\int \mcal{K}(u')du'\sum_{i=1}^n f_i^{\text{hist}}(m)\sum_{i'=1}^n f_{i'}^{\text{hist}}(m)\\
    &\le Cn^2M^{-1}.
\end{align*}
Consequently, it holds that
\begin{align*}
    \mbb{E}\left[\int \|S(x)\mbf{1}_n\|^2dx\right]\le Cn^2\left((Nnh)^{-1}+M^{-1}\right)\le Cn^2M^{-1}.
\end{align*}
Putting the above bounds together, we have:
\begin{align*}
    \mbb{E}\left[\int \|R(\widehat{G}-G)'S(x){\bf 1}_n\|^2dx\right]\le C\frac{M^2}{n^2}\|\Delta G\|^2n^2M^{-1}\le CM\|\Delta G\|^2.
\end{align*}

\subsection{Proof of Lemma \ref{lem:error3}}
Recall that in Lemma \ref{lem:error3-prep}, we let $\mcal{E}$ be the event on which $\|T-\mbb{E}[T]\|\leq M^{-1}n$ and $\|Q'(T-\mbb{E}[T])Q\|\leq n/(K\log^{1/2}(Nn))$. Moreover, in Lemma \ref{lem:spectralnorms}, it is seen that $\mbb{P}(\mcal{E})\ge 1-o((Nn)^{-5})$. In addition, let $\Delta^*$ be the event on which the last assumption of Theorem \ref{thm:main} holds, i.e., on $\Delta^*$, $\|\widehat{G}-G\|\le \delta_n$ with $\mbb{P}(\Delta^*)\ge 1-o((Nn)^{-5})$. Our proof strategy is to divide the error into two parts as follows:
\begin{align*}
    &\mbb{E}\left[\int \|\bm{\widehat{g}}^*(x)-\widehat{\bm{g}}(x)\|^2dx\right]\\
    &=\mbb{E}\left[1\{\mcal{E}\cap \Delta^*\}\int \|\bm{\widehat{g}}^*(x)-\widehat{\bm{g}}(x)\|^2dx\right]+\mbb{E}\left[1\{(\mcal{E}\cap\Delta^*)^c\}\int \|\bm{\widehat{g}}^*(x)-\widehat{\bm{g}}(x)\|^2dx\right].
\end{align*}

For the first term, recall Lemma \ref{lem:error3-decompose} that
\[
\widehat{\boldsymbol g}(x) = \bigl[ I_K + (R\Delta_2-\Delta_1)(G'G+\Delta_1)^{-1}\bigr]^{-1}\bigl[ \widehat{\bg}^*(x) + R(\widehat{G}-G)'S{\bf 1}_n\bigr]. 
\]
Thus, it enables us to apply Lemma \ref{lem:lem-lem-error23} with $u=u(x)=\widehat{\boldsymbol g}(x)$, $\nu=\nu(x)=\widehat{\bg}^*(x)$, $E=(R\Delta_2-\Delta_1)(G'G+\Delta_1)^{-1}$ and $e=e(x)=R(\widehat{G}-G)'S(x){\bf 1}_n$. In the following, we will then study $E$ and $e(x)$.

We focus on $E$ first. According to Lemma \ref{lem:error3-prep}, on $\mcal{E}\cap \Delta^*$, $\|R\Delta_2-\Delta_1\|\le \|R\Delta_2\|+\|\Delta_1\|=o(1)$. Moreover, we have:
\begin{align*}
    (G'G+\Delta_1)^{-1}=(G'G)^{-1}(I_K+\Delta_1(G'G)^{-1})^{-1},
\end{align*}
where $\|\Delta_1(G'G)^{-1}\|\le CMK^{-1}\|\Delta_1\|=o(1)$ with Lemma \ref{lem:key} (c) and the assumption $M\delta_n^2=o(K)$. Then, letting $H=\Delta_1(G'G)^{-1}$, the Neumann series $(I_K+H)^{-1}=\sum_{k=0}^\infty(-H)^k$ converges absolutely in operator norm, and it yields that
\begin{align*}
    \|(I_K+H)^{-1}\|\le \sum_{k=0}^\infty\|H\|^k=\frac{1}{1-\|H\|}.
\end{align*}
It implies $\|(G'G+\Delta_1)^{-1}\|\le CMK^{-1}\frac{1}{1-\|H\|}\le CMK^{-1}$. With the above bounds and Lemma \ref{lem:error3-prep}, we obtain: on the event $\mcal{E}\cap \Delta^*$, 
\begin{align*}
    \|E\| =\|(R\Delta_2-\Delta_1)(G'G+\Delta_1)^{-1}\|\le C\sqrt{\frac{M}{K}}\delta_n. 
\end{align*}
We then apply Lemma \ref{lem:lem-lem-error23} to obtain: on the event $\mcal{E}\cap \Delta^*$,
\begin{align*}
    &\|\widehat{\bm{g}}^*(x)-\widehat{\bm{g}}(x)\|\\&\le C \|(R\Delta_2-\Delta_1)(G'G+\Delta_1)^{-1}\|\|\widehat{\bm{g}}^*(x)+R(\widehat{G}-G)'S(x){\bf 1}_n\|+\|R(\widehat{G}-G)'S(x){\bf 1}_n\|\\
    &\le C\sqrt{\frac{M}{K}}\delta_n\|\widehat{\bm{g}}^*(x)+R(\widehat{G}-G)'S(x){\bf 1}_n\|+\|R(\widehat{G}-G)'S(x){\bf 1}_n\|.
\end{align*}
Using Lemma \ref{lem:error3-prep} and the fact that $(a+b)^2\le 2(a^2+b^2)$, it yields that on the event $\mcal{E}\cap \Delta^*$,
\begin{align*}
    &\mbb{E}\left[\int \|\widehat{\bm{g}}^*(x)-\widehat{\bm{g}}(x)\|^2dx\right]\\
    &\le C\frac{M}{K}\delta_n^2\left\{\mbb{E}\left[\int \|\widehat{\bm{g}}^*(x)\|^2dx\right]+\mbb{E}\left[\int \|R(\widehat{G}-G)'S(x){\bf 1}_n\|^2dx\right]\right\}\\
    &\qquad+\mbb{E}\left[\int \|R(\widehat{G}-G)'S(x){\bf 1}_n\|^2dx\right].
\end{align*}
According to Lemma \ref{lem:error1} and Lemma \ref{lem:error23}, $\mbb{E}\left[\int \|\widehat{\bm{g}}^*(x)\|^2dx\right]\le O(K)+\int \|\bm{g}(x)\|^2dx=O(K)$. Furthermore, it is seen in Lemma \ref{lem:error3-prep} that on $\mcal{E}\cap \Delta^*$, $$\mbb{E}\left[\int \|R(\widehat{G}-G)'S(x){\bf 1}_n\|^2dx\right]\le CM\delta_n^2.$$ It then holds that on the event $\mcal{E}\cap \Delta^*$,
\begin{align*}
    &\mbb{E}\left[1\{\mcal{E}\cap \Delta^*\}\int \|\widehat{\bm{g}}^*(x)-\widehat{\bm{g}}(x)\|^2dx\right]\\
    &\le C\left[\frac{M}{K}\delta_n^2\left(K+M\delta_n^2\right)+M\delta
    _n^2\right]\le  CM\delta_n^2,
\end{align*}
where we have used the assumption $M\delta_n^2=o(K)$.

Next, we consider the bound on $(\mcal{E}\cap\Delta^*)^c=\mcal{E}^c\cup(\Delta^*)^c$. Since $\mbb{P}((\mcal{E}\cap\Delta^*)^c)\le \mbb{P}(\mcal{E}^c)+\mbb{P}(\Delta^*)=o((Nn)^{-5})$, it suffices to directly bound 
\begin{align*}
    \int \|\bm{\widehat{g}}^*(x)-\widehat{\bm{g}}(x)\|^2dx\le \int \|\bm{\widehat{g}}^*(x)\|^2dx+\int \|\bm{\widehat{g}}(x)\|^2dx.
\end{align*}
It is seen in \eqref{hatgstar-crude-bound} that almost surely
\begin{align*}
    \int\|\bm{\widehat{g}}^*(x)\|^2dx\le CKMh^{-1}\log^2 (Nn),
\end{align*}
for some constant $C>0$. It then suffices to bound the second term. Note that by definition in \eqref{our-estimator},
\begin{align*}
    \widehat{\bg}(x)=\widehat{G}'\widehat{G}(\widehat{G}'T\widehat{G})^{-1}\widehat{G}'S{\bf 1}_n.
\end{align*}
Since $\widehat{G}$ is the estimated topic matrices in the induced topic model, by regularity, $\|\widehat{G}\|_1=1$. Then, we have $\|\widehat{G}\|\le \sqrt{K}\|\widehat{G}\|_1=\sqrt{K}$. Moreover, since we add the perturbation in \eqref{our-estimator2}
, it then implies $\|(\widehat{G}'T\widehat{G})^{-1}\|\le C(Kn)^{-1}M^2\log^2(Nn)$. Moreover, it is seen in \eqref{intS1n-crude-bound} that almost surely,
\begin{align*}
    \int \|S(x)\bm{1}_n\|^2dx\le Ch^{-1}n^2,
\end{align*}
for some constant $C>0$. Together, we have: almost surely,
\begin{align*}
    \int \|\bm{\widehat{g}}(x)\|^2dx\le CK^3(Kn)^{-2}M^4\log^6(Nn)h^{-1}n^2 = CKM^4h^{-1}\log^4(Nn).
\end{align*}
Thus, combining the above two bounds, it holds almost surely that
\begin{align*}
    \int \|\bm{\widehat{g}}^*(x)-\widehat{\bm{g}}(x)\|^2dx\le C\left(\frac{KM\log^2(Nn)}{h}+\frac{KM^4\log^4(Nn)}{h}\right)\le C\frac{KM^4\log^4(Nn)}{h}.
\end{align*}
Therefore, on the event $(\mcal{E}\cap \Delta^*)^c$,
\begin{align*}
    \mbb{E}\left[1\{(\mcal{E}\cap\Delta^*)^c\}\int \|\bm{\widehat{g}}^*(x)-\widehat{\bm{g}}(x)\|^2dx\right]\le C\frac{KM^4\log^4(Nn)}{(Nn)^5h}.
\end{align*}
Together, we obtain:
\begin{align*}
    \mbb{E}\left[\int \|\bm{\widehat{g}}^*(x)-\widehat{\bm{g}}(x)\|^2dx\right]\le C\left[M\delta_n^2+\frac{KM^4\log^4(Nn)}{(Nn)^5h}\right].
\end{align*}

\subsection{Proof of Theorem \ref{thm:TM_errorbound}}
In this section, we will study the topic modeling $\|\widehat{G}-G\|$ in Theorem \ref{thm:TM_errorbound}. In \cite{ke2024entry}, the authors obtained the minimax optimal rates for the algorithm called Topic-SCORE of constructing the estimate $\widehat{G}$. However, their results only considered a fixed $K$ setting with all quantities related to $K$ hidden in the constants. In view of this, we will extend their results by giving a finer analysis via keeping track of $K$ in all the bounds thus proving Theorem \ref{thm:TM_errorbound}. For presentational convenience, restricted to this section only, we will adopt the notations in \cite{ke2024entry} while making clear connections of their notations to ours when needed.

In equation (2) of \cite{ke2024entry}, they considered the following mode: Let $X\in\mathbb{R}^{p\times n}$ be the word-count matrix.
Introduce the empirical frequency matrix $D=[d_1,d_2,\ldots,d_n]\in \mathbb{R}^{p\times n}$, defined by:
\begin{equation*}
d_i(j)= N_i^{-1}X_i(j), 
\quad 1\leq i\leq n, 1\leq j \leq p\, ,
\end{equation*}
where $\mathbb{E}[d_i]=d_i^0=\sum_{k=1}^Kw_i(k)A_k$. Write $D_0=[d_1^0, d_2^0,\ldots, d_n^0]\in\mathbb{R}^{p\times n}$. It follows that:
\[
\mathbb E D = D_0 = AW.
\]
Here, $p$ is our $M$, the number of bins; $N_1=N_1=\ldots=N_n=N$; $A\in\mbb{R}^{p\times K}$ is our topic matrix $G^{\text{hist}}=G$; $W\in\mbb{R}^{K\times n}$ is our mixed-membership matrix $\Pi'$; $D_0\in\mbb{R}^{p\times n}$ is our matrix $F^{\text{hist}}=F$ with $d_i^0 = f_i^{\text{hist}}=f_i$ and $d_i$ is our $(f_i^{\text{hist}}(1),\ldots,f_i^{\text{hist}}(M))'$ for $1\le i\le n$.

Now, define a matrix $M$ (Equation (5) in \cite{ke2024entry}) as 
\begin{align*}
    M = \diag\left(\frac{1}{n}\sum_{i=1}^nd_i\right).
\end{align*}
For each $1\leq k\leq K$, let $\hat{\xi}_k\in\mathbb{R}^p$ denote the $k$th left singular vector of $M^{-1/2}D$. Recall that $D_0=\mathbb{E}D$. In addition, define:
\begin{equation*} 
M_0: = \mathbb {E} M  = \mathrm{diag}\biggl(\frac{1}{n}\sum_{i=1}^nd_i^0 \biggr). 
\end{equation*}
Moreover, define
\begin{equation*}
\xi_k: \mbox{the $k$th eigenvector of } M_0^{-1/2}\mathbb{E}[DD']M_0^{-1/2}, \qquad 1\leq k\leq K. 
\end{equation*}
Write $\hat \Xi : = [\hat \xi_1, \cdots, \hat \xi_K]$ and $\Xi: = [\xi_1, \cdots,  \xi_K]$. Define $\hat{R}\in\mathbb{R}^{p\times (K-1)}$ by:
\begin{equation} \label{SCORE}
\hat{R}(j,k) = \hat{\xi}_{k+1}(j)/\hat{\xi}_1(j), \qquad 1\leq j\leq p, 1\leq k\leq K-1. 
\end{equation}
Let $\hat{r}_1',\hat{r}_2',\ldots,\hat{r}_p'$ denote the rows of $\hat{R}$. Then, they pointed out that $\hat{r}_j$ is a $(K-1)$-dimensional embedding of the $j$th word in the vocabulary and that there is a simplex structure associated with these word embeddings. Specifically, define the population counterpart of $\hat{R}$ as $R$, where:
\begin{equation*}
R(j,k) = \xi_{k+1}(j)/\xi_1(j), \qquad 1\leq j\leq p, 1\leq k\leq K-1. 
\end{equation*}
Let $r_1', r_2',\ldots, r_p'$ denote the rows of $R$. All these $r_j$ are contained in a simplex ${\cal S}\subset\mathbb{R}^{K-1}$ that has $K$ vertices $v_1, v_2, \ldots, v_K$.  
If the $j$th word is an anchor word (an anchor word of topic $k$ satisfies that $A_k(j)\neq 0$ and $A_\ell(j)=0$ for all other $\ell\neq k$), then $r_j$ is located at one of the vertices. Therefore, as long as each topic has at least one anchor word, they can apply a vertex hunting algorithm to recover the $K$ vertices of ${\cal S}$. As a result, applying a vertex hunting algorithm on $\{\hat{r}_j\}_{j=1}^p$ to obtain the estimates $\{\hat{v}_k\}_{k=1}^K$ and using the fact that in the oracle case, if $j$th word is an anchor word of topic $k$, $r_j = v_k$, they were able to further construct their estimate $\widehat{G}$ in \cite{ke2024entry}. Hence, the rest of this section is devoted to proving the following theorem.

\begin{thm}[Estimation of $A$]\label{thm:A_k}
Under the setting of Theorem \ref{thm:TM_errorbound}, with probability $1- o((Nn)^{-5})$,  simultaneously for $1\leq j \leq p$:
\[
\Vert \hat a_j - a_j  \Vert_1 \leq \|a_j\|_1\cdot C \sqrt{\frac{Kp\log (Nn)}{nN}}\,. 
\]
Furthermore, with probability $1- o((Nn)^{-5})$,
\begin{align*}
\|\widehat{A}-A\|&\leq C  K\sqrt{\frac{K\log (Nn)}{nN}} \\
\mathcal L (\widehat A, A)  
&\leq C  K\sqrt{\frac{pK\log (Nn)}{nN}}.
\end{align*}
\end{thm}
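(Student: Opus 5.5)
The plan is to follow the architecture of the fixed-$K$ analysis in \cite{ke2024entry} step by step, tracking every power of $K$. The proof splits into four stages. First, a row-wise singular-vector perturbation bound for $M^{-1/2}D$. Second, the transfer of this bound to the SCORE embeddings $\hat r_j$. Third, vertex hunting. Fourth, reconstruction of $\widehat A$ row by row.

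\textbf{Stage 1: entry-wise eigenvector bound.} We work with the symmetric matrix
\[
G_0 := M_0^{-1/2}\bigl(DD' - \tfrac{1}{N}\diag(D\mathbf{1}_n)\bigr)M_0^{-1/2},
\]
whose expectation is $M_0^{-1/2}AWW'A'M_0^{-1/2}$. The diagonal correction is the usual de-biasing of $DD'$. Conditions (i')--(ii') give two facts.
\begin{itemize}
\item The $K$ nonzero eigenvalues of the population matrix are of order $n/K$, with eigen-gap of the same order, after separating the Perron eigenvalue.
\item Row-wise, $\|e_j'\Xi\|\asymp \sqrt{K/p}$, since $M_0(j,j)\asymp K/p$.
\end{itemize}
For the noise, the matrix Bernstein inequality, together with the row-wise concentration already developed (an analogue of Lemma~\ref{lem:spectralnorms}), should give the following, uniformly in $j$ with probability $1-o((Nn)^{-5})$:
\[
\|G_0-\mathbb{E}G_0\|\lesssim \sqrt{np\log(Nn)/N},\qquad
\|e_j'(G_0-\mathbb{E}G_0)\Xi\|\lesssim \sqrt{n\log(Nn)/N}.
\]
A leave-one-out, or a ``$\hat\Xi = G_0\hat\Xi\hat\Lambda^{-1}$'' expansion, then yields the target
\[
\|e_j'(\hat\Xi\hat O-\Xi)\|\le C\sqrt{K/p}\,\sqrt{Kp\log(Nn)/(nN)}.
\]
Here $\hat O$ is orthogonal, and the factor $K$ comes from dividing by eigenvalues of order $n/K$. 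The condition $K^3p\log^2(Nn)\le Nn$ is exactly what makes the remainder terms higher order.

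\textbf{Stage 2: SCORE embeddings.} Perron--Frobenius gives $\xi_1(j)\asymp \sqrt{K/p}$ entrywise. Dividing by $\hat\xi_1(j)$ therefore gives
\[
\|\hat r_j - \hat O_2 r_j\|\lesssim \sqrt{Kp\log(Nn)/(nN)}\quad\text{uniformly in } j.
\]
One must also check that the vertices of the simplex $\mathcal S$ have pairwise distances bounded below by a constant. These vertices are rows of a $K\times(K-1)$ matrix determined by $A$ and $W$. I would show that their diameter is $O(\sqrt{K})$ and that the volume is nondegenerate, both as in \cite{ke2024using}.

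\textbf{Stage 3: vertex hunting.} Here I would invoke the successive-projection guarantee of \cite[Section 3.4]{ke2023special}. Its error bound is $\max_k\|\hat v_k - v_k\|\lesssim \kappa\cdot\max_j\|\hat r_j - r_j\|$, where $\kappa$ is a condition number of the vertex matrix. I expect this to be the \emph{main obstacle}: for growing $K$, this condition number could blow up polynomially in $K$ and spoil the rate. I would first try to show that the matrix $[\mathbf 1_K, V]$ has singular values of order $\sqrt{K}$, using the balance conditions on $\Sigma_G$ and $\Sigma_\Pi$, so that $\kappa=O(1)$.

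\textbf{Stage 4: reconstruction.} Solving for the barycentric coordinates $\hat u_j$ from $[\mathbf 1_K,\widehat V]'\hat u_j=(1,\hat r_j')'$ gives
\[
\|\hat u_j-u_j\|_1\lesssim \sqrt{K}\,\|\hat u_j-u_j\|\lesssim \sqrt{Kp\log(Nn)/(nN)}.
\]
This uses the singular-value bound from Stage 3 and the contraction property of truncation and renormalization. Multiplying by $\hat\xi_1(j)M(j,j)^{1/2}$ and normalizing the columns should give the row-wise claim $\|\hat a_j-a_j\|_1\le \|a_j\|_1 C\sqrt{Kp\log(Nn)/(nN)}$. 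The column normalizing constants are estimated to relative error of the same order, since each is a sum over $j$.

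Summing the row-wise bound over $j$, with $\sum_j\|a_j\|_1=K$, gives the bound on $\mathcal L(\widehat A,A)$. For the spectral bound, note that $\|a_j\|_1\lesssim K/p$, so each row error is $\lesssim (K/p)\sqrt{Kp\log(Nn)/(nN)}$. Then
\[
\|\widehat A-A\|\le \|\widehat A-A\|_F\le \sqrt{p}\cdot \frac{K}{p}\sqrt{\frac{Kp\log(Nn)}{nN}},
\]
which is the stated $K\sqrt{K\log(Nn)/(nN)}$.
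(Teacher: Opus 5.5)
Your four stages follow the paper's route: Theorem~\ref{thm:row_eigenv}, Theorem~\ref{thm:word_embeddings}, Theorem~\ref{thm:VH_error}, then reconstruction and column normalization. $\mathcal L(\widehat A,A)$ comes from summing rows, and the spectral bound from $\sqrt p\max_j\|\hat a_j-a_j\|_1$. In the barycentric step you are more careful than the paper. The paper asserts that SPA is ``efficient'', uses $\|r_j\|=O(1)$, and passes from an $\ell^2$ bound on $\hat\pi_j^*-\pi_j^*$ to the same $\ell^1$ bound at no cost. Your plan, showing that $[\mathbf 1_K,V]$ has singular values of order $\sqrt K$ and paying $\sqrt K$ for $\ell^2\to\ell^1$, is the right bookkeeping.

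The genuine gap is the scaling in Stages~1--2. The columns of $A$ and $W$ lie on simplices, so $\mathrm{tr}(M_0)=n^{-1}\mathbf 1_p'AW\mathbf 1_n=1$. Hence $M_0(j,j)\asymp p^{-1}$, not $K/p$, as the paper states explicitly. Two consequences follow.
\begin{itemize}
\item The nonzero eigenvalues of $P:=M_0^{-1/2}AWW'A'M_0^{-1/2}$ are of order $n$ (this is $\beta_n=1$ in Lemma~\ref{lem:eigen}), not $n/K$.
\item Since $PM_0^{1/2}\mathbf 1_p=nM_0^{1/2}\mathbf 1_p$, the Perron vector is $\xi_1=M_0^{1/2}\mathbf 1_p$, so $\xi_1(j)\asymp p^{-1/2}$. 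A unit vector cannot have $p$ entries of size $\sqrt{K/p}$.
\end{itemize}
Lemma~\ref{lem:eigen} asserts the same $\sqrt{h_j}$ scaling, so you cannot inherit it from there. Your own remark that the simplex has diameter of order $\sqrt K$ is consistent only with $\xi_1(j)\asymp p^{-1/2}$.

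So the factor $K$ in your Stage~1 target does not come from small eigenvalues. Dividing that target by the true $\xi_1(j)$ gives $K\sqrt{p\log(Nn)/(nN)}$, which is $\sqrt K$ worse than the embedding rate you need. To close Stage~2 you must prove two sharper bounds.
\begin{itemize}
\item $\|e_j'(\hat\Xi-\Xi O')\|\lesssim\sqrt{K\log(Nn)/(nN)}$. This is what eigenvalues of order $n$ and row-wise noise of order $\sqrt{Kn\log(Nn)/N}$ give. Your noise level $\sqrt{n\log(Nn)/N}$ is too small by $\sqrt K$, because $\|M_0^{-1/2}d_i^0\|^2\asymp K$ for documents with a dominant topic.
\item Separately, $|\hat\xi_1(j)-\xi_1(j)|\lesssim\sqrt{\log(Nn)/(nN)}$, because $\|r_j\|\asymp\sqrt K$ at the vertices.
\end{itemize}

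Three further steps are missing or unjustified.
\begin{itemize}
\item The algorithm's $\hat\xi_k$ are eigenvectors of $M^{-1/2}DD'M^{-1/2}$. Your matrix is conjugated by $M_0^{-1/2}$, and its correction $N^{-1}M_0^{-1/2}\diag(D\mathbf 1_n)M_0^{-1/2}=\frac nN MM_0^{-1}$ is not a multiple of $I_p$. You therefore need the $M$-versus-$M_0$ comparison, which the paper does via Lemma~\ref{lem:MM0} and the $(M^{1/2}M_0^{-1/2}-I_p)$ terms in Lemma~\ref{lem:tech3}.
\item ``Separating the Perron eigenvalue'' requires $\lambda_1-\lambda_2\gtrsim n$, which (i$'$)--(ii$'$) do not imply. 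Take every word to be an anchor word, each $A_k$ uniform on $p/K$ words, and $n/K$ pure documents per topic. Then all $K$ nonzero eigenvalues of $P$ equal $n$. This gap must be assumed; the paper takes it from the fixed-$K$ setting of \cite{ke2024using} through Lemma~\ref{lem:eigen}.
\item In Stage~4 the column sums are not automatically estimated at the row rate. We have $\sum_j\|a_j^*\|_1=1$ while $\|A_k^*\|_1\asymp K^{-1}$. Summing row-wise $\ell^1$ errors therefore gives the normalizing constants only to relative error $K$ times the row rate. You need coordinate-wise control of $\hat\pi_j(k)-\pi_j(k)$ before ``each is a sum over $j$'' delivers the claimed accuracy.
\end{itemize}
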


To prove Theorem \ref{thm:A_k}, we first present the following key results. 
\begin{thm}[Entry-wise singular vector analysis]\label{thm:row_eigenv}
Under the setting of Theorem \ref{thm:A_k}, there exists a constant $C>0$ such that with probability $1- o((Nn)^{-5})$, there is an orthogonal matrix $O\in \mathbb{R}^{K\times K}$ satisfying that simultaneously for $1\leq j \leq p$:
\[
\Vert e_j' (\hat \Xi- \Xi O')   \Vert \leq C \sqrt{\frac{h_j Kp\log (Nn)}{nN}}, 
\]
where $h_j = \sum_{k=1}^K A_k(j)$.
\end{thm}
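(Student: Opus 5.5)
We follow the entry-wise eigenvector perturbation strategy of \cite{ke2024entry}, tracking every $K$-dependence that their fixed-$K$ analysis absorbs into constants. The object $M_0^{-1/2}DD'M_0^{-1/2}$ is not a good surrogate, since its diagonal carries a $1/N$ bias. We therefore work with the centered matrix
\[
\widehat{\mathcal G}:=M^{-1/2}\Bigl(DD'-\tfrac{n}{N}M\Bigr)M^{-1/2},
\qquad
\mathcal G_0:=M_0^{-1/2}D_0D_0'M_0^{-1/2},
\]
and relate it to the $\hat\xi_k$. The left singular vectors of $M^{-1/2}D$ are eigenvectors of $M^{-1/2}DD'M^{-1/2}$, which differs from $\widehat{\mathcal G}$ only by the multiple $\frac{n}{N}I_p$ of the identity; hence it has the same eigenvectors, and we may analyze $\widehat{\mathcal G}$ throughout.

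\textbf{Step 1: population quantities.} Under conditions (i')--(ii') and $\min_j h_j\asymp K/p$, we have $M_0\asymp (K/p)\cdot$ (entrywise comparable diagonal). The rank-$K$ matrix $\mathcal G_0=M_0^{-1/2}AWW'A'M_0^{-1/2}$ then has
\[
\lambda_1\asymp n,\qquad \lambda_K\gtrsim n/K .
\]
The reason is that $WW'=\Pi'\Pi$ has eigenvalues $\asymp n/K$, that $A'M_0^{-1}A\asymp (p/K)\cdot A'A$ has eigenvalues $\asymp 1$, and that the top eigenvalue is boosted by Perron--Frobenius. The gap between $\lambda_1$ and $\lambda_2$ is also of order $n$. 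We also need the delocalization bound
\[
\|e_j'\Xi\|\lesssim \sqrt{h_jK/(pK)}\cdot\sqrt{K}\asymp\sqrt{Kh_j/K}\cdot\text{(up to constants)},
\]
derived from $\Xi=M_0^{-1/2}AB$ for some $K\times K$ matrix $B$ with $\|B\|\lesssim\sqrt{K/n}\cdot\sqrt{n}$.

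\textbf{Step 2: noise bounds.} We bound the spectral norm of $Z:=\widehat{\mathcal G}-\mathcal G_0$, and separately the row norms $\|e_j'Z\Xi\|$ and $\|e_j'Z\|$. The target is
\[
\|Z\|\lesssim\sqrt{np\log(Nn)/N}+\text{lower order}.
\]
There are two tools. The first is the matrix Bernstein inequality applied to the sum of independent multinomial rows. The second is Lemma~\ref{lem:ustats-bound}, applied to the diagonal-deleted quadratic forms, combined with a union bound over $j$ via Bernstein's inequality (Lemma~\ref{lem:Bernstein}). The factor $M^{-1/2}$ versus $M_0^{-1/2}$ is handled by an entrywise concentration bound for $M$, namely
\[
|M(j,j)/M_0(j,j)-1|\lesssim\sqrt{p\log/(nNK)},
\]
which uses $\min\{p,N\}\ge\log^3$.

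\textbf{Step 3: leave-one-out argument.} Following \cite{ke2024entry}, we write
\[
\hat\Xi=\widehat{\mathcal G}\hat\Xi\hat\Lambda^{-1},
\]
so that $e_j'(\hat\Xi-\Xi O')$ is controlled by $\|e_j'Z\Xi\|/\lambda_K$ plus $\|e_j'Z(\hat\Xi-\Xi O')\|/\lambda_K$. The second term is decoupled by constructing leave-row-$j$-out versions of $D$, so that $e_j'Z$ is independent of the leave-one-out eigenvectors. A Davis--Kahan bound with $\lambda_K\gtrsim n/K$ then yields $\sqrt{h_jKp\log/(nN)}$. The $K$-tracking at exactly this point is the main obstacle. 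The ratio $\|Z\|/\lambda_K$ picks up a factor of $K$, and that factor must be absorbed by the condition $K^3p\log^2(Nn)\le Nn$ without losing an additional $\sqrt K$ in the final rate. We will first try to bound the row noise directly in the $K$-dimensional eigenspace, via $\|e_j'Z\Xi\|$, rather than through $\|Z\|$, so that only one factor of $\lambda_K^{-1}$ appears.

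\textbf{Step 4: from eigenvectors to $\widehat A$.} Given Theorem~\ref{thm:row_eigenv}, we pass to $\hat R$. Since $\xi_1(j)\asymp\sqrt{h_j}$ is bounded below, we get
\[
\|\hat r_j-r_j\|\lesssim\sqrt{Kp\log/(nN)}\cdot\sqrt{K}.
\]
The vertex hunting stability result of \cite{ke2023special} transfers this error to $\hat v_k$. The next step solves the linear system for $\hat u_m$: the simplex has volume bounded below in terms of $K$, and the condition number of $V$ scales appropriately. The truncation and renormalization step is nonexpansive in $\ell^1$. Finally, we renormalize by $D^{1/2}\diag(\hat\xi_1)$, which gives the row-wise bound with the factor $\|a_j\|_1=h_j$. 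Summing the row-wise bounds over $j$ gives the $\mathcal L(\widehat A,A)$ bound, and the corresponding spectral-norm bound gives the bound on $\|\widehat A-A\|$.
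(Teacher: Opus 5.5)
Your proposal does not close at its crux, Step 3. There you observe that $\|Z\|/\lambda_K$ loses a factor of $K$ and leave this unresolved. That loss is an artifact of a miscalibration in Step 1.

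In this setting $M_0(j,j)=\sum_k A_k(j)\cdot\frac1n\sum_i w_i(k)\asymp h_j/K\asymp p^{-1}$, which is the statement $KM_0(j,j)\asymp h_j$ of Lemma~\ref{lem:MM0}; it is not $K/p$. Consequently $\lambda_{\min}(A'M_0^{-1}A)\ge \min_j M_0(j,j)^{-1}\lambda_{\min}(A'A)\gtrsim p\cdot K/p=K$. Combined with $\lambda_{\min}(\Sigma_W)\asymp 1/K$, this gives $\lambda_K\ge n\lambda_{\min}(\Sigma_W)\lambda_{\min}(A'M_0^{-1}A)\gtrsim n$. At the top, $M_0^{1/2}{\bf 1}_p$ is a positive unit eigenvector of the nonnegative matrix $G_0$ with eigenvalue $n(1-N^{-1})$. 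So all $K$ nonzero eigenvalues of $G_0$ are of order $n$.

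This is why the paper can take $\beta_n=1$ in Lemma~\ref{lem:eigen}. Even the crude bound $\|G-G_0\|\lesssim\sqrt{Kpn\log(Nn)/N}$ then gives $\|G-G_0\|/\lambda_K\lesssim\sqrt{Kp\log(Nn)/(nN)}=o(1)$, so no factor of $K$ has to be absorbed.

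The same factor-$K$ slip affects your other Step 1--2 claims:
\begin{itemize}
\item With your $\|B\|\lesssim\sqrt K$, the representation $\Xi=M_0^{-1/2}AB$ only yields $\|e_j'\Xi\|\lesssim\sqrt{K h_j}$. The correct value $\|B\|\asymp K^{-1/2}$ gives $\sqrt{h_j}$.
\item The relative error of $M(j,j)$ is $\sqrt{p\log(Nn)/(nN)}$, not $\sqrt{p\log(Nn)/(nNK)}$.
\end{itemize}

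Second, the leave-row-$j$-out decoupling fails as stated. Rows of $D$ index bins, and for every document $X_i(j)=N_i-\sum_{t\neq j}X_i(t)$. So row $j$ is a deterministic function of the remaining rows, and the leave-row-$j$-out matrix carries exactly the same information as $D$; nothing built from it is independent of $e_j'Z$. Moreover, $e_j'\widehat{\mathcal G}$ involves $e_j'M^{-1/2}DD'M^{-1/2}$, which depends on every row and on the global normalizer $M$.

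Hence the genuinely hard term $\|e_j'Z(\hat\Xi-\Xi O')\|/\lambda_K$ stays uncontrolled. Bounding $\|e_j'Z\Xi\|$ only handles the easy part.

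The paper, following \cite{ke2024entry}, avoids leave-one-out altogether:
\begin{enumerate}
\item It starts from the three-term decomposition \eqref{2022041901}.
\item It uses the row bound of Lemma~\ref{lem:G-G0}, $\|e_j'(G-G_0)\hat\Xi\|/n\lesssim\sqrt{h_jKp\log(Nn)/(nN)}\,\bigl(1+\|H^{-1/2}(\hat\Xi-\Xi O')\|_{2\to\infty}\bigr)+o(1)\,\|e_j'(\hat\Xi-\Xi O')\|$, with $H=\mathrm{diag}(h_1,\ldots,h_p)$.
\item It rearranges, divides by $\sqrt{h_j}$, and maximizes over $j$ to obtain $\|H^{-1/2}(\hat\Xi-\Xi O')\|_{2\to\infty}=o(1)$.
\item It substitutes this back to get the stated bound.
\end{enumerate}
This $H$-weighted self-bounding argument is the ingredient your plan is missing. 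Your Step 4 concerns Theorem~\ref{thm:A_k}, not the present statement.
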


\begin{thm}[Word embeddings]\label{thm:word_embeddings}
Under the setting of Theorem \ref{thm:A_k}, with probability $1-o((Nn)^{-5})$, 
there exist an orthogonal matrix $\Omega \in \mathbb{R}^{(K-1)\times (K-1)}$ and a constant $C>0$ such that simultaneously for $1\leq j \leq p$:
\[
\Vert \hat r_j - \Omega r_j   \Vert \leq C \sqrt{\frac{Kp\log(Nn)}{nN}}. 
\]
\end{thm}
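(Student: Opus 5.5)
The bound will follow from Theorem~\ref{thm:row_eigenv}, by tracking how errors in the singular vectors pass through the ratio map $\hat R(j,k)=\hat\xi_{k+1}(j)/\hat\xi_1(j)$, as in the fixed-$K$ argument of \cite{ke2024entry}, while now recording every power of $K$. Fix the event of Theorem~\ref{thm:row_eigenv}. There, for an orthogonal $O$, $\|e_j'(\hat\Xi-\Xi O')\|\le C\sqrt{h_jKp\log(Nn)/(nN)}$ uniformly in $j$.

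\emph{Step 1.} Show that the leading singular vector needs no rotation. By Perron--Frobenius applied to the entrywise nonnegative matrix $M_0^{-1/2}\mathbb{E}[DD']M_0^{-1/2}$, $\xi_1$ can be taken positive. Under (i')--(ii') I would also show an eigen-gap between $\lambda_1$ and $\lambda_2$ of order comparable to $\lambda_1$. Then the first column of $O$ is $\pm e_1$, up to an error of the same order, and the sign is fixed by convention. Write $O=\mathrm{diag}(1,\Omega_0)+$ (small). Choose $\Omega$ as the orthogonal polar factor of the lower $(K-1)\times(K-1)$ block, or argue via a block-diagonal choice of $O$ as in \cite{ke2024entry}.

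\emph{Step 2.} Establish the entrywise scale of $\xi_1$ and of the other rows. I would show that $\xi_1(j)\asymp\sqrt{h_j/K}$ uniformly in $j$. One route is to write $\xi_1(j)=M_0^{-1/2}(j,j)\,e_j'A\,\eta$, where $\eta$ is a positive vector of comparable entries. Note $M_0(j,j)\asymp h_j/K$, since $\frac1n\sum_i d_i^0=A\Pi'\mathbf 1_n/n$ and the entries of $\Pi'\mathbf 1_n$ are $\asymp n/K$. Normalization then gives $\xi_1(j)\asymp h_j^{1/2}\cdot O(1)\big/\|\cdot\|$. Similarly, $\|e_j'\Xi\|\le C\sqrt{h_j/K}\cdot\sqrt{K}$ follows from the eigen-structure $\Xi=M_0^{-1/2}AB$ with $\|B\|$ controlled via $\lambda_{\min}(\Sigma_G)$ and $\lambda_{\min}(\Sigma_\Pi)$. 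Consequently $\|r_j\|\le C\sqrt K$, which is the simplex-diameter bound.

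\emph{Step 3.} Carry out the ratio perturbation. Write
\[
\hat r_j-\Omega r_j=\frac{(\hat\xi_{2:K}(j)-\Omega\xi_{2:K}(j))-\Omega r_j(\hat\xi_1(j)-\xi_1(j))}{\hat\xi_1(j)}.
\]
Step 1 and Theorem~\ref{thm:row_eigenv} bound the numerator by
\[
C\sqrt{h_jKp\log(Nn)/(nN)}\,(1+\|r_j\|\cdot\text{factor}).
\]
Since the entrywise error is $o(\xi_1(j))$ under $K^3p\log^2(Nn)\le Nn$, we have $|\hat\xi_1(j)|\ge \xi_1(j)/2$. Dividing by $\xi_1(j)\asymp\sqrt{h_j/K}$ cancels the $h_j$ factor.

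\emph{Main obstacle.} The main obstacle is the $K$-bookkeeping in Step 3. Naively, the division by $\xi_1(j)$ contributes $\sqrt K$ and $\|r_j\|$ contributes another $\sqrt K$, which would give $K^{3/2}\sqrt{p\log/(nN)}$ rather than the claimed $\sqrt{Kp\log/(nN)}$. To avoid this I would not bound the first-coordinate error crudely. Instead I would use that the row error for coordinate 1 is sharper than the full-row bound: applying the leave-one-out or entrywise expansion from the proof of Theorem~\ref{thm:row_eigenv} to $\xi_1$ alone should give $|\hat\xi_1(j)-\xi_1(j)|\le C\sqrt{h_jp\log(Nn)/(nNK)}$. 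I would also verify the precise normalization $\xi_1(j)\asymp\sqrt{h_j}$ (rather than $\sqrt{h_j/K}$) if $\sum_j h_j=K$. Rechecking these constants against Lemma~\ref{lem:key} is where I expect most of the work.
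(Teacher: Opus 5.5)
Your Step~3 identity is essentially the ratio decomposition the paper uses. However, the $\sqrt K$ bookkeeping you flag as the main obstacle is a genuine gap, and neither of your remedies closes it.

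\textbf{The first remedy covers only one term.} The sharper first-coordinate bound $|\hat\xi_1(j)-\xi_1(j)|\lesssim\sqrt{h_jp\log(Nn)/(nNK)}$ tames $\|r_j\|\,|\hat\xi_1(j)-\xi_1(j)|/|\hat\xi_1(j)|$. The other term, $\|\hat\xi_{2:K}(j)-\Omega\xi_{2:K}(j)\|/|\hat\xi_1(j)|$, is controlled through Theorem~\ref{thm:row_eigenv} only by $\sqrt{h_jKp\log(Nn)/(nN)}\cdot\sqrt{K/h_j}=K\sqrt{p\log(Nn)/(nN)}$. That is still a factor $\sqrt K$ too large.

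\textbf{The second remedy, $\xi_1(j)\asymp\sqrt{h_j}$, is false.} We have $G_0M_0^{1/2}\mathbf{1}_p=n(1-N^{-1})M_0^{1/2}\mathbf{1}_p$, with $M_0^{1/2}\mathbf{1}_p>0$ and $\mathrm{tr}(M_0)=1$. By Perron--Frobenius (given the eigen-gap), $\xi_1=M_0^{1/2}\mathbf{1}_p$ exactly, so $\xi_1(j)=\sqrt{M_0(j,j)}\asymp\sqrt{h_j/K}$ by Lemma~\ref{lem:MM0}. Indeed $\xi_1(j)\asymp\sqrt{h_j}$ would force $1=\|\xi_1\|^2\asymp\sum_jh_j=K$.

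\textbf{What is missing.} Your Step~2 scale is the correct one. You therefore need a row-wise bound on all of $\hat\Xi$ that is $\sqrt K$ sharper than Theorem~\ref{thm:row_eigenv}, namely $\|e_j'(\hat\Xi-\Xi O')\|\lesssim\sqrt{h_jp\log(Nn)/(nN)}$, used together with your coordinate-1 bound. A first-order heuristic supports this: the row $e_j'M_0^{-1/2}Z$ projected onto the $K$-dimensional right singular space of $M_0^{-1/2}D_0$ has norm about $\sqrt{K/N}$. Multiplying by $\sqrt n$ and dividing by $\lambda_k\asymp n$ gives $\sqrt{K/(nN)}\asymp\sqrt{h_jp/(nN)}$. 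Proving it, however, requires redoing Lemmas~\ref{lem:G-G0}--\ref{lem:tech3} without their $\sqrt K$ losses, rather than quoting Theorem~\ref{thm:row_eigenv}. For instance, matrix Bernstein gives $\|ZW'\|\lesssim\sqrt{n\log(Nn)/(KN)}$. The coordinatewise union bound over the $9^K$-net in the proof of Lemma~\ref{lem:tech1} yields only $\sqrt{n\log(Nn)/N}$.

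\textbf{Comparison with the paper.} The paper reads $O'=\mathrm{diag}(\omega,\Omega')$ off Theorem~\ref{thm:row_eigenv} and uses the same ratio identity. It closes the bookkeeping by quoting Lemma~\ref{lem:eigen}: $\xi_1(j)\asymp\sqrt{h_j}$ and $\|e_j'\Xi\|\le C\sqrt{h_j}$. With these, both terms are immediately $O(\sqrt{Kp\log(Nn)/(nN)})$. But this is exactly the normalization refuted above; it already contradicts the paper's own $KM_0(j,j)\asymp h_j$. So following the paper will not supply the missing $\sqrt K$, and your suspicion that this is the crux is well founded.

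\textbf{A separate issue in Step~1.} The gap $\lambda_1-\lambda_2\gtrsim n$ cannot be derived from (i')--(ii'). Pure documents with disjointly supported topics satisfy (i')--(ii') and the anchor condition, yet give $\lambda_1=\cdots=\lambda_K=n(1-N^{-1})$. The gap must be assumed, as the paper does implicitly via Lemma~\ref{lem:eigen}.
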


\begin{thm}[Vertex hunting errors]\label{thm:VH_error}
Under the setting of Theorem \ref{thm:A_k}, if we apply the Successive Projection Algorithm (SPA) to do vertex hunting, with probability $1-o((Nn)^{-5})$, up to a permutation of the $K$ estimated vertices, there exists a constant $C>0$ such that simultaneously for $1\le k\le K$,
\begin{align*}
    \|\hat{v}_k-v_k\|\le C \sqrt{\frac{Kp\log(n)}{nN}},
\end{align*}
where $V=[v_1,\ldots,v_K]$ and $\widehat{V}=[\hat{v}_1,\ldots,\hat{v}_K]$.
\end{thm}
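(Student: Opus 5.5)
We prove Theorem~\ref{thm:VH_error} by treating SPA as a deterministic map and feeding it the row-wise perturbation bound of Theorem~\ref{thm:word_embeddings}. We work on the event of that theorem, which has probability $1-o((Nn)^{-5})$. On this event there is an orthogonal $\Omega$ such that
\[
\max_{1\le j\le p}\|\hat r_j-\Omega r_j\|\le \epsilon_0:=C\sqrt{Kp\log(Nn)/(nN)}.
\]
The rotation $\Omega$ is harmless for SPA. SPA applied to $\{\Omega r_j\}$ returns $\{\Omega v_k\}$, and the vertex error is invariant under a common rotation, so we replace $v_k$ by $\Omega v_k$ throughout. (The statement writes $\log(n)$ where the bound naturally carries $\log(Nn)$; since $\log N=O(\log n)$ these agree up to a constant.)

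We then invoke the standard robustness bound for SPA (Gillis and Vavasis; in the form used in \cite{ke2023special}, Section 3.4). SPA is run on the lifted points $\tilde r_j=(1,\hat r_j')'\in\mathbb{R}^K$, whose noiseless versions are convex combinations of the lifted vertices $\tilde v_k=(1,v_k')'$. The bound says: if every noisy point is within $\epsilon$ of a point of the simplex and $\epsilon\le c\,\sigma_K(\tilde V)/(\sqrt{K}\kappa^2(\tilde V))$, then up to permutation
\[
\max_k\|\hat v_k-v_k\|\le C'\kappa^2(\tilde V)\,\epsilon .
\]
Here $\tilde V=[\tilde v_1,\ldots,\tilde v_K]$ and $\kappa$ is its condition number. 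Anchor words guarantee that every vertex appears among the noiseless points.

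The main step is to control $\sigma_K(\tilde V)$ and $\sigma_1(\tilde V)$ with explicit dependence on $K$. First we recall the algebra from \cite{ke2024using}. There we have $\Xi=M_0^{-1/2}AB$ for a nonsingular $K\times K$ matrix $B$ satisfying $\mathbb{E}[DD']\approx n^{-1}$-scaled $AWW'A'$ structure. The vertices are $v_k$ with $(1,v_k')=B_{k\cdot}/B_{k1}$-type normalization, so $\tilde V=\mathrm{diag}(b_1)^{-1}$ applied to $B$ rows, up to transposition. Conditions (i')–(ii') give $\lambda(\Sigma_G),\lambda(\Sigma_\Pi)\asymp1$ and $M_0$ with entries $\asymp K/p$. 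From these we would show that $M_0^{-1/2}A\Sigma_\Pi A'M_0^{-1/2}$, restricted to its column space, has all $K$ nonzero eigenvalues of the same order. Hence $B$ is well conditioned with $\|B\|\asymp\sigma_K(B)\asymp\sqrt{K/p}\cdot\sqrt{p/K}\cdot(\text{scale})$. Next, a Perron–Frobenius argument gives $\xi_1>0$ with all entries of order $1/\sqrt p$ times $\sqrt{h_j p/K}$. This shows every $b_1$ entry is $\asymp K^{-1/2}$. Combining, $\sigma_K(\tilde V)\gtrsim 1$ and $\|\tilde V\|\lesssim\sqrt K$, so $\kappa(\tilde V)\lesssim\sqrt K$. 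I expect this to be the main obstacle, because with $\kappa^2\asymp K$ the SPA bound would lose a factor $K$.

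To avoid that loss, the first thing to try is a sharper statement: $\|\tilde V\|=O(1)$ as well, which gives $\kappa=O(1)$. The heuristic is that the $v_k$ are rows of a well-conditioned matrix rescaled by $\sqrt K$. Concretely, $\tilde V'\tilde V\asymp K\,B'B$ after normalization, and $B'B$ has eigenvalues $\asymp 1/K$ by the balance of $\Sigma_\Pi$ and $\eta$. If this holds, the smallness condition reduces to $\epsilon_0\sqrt K=o(1)$. That follows from $K^3p\log^2(Nn)\le Nn$. The SPA bound then yields $\|\hat v_k-v_k\|\le C\epsilon_0$ uniformly in $k$, on the same high-probability event, which proves the theorem.

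If only $\kappa\lesssim\sqrt K$ can be established, the fallback is a preconditioned or direct SPA analysis. It would exploit that the noise is uniform over rows and the simplex is nearly regular. Each SPA step projects out the chosen vertex. By induction, the selected point's index is an anchor or near-anchor of a new topic, with error $O(\epsilon_0)$ that does not accumulate multiplicatively, because successive projections are onto nearly orthogonal directions when $\tilde V$ is well conditioned up to scale.
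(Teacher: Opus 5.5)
Your skeleton matches the paper's. Its proof simply asserts that SPA satisfies $\max_k\|\hat v_k-v_k\|=O(\max_j\|\hat r_j-r_j\|)$ with a $K$-free constant, then inserts Theorem~\ref{thm:word_embeddings}. You correctly see that, through Gillis--Vavasis, this constant is of order $\kappa^2(\tilde V)$, so the real content is $\kappa(\tilde V)=O(1)$. However, the step you propose for it cannot work.

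The target $\|\tilde V\|=O(1)$ is false for every simplex. The first row of $\tilde V=[\tilde v_1,\ldots,\tilde v_K]$ is $\mathbf{1}_K'$, so $\|\tilde V\|\ge\sqrt K$. The supporting heuristic $\tilde V'\tilde V\asymp K\,B'B$ with $B'B\asymp K^{-1}$ also miscounts. In fact $\tilde V'\tilde V=\mathrm{diag}(b_1)^{-1}BB'\,\mathrm{diag}(b_1)^{-1}$. Your own inputs are that $B$ is well conditioned and all entries of $b_1$ are comparable, hence each is $\asymp\|B\|/\sqrt K$, since $\sigma_K(B)\le\|b_1\|\le\|B\|$. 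These force every eigenvalue of $\tilde V'\tilde V$ to be $\asymp K$. So your earlier bound $\sigma_K(\tilde V)\gtrsim 1$ was loose, and that is the quantity to sharpen. As written, the proof ends in a branch whose hypothesis is false plus a heuristic fallback. With only $\kappa\lesssim\sqrt K$, Gillis--Vavasis loses a factor $K$, so the stated bound is not established.

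The repair is short, and the balance of $b_1$ you need is an exact identity, not a Perron--Frobenius estimate. Set $\bar w:=n^{-1}W\mathbf{1}_n$ and use $A'\mathbf{1}_p=\mathbf{1}_K$, $W'\mathbf{1}_K=\mathbf{1}_n$ and $M_0\mathbf{1}_p=A\bar w$. One checks that $G_0M_0^{1/2}\mathbf{1}_p=n(1-N^{-1})M_0^{1/2}\mathbf{1}_p$. A strictly positive eigenvector of a nonnegative symmetric matrix belongs to its top eigenvalue. With the eigengap of Lemma~\ref{lem:eigen}, this gives $\xi_1=M_0^{1/2}\mathbf{1}_p$. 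Then $\xi_1=M_0^{-1/2}Ab_1$ gives $b_1=\bar w$, whose entries are $\asymp K^{-1}$ by (ii'). Next, $M_0(j,j)\asymp p^{-1}$, and $\lambda(A'A)\asymp K/p$ by (i'). Since $\Xi'\Xi=I_K$ forces $BB'=(A'M_0^{-1}A)^{-1}$, all eigenvalues of $BB'$ are $\asymp K^{-1}$. Hence
\[
\sigma_K(\tilde V)\ge\frac{\sigma_K(B)}{\max_k b_1(k)}\gtrsim\sqrt K,\qquad \|\tilde V\|\le\frac{\|B\|}{\min_k b_1(k)}\lesssim\sqrt K,
\]
so $\kappa(\tilde V)=O(1)$: the simplex is nearly regular, with vertices $\asymp\sqrt K$ apart. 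The Gillis--Vavasis requirement $\epsilon_0\le c\,\sigma_K(\tilde V)/(\sqrt K\kappa^2(\tilde V))$ then reduces to $\epsilon_0\le c$, which holds under $K^3p\log^2(Nn)\le Nn$. The vertex error is $C\epsilon_0$, as claimed, and no fallback is needed. With this repair, your argument supplies the $K$-bookkeeping that the paper's one-line proof leaves implicit.
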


\begin{lemma}[Lemmas D.1 and E.1 in \cite{ke2024using}]\label{lem:MM0}
Suppose the conditions in Theorem \ref{thm:A_k} hold. Then,
\begin{align*}
KM_0(j,j) \asymp h_j;\qquad \text{ and} \quad   | M(j,j) - M_0(j,j) | \leq C\sqrt{\frac{\log (Nn)}{pNn}},
\end{align*}
for some constant $C>0$, with probability $1- o((Nn)^{-5})$, simultaneously for all $1\leq j \leq p$. Furthermore, with probability $1- o((Nn)^{-5})$, 
\begin{align} \label{est:MM01/2}
\Big \Vert M^{-1/2} M_0^{1/2} - I_p \Big \Vert  \leq C\sqrt{\frac{p\log (Nn)}{Nn}}.
\end{align}
\end{lemma}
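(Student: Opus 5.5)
The first claim is that $KM_0(j,j)\asymp h_j$, and I would get it directly from the model. Since $d_i^0=\sum_k w_i(k)A_k$, we have
\[
M_0(j,j)=\frac1n\sum_{i=1}^n d_i^0(j)=\sum_{k=1}^K A_k(j)\Bigl(\frac1n\sum_{i=1}^n w_i(k)\Bigr).
\]
Condition (ii') of Theorem~\ref{thm:TM_errorbound} says every entry of $\Pi'{\bf 1}_n$ is of order $n/K$. Hence each weight $\frac1n\sum_i w_i(k)$ is of order $1/K$. This gives $M_0(j,j)\asymp K^{-1}\sum_k A_k(j)=K^{-1}h_j$. By condition (i'), together with $\|G{\bf 1}_K\|_\infty=O(K/M)$, we also have $h_j\asymp K/p$, so $M_0(j,j)\asymp 1/p$.

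For the entrywise concentration I would write
\[
M(j,j)-M_0(j,j)=\frac{1}{nN}\sum_{i=1}^n\sum_{\ell=1}^N\bigl(U_{i\ell j}-\mbb{E}U_{i\ell j}\bigr).
\]
This is a sum of $nN$ independent centered Bernoulli terms, each bounded by $1$. The total variance is at most $\sum_i N d_i^0(j)=nN\,M_0(j,j)\lesssim nN/p$. Apply Lemma~\ref{lem:Bernstein} with $\delta=(Nn)^{-6}/p$, then take a union bound over the $p$ words; since $p\le M\le Nn$, we have $\log(2/\delta)\lesssim\log(Nn)$. With probability $1-o((Nn)^{-5})$ this gives
\[
|M(j,j)-M_0(j,j)|\le C\Bigl(\sqrt{\frac{\log(Nn)}{pNn}}+\frac{\log(Nn)}{Nn}\Bigr).
\]
The linear term is dominated by the square-root term exactly when $p\log(Nn)\le Nn$. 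That holds under the standing assumption $K^3M\log^2(Nn)\le Nn$.

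For \eqref{est:MM01/2}, the matrix is diagonal, so its operator norm is $\max_j|\sqrt{M_0(j,j)/M(j,j)}-1|$. Dividing the entrywise bound by $M_0(j,j)\asymp 1/p$ gives a relative error
\[
|M(j,j)/M_0(j,j)-1|\le C\sqrt{p\log(Nn)/(Nn)}=o(1).
\]
The map $t\mapsto t^{-1/2}$ is Lipschitz near $t=1$, so $|\sqrt{M_0/M}-1|\le C|M/M_0-1|$ there, and the claimed bound follows. The only real point to check is that the $K$-dependence cancels. It does: $M_0(j,j)\asymp 1/p$ uniformly in $K$, because the $1/K$ factor from the column sums of $\Pi$ offsets the $K/p$ size of $h_j$. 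Beyond that, one only needs the mild sample-size condition that makes the Bernstein linear term negligible.
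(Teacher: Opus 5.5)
Your proof is correct and follows essentially the standard argument behind the cited Lemmas D.1 and E.1 of \cite{ke2024using}, which the paper imports without proof. You use the identity $M_0(j,j)=\sum_k A_k(j)\,n^{-1}\sum_i w_i(k)$ with (ii') to get $KM_0(j,j)\asymp h_j$, Bernstein plus a union bound over the $p$ diagonal entries for the entrywise bound, and the diagonal structure for the operator-norm bound; your $K$-bookkeeping matches the paper's remark that $M_0(j,j)\asymp p^{-1}$ while $h_j\asymp K/p$.

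One small caveat: (i') as literally stated only controls the smallest entry of $G\mathbf{1}_K$, so $\max_j h_j\lesssim K/p$ is an extra input. The paper also assumes it implicitly here. It is needed for the uniform additive bound $C\sqrt{\log(Nn)/(pNn)}$. The bound on $\|M^{-1/2}M_0^{1/2}-I_p\|$ needs only the lower bound on $h_j$, provided you keep Bernstein in the form $C\sqrt{M_0(j,j)\log(Nn)/(Nn)}$.
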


Note that all above results have been proven in \cite{ke2024entry} and \cite{ke2024using} under the fixed $K$ setting. As mentioned in the beginning of this section, we will extend their proofs by keeping track of $K$ explicitly in all constants.

Now, with the above results, we prove Theorem \ref{thm:A_k}.
\begin{proof}[Proof of Theorem \ref{thm:A_k}]
    We now provide a modified proof based on \cite{ke2024entry} by clearly writing out all $K$ terms. We refer readers to the proof of Theorem 3 in \cite{ke2024entry} for more details.
    
    Recall the Topic-SCORE algorithm. Let $\widehat V = (\hat v_1, \hat v_2, \ldots, \hat v_K )$ and denote its population counterpart by $V$. We write:
\begin{align*}
\hat Q = \left(
\begin{array}{ccc}
1& \ldots  &1 \\
\hat v_1 & \ldots & \hat v_K
\end{array}
\right), 
\qquad 
Q = \left(
\begin{array}{ccc}
1& \ldots  &1 \\
 v_1 & \ldots & v_K
\end{array}
\right)
\end{align*}
Theorem \ref{thm:VH_error} shows
\[
\| \hat V   -  V \| \leq C\sqrt{\frac{Kp\log(Nn)}{Nn}},
\]
where we omit the permutation for simplicity here and throughout this proof. 
As a result:
\begin{align*}
\| \hat \pi_j^*  - \pi_j^*\| & =\left\| \hat Q^{-1} \left(\begin{array}{c}1\\ \hat r_j\end{array} \right) -
 Q^{-1} \left(\begin{array}{c}1\\  \Omega r_j\end{array} \right)  \right\| \notag\\
 & \leq \| \hat {Q}^{-1}   -  Q^{-1} \| \cdot \|r_j\|  + \|\hat{Q}^{-1} \| \|\hat r_j - \Omega r_j\|\notag\\
 & \leq C\sqrt{\frac{Kp\log(Nn)}{Nn}} = o(1)
\end{align*}
where we used the fact that $\|Q^{-1}-\hat{Q}^{-1}\|=O(\max_{k}\|v_k-\hat{v}_k\|)$ and $\|r_j\|=O(1)$ for all $1\le j\le p$, whose details can be found in the proof of Lemma~G.1 and Proof of Lemma~D.3 in \cite{ke2024using} respectively (note that both $Q,\hat{Q}$ and $r_j$ are well-normalized such that the bound of them remain unchanged), and Theorem \ref{thm:VH_error}. Considering the truncation at 0, it is not hard to see that:
\[
\|\tilde \pi_j^* - \pi_j^*\| \leq C \| \hat \pi_j^*  - \pi_j^*\|\leq C\sqrt{\frac{Kp\log(Nn)}{Nn}} =o(1);
\]
and furthermore:
\begin{align}\label{2024022601}
\|\hat \pi_j - \pi_j \|_1 & \leq \frac{\|\tilde \pi_j^* - \pi_j^*\|_1}{\|\tilde \pi_j^*\|_1} + \frac{\|\pi_j^*\|_1 \big|\|\tilde \pi_j^*\|_1 -\| \pi_j^*\|_1\big|}{\|\tilde \pi_j^*\|_1\| \pi_j^*\|_1}\notag\\
& \leq C \|\tilde \pi_j^* - \pi_j^*\|_1 \leq C\sqrt{\frac{Kp\log(Nn)}{Nn}}. 
\end{align}
by noticing that $\pi_j = \pi_j^*$ in the oracle case. 

Recall $\tilde A = M^{1/2} {\rm diag}(\hat \xi_1)\hat \Pi =:(\tilde a_1, \ldots, \tilde a_p)'$. Let $A^* = M_0^{1/2}{\rm diag} (\xi_1) \Pi = (a^*_1, \ldots, a^*_p)'$. Note that  $A = A^* [{\rm diag}({\bf 1}_p A^*)]^{-1}$. We can derive:
\begin{align}\label{2024022610}
&\|\tilde a_j - a^*_j \|_1  \leq  \Big\|  \sqrt{M(j,j)}  \, \hat \xi_1(j)\hat \pi_j  -  \sqrt{M_0(j,j)}  \, \xi_1(j) \pi_j \Big\|_1 \notag\\
& \leq C \left|\sqrt{M(j,j)}  - \sqrt{M_0(j,j)} \right| \cdot | \xi_1(j)| \cdot \|\pi_j\|_1 + C\sqrt{M_0(j,j)}\, \cdot  | \hat \xi_1(j) - \xi_1(j)|\cdot \|\pi_j\|_1  \notag\\
&\qquad + C\sqrt{M_0(j,j)}\, \cdot  | \xi_1(j)|\cdot \|\hat \pi_j - \pi_j\|_1  \notag\\
& \leq C \sqrt{\frac{Kh_j \log (n) }{Nn}},
\end{align}
where we used \eqref{2024022601}, Theorem \ref{thm:row_eigenv} and also Lemma~\ref{lem:MM0}.  Write $\tilde A = (\tilde A_1, \ldots, \tilde A_K)$ and $A^* = (A_1^*, \ldots, A_K^*)$.   We crudely bound: 
\begin{align}\label{2024022611}
\Big| \|\tilde A_k\|_1  - \|A^*_k\|_1\Big|\leq  \sum_{j=1}^p \|\tilde a_j - a^*_j \|_1 \leq
C K\sqrt{\frac{p \log (n) }{Nn}} =o(K^{-1/2}) 
\end{align}
simultaneously for all $1\leq k \leq K$,
since $h_j\asymp K/p$. By the study of oracle case in Lemma D.2 in \cite{ke2024using} and Proof of Lemma G.1 in \cite{ke2024using}, it can be deduced that $\|A^*_k\|_1\asymp K^{-1/2}$ under the growing $K$ setting. It then follows that $\|\tilde A_k\|_1\asymp K^{-1/2}$ and
\begin{align*}
\|\hat a_j - a_j \|_1 &=\Big\|{\rm diag} (1/\|\tilde A_1\|_1, \ldots, 1/\|\tilde A_K\|_1 )  \tilde a_j  - {\rm diag} (1/\| A^*_1\|_1, \ldots, 1/\|A^*_K\|_1 )  a^*_j \Big\|_1 \notag\\
& = \sum_{k=1}^K \bigg|  \frac{\tilde a_j(k) }{\|\tilde A_k\|_1} -  \frac{a^*_j(k)}{\|A^*_k\|_1} \bigg|\notag\\
& \leq  \sum_{k=1}^K \bigg|  \frac{\tilde a_j(k) - a^*_j(k) }{\|A^*_k\|_1} \bigg| + |a^*_j(k)| \frac{\big|\|\hat A_k\|_1 - \|A^*_k\|_1 \big|}{\|A^*_k\|_1\|\tilde A_k\|_1} \notag\\
& \leq C K^{1/2} \|\tilde a_j - a^*_j \|_1   + \|a_j^*\|_1 \max_{k}\Big| \|\tilde A_k\|_1  - \|A^*_k\|_1\Big| \notag\\
&  \leq
CK  \sqrt{\frac{h_j \log (n) }{Nn}}  = C\|a_j\|_1 \sqrt{\frac{Kp \log (n) }{Nn}} \,. 
\end{align*}
Here, we used (\ref{2024022610}), (\ref{2024022611}), $h_j\asymp K/p$ and the following  estimate:
\[
\|a_j^*\|_1 = \sqrt{M_0(j,j)}\,   |\xi_1(j)| \|\pi^*_j\|_1\asymp M^{-1/2}\sqrt{h_j}\asymp K^{1/2}p^{-1}.
\]
Combining all $j$ together, we immediately have the result for $\mathcal L (\hat A, A)$. Using the inequality $\|\widehat{A}-A\|\le \sqrt{M}\|\widehat{A}'-A'\|_1$ with the above bound, we obtain the result for the operator norm error.
\end{proof}

\subsubsection{Proof of Theorem \ref{thm:row_eigenv}}
To prove Theorem \ref{thm:row_eigenv}, we introduce the following lemmas. 

Recall that $\hat{\xi}_k\in\mathbb{R}^p$ is the $k$th left singular vector of $M^{-1/2}D$. Define:
\begin{equation*}
G : = M^{-1/2} DD' M^{-1/2} - \frac{n}{N} I_p, \qquad G_0:=  n\cdot M_0^{-1/2} A\Sigma_W A'M_0^{-1/2}, 
\end{equation*}
where $\Sigma_W = \frac{1-N^{-1}}{n}WW'$.

Since the identify matrix in $G$ does not affect the eigenvectors, $\hat{\xi}_k$ is the $k$th eigenvector of $G$. Additionally, \cite{ke2024entry} also showed that $\xi_k$ is the $k$th eigenvector of $G_0$ and 
\begin{equation*}
G - G_0 = M^{-1/2}DD'M^{-1/2}-M_0^{-1/2}\mathbb{E}[DD']M_0^{-1/2}. 
\end{equation*}

\begin{lemma}[Lemmas F.2, F.3, and D.3 in \cite{ke2024using}]\label{lem:eigen}
Suppose the conditions in Theorem \ref{thm:A_k} hold. Denote by $\lambda_1\geq \lambda_1\geq \ldots \geq \lambda_K$ the non-zero eigenvalues of $G_0$. There exists a constant $C$ such that:
\[
Cn \beta_n \leq \lambda_k\leq C n, \;\; \text{ for $2\leq k\leq K$},\qquad \text{and} \quad \lambda_1 \geq C^{-1} n + \max_{2\leq k \leq K}\lambda_K\,. 
\]
Furthermore, recall that $\xi_1,\xi_2, \ldots, \xi_K $ are the associated eigenvectors of $G_0$. Then:
\begin{align*}
C^{-1} \sqrt{h_j} \leq \xi_1(j)\leq C \sqrt{h_j}\, , \qquad \Vert e_j'\Xi\Vert \leq C\sqrt{h_j} \, . 
\end{align*}
\end{lemma}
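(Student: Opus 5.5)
This statement (Lemmas F.2, F.3, D.3 of \cite{ke2024using}) will be re-derived while tracking $K$ explicitly. The starting point is to rewrite the nonzero part of $G_0$ as a $K\times K$ problem. Write $G_0 = n\,M_0^{-1/2}A\Sigma_W A'M_0^{-1/2}$. Its nonzero eigenvalues coincide with those of the $K\times K$ matrix
\[
\mathcal{M} := n\,\Sigma_W^{1/2}A'M_0^{-1}A\Sigma_W^{1/2}.
\]
Moreover, $\Xi = M_0^{-1/2}A\Sigma_W^{1/2}\,\mathcal{U}\,\Lambda^{-1/2}\sqrt{n}$, where $\mathcal{U}$ holds the eigenvectors of $\mathcal{M}$. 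Note that $\Sigma_W \asymp K^{-1}\Pi'\Pi/n$ up to the factor $(1-N^{-1})$. By condition (ii'), $\lambda(\Sigma_W)\asymp K^{-1}$. By Lemma~\ref{lem:MM0}, $M_0(j,j)\asymp h_j/K\asymp p^{-1}$, so $A'M_0^{-1}A\asymp p\,A'A = K\Sigma_G$ has eigenvalues $\asymp K$ by (i'). Hence all eigenvalues of $\mathcal{M}$ lie in $[c\,n, C n]$, up to the conditioning constants. This gives the upper bound $\lambda_k\le Cn$, and also a lower bound $\lambda_K\ge cn$, which is stronger than $Cn\beta_n$.

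For the eigen-gap I will use Perron--Frobenius. All entries of $A$, $W$ and $M_0$ are nonnegative, so $\mathcal{M}$, or equivalently $\Sigma_W A'M_0^{-1}A$, is an entrywise positive matrix after similarity. I then need to show that $\lambda_1$ exceeds $\lambda_2$ by a constant multiple of $n$. The plan is to exhibit the near-leading direction explicitly. The vector $\xi^\star = M_0^{1/2}\mathbf 1_p$ satisfies $G_0\xi^\star = n M_0^{-1/2}A\Sigma_W A'\mathbf 1_p$. Using $A'\mathbf 1_p=\mathbf 1_K$ and $W\mathbf 1$ balanced, together with $M_0 = \mathrm{diag}(AW\mathbf 1_n/n)$, this is close to a multiple of $\xi^\star$. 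Consequently $\xi^\star/\|\xi^\star\|$ is an exact eigenvector when $N\to\infty$ scaling is ignored, with eigenvalue $\asymp n$. The remaining spectrum is obtained by restricting to the orthogonal complement.

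The main obstacle is showing that the restricted spectrum has top eigenvalue at most $\lambda_1 - c\,n$ \emph{uniformly in $K$}. For fixed $K$ this came from compactness, which is not available here. I will try a quantitative Perron--Frobenius (Birkhoff contraction) bound. The entries of the normalized $K\times K$ matrix $K^{-1}\cdot$(similarity of $\mathcal{M}/n$) are all of comparable order, because of the balance conditions on $G\mathbf 1_K$ and $\Pi'\mathbf 1_n$. Birkhoff's theorem then bounds the ratio $|\lambda_2|/\lambda_1$ by $\tanh(\Delta/4)<1$ with $\Delta$ bounded. If entries are not all comparable, which can happen when rows of $A$ are sparse, as with anchor words, I fall back on a direct argument. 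Namely, $\lambda_1-\lambda_2\ge$ the variational gap $\min_{v\perp\xi^\star}$ between the quadratic forms, controlled via $\lambda_{\min}(\Sigma_G)$ and $\lambda_{\min}(\Sigma_\Pi)$.

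For the eigenvector bounds, I use the expression $\xi_1 = \lambda_1^{-1}G_0\xi_1$. Nonnegativity gives $\xi_1(j) = \lambda_1^{-1}n\,M_0(j,j)^{-1/2}\,e_j'A\Sigma_W A'M_0^{-1/2}\xi_1$, with all terms nonnegative. The vector $A\Sigma_W A'M_0^{-1/2}\xi_1$ has entries $\asymp$ (row sum of $A$) times a common scale, by the balance of $\Sigma_W$ and positivity. This gives $\xi_1(j)\asymp M_0(j,j)^{-1/2}h_j\,p^{-1/2}\cdots\asymp\sqrt{h_j}$, after normalizing with $\sum_j h_j = K$ and $\|\xi_1\|=1$. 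Similarly, $\|e_j'\Xi\| = \|e_j'M_0^{-1/2}A\Sigma_W^{1/2}\mathcal{U}\Lambda^{-1/2}\|\sqrt n \le M_0(j,j)^{-1/2}\|e_j'A\|\,K^{-1/2}\,n^{-1/2}\sqrt n$. Since $\|e_j'A\|\le h_j$ and $M_0(j,j)\asymp h_j/K$, this is at most $C\sqrt{h_j}$.
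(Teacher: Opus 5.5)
Your reduction to the $K\times K$ matrix $\mathcal M$, the two-sided bound $cn\le\lambda_K\le\lambda_1\le Cn$, and the row bound $\|e_j'\Xi\|\le C\sqrt{h_j}$ are correct. The genuine gap is the eigen-gap $\lambda_1\ge C^{-1}n+\max_{k\ge 2}\lambda_k$, and neither of your routes can deliver it.

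\emph{Birkhoff route.} Normalize the nonnegative matrix $Z:=n\Sigma_WA'M_0^{-1}A$, which is similar to $\mathcal M$, by its positive Perron vector $y=n^{-1}W\mathbf 1_n$. That is, set $P:=\lambda_1^{-1}\mathrm{diag}(y)^{-1}Z\,\mathrm{diag}(y)$, which is row-stochastic with eigenvalues $\lambda_k/\lambda_1$. Your own bound $\lambda_K\ge cn$ gives $\mathrm{tr}(P)\ge cK$. So a constant fraction of the row mass sits on the diagonal, and the entries cannot be of comparable order once $K$ is large. The Hilbert-metric diameter is then at least $2\log K-O(1)$, and Birkhoff only certifies $1-|\lambda_2|/\lambda_1\gtrsim 1/K$.

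\emph{Fallback route.} This fails more basically: $\lambda_{\min}(\Sigma_G)$ and $\lambda_{\min}(\Sigma_\Pi)$ bound $\lambda_K$ from below, not $\lambda_1-\lambda_2$. Take $p=Km_0$, let $A_k$ be uniform on its own block of $m_0$ words (every word an anchor word), and let $\Pi$ be pure and balanced. Then $\Sigma_G=\Sigma_\Pi=I_K$ and $h_j=K/p$, so (i')--(ii') and the anchor condition all hold. Yet $M_0=p^{-1}I_p$, $\Sigma_W=K^{-1}(1-N^{-1})I_K$, and $\mathcal M=n(1-N^{-1})I_K$, so $\lambda_1=\cdots=\lambda_K$.

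So the gap is not a consequence of the listed conditions, for any $K\ge 2$. An additional assumption is needed, e.g.\ a Doeblin minorization $P(k,\ell)\ge c/K$, which does give $\max_{k\ge2}|\lambda_k|\le(1-c)\lambda_1$ uniformly in $K$. The paper only imports the fixed-$K$ lemma with a normalization remark, so it does not supply this step either.

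Your eigenvector step also asserts something false when $K\to\infty$. The vector $\xi^\star=M_0^{1/2}\mathbf 1_p$ is an \emph{exact} eigenvector, with no $N\to\infty$ caveat. Indeed, $A'\mathbf 1_p=\mathbf 1_K$ and $W'\mathbf 1_K=\mathbf 1_n$ give $A\Sigma_W\mathbf 1_K=(1-N^{-1})M_0\mathbf 1_p$. Hence $G_0\xi^\star=n(1-N^{-1})\xi^\star$, and $\|\xi^\star\|^2=\mathrm{tr}(M_0)=1$.

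Being entrywise positive, $\xi^\star$ is the Perron vector of the nonnegative matrix $G_0$. So $\lambda_1=n(1-N^{-1})$ and, whenever $\lambda_1$ is simple, $\xi_1(j)=\sqrt{M_0(j,j)}\asymp\sqrt{h_j/K}\asymp p^{-1/2}$. Thus the lower bound $\xi_1(j)\ge C^{-1}\sqrt{h_j}$ cannot hold with $C$ free of $K$: it would force $1=\|\xi_1\|^2\ge C^{-2}\sum_jh_j=C^{-2}K$.

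Your own normalization by $\sum_jh_j=K$ and $\|\xi_1\|=1$ actually produces $\sqrt{h_j/K}$; the step asserting $\sqrt{h_j}$ loses a factor $\sqrt K$. Only the upper bound $\xi_1(j)\le\|e_j'\Xi\|\le C\sqrt{h_j}$ survives. The exact identity also makes the Perron--Frobenius argument for locating $\xi_1$ unnecessary. What remains is purely a bound on $\lambda_2$, which, as shown above, requires an extra hypothesis.
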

Here, we point out the difference that under our setting, $\beta_n = 1$.
. Moreover, for $1\le j\le p$, $M_0(j,j)\asymp h_j\asymp K/p$ in \cite{ke2024entry} while under our setting, $h_j\asymp K/p$ but $M_0(j,j)\asymp p^{-1}$. However, under our setting, all eigenvalues of $\Sigma_W$ is of order $K^{-1}$ while it is assumed of constant order in \cite{ke2024entry}. Hence, both $G$ and $G_0$ are still well normalized under our setting such that the above bounds remain unchanged. We now prove Theorem \ref{thm:row_eigenv}.

\begin{lemma} \label{lem:G-G0}
Under the setting of Theorem \ref{thm:row_eigenv}. With probability $1- o((Nn)^{-5})$:
\begin{align}
&\Vert G- G_0 \Vert  \leq  C\sqrt{\frac{Kpn\log(Nn)}{N}} \ll n\beta_n;   \label{est:G-G0}
  \\ 
&
\Vert e_j'(G- G_0) \hat \Xi \Vert/n \leq C \sqrt{\frac{h_j Kp\log (Nn)}{nN}}  \bigg(1+ \Vert H^{-\frac 12}  (\hat \Xi-\Xi O') \Vert_{2\to \infty} \bigg)+ o(\beta_n) \cdot \Vert e_j' (\hat \Xi-\Xi O')\Vert \, , \label{est:jG-G0}
\end{align}
simultaneously for all $1\leq j \leq p$. 
\end{lemma}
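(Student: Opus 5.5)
The plan is to follow the matrix-concentration route of \cite{ke2024entry}, keeping track of $K$ explicitly. I would split
\[
G-G_0 = \bigl(M^{-1/2}DD'M^{-1/2}-M_0^{-1/2}DD'M_0^{-1/2}\bigr) + M_0^{-1/2}\bigl(DD'-\mathbb{E}[DD']\bigr)M_0^{-1/2} - \tfrac{n}{N}I_p
\]
and handle the diagonal bias separately. Since $\mathbb{E}[DD']=D_0D_0'+N^{-1}\diag(D_0\mathbf 1_n)-N^{-1}D_0D_0'$, the term $\tfrac nN I_p$ cancels $N^{-1}M_0^{-1/2}\diag(D_0\mathbf 1_n)M_0^{-1/2}=\tfrac nN I_p$ exactly. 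What remains is a centered random matrix plus the normalization error.

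For \eqref{est:G-G0}, the centered part $M_0^{-1/2}(DD'-\mathbb{E}DD')M_0^{-1/2}$ decomposes into a linear term $\sum_i\bigl[(d_i-d_i^0)d_i^{0\prime}+d_i^0(d_i-d_i^0)'\bigr]$ and a quadratic term $\sum_i\bigl[(d_i-d_i^0)(d_i-d_i^0)'-\mathbb{E}(\cdot)\bigr]$. Both are sums of independent matrices across $i$, and each $d_i-d_i^0$ is an average of $N$ i.i.d.\ centered multinomial indicators.

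For the linear part I would apply matrix Bernstein. The variance proxy is bounded using $M_0(j,j)\asymp p^{-1}$, $\|d_i^0\|^2\lesssim K/p$ (from $h_j\asymp K/p$) and $\|\Sigma_W\|\asymp K^{-1}$. This gives the rate $\sqrt{Kpn\log(Nn)/N}$.

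For the quadratic part I would use the diagonal-deletion U-statistic structure (as in Lemma~\ref{lem:ustats-bound}) together with an $\epsilon$-net argument (as in Lemma~\ref{lem:spectralnorms}), each rescaled by $M_0^{-1/2}$. The normalization error is controlled by \eqref{est:MM01/2}, because $\|M_0^{-1/2}DD'M_0^{-1/2}\|\lesssim n$. Finally, the comparison $\ll n\beta_n=n$ follows from $K^3M\log^2(Nn)\le Nn$.

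The row-wise bound \eqref{est:jG-G0} is the main obstacle. I would write $e_j'(G-G_0)\hat\Xi=e_j'(G-G_0)\Xi O'+e_j'(G-G_0)(\hat\Xi-\Xi O')$.

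The first piece involves a fixed $K$-column matrix. Each coordinate $e_j'(G-G_0)\xi_k$ is a sum of independent terms, so scalar Bernstein with a union bound over $j,k$ bounds it. Here $\|e_j'\Xi\|\le C\sqrt{h_j}$ from Lemma~\ref{lem:eigen} supplies the factor $\sqrt{h_j}$, and the sum over $K$ columns supplies the factor $K$.

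For the second piece the dependence between $\hat\Xi$ and row $j$ must be broken. I would use a leave-one-out construction: form $G^{(j)}$ by deleting the randomness tied to word $j$, take its eigenvectors $\hat\Xi^{(j)}$, and bound $\|\hat\Xi-\hat\Xi^{(j)}\|$ via Davis--Kahan using the eigengap $\asymp n$ from Lemma~\ref{lem:eigen}. The independent part is then bounded by Bernstein as $\sqrt{h_jKp\log/(nN)}\,\|H^{-1/2}(\hat\Xi-\Xi O')\|_{2\to\infty}$. The perturbation part contributes $o(\beta_n)\|e_j'(\hat\Xi-\Xi O')\|$ through \eqref{est:G-G0}.

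Throughout, the care needed is in tracking the $h_j$ weights against $M_0(j,j)\asymp p^{-1}$ so that no extra powers of $K$ appear.
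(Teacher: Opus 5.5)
Your decomposition is the same as the paper's $G-G_0=E_1+E_2+E_3+E_4$: the normalization error, the linear term $M_0^{-1/2}ZW'A'M_0^{-1/2}$ and its transpose, and the centered quadratic term. Matrix Bernstein for $E_2+E_3$ is a fine substitute for the paper's $\epsilon$-net over $\mathbb{R}^K$.

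\textbf{Main gap: the word-wise leave-one-out for \eqref{est:jG-G0}.} This is the step you flag yourself, and it does not work as written.
\begin{itemize}
\item Leaving out word $j$ requires the noise in row $j$ to be independent of the data used to build $\hat\Xi^{(j)}$. In the multinomial model the rows of $D$ are not independent. For every document $\sum_t d_i(t)=1$, so $d_i(j)=1-\sum_{t\neq j}d_i(t)$ is a deterministic function of the other rows. Conditioning on anything computed from the rows $t\neq j$ therefore leaves no randomness in row $j$ for Bernstein to act on.
\item Even apart from this, $e_j'(G-G_0)$ is not a function of row $j$ alone. It contains $M_0(j,j)^{-1/2}M_0(t,t)^{-1/2}\sum_i z_i(j)z_i(t)$ for every $t$, together with the random normalization $M^{-1/2}$. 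So it shares randomness with any $G^{(j)}$.
\item The independent units are documents (columns of $D$), not words. A word-wise leave-one-out needs an extra decoupling device that your plan does not supply, for example Poissonizing the counts so that words become independent and then de-Poissonizing.
\end{itemize}
The paper does not re-derive this step. It imports the row-wise bounds for $E_1,\dots,E_4$ from \cite{ke2024entry} (Lemmas~\ref{lem:tech2}--\ref{lem:tech3}, with the extra $\sqrt K$ tracked). There, the $o(\beta_n)\,\|e_j'(\hat\Xi-\Xi O')\|$ term comes from the diagonal factor $(M^{1/2}M_0^{-1/2}-I_p)$ in $E_1$, not from an eigenvector perturbation.

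\textbf{Secondary, fixable problem: the spectral bound for $E_4$.}
\begin{itemize}
\item With an $\epsilon$-net of the unit sphere of $\mathbb{R}^p$, the union bound forces $\log(2/\delta)\gtrsim p$.
\item After rescaling by $M_0^{-1/2}$ (with $M_0(t,t)\asymp p^{-1}$), the kernel $(u'(T_{im}-d_i^0))(u'(T_{im'}-d_i^0))$, $u=M_0^{-1/2}v$, has range $b\asymp p$; take $v=e_t$.
\item The off-diagonal part of $v'E_4v$ is $n\frac{N-1}{N}$ times the incomplete U-statistic. Applying Lemma~\ref{lem:ustats-bound} to it therefore leaves a term of order $p(p+\log(Nn))/N$.
\item This is not $O\bigl(\sqrt{Kpn\log(Nn)/N}\bigr)$ once $p^3\gg KnN\log(Nn)$. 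The hypotheses allow this: for example $K$ fixed, $N\asymp\log^3 n$, $p\asymp n^{1/2}$ satisfies $K^3p\log^2(Nn)\le Nn$. This is exactly the $p\gg N$ regime the paper emphasizes.
\end{itemize}
Your route does work for $p\asymp K\log(Nn)$, the choice in Theorem~\ref{thm:upperbound}. For the full statement, treat $E_4$ as you treat $E_2+E_3$: apply matrix Bernstein to the $m=m'$ part and to a decoupled $m\neq m'$ part, which is dimension-free up to $\log p$.

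\textbf{Minor slip.} Your displayed decomposition carries a spurious extra $-\tfrac nN I_p$. The correct identity, which your text but not your display reflects, is $G-G_0=M^{-1/2}DD'M^{-1/2}-M_0^{-1/2}\mathbb{E}[DD']M_0^{-1/2}$.
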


Next, we use Lemma \ref{lem:eigen} and Lemma~\ref{lem:G-G0} to prove Theorem~\ref{thm:row_eigenv}. 
Let $(\hat \lambda_k, \hat \xi_k)$ and $(\hat \lambda_k, \hat \xi_k)$ be the $k$-th eigen-pairs of $G$ and $G_0$, respectively. Let $\hat \Lambda = {\rm diag} (\hat \lambda_1, \hat \lambda_2, \ldots, \hat \lambda_K)$ and $\Lambda = {\rm diag} ( \lambda_1,  \lambda_2, \ldots,  \lambda_K)$. Following Equation (A18) in \cite{ke2024entry}, we have:
\begin{align}\label{2022041901}
\Vert e_j' (\hat \Xi- \Xi O')   \Vert  \leq \Vert  e_j' \Xi (\Xi' \hat \Xi - O') \Vert  + \Vert e_j' \Xi \Xi' (G_0- G) \hat \Xi  \hat \Lambda^{-1}\Vert +  \Vert e_j'(G- G_0) \hat \Xi \hat \Lambda^{-1} \Vert.
\end{align}

In the sequel, we bound the three terms on the RHS above  one-by-one. 

First, by sine-theta theorem:
\begin{align*}
\Vert  e_j' \Xi (\Xi' \hat \Xi - O') \Vert \leq C \Vert e_j' \Xi \Vert \frac{\Vert G-G_0\Vert^2 }{|\hat \lambda_K- \lambda_{K+1}|^2}.
\end{align*}
For $1\leq k \leq  p $, by Weyl's inequality:
\begin{align}\label{ineq:weyl_lambda}
| \hat \lambda_k - \lambda_k| \leq \Vert G - G_0\Vert  \ll n \beta_n
\end{align}
with probability $1- o((Nn)^{-5})$, by employing (\ref{est:G-G0}) in Lemma~\ref{lem:G-G0}. In
particular,  $\lambda_1\asymp n $ and $Cn \beta_n<\lambda_k\leq Cn$ for $2\leq k\leq K$ and $\lambda_k=0$ otherwise (see Lemma~\ref{lem:eigen}). Thereby,  $|\hat \lambda_K- \lambda_{K+1}|\geq C n \beta_n$.  Further using  $\Vert e_j' \Xi\Vert \leq C\sqrt{h_j}$ (see Lemma~\ref{lem:eigen}),
with the aid of Lemma~\ref{lem:G-G0}, we obtain that with probability $1- o((Nn)^{-5})$:
\begin{align}\label{est:Term1}
\Vert  e_j' \Xi (\Xi'\hat \Xi - O') \Vert \leq C\sqrt{h_j} \,\cdot  \frac{Kp\log (Nn)}{Nn\beta_n^2} 
\end{align}
simultaneously for all $1\leq j \leq p$.

Next, we similarly bound the second term:
\begin{align}\label{est:Term2}
\Vert e_j' \Xi \Xi' (G_0- G) \hat \Xi \hat  \Lambda^{-1}\Vert \leq \frac{C}{n\beta_n} \Vert e_j' \Xi \Vert \Vert G- G_0\Vert  \leq C\sqrt{\frac{h_j Kp \log (Nn)}{Nn \beta_n^2}}\, .
\end{align} 
Here we used the fact that $\hat \lambda_K \geq C n\beta_n$ following from (\ref{ineq:weyl_lambda}) and Lemma~\ref{lem:eigen}.

For the last term, we simply  bound:
\begin{align}\label{est:Term3}
\Vert e_j'(G- G_0) \hat \Xi \hat \Lambda^{-1} \Vert \leq C \Vert e_j'(G- G_0) \hat \Xi \Vert/(n\beta_n)\,. 
\end{align}

Combining (\ref{est:Term1}), (\ref{est:Term2}), and (\ref{est:Term3}) into (\ref{2022041901}), by (\ref{est:jG-G0}) in Lemma~\ref{lem:G-G0},  we arrive at: 
\begin{align*}
\Vert e_j' (\hat \Xi - \Xi O')   \Vert &\leq C  \sqrt{\frac{h_j Kp\log (Nn)}{Nn\beta_n^2}} \bigg(1+ \Vert H^{-\frac 12}  (\hat \Xi-\Xi O') \Vert_{2\to \infty} \bigg)\\
&\quad+ o(1) \cdot \Vert e_j' ( \hat \Xi-\Xi O')\Vert  \, .
\end{align*}
Rearranging both sides above gives: 
\begin{align}\label{eq:almost}
\Vert e_j' (\hat \Xi -  \Xi O')   \Vert \leq C  \sqrt{\frac{h_j Kp\log (Nn)}{Nn\beta_n^2}} \bigg(1+ \Vert H^{-\frac 12}  (\hat \Xi-\Xi O') \Vert_{2\to \infty} \bigg)   ,
\end{align}
with probability $ 1- o((Nn)^{-3})$, simultaneously for all $1\leq j \leq p$. 

To proceed, we multiply both sides in (\ref{eq:almost}) by $h_j^{-1/2} $ and take the maximum. It follows that:
\begin{align*}
\Vert H^{-\frac 12}  (\hat \Xi -\Xi O') \Vert_{2\to \infty}  \leq C   \sqrt{\frac{Kp\log (Nn)}{Nn\beta_n^2}} \bigg(1 +  \Vert H_0^{-\frac 12}  (\hat \Xi -\Xi O') \Vert_{2\to \infty} \bigg)\, .
\end{align*}
Note that $\sqrt{Kp\log( Nn)}/\sqrt{Nn\beta_n^2} = o(1)$. We further rearrange both sides above and get:
\[
\Vert H^{-\frac 12}  (\hat \Xi - \Xi O') \Vert_{2\to \infty}  \leq \sqrt{\frac{Kp\log (Nn)}{Nn\beta_n^2}} = o(1)\, .
\]
Plugging the above estimate into (\ref{eq:almost}), we finally conclude 
the proof of Theorem \ref{thm:row_eigenv}.

Hence, it remains to prove Lemma \ref{lem:G-G0}. This lemma has been proven in Lemma A3 in \cite{ke2024entry} with a fixed $K$ setting, where Equation (A24) in \cite{ke2024entry} showed
\begin{align*}
    G-G_0 = E_1+E_2+E_3+E_4,
\end{align*}
with $E_i$'s defined in Equation (A25) in \cite{ke2024entry}. As a result, to prove Lemma \ref{lem:G-G0}, we present the following lemmas.
\begin{lemma}\label{lem:tech1}
Suppose the conditions in Theorem~\ref{thm:row_eigenv} hold. There exists a constant $C>0$, such that with probability $1- o((Nn)^{-5})$:
\begin{align}
&\| E_s\| \leq C\sqrt{K}\sqrt{\frac{pn\log (Nn)}{N}}, 
\qquad \text{for $s=1, 2,3$}
\label{eq:tech11}\\
& \| E_4 \|  = \| M_0^{-\frac12} (ZZ' - \mathbb{E} ZZ') M_0^{-\frac 12}\| \leq C\sqrt{K} \max\Big\{ \sqrt{\frac{pn\log (Nn)}{N^2}}, \frac{p\log (Nn)}{N}\Big\}\, . \label{eq:tech2_E4}
\end{align}
\end{lemma}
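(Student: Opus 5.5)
We need to bound the four pieces $E_1,\dots,E_4$ of $G-G_0$ from Equation (A24)--(A25) of \cite{ke2024entry}, while tracking $K$ explicitly. In that decomposition, $Z=D-D_0$ is the centered frequency matrix. Its columns $z_i=d_i-d_i^0$ are independent across $i$, and each $N z_i$ is a centered multinomial vector with $\mathrm{Cov}(z_i)=N^{-1}(\diag(d_i^0)-d_i^0(d_i^0)')$. Up to the normalizations $M^{-1/2}$ versus $M_0^{-1/2}$, the terms $E_1,E_2,E_3$ are built from the cross products $D_0Z'$ and $ZD_0'$, and from the difference $M^{-1/2}(\cdot)M^{-1/2}-M_0^{-1/2}(\cdot)M_0^{-1/2}$ applied to $\mathbb{E}[DD']$. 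The term $E_4$ is the centered Gram matrix $M_0^{-1/2}(ZZ'-\mathbb{E}ZZ')M_0^{-1/2}$.

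The plan has three steps.

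\textbf{Step 1: the normalization terms.} For whichever of $E_1,E_2,E_3$ only reflect replacing $M_0$ by $M$, use \eqref{est:MM01/2} of Lemma~\ref{lem:MM0}. Write
\[
M^{-1/2}XM^{-1/2}-M_0^{-1/2}XM_0^{-1/2}
\]
through $\Delta=M^{-1/2}M_0^{1/2}-I_p$, with $X$ the relevant population Gram matrix. Then bound
\[
\|M_0^{-1/2}\mathbb{E}[DD']M_0^{-1/2}\|\lesssim \|G_0\|+\|M_0^{-1/2}\mathbb{E}[ZZ']M_0^{-1/2}\|\lesssim n+ np/N.
\]
Here $\|G_0\|\lesssim n$ follows from Lemma~\ref{lem:eigen}, which holds in our normalization: $M_0(j,j)\asymp p^{-1}$ and the eigenvalues of $\Sigma_W$ are of order $K^{-1}$, so $\|M_0^{-1/2}A\Sigma_WA'M_0^{-1/2}\|\lesssim p\cdot(K/p)\cdot K^{-1}=O(1)$. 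Multiplying by $\|\Delta\|\lesssim\sqrt{p\log(Nn)/(Nn)}$ gives $\sqrt{pn\log(Nn)/N}$, which is within the target with room for $\sqrt K$.

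\textbf{Step 2: the cross terms.} For a term of the form $M_0^{-1/2}D_0Z'M_0^{-1/2}$, write
\[
D_0Z'=\sum_i d_i^0 z_i'
\]
as a sum of independent mean-zero matrices and apply matrix Bernstein. The summands are bounded using $\|M_0^{-1/2}d_i^0\|\lesssim\sqrt{p}\,\|d_i^0\|\lesssim \sqrt{K/p}\cdot\sqrt p$, since $\|d_i^0\|_\infty\lesssim K/p$ and $\|d_i^0\|_1=1$; together with $\|M_0^{-1/2}z_i\|$ of order $\sqrt{p/N}$. The variance proxy is
\[
\Big\|\sum_i \|M_0^{-1/2}d_i^0\|^2 M_0^{-1/2}\mathrm{Cov}(z_i)M_0^{-1/2}\Big\|\lesssim nK\cdot N^{-1}\max_j d_i^0(j)/M_0(j,j)\lesssim nKp/N\cdot p^{-1}\cdot(\text{const}),
\]
and the other-side variance is handled symmetrically. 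With $\log(Nn)$ from the union and probability requirements, this yields $C\sqrt K\sqrt{pn\log(Nn)/N}$. The single factor $\sqrt K$ enters exactly through $\|M_0^{-1/2}d_i^0\|^2\lesssim K$; this is the only place growing $K$ changes the fixed-$K$ bound. The remaining factor in the variance must be checked using $d_i^0(j)\le h_j$ and $M_0(j,j)\asymp p^{-1}$.

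\textbf{Step 3: the quadratic term $E_4$.} Split $ZZ'-\mathbb{E}ZZ'$ into the within-document diagonal part and the off-diagonal pairs, viewing $z_i=N^{-1}\sum_{\ell}(u_{i\ell}-d_i^0)$ with $u_{i\ell}$ the word indicators. The diagonal parts ($\ell=\ell'$) give a sum of independent bounded matrices, handled by matrix Bernstein with bound of order $p/N$ per scale, giving the $p\log(Nn)/N$ piece. The off-diagonal parts form a decoupled (incomplete) U-statistic across $\ell\ne\ell'$ within each $i$. For these I would use an $\epsilon$-net over the unit sphere of $\mathbb{R}^p$ together with Lemma~\ref{lem:ustats-bound}, in the same way Lemma~\ref{lem:spectralnorms} was proved: the net costs $9^p$, absorbed as $p+\log(Nn)\lesssim p\log(Nn)$. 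Variance is computed using $M_0(j,j)\asymp p^{-1}$ and $h_j\asymp K/p$, producing $\sqrt{Kpn\log(Nn)/N^2}$.

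The main obstacle is Step 3. The net argument must produce the dependence $\sqrt{pn}/N$ rather than something like $p\sqrt{n}/N$, which requires a sharp bound on the variance of $(v'M_0^{-1/2}u_{i\ell})(v'M_0^{-1/2}u_{i\ell'})$ uniformly in $v$, through $\max_j d_i^0(j)/M_0(j,j)\lesssim K$. If the net loses too much, I would fall back to the entrywise-plus-row-sum argument of Lemma~A3 in \cite{ke2024entry}, inserting the $K$-dependence at each step.
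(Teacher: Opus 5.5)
\textbf{The gap is in Step 3, in the off-diagonal ($\ell\neq\ell'$) part of $E_4$.} For fixed unit $v$, the kernel there is $s_{i\ell}s_{i\ell'}$ with $s_{i\ell}=v'M_0^{-1/2}(u_{i\ell}-d_i^0)$ centered. This is a \emph{degenerate} kernel, and Lemma~\ref{lem:ustats-bound} cannot exploit degeneracy. It is proved by averaging over $\lceil N/2\rceil$ disjoint pairs, so its deviation term is $\sigma\sqrt{\log(2/\delta)/(nN)}$ regardless.

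Now $\sum_i N^{-2}\sum_{\ell\neq\ell'}s_{i\ell}s_{i\ell'}\approx nH^*_{n,N}$, and the $9^p$ net forces $\log(2/\delta)\asymp p+\log(Nn)$. So you get $\sigma\sqrt{n(p+\log(Nn))/N}$, i.e.\ $\sqrt{Knp/N}$ with $\sigma^2\lesssim K$. No variance computation rescues this: $\sigma^2\gtrsim 1$ already at $v=e_j$, so the bound exceeds the target $\sqrt{K}\sqrt{pn\log(Nn)}/N$ by a factor of order at least $\sqrt{N/(K\log(Nn))}$.

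The range term is also too large. Here $b\asymp\max_j M_0(j,j)^{-1}\asymp p$, and multiplying by $\log(2/\delta)\asymp p$ gives $p^2/N$ instead of $\sqrt{K}\,p\log(Nn)/N$. The true fluctuation is smaller because $N^{-2}\sum_{\ell\neq\ell'}s_{i\ell}s_{i\ell'}$ has standard deviation of order $\mathbb{E}s_{i1}^2/N$, not $\mathbb{E}s_{i1}^2/\sqrt N$. The same tool worked in Lemma~\ref{lem:spectralnorms} only because the kernel there is not centered. So the claim that this step ``produces $\sqrt{Kpn\log(Nn)/N^2}$'' is false. The obstacle you flagged ($p$ versus $\sqrt p$) is not the real one, and the resulting bound does not establish \eqref{eq:tech2_E4}.

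\textbf{What the paper does instead.} Following \cite{ke2024entry} with $h_t\asymp K/p$ tracked, it does not split within documents. It treats $M_0^{-1/2}(z_iz_i'-\mathbb{E}z_iz_i')M_0^{-1/2}$ as independent summands across $i$ and applies Bernstein. The key input is $\mathbb{E}\|M_0^{-1/2}z_i\|^2\le N^{-1}\sum_j d_i^0(j)/M_0(j,j)\lesssim p/N$, equivalently $\mathbb{E}\|H^{-1/2}z_i\|^2\lesssim p/(KN)$. Truncating at $\|M_0^{-1/2}z_i\|^2\lesssim p\log(Nn)/N$ gives variance proxy $(p\log(Nn)/N)\cdot\|\sum_iM_0^{-1/2}\mathbb{E}[z_iz_i']M_0^{-1/2}\|\le(p\log(Nn)/N)(n/N)$. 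This yields the $\max\{\sqrt{pn}/N,\,p/N\}$ shape with room to spare. Your fallback points in this direction but is not carried out.

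\textbf{Smaller repairs.}

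In Step 1, $M_0^{-1/2}\mathbb{E}[DD']M_0^{-1/2}=G_0+(n/N)I_p$ exactly, because $\sum_i\diag(d_i^0)=nM_0$. Your $np/N$ is therefore loose, and if kept it breaks the bound when $p\gg\sqrt{K}N$. Also, $E_1$ acts on the random $DD'$, so you need $\|M_0^{-1/2}DD'M_0^{-1/2}\|\lesssim n$. The paper gets this from $G_0+(n/N)I_p+E_2+E_3+E_4$.

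In Step 2, matrix Bernstein needs a uniform summand bound, but $\|M_0^{-1/2}z_i\|$ is only $\lesssim\sqrt p$ almost surely. That gives a range term $\sqrt{Kp}\log(Nn)$, which is below the target only if $n\gtrsim N\log(Nn)$. Splitting into word-level summands $N^{-1}M_0^{-1/2}d_i^0(u_{i\ell}-d_i^0)'M_0^{-1/2}$ fixes this. Step 2 is then a valid alternative to the paper's $\epsilon$-net over $\mathbb{R}^K$ for $\|ZW'\|$. It even gives $\sqrt{pn\log(Nn)/N}$ without the $\sqrt K$, since the two variance proxies are $\lesssim pn/N$ and $\lesssim Kn/N$.
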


\begin{lemma}\label{lem:tech2}
Suppose the conditions in Theorem~\ref{thm:row_eigenv} hold. There exists a constant $C>0$, such that with probability $1- o(n^{-3})$, simultaneously for all $1\leq j \leq p$:
\begin{align}
 & \| e_j' E_s\hat \Xi\|/n \leq C\sqrt{K}\sqrt{\frac{h_jp\log (Nn)}{Nn}}, 
\qquad \text{for $s=2,3$} \label{eq:entryE23} \\
&  \Vert e_j' E_4 \hat \Xi  \Vert /n  \leq  C\sqrt{K}\sqrt{\frac{h_j p\log (Nn)}{Nn}} \,  \Big(1 +  \Vert H_0^{-\frac 12} (\hat \Xi- \Xi O')\Vert_{2\to \infty} \Big) \,, \label{eq:tech2_entryE4}
\end{align}
with $O = {\rm sgn} (\hat \Xi'\Xi )$. 
\end{lemma}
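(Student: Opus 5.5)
We would follow the proof of Lemma A4 in \cite{ke2024entry} and redo every estimate so that each power of $K$ is explicit. The inputs are Lemma~\ref{lem:eigen}, Lemma~\ref{lem:MM0} and the spectral bounds of Lemma~\ref{lem:tech1}, together with the normalizations of our setting: $h_j\asymp K/p$, $M_0(j,j)\asymp p^{-1}$, and eigenvalues of $\Sigma_W$ of order $K^{-1}$.

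\emph{Setup and the deterministic part.} Write $D=D_0+Z$, where the columns $z_i=d_i-d_i^0$ are independent, centered, scaled multinomial noise vectors. Recall the decomposition from Equation (A25) of \cite{ke2024entry}: $E_2,E_3$ are linear in $Z$ (or in $M-M_0$), and $E_4$ is the centered quadratic term. For every $s$ we split
\[
e_j'E_s\hat\Xi=e_j'E_s\Xi O'+e_j'E_s(\hat\Xi-\Xi O').
\]
For the first piece, $\Xi$ is deterministic and Lemma~\ref{lem:eigen} gives $\|e_\ell'\Xi\|\le C\sqrt{h_\ell}$. Hence $e_j'E_2\Xi$ is, coordinatewise, a sum over $i$ and over words of independent bounded centered variables. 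Its variance is of order $n h_j (K/p)^{-1}\cdot p\,K/N$ after the $M_0^{-1/2}$ scalings, and Bernstein's inequality (Lemma~\ref{lem:Bernstein}) with a union bound over $j$ and the $K$ coordinates yields $C\sqrt{K}\sqrt{h_jp\log(Nn)/(Nn)}\cdot n$. The single factor $\sqrt K$ comes from summing $K$ squared coordinates. $E_3$ carries a factor $M^{-1/2}M_0^{1/2}-I_p$, so for it we would instead use the entrywise bound in Lemma~\ref{lem:MM0} multiplied by $\|e_j'(G_0+\text{linear part})\Xi\|$, which by Lemma~\ref{lem:eigen} is of order $n\sqrt{h_j}$. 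For the second piece with $s=2,3$ we use $\|e_j'E_s\|\cdot\|\hat\Xi-\Xi O'\|$. The operator norm of $\hat\Xi-\Xi O'$ is $o(1)$ by the sine-theta theorem together with \eqref{est:G-G0} and the eigen-gap $\lambda_K\gtrsim n$. The row norm $\|e_j'E_s\|$ is again bounded by Bernstein's inequality, and the product is of lower order than the target rate.

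\emph{The quadratic term $E_4$ via leave-one-out.} $E_4$ is the delicate term because $\hat\Xi$ depends on the same noise. We would define a leave-one-out matrix $G^{(j)}$ by replacing row/column $j$ of $M_0^{-1/2}(ZZ'-\mathbb{E}ZZ')M_0^{-1/2}$ by its expectation (equivalently, removing the $j$th word's noise), and let $\hat\Xi^{(j)}$ be its top-$K$ eigenvectors. Then
\[
e_j'E_4\hat\Xi=e_j'E_4\hat\Xi^{(j)}+e_j'E_4(\hat\Xi-\hat\Xi^{(j)}O_j).
\]
Conditionally on the data defining $\hat\Xi^{(j)}$, the first term is a sum $\sum_i\sum_{\ell}(z_i(j)z_i(\ell)-\mathbb{E})\,M_0(j,j)^{-1/2}M_0(\ell,\ell)^{-1/2}\hat\Xi^{(j)}(\ell,\cdot)$. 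The remaining dependence on $z_i(j)$ through the multinomial constraint we would handle by conditioning on the counts, or by Poissonization. We then bound this sum with a Bernstein/incomplete-$U$ argument in the spirit of Lemma~\ref{lem:ustats-bound}, using $\|e_\ell'\hat\Xi^{(j)}\|\le C\sqrt{h_\ell}\,(1+\|H^{-1/2}(\hat\Xi^{(j)}-\Xi O')\|_{2\to\infty})$. This produces the factor $1+\|H_0^{-1/2}(\hat\Xi-\Xi O')\|_{2\to\infty}$ in the statement. For the second term, a Davis--Kahan bound gives $\|\hat\Xi-\hat\Xi^{(j)}O_j\|\lesssim \|(G-G^{(j)})\hat\Xi^{(j)}\|/(n\beta_n)$, whose leading part is again a row of $E_4$ times $\hat\Xi^{(j)}$. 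Combined with $\|E_4\|$ from \eqref{eq:tech2_E4}, this term is of smaller order.

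\emph{Main obstacle.} We expect the leave-one-out step for $E_4$ to be the main difficulty. It requires checking that the conditional variance of $e_j'E_4\hat\Xi^{(j)}$ scales as $K h_j p/(Nn)\cdot n^2$, with only one power of $K$ and not more. It also requires that the perturbation $\hat\Xi-\hat\Xi^{(j)}$ closes self-consistently, which needs $\sqrt{Kp\log(Nn)/(Nn)}=o(1)$ (guaranteed by $K^3M\log^2(Nn)\le Nn$). Finally, a union bound over $j\le p$ with failure probability $o(n^{-3})$ completes the proof.
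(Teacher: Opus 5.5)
The step that fails is the leave-one-out treatment of $E_4$. In the topic model every noise column satisfies $\mathbf{1}_p'z_i=0$, because both $d_i$ and $d_i^0$ are probability vectors. Hence $z_i(j)=-\sum_{\ell\neq j}z_i(\ell)$, so the $j$th row of $Z$ is a deterministic function of the other rows. "Conditionally on the data defining $\hat\Xi^{(j)}$", no randomness is left in $e_j'E_4\hat\Xi^{(j)}$, and no Bernstein or U-statistic bound can be applied.

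There is a second problem with the construction itself. Your $G^{(j)}$ only changes row/column $j$ of $E_4$. It still contains $E_1,E_2,E_3$, which depend on row $j$ of $Z$ and on $M(j,j)$. So "equivalently, removing the $j$th word's noise" is false even without the sum constraint.

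Your proposed remedies are not carried out. The naive de-Poissonization, which conditions all $n$ Poisson document lengths to equal $N$, costs a factor of order $(2\pi N)^{n/2}$ in probability. That is incompatible with an $o(n^{-3})$ failure probability. Finally, even if decoupling worked, you would get $\|H^{-1/2}(\hat\Xi^{(j)}-\Xi O')\|_{2\to\infty}$ rather than the quantity in the statement. Converting it would need a further row-wise comparison of $\hat\Xi^{(j)}$ with $\hat\Xi$. So the $E_4$ bound, which you yourself flag as the main obstacle, is not established.

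None of this machinery is needed. The paper proves the lemma by invoking Lemma A.5 of \cite{ke2024entry} and tracking $K$. The only new input is the variance bookkeeping of Lemma~\ref{lem:tech1}: $h_t\asymp K/p$, $M_0(t,t)\asymp p^{-1}$, and for example $\mathbb{E}z_i'H^{-1}z_i\lesssim p/(KN)$. This bookkeeping produces the extra $\sqrt K$. The factor $1+\|H_0^{-1/2}(\hat\Xi-\Xi O')\|_{2\to\infty}$ is in the statement precisely so that no decoupling is required inside the lemma. One splits $\hat\Xi=\Xi O'+(\hat\Xi-\Xi O')$ and uses concentration only against the deterministic $\Xi$. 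The remainder is controlled through that weighted $2\to\infty$ norm. It is absorbed later, in the proof of Theorem~\ref{thm:row_eigenv}: multiply by $h_j^{-1/2}$, take the maximum over $j$, and rearrange.

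Separately, you have misidentified $E_3$. In the decomposition of \cite{ke2024entry}, $E_3=E_2'=M_0^{-1/2}AWZ'M_0^{-1/2}$ is linear in $Z$. The $M$-versus-$M_0$ correction is $E_1$, which belongs to Lemma~\ref{lem:tech3}. The true $E_3$ is immediate:
$\|e_j'E_3\hat\Xi\|\le M_0(j,j)^{-1/2}\|e_j'A\|\,\|WZ'\|\,\|M_0^{-1/2}\|\lesssim p\,h_j\sqrt{n\log(Nn)/N}$.
This uses $\|e_j'A\|\le h_j$ and $\|ZW'\|^2\lesssim n\log(Nn)/N$ from the proof of Lemma~\ref{lem:tech1}. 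Since $ph_j\asymp K$, this is of the order of $n$ times the required rate. Likewise, $\|e_j'E_2\hat\Xi\|\le\|e_j'E_2\|\le M_0(j,j)^{-1/2}\|e_j'ZW'\|\,\|A'M_0^{-1/2}\|$ with $\|A'M_0^{-1/2}\|\lesssim\sqrt K$. A Bernstein bound on the $K$-vector $e_j'ZW'$ already gives the rate, so your sine-theta step for $s=2,3$ is unnecessary.
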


\begin{lemma}\label{lem:tech3}
Suppose the conditions in Theorem~\ref{thm:row_eigenv} hold. There exists a constant $C>0$, such that with probability $1- o((Nn)^{-3})$, simultaneously for all $1\leq j \leq p$:
\begin{align}
&  \Vert e_j' E_4(M_0^{1/2}M^{-1/2}-I_p) \hat \Xi  \Vert/n \leq C\sqrt{K}\sqrt{h_j} \cdot \frac{p\log (Nn)}{nN} \Big( 1 +   \Vert H^{-\frac 12}  (\hat \Xi - \Xi O') \Vert_{2\to \infty}\Big) , 
\label{eq:tech2_add}\\
&\Big\Vert e_j'  \big(M^{1/2}M_0^{-1/2} - I_p\big)\hat\Xi  \Big \Vert \leq C\sqrt{K}\sqrt{\frac{\log (Nn)}{Nn} } + o(\beta_n) \cdot \Vert e_j'(\hat \Xi - \Xi O')\Vert; \label{eq:tech3_add}
\end{align}
and furthermore:
\begin{align}
 \| e_j' E_1\hat \Xi\|/n &\leq C\sqrt{K}\sqrt{\frac{h_jp\log (Nn)}{Nn}} \, \Big(1 +  \Vert H_0^{-\frac 12} (\hat \Xi- \Xi O')\Vert_{2\to \infty} \Big)\\
 &\quad+ o(\beta_n) \cdot \Vert e_j'(\hat \Xi - \Xi O')\Vert\,.  \label{eq:tech3_entryE1}
\end{align}
\end{lemma}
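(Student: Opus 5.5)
The plan is to follow the proof of the corresponding entrywise bounds in \cite{ke2024entry} (the pieces of their Lemma A3 that handle the normalization matrix $M$), keeping every factor of $K$ explicit. The main tools are Lemma~\ref{lem:MM0}, Lemma~\ref{lem:eigen} and Lemma~\ref{lem:tech1}, together with the facts $h_j\asymp K/p$ and $M_0(j,j)\asymp p^{-1}$. The key observation is that $M$ and $M_0$ are diagonal, so $M_0^{1/2}M^{-1/2}-I_p$ and $M^{1/2}M_0^{-1/2}-I_p$ act entrywise. By Lemma~\ref{lem:MM0}, uniformly in $j$, their diagonal entries are bounded by
\[
|M(j,j)-M_0(j,j)|/M_0(j,j)\le C\sqrt{p\log(Nn)/(Nn)}=o(1).
\]

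I would prove \eqref{eq:tech3_add} first, since the other two bounds use it. Its left-hand side equals $|\sqrt{M(j,j)/M_0(j,j)}-1|\cdot\|e_j'\hat\Xi\|$. I would split $\|e_j'\hat\Xi\|\le\|e_j'\Xi\|+\|e_j'(\hat\Xi-\Xi O')\|$ and use $\|e_j'\Xi\|\le C\sqrt{h_j}$ from Lemma~\ref{lem:eigen}. This gives the bound $C\sqrt{p\log(Nn)/(Nn)}\sqrt{K/p}=C\sqrt{K}\sqrt{\log(Nn)/(Nn)}$, plus an $o(1)=o(\beta_n)$ multiple of the row error, since $\beta_n=1$ here.

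Next, for \eqref{eq:tech3_entryE1} I would use the explicit form of $E_1$ in \cite[(A25)]{ke2024entry}. It is the part of $G-G_0$ coming from replacing $M_0^{-1/2}$ by $M^{-1/2}$, so it can be written as $(M^{-1/2}M_0^{1/2}-I)\,\widetilde G+\widetilde G\,(M_0^{1/2}M^{-1/2}-I)$ plus a product of the two, where $\widetilde G=M_0^{-1/2}DD'M_0^{-1/2}-(n/N)I$. I would then replace $\widetilde G$ by $G_0+(\widetilde G-G_0)$.

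For the left factor, $e_j'(M^{-1/2}M_0^{1/2}-I)\widetilde G\hat\Xi$ is a scalar of size $O(\sqrt{p\log(Nn)/(Nn)})$ times $e_j'\widetilde G\hat\Xi$. Here $\|e_j'G_0\hat\Xi\|\le\|e_j'\Xi\|\,\lambda_1\le Cn\sqrt{h_j}$. The perturbation part is handled by Lemma~\ref{lem:tech1} and Lemma~\ref{lem:tech2}. Together these give $Cn\sqrt{h_jp\log(Nn)/(Nn)}$, up to the $\sqrt K$ that enters through $\|E_s\|$.

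For the right factor, I would write $e_j'\widetilde G(M_0^{1/2}M^{-1/2}-I)\hat\Xi$ as $e_j'\widetilde G\,\Xi O'$ plus a remainder. The first term is controlled by the previous step. The remainder is controlled through \eqref{eq:tech3_add} applied row by row, combined with $\|e_j'\widetilde G\|\lesssim n\sqrt{h_j}$ and $\max_j h_j^{-1/2}$ weighting. This produces the term $\|H_0^{-1/2}(\hat\Xi-\Xi O')\|_{2\to\infty}$ and the $o(\beta_n)$ term.

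The main obstacle is \eqref{eq:tech2_add}, because the random diagonal matrix $\Lambda_M:=M_0^{1/2}M^{-1/2}-I_p$ depends on the same multinomial noise $Z$ as $E_4$. The crude estimate $\|e_j'E_4\|\,\|\Lambda_M\|\,\|\hat\Xi\|$ loses a factor of $\sqrt p$. I would first linearize: $\Lambda_M=-\tfrac12 M_0^{-1}(M-M_0)+O(p\log(Nn)/(Nn))$, with $M-M_0=n^{-1}\mathrm{diag}(Z{\bf 1}_n)$. The quadratic remainder is then negligible by crude norms. The linear part turns $e_j'E_4\Lambda_M\Xi$ into a cubic form in the centered multinomial variables. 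I would treat it by splitting diagonal and off-diagonal index tuples and decoupling as in the proof of Lemma~\ref{lem:ustats-bound}. A Bernstein bound (Lemma~\ref{lem:Bernstein}) is then applied conditionally on one block, followed by a union bound over $j$. The $\hat\Xi-\Xi O'$ part is absorbed into the $\|H^{-1/2}(\hat\Xi-\Xi O')\|_{2\to\infty}$ factor using $\|\Lambda_M\|$ and the row norms of $E_4$ from Lemma~\ref{lem:tech1}. Tracking $K$ through the variance proxies yields the claimed $\sqrt K\sqrt{h_j}\,p\log(Nn)/(nN)$ rate.
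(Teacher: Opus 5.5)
Your top-level route matches the paper's: rerun the proof of Lemma~A.6 in \cite{ke2024entry} and track $K$. Your derivation of \eqref{eq:tech3_add} is correct, and cleaner than the paper's. The diagonal factor is at most $C\sqrt{p\log(Nn)/(Nn)}$ by Lemma~\ref{lem:MM0} with $M_0(j,j)\asymp p^{-1}$, and the $\sqrt{K}$ comes only from $\|e_j'\Xi\|\le C\sqrt{h_j}$ with $h_j\asymp K/p$.

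The gap is in the steps where you replace the cited argument by norm bounds on $E_4$. These fail in the short-document regime $p\gg N$, where $p$ is the number of bins. The lemma covers this regime: Theorem~\ref{thm:TM_errorbound} only requires $K^3p\log^2(Nn)\le Nn$, and the paper explicitly claims validity when the number of bins far exceeds $N$. This regime is exactly what the entrywise analysis of \cite{ke2024entry} was built for.

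\emph{The remainder in \eqref{eq:tech2_add}.} Rows of $E_4$ are not smaller than $\|E_4\|$. Off-diagonal entries have standard deviation of order $\sqrt{n}/N$, so $\|e_j'E_4\|$ is of order $\sqrt{pn}/N$ up to $\sqrt{K}$ and log factors. You absorb $e_j'E_4\Lambda_M(\hat\Xi-\Xi O')$ via $\|e_j'E_4\|\,\|\Lambda_M\|\,\|\hat\Xi-\Xi O'\|_F$ together with $\|\hat\Xi-\Xi O'\|_F\le\sqrt{K}\,\|H^{-1/2}(\hat\Xi-\Xi O')\|_{2\to\infty}$ (since $\sum_t h_t=K$). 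This gives a coefficient of order $Kp\log(Nn)/(nN^{3/2})$. But \eqref{eq:tech2_add} requires $\sqrt{K}\sqrt{h_j}\,p\log(Nn)/(nN)\asymp K\sqrt{p}\log(Nn)/(nN)$. You lose a factor $\sqrt{p/N}$, the same kind of loss you correctly flagged for the main term.

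\emph{The linearization remainder.} The quadratic remainder $O(p\log(Nn)/(Nn))$ in your linearization of $\Lambda_M$, bounded crudely through $\|e_j'E_4\|$, loses a factor of order $p/(N\sqrt{n})$ up to logs. This is unbounded in admissible regimes, e.g.\ $K$ fixed, $N=\log^4 n$, $p=n^{3/4}$. Your cubic-form concentration can only handle deterministic directions such as $\Xi O'$. The $\hat\Xi$-dependent part needs the refined entrywise argument of \cite{ke2024entry}, not operator- or row-norm bounds.

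\emph{The right factor in \eqref{eq:tech3_entryE1}.} The same problem breaks your plan here.
\begin{itemize}
\item You use $\|e_j'\widetilde G\|\lesssim n\sqrt{h_j}$. This holds for the $G_0$, $E_2$, $E_3$ pieces but fails for the $E_4$ piece, since $\|e_j'E_4\|/(n\sqrt{h_j})$ is of order $p/(N\sqrt{n})$ up to logs.
\item The claim that ``the first term is controlled by the previous step'' does not apply. The diagonal matrix sits between $\widetilde G$ and $\Xi$, so it cannot be pulled out as a scalar.
\item Defining $\widetilde G$ with the shift $-(n/N)I_p$ leaves a term $(n/N)(M_0M^{-1}-I_p)$ in $E_1$ unaccounted for. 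It is harmless but missing.
\end{itemize}

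\emph{The correct bookkeeping.} Expand the right factor using $M_0^{-1/2}DD'M_0^{-1/2}=G_0+(n/N)I_p+E_2+E_3+E_4$.
\begin{itemize}
\item The $G_0$ piece is at most $\|e_j'\Xi\|\,\lambda_1\,\|\Lambda_M\|\le Cn\sqrt{h_j}\sqrt{p\log(Nn)/(Nn)}$.
\item The $(n/N)I_p$, $E_2$ and $E_3$ pieces are handled by row norms.
\item The $E_4$ piece is exactly the left-hand side of \eqref{eq:tech2_add}. That is why that bound is part of this lemma, yet your plan never invokes \eqref{eq:tech2_add} when proving \eqref{eq:tech3_entryE1}.
\end{itemize}
Your left-factor argument is fine: pull out the diagonal entry as a scalar, then bound $e_j'(G_0+E_2+E_3+E_4)\hat\Xi$ using $\lambda_1\|e_j'\Xi\|$ and Lemma~\ref{lem:tech2}.
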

Using Lemma \ref{lem:tech1}, Lemma \ref{lem:tech2} and Lemma \ref{lem:tech3} along with the fact that $G-G_0 = E_1+E_2+E_3+E_4$, we immediately obtain Lemma \ref{lem:G-G0}. In the following, it then suffices to prove the above three technical lemmas. Note that they have been proven in \cite{ke2024entry} as Lemmas A.4-4.6 without the extra $\sqrt{K}$ in all the bounds under the fixed $K$ setting. Here, to avoid redundant proof arguments in \cite{ke2024entry}, we only highlight the changes under our growing $K$ settings.

\begin{proof}[Proof of Lemma \ref{lem:tech1}]
    For $\|E_2\|$, according to Equation (A34) in \cite{ke2024entry}, it holds that
    \begin{align*}
        E_2 = M_0^{-1/2}ZW'A'M_0^{-1/2}.
    \end{align*}
    It is seen in Lemma \ref{lem:MM0} that $\|M_0^{-1/2}\|\asymp p^{1/2}$. Moreover, we have $\|W\|\asymp \sqrt{n/K}$ and $\|A\|\asymp \sqrt{K/p}$. Here, $Z$ is a $p\times n$ matrix such that $Z_i(j)=N^{-1}\sum_{m=1}^N(T_{im}(j)-d_i^0(j))$ with $T_{im}(j)\sim\text{Bernoulli}(d_i^0(j))$ and $d_i^0(j)\asymp p^{-1}$. We now bound $\|ZW'\|^2$. We apply the $\epsilon$-net approach. Fix an $1/4$-net, $\mcal{N}_{1/4}$, of the unit sphere $\mbb{R}^K$. By \cite[Lemma 5.2 and Lemma 5.4]{vershynin2010introduction}, $|\mcal{N}_{1/4}|\le 9^K$ and $\|ZW'\|=\sup_{v\in\mbb{R}^K:\|v\|=1}\|ZW'v_k\|\le 2\max_{v\in\mcal{N}_{1/4}}\|ZW'v_k\|$. For each $v\in\mbb{R}^K$ in $\mcal{N}_{1/4}$, we have:
    \begin{align*}
        \|ZW'v\|^2 = \sum_{j=1}^p\left(\frac{1}{N}\sum_{i=1}^n\sum_{m=1}^N\left(\sum_{k=1}^KW_{ki}v_k\right)(T_{im}(j)-d_i^0(j))\right)^2.
    \end{align*}
\end{proof}
We then Bernstein inequality in Lemma \ref{lem:Bernstein} to the sum in the square. Note that
\begin{align*}
    \frac{1}{N^2}\sum_{i=1}^n\sum_{m=1}^N\left(\sum_{k=1}^KW_{ki}v_k\right)^2\Var[T_{im}(j)]\le \frac{1}{N}\sum_{i=1}^n\left(\sum_{k=1}^KW_{ki}v_k\right)^2d_i^0(j).
\end{align*}
Hence, for any $\delta>0$, it holds that with probability $1-\delta$, for some constant $C>0$,
\begin{align*}
    &\left|\frac{1}{N}\sum_{i=1}^n\sum_{m=1}^N\left(\sum_{k=1}^KW_{ki}v_k\right)(T_{im}(j)-d_i^0(j))\right|\\
    &\le C\left(\sqrt{\frac{\log(2/\delta)}{N}\sum_{i=1}^n\left(\sum_{k=1}^KW_{ki}v_k\right)^2d_i^0(j)}+\frac{\log(2/\delta)}{N}\right).
\end{align*}
Applying the union bound for all $v\in \mcal{N}_{1/4}$ and all $1\le j\le p$, the above bound then holds with probability $1-9^Kp\delta$. Letting $9^Kp\delta=o((Nn)^{-5})$ with $p\le Nn$, we have:
\begin{align*}
    &\left|\frac{1}{N}\sum_{i=1}^n\sum_{m=1}^N\left(\sum_{k=1}^KW_{ki}v_k\right)(T_{im}(j)-d_i^0(j))\right|\\
    &\le C\left(\sqrt{\frac{K+\log(Nn)}{N}\sum_{i=1}^n\left(\sum_{k=1}^KW_{ki}v_k\right)^2d_i^0(j)}+\frac{K+\log(Nn)}{N}\right)\\
    &\le C\left(\sqrt{\frac{K\log(Nn)}{N}\sum_{i=1}^n\left(\sum_{k=1}^KW_{ki}v_k\right)^2d_i^0(j)}+\frac{K\log(Nn)}{N}\right).
\end{align*}
As a result, with probability $1-o((Nn)^{-5})$, it holds that
\begin{align*}
    \|ZW'\|^2&\le C\left(\frac{K\log(Nn)}{N}\sum_{j=1}^p\sum_{i=1}^n\left(\sum_{k=1}^KW_{ki}v_k\right)^2d_i^0(j)+\frac{K^2p\log^2(Nn)}{N^2}\right)\\
    &\le C\left(\frac{K\log(Nn)}{N}\|W\|^2+\frac{K^2p\log^2(Nn)}{N^2}\right)\le \frac{Cn\log(Nn)}{N}.
\end{align*}
Then, we have:
\begin{align*}
    \|E_2\|\le Cp^{1/2}\sqrt{\frac{n\log(Nn)}{N}}\sqrt{\frac{K}{p}}p^{1/2} = C\sqrt{\frac{Kpn\log(Nn)}{N}}.
\end{align*}

Moreover, since $E_3'=E_2'$ from Equation (A25) in \cite{ke2024entry}, the same bound holds.

Next, for $E_4$, in Equation (A39) in \cite{ke2024entry}, they used $M_0(j,j)\asymp h_j\asymp K/p$, which should be $M_0(j,j)\asymp p^{-1}$ for our growing $K$ setting. As a result, in Equation (A39), it suffices to prove
\begin{align}\label{bd:new2}
 \Vert H^{-\frac 12} (ZZ'- \mathbb{E} ZZ') H^{-\frac 12} \Vert
  \leq CK^{-1/2} \max\Big\{ \sqrt{\frac{pn\log (n)}{N^2}}, \frac{p\log (n)}{N}\Big\}. 
\end{align}
In the following, we will point out where this additional $K^{-1/2}$ comes from under our growing $K$ setting.

In Equation (A42) in \cite{ke2024entry}, they showed
\begin{align*}
 \mathbb{E} ( \tilde{z}_i' \tilde{z}_i)= \mathbb{E} z_i' H^{-1} z_i &=\frac{1}{N_i^2} \sum_{m=1}^{N_i} \mathbb{E} (T_{im}- \mathbb{E} T_{im})' H^{-1} (T_{im}- \mathbb{E} T_{im}) \notag\\
 &= \frac{1}{N_i^2} \sum_{m=1}^{N_i}\sum_{t=1}^p \mathbb{E}(T_{im}(t)- d_i^0(t))^2 h_t^{-1}\notag\\
 &=  \frac{1}{N_i^2} \sum_{m=1}^{N_i} \sum_{t=1}^p d_i^0(t)\big(1 - d_i^0(t)\big)h_t^{-1}\leq \frac{p}{N_i}.
 \end{align*}
 Here, again, they used $h_t\asymp K/M$ but hid this $K$ inside the constant. The finer bound should be $\frac{p}{KN}$ (recall in our setting, all $N_i$ are of the same order of $N$). Thus, whenever using Bernstein inequality, this bound appears in the variance bound yielding one more $K^{-1/2}$.

 Last, consider $E_1$. According to Equation (A52) in \cite{ke2024entry}, bounding $E_1$ involves bounding $M_0^{-1/2}DD'M_0^{-1/2}=G_0+\frac{n}{N}I_p+E_2+E_3+E_4$. Therefore, $E_1$ inherits the additional $\sqrt{K}$ term from $E_2,E_3$ and $E_4$.

 \begin{proof}[Proof of Lemma \ref{lem:tech2}]
     Due to the additional $\sqrt{K}$ term under our growing $K$ setting in all $E_i$ terms of Lemma \ref{lem:tech1} compared to Lemma A.4 in \cite{ke2024entry}, Lemma \ref{lem:tech2} regarding all $E_i$ terms inherits this additional $\sqrt{K}$ term compared to Lemma A.5 in \cite{ke2024entry}.
 \end{proof}

  \begin{proof}[Proof of Lemma \ref{lem:tech3}]
     Again, due to the additional $\sqrt{K}$ term under our growing $K$ setting in all $E_i$ terms of Lemma \ref{lem:tech1} compared to Lemma A.4 in \cite{ke2024entry}, Lemma \ref{lem:tech3} regarding all $E_i$ terms inherits this additional $\sqrt{K}$ term compared to Lemma A.6 in \cite{ke2024entry}. However, as for the last higher order $o(\beta_n)=o(1)$ term, in Proof of Lemma A.6 in \cite{ke2024entry}, they showed
\begin{align*}
\Big\Vert e_j'  \big(M^{1/2}M_0^{-1/2} - I_p\big)(\hat\Xi - \Xi O') \Big \Vert  
&\leq  \left|\sqrt{\frac{M(j,j)}{M_0(j,j)}} - 1\right| \cdot \|e_j'(\hat \Xi - \Xi O')\| \notag\\
& \leq \sqrt{\frac{p\log(n)}{Nn}} \cdot  \|e_j'(\hat \Xi - \Xi O')\|\notag\\
& = o(\beta_n) \cdot \|e_j'(\hat \Xi - \Xi O')\| \, ,
\end{align*}
using $\sqrt{\frac{p\log(n)}{Nn}}=o(\beta_n)$. When inheriting the additional $\sqrt{K}$, it becomes $\sqrt{\frac{Kp\log(n)}{Nn}}=o(\beta_n)$, which still remains unchanged under our assumption. 
 \end{proof}

\subsubsection{Proof of Theorem \ref{thm:word_embeddings}}
\begin{proof}[Proof of Theorem \ref{thm:word_embeddings}]
    According to Theorem \ref{thm:row_eigenv}, we can let $O' = \diag(\omega,\Omega')$, where $\omega\in \{-1,1\}$ and $ \Omega'$ is an orthogonal matrix in $\mathbb R^{K-1, K-1}$. Let us write $\hat \Xi_1 : = (\hat \xi_2, \ldots, \hat \xi_K)$ and similarly for $\Xi_1$. Without loss of generality, we assume $\omega =1$. Therefore:
\begin{align} \label{2024022501}
\big| \xi_1(j) - \hat \xi_1(j)  \big|  \leq C\sqrt{\frac{Kh_j p\log(n)}{Nn}}, \qquad \big\| e_j'(\hat \Xi_1 - \Xi_1) \Omega'  \big\|  \leq C\sqrt{\frac{Kh_j p\log(n)}{Nn}} \,. 
\end{align}
We rewrite:
\begin{align*}
\hat r_j' - r_j' \Omega' =\hat \Xi_1(j) \cdot  \frac{\xi_1(j) - \hat \xi_1(j) }{\hat \xi_1(j)\xi_1(j)}  -  \frac{e_j' (\hat \Xi_1 - \Xi_1 \Omega' ) }{ \xi_1(j)}. 
\end{align*}
Using Lemma~\ref{lem:eigen} together with (\ref{2024022501}), we conclude the proof. 
\end{proof}

\subsubsection{Proof of Theorem \ref{thm:VH_error}}
\begin{proof}[Proof of Theorem \ref{thm:VH_error}]
    Since the successive projection algorithm or any other vertex hunting algorithms we consider here is shown to be efficient, i.e., $\max_{1\le k\le K} \|v_k-\hat{v}_k\|= O(\max_{1\le j\le p}\|r_j-\hat{r}_j\|)$. We then obtain the bound using Theorem \ref{thm:word_embeddings}.
\end{proof}

\subsection{Proof of Theorem \ref{thm:upperbound}}
Note that under the assumptions of Theorem \ref{thm:upperbound}, both Theorem \ref{thm:main} and Theorem \ref{thm:TM_errorbound} hold. Plugging $\delta_n$ of Theorem \ref{thm:TM_errorbound} in Theorem \ref{thm:main}, with $h\asymp [K/(Nn)]^{\frac{1}{2\beta+1}}$, we obtain the desired bound.

\section{Proof of the lower bound} \label{supp:LBproofs}

\subsection{Proof of Theorem \ref{thm:lowerbound}}
For the minimax lower bound, it suffices to prove the lower bound by considering a specific case of $\mcal{G}$ and $\Pi$ among the class. We fix a $\Pi$ such that each $\pi_i$ with only one nonzero entry leading to $\Sigma_{\Pi}=K^{-1}n$ and Assumption \ref{assump2} (b). For $\mcal{G}$, we will apply the following minimax lower bound result in \cite[Theorem 2.7]{tsybakov2009introduction}.
\begin{lemma}\label{lem:minimaxlowerboundgeneral}
    Suppose there exists a family of models $\{\mbb{P}^{(s)}\}_{s=1}^J$ for $J\ge 2$, where for each $s$, the corresponding density vector $\bm{g}^{(s)}=(g_1^{(s)}.\ldots,g_K^{(s)})'\in\mcal{G}$, and the following hold:
    \begin{itemize}
        \item [(a).] For all $s\neq s'$, $\left(\int \|\bm{g}^{(s)}(x)-\bm{g}^{(s')}(x)\|^2 dx\right)^{1/2}\ge 2\varepsilon>0$.
        \vspace{0.1in}
        \item [(b).] $\frac{1}{J}\sum_{s=1}^J \text{KL}(\mbb{P}_s,\mbb{P}_0)\le \alpha\log J$, for some $\alpha\in (0,1/8)$, where $\mbb{P}^{(s)}$ denotes the probability measure associated with $\bm{g}^{(s)}\in\mcal{G}$ and $\text{KL}(\cdot,\cdot)$ denotes the Kullback–Leibler divergence.
    \end{itemize}
    Then, there exists a positive constant $c(\alpha)$ only depending on $\alpha$ such that
    \begin{align*}
        \inf_{\widehat{\bm{g}}}\sup_{\bm{g}\in\mcal{G}}\mbb{E}\left[\int \|\bm{g}(x)-\bm{\widehat{g}}(x)\|^2dx\right]\ge c(\alpha)\varepsilon^2.
    \end{align*}
\end{lemma}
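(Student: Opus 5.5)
The argument is the standard reduction from estimation to multiple hypothesis testing, followed by a Fano-type bound. I read the family as indexed by $s=0,1,\ldots,J$, so that $\mathbb{P}_0$ in (b) is a member and the separation in (a) holds for all $0\leq s\neq s'\leq J$. Write $d(\bm{g},\widetilde{\bm{g}})=(\int\|\bm{g}(x)-\widetilde{\bm{g}}(x)\|^2dx)^{1/2}$. As noted in Section~\ref{sec:LBproof}, $d$ satisfies the triangle inequality.

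\emph{Reduction to testing.} Fix an arbitrary estimator $\widehat{\bm{g}}$ and define the minimum-distance test $\hat{s}\in\arg\min_{0\le s\le J} d(\widehat{\bm{g}},\bm{g}^{(s)})$. Suppose $d(\widehat{\bm{g}},\bm{g}^{(s)})<\varepsilon$. For every $s'\neq s$, condition (a) and the triangle inequality give $d(\widehat{\bm{g}},\bm{g}^{(s')})\ge 2\varepsilon-d(\widehat{\bm{g}},\bm{g}^{(s)})>\varepsilon$, and hence $\hat{s}=s$. Consequently $\{\hat s\neq s\}\subset\{d(\widehat{\bm{g}},\bm{g}^{(s)})\ge\varepsilon\}$, and Markov's inequality yields
\[
\mathbb{E}_s\bigl[d^2(\widehat{\bm{g}},\bm{g}^{(s)})\bigr]\ \ge\ \varepsilon^2\,\mathbb{P}_s(\hat s\neq s).
\]
Since each $\bm{g}^{(s)}\in\mathcal{G}$, it follows that
\[
\sup_{\bm{g}\in\mathcal{G}}\mathbb{E}\bigl[d^2(\widehat{\bm{g}},\bm{g})\bigr]\ \ge\ \varepsilon^2\,\inf_{\psi}\max_{0\le s\le J}\mathbb{P}_s(\psi\neq s),
\]
where the infimum runs over all tests $\psi$ taking values in $\{0,\ldots,J\}$. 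The right-hand side no longer depends on $\widehat{\bm{g}}$.

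\emph{Bounding the testing error.} It remains to lower bound $p_{e}:=\inf_\psi\max_s\mathbb{P}_s(\psi\ne s)$ by a positive constant depending only on $\alpha$, using condition (b). I would invoke Theorem~2.5 of \cite{tsybakov2009introduction}. Under $\frac1J\sum_{s=1}^J\mathrm{KL}(\mathbb{P}_s,\mathbb{P}_0)\le\alpha\log J$ with $J\ge2$, it gives
\[
p_e\ \ge\ \frac{\sqrt{J}}{1+\sqrt{J}}\Bigl(1-2\alpha-\sqrt{\tfrac{2\alpha}{\log J}}\Bigr).
\]
For $\alpha<1/8$ and $J\ge2$ the bracket is at least $1-\tfrac14-\sqrt{\tfrac{1}{4\log 2}}>0.15$, and $\sqrt J/(1+\sqrt J)\ge \sqrt2/(1+\sqrt2)$. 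Hence $p_e\ge c(\alpha)>0$, and combining this with the reduction step proves the lemma.

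\emph{Main obstacle.} The delicate step is this lower bound on the testing error when $J$ is small. The plain Fano inequality, with the mutual information bounded by the average KL to the reference measure $\mathbb{P}_0$, gives
\[
p_e\ \ge\ 1-\frac{\alpha\log J+\log 2}{\log(J+1)},
\]
and this is positive only once $J$ is moderately large. To cover every $J\ge2$, I would reproduce Tsybakov's argument. It lower bounds the average error via a change of measure to $\mathbb{P}_0$, splitting on the event that the likelihood ratio $d\mathbb{P}_s/d\mathbb{P}_0$ exceeds $\tau=1/\sqrt{J}$, and then controls the complementary probability using the KL bound and Pinsker's inequality. In the application to Theorem~\ref{thm:lowerbound}, $J=\lceil 2^{KB/8}\rceil$ is huge, so even the simple Fano bound would suffice there.
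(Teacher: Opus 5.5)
Your argument is correct and takes essentially the paper's route. The paper gives no proof of its own; it simply invokes Tsybakov's many-hypotheses lower bound, whose proof is exactly your minimum-distance reduction plus Markov followed by the Fano-type testing bound $p_e\ge \frac{\sqrt J}{1+\sqrt J}\bigl(1-2\alpha-\sqrt{2\alpha/\log J}\bigr)$. Your reading of the family as indexed by $s=0,\dots,J$, with the separation in (a) including $s=0$, is the right repair of the statement and matches how the paper applies it.

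Two slips, neither affecting the conclusion:
\begin{itemize}
\item $1-\tfrac14-\sqrt{1/(4\log 2)}\approx 0.149$, not $>0.15$; it is still positive, so your constant $c(\alpha)$ stands.
\item Your ``main obstacle'' is not actually an obstacle. The plain Fano bound you display is already positive for every $J\ge 2$ when $\alpha<1/8$: it is about $0.29$ at $J=2$. Failure would require $\alpha\ge \log\frac{J+1}{2}/\log J$, and this threshold is at least $0.58$ for all $J\ge 2$.
\end{itemize}
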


It then suffices to verify the conditions (a) and (b) above by constructing such a class $\{\bm{g}^{(s)}\}_{s=1}^J$ with $\varepsilon^2 = K\left(\frac{K}{Nn}\right)^{\frac{2\beta}{2\beta+1}}$. We divide the interval $[-K/2,K/2]$ evenly into $K$ blocks $\{S^{(k)}\}_{k=1}^K$ with $S^{(k)}:=\left[a_k,a_k+1\right]$. For each $1\le k\le K$, we first need to construct a base density $g_{0,k}$ such that (a) it is in the Nikol’ski class (i.e., satisfying Assumption \ref{assump1}); (b) there exists a region such that only density $g_{0,k}$ is nonzero while the rest densities are zero (for the existence of anchor region); (c) there exists a region where all densities $g_{0,k}$ are lower bounded away from zero (where we can perturb them to construct $\{g_k^{(s)}\}$). To achieve this, we will utilize the following three types of bump functions that are both infinitely-order smooth:
\begin{align*}
    \phi_0(z;a) &= \text{exp}\left(\frac{1}{z^2-a^2}\right)1\{-a<z<a\},\\
    \phi_1(z;a)&=\text{exp}\left((z-a)^2+\frac{1}{(z-a)^2}\right)1\{z>a\},\\ \phi_2(z;a)&=\text{exp}\left((z-a)^2+\frac{1}{(z-a)^2}\right)1\{z<a\}.
\end{align*}
Fix $T>0$ and an interval $[-2T,2T]$. To construct $g_{0,k}(x)$, we first put a bump $\phi_0(x;2T)$ in the middle symmetric around $x=0$ such that there exists a constant $c_0>0$ (depending on $T$) such that $\inf_{x\in [-T,T]}\phi_0(x,2T)\ge c_0>0$. Next, we put a bump $\phi_0(x-(2T+2k-1);1)$ to the right, which centers at $x=2T+2k-1$ over the interval $[2T+2k-2,2T+2k]$. Further, for the right tail, we add a bump $\phi_1(x;2T+2K)$, which is zero at $x=2T+2K$ with an exponential tail decaying to zero. Then, we set all other values of $g_{0,k}(x)$ on $[0,+\infty)$ except these bumps to 0, and $g_{0,k}(x)=g_{0,k}(-x)$ for $x<0$ to make it symmetric around $x=0$. In order to make it a density, we properly normalize it and define it as:
\begin{align}\label{eq:g0kconstruction}
    g_{0,k}(x) &= \frac{1}{5Z_1}\phi_0(x;2T)+\frac{1}{5Z_2}\left[\phi_0(x-(2T+2k-1);1)+\phi_0(x+(2T+2k-1);1)\right]\\\nonumber
    &\quad+\frac{1}{5Z_3}\left[\phi_1(x;2T+2K)+\phi_2(x;-(2T+2K))\right],
\end{align}
where 
\begin{align*}
    Z_1 = \int \phi_0(z;2T)dz,\ Z_2=\int \phi_0(z;1)dz,\ Z_3 = \int \phi_1(z;2T+2K)dz.
\end{align*}
Here, it is seen that $\mcal{A}_k=$ is the anchor region for density $g_{0,k}$. In the following, we will make perturbations to this base density $g_{0,k}$ on the interval $[-T,T]$.

Let $\psi$ be a bump function belonging to the Nikol’ski class (i.e., satisfying Assumption \ref{assump1}) such that it is supported on $[-1,1]$, $\int_0^1 \psi(x)dx=0$, $\int_0^1 \psi^2(x)dx=1$ and $\|\psi\|_{\infty}<\infty$. Picking an integer $B\asymp (Nn/K)^{\frac{1}{2\beta+1}}$, for each $1\le b\le B$, we let 
\begin{align*}
    \psi_b(x):=\psi\left(Bx-b\right).
\end{align*}
Since $B^{-1}=o(1)$, by picking a properly large $T$, it is seen that $\{\psi_b\}_{b=1}^B$ have disjoint support within $[-T,T]$ such that $\int \psi_b(u)du=0$ and $\int \psi^2_b(u)du=B^{-1}$. We will use $\psi_b$ to perturb the base density $g_{0,k}$ for each $1\le k\le K$. The following lemma is a well-known result (e.g., see \cite[Lemma 2.9]{tsybakov2009introduction} for a proof).
\begin{lemma}[Varshamov–Gilbert]
    For any integer $A\ge 8$, there exists an integer $J\ge 2^{A/8}$, and vectors $\nu^{(0)},\ldots,\nu^{(J)}\in \{0,1\}^A$ such that $\nu^{(0)}$ is a zero vector and that $\|\nu^{(s)}-\nu^{(t)}\|_1\ge A/8$ for all $0\le s\neq t\le J$.
\end{lemma}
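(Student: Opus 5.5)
This is the Varshamov--Gilbert bound, and I would prove it by the standard probabilistic (or greedy) packing argument in the Hamming cube $\{0,1\}^A$. The plan is to build the set greedily: start with $\nu^{(0)}={\bf 0}$ and repeatedly add any vector of $\{0,1\}^A$ whose Hamming distance to every vector already chosen is at least $A/8$. The process stops when the Hamming balls of radius $\lfloor A/8\rfloor$ around the chosen vectors cover the cube. Hence, if $J+1$ vectors are chosen,
\[
(J+1)\sum_{i=0}^{\lfloor A/8\rfloor}\binom{A}{i}\ \ge\ 2^A .
\]
Every pair in the resulting family automatically satisfies $\|\nu^{(s)}-\nu^{(t)}\|_1\ge A/8$, since the $\ell^1$ distance of binary vectors is the Hamming distance. (Strictly, the greedy rule excludes only distance $<A/8$, so ties are fine.)

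The remaining step, and the only real computation, is to bound the volume of a Hamming ball of radius $A/8$. I would use the Chernoff/entropy bound
\[
\sum_{i\le A/8}\binom{A}{i}\ \le\ 2^{A\,H(1/8)},
\]
where $H$ is the binary entropy. This follows from $2^{-A}\sum_{i\le A/8}\binom{A}{i}=\mathbb{P}(\mathrm{Bin}(A,1/2)\le A/8)$ together with Hoeffding, which gives at most $e^{-2A(3/8)^2}=e^{-9A/32}$. With the Hoeffding route,
\[
J+1\ \ge\ e^{9A/32}\ =\ 2^{(9/(32\ln 2))A}\ \approx\ 2^{0.405A}.
\]
This is far more than $2^{A/8}+1$ once $A\ge 8$; the condition $A\ge8$ guarantees $2^{A/8}\ge 2$ and absorbs the $+1$. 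So $J\ge 2^{A/8}$, and in particular $J\ge 2$.

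The main obstacle is only bookkeeping of constants. One must be careful about the floor $\lfloor A/8\rfloor$ and about whether the stopping criterion uses radius $<A/8$ or $\le A/8$. Hoeffding handles either version with room to spare, since $0.405>1/8$. Alternatively one may simply cite \cite[Lemma 2.9]{tsybakov2009introduction}, which gives exactly this statement.
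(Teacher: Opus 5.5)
Your greedy packing argument, combined with the Hoeffding bound $\mathbb{P}(\mathrm{Bin}(A,1/2)\le A/8)\le e^{-9A/32}$, is correct, and the constant check $e^{9A/32}-1\ge 2^{A/8}$ for $A\ge 8$ goes through. The paper gives no proof of its own and simply cites Lemma 2.9 of \cite{tsybakov2009introduction}, whose proof is this same greedy-exclusion-plus-Hoeffding argument, so your approach coincides with the one the paper relies on.
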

Recall that for each $s$, we have $K$ densities $\{g_k^{(s)}\}_{k=1}^K$ with $B$ bumps $\{\psi_b\}$ for each $k$. We then need a vector of length $KB$ from the Varshamov–Gilbert lemma. Set $A=KB$ then there exists an integer $J\ge 2^{KB/8}$ and a vector $\omega^{(s)}=((\omega^{(s,1)})',\ldots,(\omega^{(s,K)})')'\in \{0,1\}^{KB}$, where each $\omega^{(s,k)}=(\omega_1^{(s,k)},\ldots,\omega_B^{(s,k)})'\in \{0,1\}^B$ such that $\|\omega^{(s)}-\omega^{(s')}\|_1\ge KB/8$ for any $s\neq s'$. Now, our construction is given by
\begin{align*}
    g_k^{(s)}(x)=g_{0,k}(x)+\delta\sum_{b=1}^B\omega_b^{(s,k)}\psi_b(x),\ \text{for some}\ \delta\asymp32\left(\frac{K}{Nn}\right)^{\frac{\beta}{2\beta+1}}=o(1).
\end{align*}
In the following, it suffices to verify the condition (a) and (b) in Lemma \ref{lem:minimaxlowerboundgeneral}.

Before this, we first check all $\{g_k^{(s)}\}$ are densities in the Nikol’ski class (i.e., satisfying Assumption \ref{assump1}). For each $1\le k\le K$, it is clear that $\int g_k^{(s)}(x)dx=\int g_{0,k}(x)dx=1$ due to $\int \psi_b(x)dx=0$. Outside $[-T,T]$, $g_k^{(s)}(x)=g_{0,k}(x)\ge 0$ and inside $[-T,T]$, since $\{\psi_b\}$ have disjoint supports, we then have $g_k^{(s)}(x)\ge c_0-\delta\|\psi\|_{\infty}=c_0-o(1)$. Thus, all $g_k^{(s)}$'s are nonnegative and they are densities. Meanwhile, they all have constant integrals over anchor regions, i.e., $[2T+2k-2,2T+2k]\cup[-2T-2k+2,-2T-2k]$, where only density $g_{0,k}$ is nonzero for $1\le k\le K$. Moreover, for any derivative order $r>0$, it holds that
\begin{align*}
    \psi^{(r)}_b(x)=B^r\psi^{(r)}(Bx-b).
\end{align*}
Hence, for each $1\le k\le K$, note that for each bump $\psi_b$, we have:
\begin{align*}
    &\int \left| \psi_b^{(\lfloor\beta\rfloor)}(x + t) - \psi_b^{(\lfloor\beta\rfloor)}(x) \right|^2 dx\\
    &= B^{2\lfloor\beta\rfloor}\int \left| \psi^{(\lfloor\beta\rfloor)}(Bx-b + Bt) - \psi^{(\lfloor\beta\rfloor)}(Bx-b) \right|^2 dx\\
    &\le B^{2\lfloor\beta\rfloor-1}\int \left| \psi^{(\lfloor\beta\rfloor)}(u + Bt) - \psi^{(\lfloor\beta\rfloor)}(u) \right|^2 du \\
    &\le C B^{2\lfloor\beta\rfloor-1}|Bt|^{2\beta-2\lfloor\beta\rfloor}=CB^{2\beta-1}|t|^{2\beta-2\lfloor\beta\rfloor}.
\end{align*}
Thus, we obtain:
\begin{align*}
    \frac{\int \left| \psi_b^{(\lfloor\beta\rfloor)}(x + t) - \psi_b^{(\lfloor\beta\rfloor)}(x) \right|^2dx}{|t|^{2\beta-2\lfloor\beta\rfloor}}\le CB^{2\beta-1}.
\end{align*}
Since we have $B$ bumps in total, we have 
\begin{align*}
    \sum_{b=1}^B\frac{\int \left| \psi_b^{(\lfloor\beta\rfloor)}(x + t) - \psi_b^{(\lfloor\beta\rfloor)}(x) \right|^2dx}{|t|^{2\beta-2\lfloor\beta\rfloor}}\le CB^{2\beta}.
\end{align*}
Since $\delta^2B^{2\beta}=O(1)$, this ensures each $g_k^{(s)}$ belongs to the Nikol’ski class (i.e., satisfying Assumption \ref{assump1}) given $g_{0,k}$ is already in the Nikol’ski class.

Next, for the condition (a) in Lemma \ref{lem:minimaxlowerboundgeneral}, due to the construction that all $\{\psi_b\}_{b=1}^B$ have disjoint supports, it holds that for any $s\neq s'$,
\begin{align*}
    \sum_{k=1}^K\|g_k^{(s)}-g^{(s')}_k\|_{L^2}^2=\delta^2\sum_{k=1}^K \sum_{b=1}^B(\omega_b^{(s,k)}-\omega_b^{(s',k)})^2\|\psi_b(x)\|_{L^2}^2,
\end{align*}
where we use the notation $\|\cdot\|_{L^2}$ to denote the $L_2$ norm of a function. Since $\|\psi_b(x)\|_{L^2}^2=B^{-1}$ and $\|\omega^{(s)}-\omega^{(s')}\|_1=\|\omega^{(s)}-\omega^{(s')}\|_2^2\ge KB/8$ (they are in $\{0,1\}^{KB}$) for any $s\neq s'$, we then have:
\begin{align*}
    \sum_{k=1}^K\|g_k^{(s)}-g^{(s')}_k\|_{L^2}^2\ge \delta^2B^{-1}KB/8=K\delta^2/8\ge 4K\delta^2= 4\varepsilon^2.
\end{align*}
The condition (a) is satisfied.

As for the condition (b) in Lemma \ref{lem:minimaxlowerboundgeneral}, for all $1\le i\le n$ and each $s$, according to our model, we have:
\begin{align*}
    f_i^{(s)}(x)=\sum_{\ell\neq k}\pi_i(\ell)g_0(x)+\pi_i(k)g_k^{(s)}(x),\quad
    f_i^{(0)}(x)=g_0(x).
\end{align*}
It implies:
\begin{align*}
    f_i^{(s)}(x)-f_i^{(0)}(x)=\sum_{k=1}^K\pi_i(k)(g_k^{(s)}(x)-g_0(x))=\delta\sum_{k=1}^K\pi_i(k)\sum_{b=1}^B\omega_b^{(s,k)}\psi_b(x)=:\Delta_i^{(s)}(x).
\end{align*}
Note that the KL divergence between $\mathbb{P}^{(s)}$ and $\mathbb{P}^{(0)}$ is:
\[
\mathrm{KL}(\mathbb{P}^{(s)}, \mathbb{P}^{(0)}) = \sum_{i=1}^n \sum_{j=1}^N \int f_i^{(s)}(x) \log \left( \frac{f_i^{(s)}(x)}{f_i^{(0)}(x)} \right) dx.
\]
Since $\{\psi_b(x)\}$ have disjoint supports, we have for all $x\in [-T,T]$, $|\Delta_i^{(s)}(x)/f_i^{(0)}(x)|\le \delta \|\psi\|_{\infty}/c_0=o(1)$. Then using the inequality $\log(1 + u) \le u - \frac{u^2}{2(1 + |u|)}$ for $|u| < 1$, and noting that $\int \Delta_i^{(s)}(x) dx = 0$ such that the first term order vanishes, there exists a constant $C>0$ such that for all $1\le i\le n$ and each  $s$,
\[
\int f_i^{(s)}(x) \log \left( \frac{f_i^{(s)}(x)}{f_i^{(0)}(x)} \right) dx \le C \int \frac{ (\Delta_i^{(s)}(x))^2 }{ f_i^{(0)}(x) } dx\le Cc_0^{-1} \| \Delta_i^{(s)} \|_{L^2}^2.
\]
It then remains to bound $\| \Delta_i^{(s)} \|_{L^2}^2$. Due to the disjoint support of the bumps $\{\psi_b\}$, we have:
\begin{align*}
    \| \Delta_i^{(s)} \|_{L^2}^2 &= \left\|\delta\sum_{k=1}^K\pi_i(k)\sum_{b=1}^B\omega_b^{(s,k)}\psi_b(x)\right\|_{L^2}^2\\
    &=\delta^2\sum_{b=1}^B\left(\sum_{k=1}^K\pi_i(k)\omega_b^{(s,k)}\right)^2\frac{1}{B}\\
    &\le \delta^2.
\end{align*}
Therefore, the KL divergence between $\mathbb{P}^{(s)}$ and $\mathbb{P}^{(0)}$ satisfies:
\[
\mathrm{KL}(\mathbb{P}^{(s)}, \mathbb{P}^{(0)}) = O(Nn\delta^2).
\]
Recall that $\delta\asymp\left(\frac{K}{Nn}\right)^{\frac{\beta}{2\beta+1}}$ and $B\asymp (Nn/K)^{\frac{1}{2\beta+1}}$. Then, it is seen that 
\begin{align*}
    \frac{1}{J}\sum_{s=1}^J \text{KL}(\mbb{P}_s,\mbb{P}_0)=O(Nn\delta^2)=O(KB)=O(\log J).
\end{align*}
Therefore, the condition (b) is also satisfied by picking the constant in $B$ properly small.

Finally, we need to show that the above constructed $\{g_k^{(s)}\}_{k=1}^K$ satisfy Assumption \ref{assump2} and \eqref{tscore-op-norm} required in the upper bound Theorem \ref{thm:upperbound}  so that the lower bound $\varepsilon^2$ in Lemma \ref{lem:minimaxlowerboundgeneral} indeed provides a minimax lower bound for the class considered in the upper bound in Theorem \ref{thm:upperbound}. 

To show Assumption \ref{assump2} is satisfied, we aim to utilize the proof of Lemma \ref{lem:key} (c). Recall that
\begin{align}\label{perturb}
    g_k^{(s)}(x)=g_{0,k}(x)+\delta\sum_{b=1}^B\omega_b^{(s,k)}\psi_b(x).
\end{align}
Consequently, letting $G^{(s)}=(G_{mk}^{(s)})$, $G_0=(G_{0,mk})$ and $E^{(s)}=(E_{mk}^{(s)})$ be three matrices in $\mbb{R}^{M\times K}$, we have:
\begin{align}\label{perturbationonG}
    G_{mk}^{(s)}=G_{0,mk}+\delta E_{mk}^{(s)},
\end{align}
where
\begin{align*}
    G_{mk}^{(s)} = \int_{\mcal{C}_m}g_k^{(s)}(x)dx,\ G_{0,mk} = \int_{\mcal{C}_m}g_{0,k}(x)dx,\ E_{mk}^{(s)}=\sum_{b=1}^B\omega_b^{(s,k)}\int_{\mcal{C}_m}\psi_b(x)dx.
\end{align*}
Following the proof of Lemma \ref{lem:key} (c), our plan to show Assumption \ref{assump2} is satisfied is as follows: we construct a set of bins $\{\mcal{C}_m\}_{m=1}^M$, and first show the main term $G_{0}$ satisfying Lemma \ref{lem:key} (c), and then show the perturbation is small enough. Thus, the eigenvalue bounds of $\Sigma_{G^{(s)}}$ can lead to the eigenvalue bounds of $\Sigma_{\bm{g}^{(s)}}$.

Let $h_K(x)=\sum_{k=1}^Kg_{0,k}(x)$. Since $h_K$ has a total mass $\int h_K(x)dx=K$, we then evenly divide the total mass $K$ into $M$ bins over $\mbb{R}$ such that each bin contains mass equal to $K/M$. Formally, let $q_1,q_2,\ldots,q_{M-1}$ be the $K/M, 2K/M,\ldots,(M-1)K/M$-th quantile of $h_K(x)$ respectively. Then, we set $\mcal{C}_1 = (-\infty,q_1)$, $\mcal{C}_M=[q_{M-1},+\infty)$ and for $2\le m\le M-1$, $\mcal{C}_m = [q_{m-1},q_m)$. Hence, by construction, for all $1\le k\le K$,
\begin{align*}
    \sum_{k=1}^M\int_{\mcal{C}_m}g_{0,k}(x)dx = K/M,
\end{align*}
implying $\|G_{0}\mbf{1}_{K}\|_{\infty}=K/M$. As for $\Sigma_{G_0}$, let $G_{0,m:}\in\mbb{R}^K$ be its $m$-th row vector. Then, we have $\Sigma_G = \frac{M}{K}\sum_{m=1}^MG_{0,m:}G_{0,m}'$ as a sum of rank one matrices. For each $1\le k\le K$, let $\mcal{A}_k$ denote the anchor region of the density $g_{0,k}$, where only $g_{0,k}$ is nonzero. We can then divide all bins $\mcal{C}_m$ into three cases: (a) There exists $1\le k\le K$ such that $\mcal{C}_m\subset \mcal{A}_k$; (b) $\mcal{C}_m\cap\left(\cup_{k=1}^K\mcal{A}_k\right)=\emptyset$, i.e., $\mcal{C}_m$ does not overlap with any of the anchor region $\mcal{A}_k$; (c) there exists $1\le k\le K$ such that $\mcal{C}_m\cap\mcal{A}_k\neq \emptyset$ but there does not exist $1\le k\le K$ such that $\mcal{C}_m\subset\mcal{A}_k$, i.e., $\mcal{C}_m$ overlaps with some or more $\mcal{A}_k$ but it does not contain in any $\mcal{A}_k$. We thus have:
\begin{align*}
    \Sigma_G &= \frac{M}{K}\sum_{m=1}^MG_{0,m:}G_{0,m}'=\frac{M}{K}\left(\sum_{m\in\text{Case\ a}}+\sum_{m\in\text{Case\ b}}+\sum_{m\in\text{Case\ c}}\right)G_{0,m:}G_{0,m}'\\
    &=\Sigma_{G,a}+\left(\Sigma_{G,b}+\Sigma_{G,c}\right).
\end{align*}
In the following, we analyze them separately.

For $\Sigma_{G,a}$, since $\mcal{C}_m\subset \mcal{A}_k$, $G_{0,mk}=\int_{\mcal{C}_m}g_{0,k}(x)dx= \frac{K}{M}$ and $G_{0,mk'}=0$ for any $k'\neq k$. It then implies:
\begin{align*}
    \Sigma_{G,a}=\frac{M}{K}\sum_{k=1}^K\sum_{m:\mcal{C}_m\subset\mcal{A}_k}\frac{K^2}{M^2}e_ke_k'=\sum_{k=1}^K\left(\frac{K}{M}|\{m:\mcal{C}_m\subset\mcal{A}_k\}|\right)e_ke_k'.
\end{align*}
Let $\eta_k = \int_{\mcal{A}_k}g_{0,k}(x)dx$. Note that $\eta_k\asymp1$ due to our construction \eqref{eq:g0kconstruction}. Hence, the number of such bins $\mcal{C}_m$ contained in $\mcal{A}_k$ $|\{m:\mcal{C}_m\subset\mcal{A}_k\}|\asymp \eta_k\frac{M}{K}$. Therefore, there exist constant $C\ge c>0$ such that
\begin{align*}
     \Sigma_{G,a}=\diag(\sigma_{a,1},\sigma_{a,2},\ldots,\sigma_{a,K}),
\end{align*}
where $c\le \sigma_{a,k}\le C$ for all $k$. Hence, $\lambda_{\min}(\Sigma_G)\ge \lambda_{\min}(\Sigma_{G,a})\ge c$. As for the largest eigenvalue, we will then bound $\|\Sigma_{G,b}+\Sigma_{G,c}\|$.

Consider $\Sigma_{G,b}$. Note that outside $\cup_{k=1}^K\mcal{A}_k$, all densities $g_{0,k}(x)$ are equal by construction \eqref{eq:g0kconstruction}. Since $\int_{\mcal{C}_m}\sum_{k=1}^Kg_{0,k}(x)dx=\frac{K}{M}$, it then holds that $\int_{\mcal{C}_m}g_{0,k}(x)dx=\frac{1}{M}$ for all $1\le k\le K$. As a result, we obtain $G_{0,mk}=\frac{1}{M}$ for all $1\le k\le K$, and
\begin{align*}
    \Sigma_{G,b} = \frac{M}{K}\sum_{m\in \text{Case b}}\frac{1}{M^2}1_K1_K'.
\end{align*}
It then yields that
\begin{align*}
    \|\Sigma_{G,b}\|\le \frac{M}{K}M\frac{1}{M^2}K=1
\end{align*}

Finally, for $\Sigma_{G,c}$, recall that Case (c) corresponds to $m$, where $\mcal{C}_m$ partially overlaps with some or more $\mcal{A}_k$ while not strictly contained in any $\mcal{A}_k$. Recall the construction of $g_{0,k}$ in \eqref{eq:g0kconstruction}. We further divide Case (c) into three subcases. Case (c1): $\mcal{C}_m\cap \mcal{A}_1\neq \emptyset$ and $\mcal{C}_m\cap[-2T,2T]\neq\emptyset$; Case (c2): for $1\le k\le K-1$, $\mcal{C}_m\cap\mcal{A}_{k}\neq\emptyset$ and $\mcal{C}_m\cap\mcal{A}_{k+1}\neq\emptyset$; Case (c3): $\mcal{C}_m\cap \mcal{A}_K\neq\emptyset$ and $\mcal{C}_m\cap \left((-\infty,-2T-2K)\cup(2T+2K,+\infty)\right)\neq\emptyset$. We first analyze Case (c1) as Case (c3) is similar to it. 

For Case (c1), according to the definition of $g_{0,k}$ in \eqref{eq:g0kconstruction}, the function $\phi_0(x;2T)$ from all $g_{0,k}(x)$'s are nonzero on $\mcal{C}_m\cap[-2T,2T]$ and only $g_{0,1}(x)$ is nonzero on $\mcal{C}_m\cap \mcal{A}_1$. Hence, we obtain:
\begin{align*}
    G_{0,m:}=c_m\mbf{1}_K+a_1e_1,
\end{align*}
where $c_m = \frac{1}{5Z_1  }\int_{\mcal{C}_m\cap[-2T,2T]}\phi_0(x;2T)$ and $a_1 = \int_{\mcal{C}_m\cap \mcal{A}_1}g_{0,1}(x)dx$. Then, we have:
\begin{align*}
    \frac{M}{K}\sum_{m\in \text{Case c1}}G_{0,m:}G_{0,m:}'&=\frac{M}{K}(c_m\mbf{1}_K+a_1e_1)(c_m\mbf{1}_K+a_1e_1)'\\
    &=\frac{M}{K}(c_m^21_K1_K'+a_1c_m1_Ke_1'+a_1c_me_11_K'+a_1^2e_1e_1').
\end{align*}
Since $c_m\le 1/M, a_1\le K/M$, $\|1_K1_K'\|=K$, $\|1_Ke_1'\|=\sqrt{K}$ and $\|e_1e_1'\|=1$, we then obtain:
\begin{align*}
    \left\|\frac{M}{K}\sum_{m\in \text{Case c1}}G_{0,m:}G_{0,m:}\right\|\le \frac{M}{K}\left(\frac{K}{M^2}+2\frac{K\sqrt{K}}{M^2}+\frac{K^2}{M^2}\right)=O(1),
\end{align*}
noting that $M\ge K$. Similarly, we also obtain the same bound for Case (c3).
\begin{align*}
    \left\|\frac{M}{K}\sum_{m\in \text{Case c3}}G_{0,m:}G_{0,m:}\right\|=O(1).
\end{align*}
Last, focus on Case (c2). Since only density $g_{0,k}$ is nonzero on $\mcal{A}_k$, for $\mcal{C}_m\cap\mcal{A}_{k}\neq\emptyset$ and $\mcal{C}_m\cap\mcal{A}_{k+1}\neq\emptyset$, $G_{0,mk}=\int_{\mcal{C}_m\cap\mcal{A}_k}g_{0,k}(x)dx$ and $G_{0,m(k+1)}=\int_{\mcal{C}_m\cap\mcal{A}_{k+1}}g_{0,k+1}(x)dx$ while $G_{0,mk'}=0$ for $k'\notin\{k,k+1\}$. Hence, it yields that 
\begin{align*}
    G_{0,m:}=a_ke_k+a_{k+1}e_{k+1},
\end{align*}
where $a_k = \int_{\mcal{C}_m\cap \mcal{A}_k}g_{0,k}(x)dx\le K/M$ for all $1\le k\le K-1$. Then, we obtain:
\begin{align*}
    \frac{M}{K}\sum_{m\in \text{Case c2}}G_{0,m:}G_{0,m:} = \frac{M}{K}\sum_{k=1}^{K-1}(a_ke_k+a_{k+1}e_{k+1})(a_ke_k+a_{k+1}e_{k+1})'.
\end{align*}
Let $S = \sum_{k=1}^{K-1}(a_ke_k+a_{k+1}e_{k+1})(a_ke_k+a_{k+1}e_{k+1})'$. It is seen that $S$ is a symmetric tridiagonal matrix. The diagonal is $S_{11}=a_1^2$, $S_{KK}=a_K^2$ and $S_{kk}=2a_{kk}^2$ for $2\le k\le K-1$. The off-diagonal is $S_{k(k+1)}=a_ka_{k+1}$. Hence, using the bound
\begin{align*}
    \|S\|\le \max_{1\le i\le k}\sum_{j=1}^k|S_{ij}|=4\left(\frac{K}{M}\right)^2,
\end{align*}
we obtain:
\begin{align*}
    \left\|\frac{M}{K}\sum_{m\in \text{Case c2}}G_{0,m:}G_{0,m:}\right\|\le 4\frac{K}{M}=O(1).
\end{align*}
Combining all subcases (c1)-(c3) together, it yields that $\|\Sigma_{G,c}\|=O(1)$. Since it is seen that $\|\Sigma_{G,b}\|\le 1$, we can conclude $\|\Sigma_{G,b}+\Sigma_{G,c}\|=O(1)$, implying $\lambda_{\max}(\Sigma_G)\le \lambda_{\max}(\Sigma_{G,a})+O(1)=O(1)$. Therefore, we have shown with bins $\{\mcal{C}_m\}_{m=1}^M$, the main term matrix $G_0$ satisfies Lemma \ref{lem:key} (c).

Now, we consider the perturbation term $\delta E^{(s)}$. Recall \eqref{perturbationonG} that 
\begin{align*}
    E_{mk}^{(s)}=\sum_{b=1}^B\omega_b^{(s,k)}\int_{\mcal{C}_m}\psi_b(x)dx.
\end{align*}
Note that all bumps used for perturbation are supported within $[-T,T]$ for some fixed $T>0$. Hence, only $\mcal{C}_m$ within $[-T,T]$ makes $\int_{\mcal{C}_m}\psi_b(x)dx$ nonzero. Since all densities $g_{0,k}$ on $[-T,T]$ are bounded from above and away from zero. By the construction $\int_{\mcal{C}_m}\sum_{k=1}^Kg_{0,k}(x)dx=K/M$, it implies $|\mcal{C}_m|\asymp 1/M$. Hence, due to the disjoint support of the bumps, we have:
\begin{align*}
    |\delta E_{mk}^{(s)}| &= \left|\delta\sum_{b=1}^B\omega_b^{(s,k)}\int_{\mcal{C}_m}\psi_b(x)dx\right|\le \int_{\mcal{C}_m}\left|\delta\sum_{b=1}^B\omega_b^{(s,k)}\psi_b(x)\right|dx\\
    &=O\left( \frac{\delta\|\psi\|_{\infty}}{M}\right)=o\left(\frac{1}{M}\right).
\end{align*}
Then, it yields that
\begin{align*}
    \|\delta E^{(s)}1_K\|_{\infty}&=o\left(\frac{K}{M}\right),\\
    \|\delta E^{(s)}\|=o\left(\sqrt{KM}\frac{1}{M}\right)&=o\left(\sqrt{\frac{K}{M}}\right)=o(1).
\end{align*}
Since $G^{(s)}= G_0+\delta E^{(s)}$ and we already showed that $G_0$ satisfies Lemma \ref{lem:key} (c), $G^{(s)}$ also satisfies Lemma \ref{lem:key} (c) under the bins $\{\mcal{C}_m\}_{m=1}^M$. Hence, following the proof of Lemma \ref{lem:key} (c), the eigenvalue bounds of $\Sigma_{G^{(s)}}$ imply the eigenvalue bounds of $\Sigma_{\bm{g}^{(s)}}$ such that Assumption \ref{assump2} is satisfied. Moreover, according to Theorem \ref{thm:TM_errorbound}, there exists an estimator $\widehat{G}^{(s)}$ such that \eqref{tscore-op-norm} holds. Consequently, our constructed $\{g_k^{(s)}\}_{k=1}^{K}$ satisfy the conditions of Theorem \ref{thm:upperbound} in the upper bound, implying a valid minimax lower bound.

\section{Extension to the case of a general $d$} \label{supp:extension}
We now present the general version of our main result, Theorem \ref{thm:main}, in dimension $d>1$. 
\begin{thm} \label{thm:mainind}
Fix $d>1$. Consider the model \eqref{model-1}-\eqref{model-2} in $\mbb{R}^d$ and Assumption~\ref{assump1prime}. Suppose the assumptions (i)-(iii) and (v) in Theorem \ref{thm:main} hold. In addition, suppose $\prod_{j=1}^d h_j\rightarrow 0$ and $Nn\prod_{j=1}^d h_j\rightarrow\infty$. Now, consider the estimator $\widehat{\boldsymbol g}^+(\bm{x})$ in \eqref{our-estimator2} using the above multivariate product kernel $\mcal{K}_h^{\text{prod}}$. Suppose $K\leq M\leq [Nn/\log^2(Nn)]^{1/2}$ and $ (Nn)^{-1}K\ll \prod_{j=1}^d h_j\ll \log^{-1}(Nn)$. 
    Then, there exists a constant $C_0>0$ such that 
\begin{align*}
\mbb{E}\left[\int_{\mbb{R}^d} \|\widehat{\bm{g}}^+(\bm{x})-\bm{g}(\bm{x})\|^2d\bm{x}\right]\le C_0 \cdot K\left( \sum_{j=1}^d h_j^{2\beta_j}  + \frac{K}{Nn\prod_{j=1}^d h_j}+\frac{M}{K}\delta_n^2  +  \frac{K^2}{Nn} \right). 
\end{align*}
\end{thm}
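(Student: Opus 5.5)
The plan is to reuse the three-term decomposition \eqref{error-decomposition} verbatim, with $\bm{x}\in\mathbb{R}^d$ and the product kernel ${\cal K}_{\bm h}$ in place of ${\cal K}_h$. The argument for $d=1$ touches the dimension only through two kernel facts: (a) the bias identity $g_k^*(\bm x)=\int {\cal K}_{\bm h}(\bm x-\bm y)g_k(\bm y)d\bm y$, and (b) $\int {\cal K}_{\bm h}^2 = (\prod_j h_j)^{-1}\int {\cal K}^2$ together with $\int |{\cal K}_{\bm h}| = \int |{\cal K}| <\infty$. Everything tied to bins, namely $T$, $Q$, $\Omega$, Lemma~\ref{lem:key}, Lemma~\ref{lem:spectralnorms}, Lemma~\ref{lem:error22}, Lemma~\ref{lem:error3-decompose} and the algebraic Lemma~\ref{lem:error2-decompose}, is dimension-free, since the bins are a partition of $\mathbb{R}^d$ and only the histogram quantities $f_i^{\text{hist}}$ and $G$ enter. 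I would therefore state at the outset that these lemmas hold unchanged under assumptions (i)--(iii), (v), and then redo only the kernel-dependent steps.

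\textbf{Bias step (analogue of Lemma~\ref{lem:error1}).} By the same computation \eqref{proof-bias-1}, $g_k^*=({\cal K}_{\bm h}*g_k)$. I would telescope coordinatewise: write $g_k(\bm x+\bm u\bm h)-g_k(\bm x)$ as a sum of $d$ increments, each changing one coordinate $x_j\mapsto x_j+u_jh_j$. For the $j$th increment I apply the one-dimensional Taylor expansion with integral remainder of order $r_j$ in direction $j$. The moment conditions $\int u^\ell {\cal K}_j=0$ for $1\le\ell\le r_j$ kill the polynomial terms; the other coordinates integrate out because $\int {\cal K}_{j'}=1$. Minkowski's inequality (Lemma~\ref{lem:Minkowski}) then gives an $L^2$ bound $\lesssim h_j^{\beta_j}$.

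This step is the main obstacle. Assumption~\ref{assump1prime} controls the increments of the mixed derivative $\partial^{\bm r}g_k$, whereas the telescoping needs directional control of $\partial_j^{r_j}g_k$ in the $j$th direction. I would first try to read the assumption as the standard anisotropic Nikol'ski condition of \cite{lepski2013multivariate}, which is directional. If that reading is not available, I would instead derive the needed directional increment bound from the stated one via embedding results in that reference. Summing over $k$ gives $CK\sum_j h_j^{2\beta_j}$.

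\textbf{Main stochastic term (analogues of Lemmas~\ref{lem:error21}, \ref{lem:error23}, \ref{lem:error2}).} In the proof of Lemma~\ref{lem:error21}, $b_i^2$ and $\sigma_i^2$ carry the factor $\int {\cal K}_{\bm h}^2 = (\prod_jh_j)^{-1}\int{\cal K}^2$ instead of $h^{-1}$. Lemma~\ref{lem:uprocess-bound} holds for integrals over $\mathbb{R}^d$ by Fubini, so the bound becomes $C K^2/(Nn\prod_jh_j)$. Lemma~\ref{lem:lem-lem-error23} combined with Lemma~\ref{lem:error22} then gives $CK(K^2/(Nn)+K/(Nn\prod_jh_j))$ on ${\cal F}$. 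The crude bound \eqref{intS1n-crude-bound} becomes $Cn^2(\prod_j h_j)^{-1}$, which uses $\sup|{\cal K}_{\bm h}|\le C(\prod h_j)^{-1}$. Because $\mathbb{P}({\cal F}^c)=o((Nn)^{-5})$ and $(Nn)^{-1}K\ll\prod h_j$, this contribution is negligible.

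\textbf{Secondary term (analogue of Lemma~\ref{lem:error3}).} Lemma~\ref{lem:error3-prep} goes through with the same change: in Lemma~\ref{lem:error21*} the factor is $n(N\prod h_j)^{-1}$. For $\int\|\mathbb{E}S\bm 1_n\|^2\le Cn^2M^{-1}$, the argument uses only $\int f_i^2<\infty$, which follows from the boundedness of $\bar g$ in Assumption~\ref{assump2}, and $\int|{\cal K}_{\bm h}|<\infty$. This yields $C[M\delta_n^2+KM^4\log^4(Nn)/((Nn)^5\prod h_j)]$. Since $M\le (Nn)^{1/2}$, the last term is absorbed into $K^2/(Nn)$.

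Summing the three pieces gives the stated bound.
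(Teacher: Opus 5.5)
Your plan has the same skeleton as the paper's. The paper also treats the $\frac{M}{K}\delta_n^2$ and $\frac{K^2}{Nn}$ terms as dimension-free, and gets the variance term simply by replacing $h$ with $\prod_j h_j$; your accounting via $\int\mathcal{K}_{\bm h}^2$ and $\sup|\mathcal{K}_{\bm h}|$ is the detail behind that sentence. All the new work then goes into the bias (Lemma~\ref{lem:multivariate-lemma4.3}), and that is where you genuinely differ. The paper Taylor-expands $g_k(x+u)$ jointly to total degree $|\ell|=\sum_j\lfloor\beta_j\rfloor$ and argues that the product-kernel moments annihilate every polynomial term. You instead telescope one coordinate at a time, expand only along $e_j$ to order $r_j$, and integrate out $u_j$ first. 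Your route is the standard one for anisotropic classes and needs less: it only involves $\partial_j^{s}g_k$ with $s\le r_j$ and moments of $\mathcal{K}_j$ of order at most $r_j$. A joint expansion to total degree $|\ell|$ also generates terms like $u_1^{|\ell|}\partial_1^{|\ell|}g_k(x)$ when $d\ge 2$ and $\ell_1<|\ell|$. Their kernel moment $h_1^{|\ell|}\int v^{|\ell|}\mathcal{K}_1(v)\,dv$ need not vanish, and $\partial_1^{|\ell|}g_k$ is not controlled by the class. The joint expansion therefore effectively presupposes isotropic smoothness of order $|\ell|$, which your telescoping avoids.

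Two remarks.

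First, the directional bound you need, $\|\partial_j^{r_j}g_k(\cdot+te_j)-\partial_j^{r_j}g_k\|_{L^2}\le L|t|^{\beta_j-r_j}$, is exactly what the hypothesis of Lemma~\ref{lem:multivariate-lemma4.3} supplies. That lemma assumes the Nikol'ski bound for every multi-index with $0\le r_j\le\lfloor\beta_j\rfloor$, and taking $r=r_je_j$, $t=te_j$ gives it. So adopt that reading and drop the embedding fallback, which cannot work if only the top mixed derivative of Assumption~\ref{assump1prime} is controlled. For example, take $d=2$, $\bm r=(1,1)$ and $g=p(x_1)q(x_2)$. The mixed modulus and the directional modulus in $x_1$ then differ by the factor $\|q\|_{L^2}/\|q'\|_{L^2}$, which is unbounded over spread-out densities $q$.

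Second, in the secondary term, boundedness of $\bar g$ only yields $\int f_i^2\le\|f_i\|_\infty\le K\|\bar g\|_\infty$, which would cost a factor $K$. The $O(1)$ bound the paper uses is $\|f_i\|_{L^2}\le\max_k\|g_k\|_{L^2}=O(1)$.
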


\subsection{Proof of Theorem \ref{thm:mainind}}
Note that as mentioned in Section \ref{subsec:d>1}. The last two terms of the bound in Theorem \ref{thm:mainind} are irrelevant of the dimension. It thus suffices to prove the first two terms. The second term is obtained simply by replacing the bandwidth $h$ by the product of bandwidths $\prod_{j=1}^d h_j$ due to the use of the product kernel in our method. Then, in the following, we will only prove the first term.

The first term relates to the bias of KDE for a multivariate density. It is from a generalized version of Lemma \ref{lem:error1}, which we present as follows.

\begin{lemma}[Multivariate analogue of Lemma \ref{lem:error1}]
\label{lem:multivariate-lemma4.3}
Let $d>1$.  For each $1\le k\le K$ let $g_k:\mathbb{R}^d\to\mathbb{R}$ be a probability density belonging to an \emph{anisotropic Nikol'ski class} with smoothness vector $\boldsymbol{\beta}=(\beta_1,\dots,\beta_d)$ and constant $L>0$ in the sense that for each multi-index $r=(r_1,\dots,r_d)$ with $0\le r_j\le\lfloor\beta_j\rfloor$ the mixed partial $\partial^r g_k$ exists in $L^2(\mathbb{R}^d)$ and for all $t=(t_1,\dots,t_d)\in\mathbb{R}^d$
\[
\Big\| \partial^r g_k(\cdot + t) - \partial^r g_k(\cdot)\Big\|_{L^2(\mathbb{R}^d)}
\le L \sum_{j=1}^d |t_j|^{\beta_j - r_j} .
\]
Recall that the multivariate kernel is the product kernel
\[
\mcal{K}_{\mathbf{h}}(u) = \prod_{j=1}^d \frac{1}{h_j} \mcal{K}_j\!\left(\frac{u_j}{h_j}\right),\qquad \mathbf{h}=(h_1,\dots,h_d),
\]
where the univariate kernel $\mcal{K}_j$ has order $\ell_j=\lfloor\beta_j\rfloor$ in each coordinate (i.e.\ $\int u^m K(u)\,du=0$ for $1\le m\le \ell_j$) and satisfies the integrability condition $\int |u|^{\beta_j}|K(u)|\,du<\infty$ for each $j$. Define the kernel-smoothed target:
\[
g_k^*(x) \;=\; \int K_{\mathbf{h}}(u)\, g_k(x+u)\,du .
\]
Then, there exists a constant $C>0$ such that
\[
\sum_{k=1}^K \int_{\mathbb{R}^d} \big( g_k^*(x) - g_k(x) \big)^2 \,dx
\;\le\; C \, K \sum_{j=1}^d h_j^{2\beta_j}.
\]
\end{lemma}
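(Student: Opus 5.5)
We reduce to one density $g=g_k$ and prove $\|g^*-g\|_{L^2}^2\le C\sum_{j=1}^d h_j^{2\beta_j}$ with $C$ independent of $k$; summing over $k$ then gives the factor $K$. The argument mirrors the proof of Lemma~\ref{lem:error1}, except that we telescope the smoothing one coordinate at a time. Write $\mcal{K}_{\mathbf h}(u)=\prod_j \mcal{K}_{j,h_j}(u_j)$ and define intermediate functions
\[
g^{(0)}=g,\qquad g^{(j)}(x)=\int \mcal{K}_{j,h_j}(u_j)\,g^{(j-1)}(x+u_je_j)\,du_j,\quad 1\le j\le d,
\]
so that $g^{(d)}=g^*$ by Fubini. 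Then $g^*-g=\sum_{j=1}^d (g^{(j)}-g^{(j-1)})$, and $(a_1+\cdots+a_d)^2\le d\sum_j a_j^2$ reduces the task to bounding each one-directional smoothing error $\|g^{(j)}-g^{(j-1)}\|_{L^2}\le C h_j^{\beta_j}$.

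For the $j$th term we apply the univariate argument of Lemma~\ref{lem:error1} along direction $e_j$. Taylor-expand $u_j\mapsto g^{(j-1)}(x+u_jh_je_j)$ to order $\ell_j=\lfloor\beta_j\rfloor$ with integral remainder. The vanishing moments of $\mcal{K}_j$ kill the polynomial part, and subtracting $\partial_j^{\ell_j}g^{(j-1)}(x)$ inside the remainder is free for the same reason. Two applications of Minkowski's integral inequality (Lemma~\ref{lem:Minkowski}) then give
\[
\|g^{(j)}-g^{(j-1)}\|_{L^2}\le \frac{h_j^{\ell_j}}{(\ell_j-1)!}\int|u|^{\ell_j}|\mcal{K}_j(u)|\int_0^1(1-\tau)^{\ell_j-1}\big\|\partial_j^{\ell_j}g^{(j-1)}(\cdot+\tau u h_je_j)-\partial_j^{\ell_j}g^{(j-1)}\big\|_{L^2}\,d\tau\,du .
\]
When $\ell_j=0$ we use the direct difference instead of the Taylor expansion.

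Next we transfer the smoothness from $g$ to $g^{(j-1)}$. Since $g^{(j-1)}$ is $g$ convolved in coordinates $1,\dots,j-1$ with kernels of finite $L^1$ norm, and convolution commutes with translation and with $\partial_j$, Minkowski gives
\[
\|\partial_j^{\ell_j}g^{(j-1)}(\cdot+te_j)-\partial_j^{\ell_j}g^{(j-1)}\|_{L^2}\le \prod_{i<j}\|\mcal{K}_i\|_{L^1}\,\|\partial_j^{\ell_j}g(\cdot+te_j)-\partial_j^{\ell_j}g\|_{L^2}.
\]
The stated anisotropic condition, applied with the multi-index $r=\ell_j e_j$ and shift $t e_j$, bounds the right-hand side by $L|t|^{\beta_j-\ell_j}$. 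Substituting $t=\tau u h_j$ and using $\int|u|^{\beta_j}|\mcal{K}_j|<\infty$ yields $\|g^{(j)}-g^{(j-1)}\|_{L^2}\le C_jLh_j^{\beta_j}$. Squaring, summing over $j$ and then over $k$ gives the claim.

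The main obstacle is this transfer step. The hypothesis controls only derivatives of pure multi-index $\ell_je_j$ under pure coordinate shifts, so it is essential that the telescoping leaves exactly one direction at a time. Mixed terms never arise, and the earlier smoothing is absorbed through the $L^1$ norms of the kernels. One must also check that $\partial_j^{\ell_j}$ can be passed inside the convolutions in the other coordinates; this is justified by Fubini since $\partial_j^{\ell_j}g\in L^2$.
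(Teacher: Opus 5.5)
Your argument is correct, but it is organized differently from the paper's proof.

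\textbf{The paper's route.} It performs a single joint Taylor expansion of $g_k(x+u)-g_k(x)$ to total order $|\ell|=\sum_j\ell_j$. It then asserts that every polynomial term is annihilated by the product-kernel moments. The remainder is bounded by applying the Nikol'ski condition to $\partial^\alpha g_k$ for all $|\alpha|=|\ell|$, with the leftover powers of $u$ absorbed into a polynomial factor $P(u)$.

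\textbf{Your route.} You telescope $g^*-g=\sum_j\bigl(g^{(j)}-g^{(j-1)}\bigr)$. Each piece is then a one-directional smoothing error, handled verbatim by the univariate proof of Lemma~\ref{lem:error1}. The smoothing already performed in the other coordinates is absorbed through the factor $\prod_{i<j}\|\mathcal{K}_i\|_{L^1}$.

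\textbf{What each buys.} You only need the moments $\int v^m\mathcal{K}_j(v)\,dv=0$ for $m\le\ell_j$. You also only need the pure directional instances $r=\ell_je_j$, $t\in\mathbb{R}e_j$ of the hypothesis, i.e.\ the standard anisotropic Nikol'ski condition. (The literal hypothesis with $r=0$ and shift $se_j$ would force $\partial_jg_k=0$, hence $g_k\equiv0$, whenever $\beta_j>1$.)

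The joint expansion, by contrast, produces monomials $u^\alpha$ with $\alpha_j>\ell_j$ for some $j$. For example, $\alpha=|\ell|e_1$ appears as soon as $\ell_i\ge1$ for some $i\neq1$. For such $\alpha$, the moments $\int v^{\alpha_j}\mathcal{K}_j(v)\,dv$ need not vanish. The derivatives $\partial^\alpha g_k$ are also not supplied by the hypothesis, and the exponent $\beta_j-\alpha_j$ in the Nikol'ski bound becomes nonpositive. So the paper's one-shot computation is shorter but loose exactly where your coordinatewise reduction is tight. Your only price is the kernel $L^1$-norm factors in the constant.

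\textbf{A small simplification.} The one-directional smoothing operators commute, so $g^{(j)}-g^{(j-1)}$ is just the smoothing in coordinates $1,\dots,j-1$ applied to $x\mapsto\int\mathcal{K}_{j,h_j}(u)\,g(x+ue_j)\,du-g(x)$. Young's inequality then lets you run the univariate argument on $g$ itself. This avoids having to commute $\partial_j^{\ell_j}$ with the partial convolutions at all.
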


\begin{proof}
The proof follows the same high-level steps as the univariate case (Taylor expansion with cancellation of lower order terms by kernel moments and bounding the remainder by the anisotropic Nikol'ski condition), but we keep track of each coordinate's smoothness.

Fix $k$. Write the bias
\[
\Delta_k(x):=g_k^*(x)-g_k(x)=\int \mcal{K}_{\mathbf{h}}(u)\big(g_k(x+u)-g_k(x)\big)\,du.
\]
Let $\ell_j=\lfloor\beta_j\rfloor$ for $j=1,\dots,d$, and let $\ell=(\ell_1,\dots,\ell_d)$. We perform a multivariate Taylor expansion with integral-form remainder. For any multi-index $\alpha=(\alpha_1,\dots,\alpha_d)$ with $0\le\alpha_j\le\ell_j$, denote $|\alpha|=\sum_j\alpha_j$ and $\partial^\alpha$ the mixed partial derivative. The multivariate Taylor expansion about $x$ up to order $\ell$ reads
\[
g_k(x+u)-g_k(x)
= \sum_{0<|\alpha|\le|\ell|}
\frac{u^\alpha}{\alpha!}\,\partial^\alpha g_k(x)
\;+\;
R_\ell(x,u),
\]
where $u^\alpha=\prod_{j=1}^d u_j^{\alpha_j}$ and the remainder has the integral representation
\[
R_\ell(x,u)
= \sum_{|\alpha|=|\ell|}
\frac{|\ell|}{\alpha!}\int_0^1 (1-\tau)^{|\ell|-1}
\Big(\partial^\alpha g_k(x+\tau u)-\partial^\alpha g_k(x)\Big)\,d\tau\; u^\alpha .
\]
Because the kernel is a product kernel and each univariate kernel $\mcal{K}_j$ has vanishing moments up to order $\ell_j$ in coordinate \(j\), every polynomial term in the finite sum above integrates to zero against $\mcal{K}_{\mathbf{h}}(u)$. Concretely, for any multi-index $\alpha$ with $0<\alpha_j\le\ell_j$ for some $j$, by the product structure
\[
\int \mcal{K}_{\mathbf{h}}(u)\, u^\alpha\,du
=\prod_{j=1}^d \Big(\int \frac{1}{h_j} \mcal{K}_j\!\Big(\frac{u_j}{h_j}\Big) u_j^{\alpha_j}\,du_j\Big)
= \prod_{j=1}^d h_j^{\alpha_j}\Big(\int v^{\alpha_j}\mcal{K}_j(v)\,dv\Big)=0,
\]
because at least one coordinate integral vanishes. Hence all polynomial terms vanish and only the remainder contributes:
\[
\Delta_k(x)=\int \mcal{K}_{\mathbf{h}}(u)\, R_\ell(x,u)\,du.
\]

We now bound the $L^2(\mathbb{R}^d)$ norm of $\Delta_k$. Using Minkowski's integral inequality (twice) and Fubini,
\begin{align*}
\|\Delta_k\|_{L^2} 
&= \Big\| \int \mcal{K}_{\mathbf{h}}(u)\, R_\ell(\cdot,u)\,du \Big\|_{L^2}
\le \int |\mcal{K}_{\mathbf{h}}(u)|\, \|R_\ell(\cdot,u)\|_{L^2}\,du.
\end{align*}
Plug in the integral expression for $R_\ell$ and use triangle inequality and Jensen inequality (integrating in $\tau$):
\[
\|R_\ell(\cdot,u)\|_{L^2}
\le C_\ell \sum_{|\alpha|=|\ell|} |u^\alpha| \int_0^1 (1-\tau)^{|\ell|-1}
\|\partial^\alpha g_k(\cdot+\tau u)-\partial^\alpha g_k(\cdot)\|_{L^2}\,d\tau,
\]
where \(C_\ell\) is a combinatorial constant depending on \(\ell\). By the anisotropic Nikol'ski condition we have, for each such \(\alpha\),
\[
\|\partial^\alpha g_k(\cdot+\tau u)-\partial^\alpha g_k(\cdot)\|_{L^2}
\le L \sum_{j=1}^d |\tau u_j|^{\beta_j-\alpha_j}
\le L \sum_{j=1}^d |u_j|^{\beta_j-\alpha_j}.
\]
Hence, there exists a constant $C>0$ such that
\[
\|R_\ell(\cdot,u)\|_{L^2}
\le C \sum_{|\alpha|=|\ell|} |u^\alpha|\sum_{j=1}^d |u_j|^{\beta_j-\alpha_j}
\le C \sum_{j=1}^d |u_j|^{\beta_j} \cdot P(u),
\]
where \(P(u)\) is a polynomial factor in \(|u_1|,\dots,|u_d|\) whose degree depends only on \(\ell\) and \(d\). Concretely one can bound \(P(u)\le C'(1+\sum_{j}|u_j|^{\ell_j})^{|\ell|-1}\), which is harmless because of the integrability of the kernel moments assumed below.

Now, return to the $\|\Delta_k\|_{L^2}$ bound:
\[
\|\Delta_k\|_{L^2}
\le C \int |\mcal{K}_{\mathbf{h}}(u)| \sum_{j=1}^d |u_j|^{\beta_j} P(u)\,du
\le C \sum_{j=1}^d \int \prod_{m=1}^d \frac{1}{h_m}\big|\mcal{K}_j\big(\tfrac{u_m}{h_m}\big)\big|
\, |u_j|^{\beta_j} P(u)\,du.
\]
Change variables $v_m=u_m/h_m$. Then
\[
\|\Delta_k\|_{L^2}
\le C \sum_{j=1}^d h_j^{\beta_j} \int \prod_{m=1}^d |\mcal{K}_j(v_m)|\, |v_j|^{\beta_j} \tilde P(v)\,dv
\le C''\sum_{j=1}^d h_j^{\beta_j},
\]
where $\tilde P(v)$ is the rescaled polynomial and the final integral is finite by the kernel integrability assumptions $\int |v|^{\beta_j}|\mcal{K}_j(v)|\,dv<\infty$ and bounded moments. Thus
\[
\|\Delta_k\|_{L^2} \le C\sum_{j=1}^d h_j^{\beta_j}.
\]

Squaring and summing over $k=1,\dots,K$ gives
\[
\sum_{k=1}^K \|\Delta_k\|_{L^2}^2
\le K \cdot C^2 \Big(\sum_{j=1}^d h_j^{\beta_j}\Big)^2
\le C' K \sum_{j=1}^d h_j^{2\beta_j},
\]
where the last inequality follows since $(\sum_j a_j)^2 \le d\sum_j a_j^2$ and constants are absorbed into \(C'\). This is the desired bound.
\end{proof}

\bibliography{reference}

\end{document}